\tikzset{cross/.style={cross out, draw, 
minimum size=2*(#1-\pgflinewidth), 
inner sep=0pt, outer sep=0pt}}
\newtheorem{theorem}{Theorem}[section]
\newtheorem{proposition}[theorem]{Proposition}
\newtheorem{fact}[theorem]{Fact}
\newtheorem{lemme}[theorem]{Lemma}
\newtheorem{corollary}[theorem]{Corollary}
\newtheorem*{th*}{Theorem}
\newtheorem*{fact*}{Fact}
\newtheorem*{lemma*}{Lemma}
\theoremstyle{definition}
\newtheorem{example}{Example}
\DeclareMathSymbol{\lsb@l}{\mathalpha}{letters}{`l}
\DeclareMathOperator{\SL}{\mathrm{SL}}
\DeclareMathOperator{\Ad}{Ad}
\DeclareMathOperator{\ad}{ad}
\DeclareMathOperator{\Cov}{\mathbb{C}ov}
\newcommand{\R}{\mathbb{R}}
\newcommand{\C}{\mathbb{C}}
\newcommand{\Z}{\mathbb{Z}}
\newcommand{\N}{\mathbb{N}}
\newcommand{\E}{\mathbb{E}}
\newcommand{\1}{\mathbbm{1}}
\newcommand{\kF}{\mathcal{F}}
\newcommand{\DN}{\mathcal{D}_{N}}
\newcommand{\kl}{\mathfrak{l}}
\newcommand{\kh}{\mathfrak{h}}
\newcommand{\km}{\mathfrak{m}}
\newcommand{\kr}{\mathfrak{r}}
\newcommand{\kg}{\mathfrak{g}}
\newcommand{\g}{\mathfrak{g}}
\newcommand{\ww}{\underline{w}}
\newcommand{\oalpha}{\overline{\alpha}}
\newcommand{\W}{\Wmega}
\newcommand{\xx}{\underline{x}}
\newcommand{\yy}{\underline{y}}
\newcommand{\uu}{\underline{u}}
\newcommand{\rr}{\underline{r}}
\newcommand{\dkg}{\widehat{\kg}}
\newcommand{\tkg}{\widetilde{\kg}}
\newcommand{\tmu}{\widetilde{\mu}}
\newcommand{\teta}{\widetilde{\eta}}
\newcommand{\htmuNchi}{\widehat{\tmu^{*N}_{-N\chi}}}
\newcommand{\tkm}{\widetilde{\mathfrak{m}}}
\newcommand{\aalpha}{\underline{\alpha}}
\newcommand{\ug}{\underline{g}}
\newcommand{\hmu}{\widehat{\mu}}
\newcommand{\hf}{\widehat{f}}
\newcommand{\eps}{\varepsilon}
\newcommand{\supp}{\text{supp}}
\newcommand{\LL}{\mathscr{L}}
\renewcommand{\kl}{\mathfrak{l}}
\newcommand{\nn}{\underline{n}}
\renewcommand{\SS}{\mathscr{S}}
\newcommand{\ot}{\overline{t}}
\newcommand{\YY}{\mathcal{Y}}
\newcommand{\oSS}{\mathring{\SS}}
\newcommand{\Pc}{\mathscr{P}}
\newcommand{\tX}{\widetilde{X}}
\newcommand{\ki}{\mathfrak{I}}
\newcommand{\XXab}{\overline{X}_{\mu}}
\newcommand{\XX}{X_{\mu}}
\newcommand{\XXkl}{\overline{X}_{\kl}}
\newcommand{\dd}{d_{\Xab}}
\newcommand{\DilN}{D_{\sqrt{N}}}
\newcommand{\DilsN}{D_{\frac{1}{\sqrt{N}}}}
\newcommand{\Xm}{X}
\newcommand{\Xab}{\overline{X}}
\renewcommand{\W}{W}
\renewcommand{\Z}{Z}
\newcommand{\normtp}[1]{{|#1|'_{\tkg}} } % norme sur (tkg,*')
\newcommand{\normp}[1]{{|#1|'_{\kg}} }  % norme sur (kg,*')
\newcommand{\norm}[1]{|#1|_{\kg} }  % norme sur (kg,*)
\newcounter{namedthm}
\newcommand{\keywordsMSC}[1]{\renewcommand{\thefootnote}{}\footnotetext{#1}\renewcommand{\thefootnote}{\arabic{footnote}}}
\newcommand{\keywordsi}[1]{\renewcommand{\thefootnote}{}\footnotetext{#1}\renewcommand{\thefootnote}{\arabic{footnote}}}
\begin{document}

\title{The central limit theorem on nilpotent Lie groups}
\author{Timothée Bénard and Emmanuel Breuillard}

\maketitle

\begin{abstract}
We formulate and establish the central limit theorem for products of i.i.d. random variables on arbitrary simply connected nilpotent Lie groups, allowing a possible bias. We find that some interesting new phenomena arise in the presence of a bias: the walk spreads out at a higher rate in the ambient group, while the limiting hypoelliptic diffusion process may not always have full support. We use elementary Fourier analysis to establish our results which include Berry-Esseen bounds under optimal moment assumptions, as well as an analogue of Donsker's invariance principle. Various examples of nilpotent Lie groups are treated in detail showing the variety of different behaviours. We also obtain a characterization of when the limiting distribution is an ordinary gaussian and answer a question of Tutubalin regarding asymptotically close distributions on nilpotent Lie groups.
\end{abstract}

\keywordsMSC{60B15, 60F05, 35H10, 35R03, 22E25, 58J65}
\keywordsi{Nilpotent Lie groups, biased random walks, central limit theorem, hypoelliptic diffusion process}

%%%%%%%%%%%%%%%%%%%%%%%%%%%%%%%%%%%%%%%%%%%%%%
%% Please use \tableofcontents for articles %%
%% with 50 pages and more                   %%
%%%%%%%%%%%%%%%%%%%%%%%%%%%%%%%%%%%%%%%%%%%%%%
%\tableofcontents

\tableofcontents

%\addtocontents{toc}{\protect\setcounter{tocdepth}{0}}

\section{Introduction}

%This paper is devoted to the central limit theorem for random walks on a nilpotent Lie group, and the study of the limiting process. We consider a probability measure $\mu$ on  a simply connected nilpotent Lie group $G$ and we  aim to estimate the large scale geometry of the convolution power $\mu^{*N}$, in other words  the distribution of the product $X_{1}\dots X_{N}$  where the $X_{i}$'s are independent random variables in $G$ with law $\mu$. 

%This paper investigates the question of the central limit theorem for random walks on Lie groups. We consider a probability measure $\mu$ on  a simply connected nilpotent Lie group $G$ and we  aim to estimate the large scale geometry of the convolution power $\mu^{*N}$, in other words  the distribution of the product $X_{1}\dots X_{N}$  where the $X_{i}$'s are independent random variables in $G$ with law $\mu$. 

Given a Lie group $G$ and a probability measure $\mu$ on $G$, it is a natural problem to investigate the large  scale geometry of the convolution powers $\mu^{*N}$ for large $N$, or, in other words, the distribution of the product $X_{1}\cdots X_{N}$  of $N$ independent random variables in $G$ with law $\mu$. We refer the reader to the survey \cite{saloff-coste01} for a pleasant introduction to the topic, as well as to the books \cite{grenander63, varopoulos-saloff-coulhon92, liao-book, benoist-quint-book, varopoulos21}.

%We consider a simply connected nilpotent Lie group $G$ and  a sequence of $G$-valued random variables $(X_{i})_{i\geq 1}$ which are independent and identically distributed. Our goal to estimate the large scale geometry of the distribution of the product $X_{1}\dots X_{N}$. 

%We let $\mu$ be a probability measure on simply connected nilpotent Lie group $G$ and aim to estimate the large scale geometry of the distribution of the product $X_{1}\dots X_{N}$ where the $X_{i}$'s are independent random variables in $G$ with law $\mu$. 

In the abelian case $G=\R^d$, the answer is provided by  the classical central limit theorem: provided $\E(X_{1}^2)<\infty$, we have the convergence in law:
%\cite{gnedenko-kolmogorov68, breiman68}:
$$\frac{1}{\sqrt{N}}\left( X_{1}+\dots +X_{N} -N\chi \right) \underset{N\to +\infty}{\longrightarrow} \mathscr{N}(0, \sigma^2),$$ where $\chi$ and $\sigma^2$ denote the expectation and covariance matrix of $X_{1}$ respectively, and $\mathcal{N}(0,\sigma^2)$ is the centered Gaussian distribution on $\R^d$ with covariance matrix $\sigma^2$. 

%Given a Lie group $G$ and a probability measure $\mu$ on $G$, we may ask whether there are sequences $a_n>0$ and $g_n \in G$ such that the measures \begin{equation}\label{llconv} a_n \,\mu^{*n} * \delta_{g_n}\end{equation} converge weakly towards some non-trivial Radon measure on $G$. The measure $\mu^{*n} * \delta_{g_n}$ is the law of the random product $X_{1}\ldots X_{n} g_n$, where $X_1,\ldots,X_n$ are independent random variables with law $\mu$. We call an estimate of this kind a \emph{local (or local central) limit theorem} on $G$. In this paper we shall answer this question affirmatively for the class of connected nilpotent Lie groups.  Such results have been shown for different kinds of Lie groups (see the historical remarks at the end of the introduction). When $G=\R^d$, this is the classical local limit theorem: given a sum of $\R^d$-valued independent random variables $S_n=X_1+\ldots+X_n$ with common law $\mu$ and drift $\chi=\E(X_1)$, we have \cite{gnedenko-kolmogorov68, breiman68}:
%\begin{equation}\label{classicalLLT} (2\pi n)^{d/2} \,\mathbb{P}(S_{n}-n\chi \in B) =|B|+o_{\mu,B}(1),\end{equation}  where $B$ is any box  $B=\prod_{i=1}^d[s_i,t_i]$ of Euclidean volume $|B|$,
%provided $\mu$ has a finite second moment and is non-lattice, that is the support of $\mu$ is not contained in a coset of a proper closed subgroup of $\R^d$.

What is meant by a central limit theorem in a general Lie group $G$ is less clear, because there rarely is a canonical way to renormalize the random walk. See \cite{grenander63}, \cite[p. 3]{tutubalin-sazonov66}, \cite{virtser74}, \cite[Chapter 1] {breuillard-thesis04} for a discussion.  However, if the Lie group $G$ is nilpotent and simply connected, then $G$ is diffeomorphic to its Lie algebra $\kg$ via the exponential map  \cite{corwin-greenleaf90} and, although there may not exist any dilating automorphisms \cite{dyer69, dixmier-lister57}, there are natural ways to renormalize the space, leading to meaningful limit theorems.

In this paper we shall be interested in  the central limit theorem for i.i.d. random walks on an arbitrary simply connected nilpotent Lie group $G$. These are the simply connected real Lie groups $G$ whose central descending series $G^{[i+1]}:=[G,G^{[i]}]$ terminates at the trivial group in a finite number of steps. They can alternatively be described as the connected subgroups of unipotent upper triangular matrices \cite[Theorem 1.1.1]{corwin-greenleaf90}. % They are diffeomorphic to their Lie algebra $\kg$ via the exponential map. %In fact a sufficiently uniform local limit theorem implies a central limit theorem as we shall see. 

\bigskip

 In his early work on the subject,  Tutubalin  \cite{tutubalin64} handled successfully the case where $G$ is the Heisenberg group. It is the  simplest instance of a non-commutative nilpotent Lie group.

\begin{example}[Heisenberg group] \label{heis-ex} Let $G$ be the classical $3$-dimensional Heisenberg group, defined as $\R^3=\oplus_{i=1}^3\R e_{i}$ endowed with the group structure 
$$x*y= (x_{1}+y_{1}, \,x_{2}+y_{2}, \,x_{3}+y_{3}+\frac{1}{2}(x_{1}y_{2}-x_{2}y_{1})).$$ Let  $\mu$ be a probability measure on $G$ satisfying the moment conditions $\E_\mu(|x_i|^2)<\infty$ for $i=1,2$ and $\E_\mu(|x_3|)<\infty$. The mean of $\mu$ in the abelianization is identified with $\XX:=\E_\mu(x_1e_1+x_2e_2)$. Let $r>0$. If $\XX=0$, set $D_r:x \mapsto (x_1 r, x_2 r, x_3 r^2)$, while if $\XX \neq 0$ set $D_r: x\mapsto (x_1 r, x_2 r, x_3 r^{3})$.    

\noindent In the centered case $\XX=0$, Tutubalin \cite{tutubalin64} proves that the renormalized distribution $\DilsN(\mu^{*N})$ converges to a probability measure on $G$, which is absolutely continuous with respect to Lebesgue. In fact, it is the law of the value at time $t=1$ of a hypoelliptic diffusion $(B^{(1)}_t,B^{(2)}_t,L_t)$ on $G$, where $(B^{(1)}_t,B^{(2)}_t)$ is a planar Brownian motion and $L_t$ is its L\'evy area (see e.g. \cite{friz-victoir10}). 

\noindent On the other hand, if $\XX \neq 0$, Tutubalin \cite{tutubalin64} shows that the recentered and renormalized distribution $\DilsN(\mu^{*N} *\delta_{-N\XX})$, i.e. the distribution of $\DilsN(x *(-N\XX))$ when $x\sim \mu^{*N}$, converges towards an ordinary gaussian (normal) distribution on $\R^3$. Notice that $\det D_r=r^{3}$ in the centered case, while  $\det D_r=r^{5}$ in the non-centered case. This means that \emph{the random walk ``spreads out'' more in space in the presence of a non-trivial bias.}
\end{example}

For an arbitrary simply connected nilpotent Lie group $G$,  the central limit theorem (CLT) for \emph{centered} random walks, i.e. for measures $\mu$ whose projection to the abelianization $G/[G,G]$ has zero mean, is by now a classical result and several proofs are known. The case where $G$ is stratified follows easily from Wehn's infinitesimal CLT  \cite{wehn62} or from Stroock-Varadhan \cite{stroock-varadhan73}, while the general case is handled by Cr\'epel and Raugi in \cite{crepel-raugi78}. See also \cite{caramellinoetal} for a more recent treatment of the centered CLT for stratified nilpotent Lie groups. Berry-Esseen estimates have also been derived by Pap \cite{pap91} and Bentkus-Pap \cite{bentkus-pap96} in the stratified centered case.  Last but not least, in the case where the driving measure $\mu$ has a continuous density, Alexopoulos \cite{alexopoulos02-2, alexopoulos02-3} obtained very complete results for the centered CLT on an arbitrary nilpotent Lie group including Berry-Esseen bounds and even a full Edgeworth expansion.

Our main goal in this paper will be to formulate and establish the general case of the CLT and the Berry-Esseen estimates, see Theorems \ref{CLT} \ref{BE}, allowing the group to be non-stratified and most importantly the walk to be \emph{non-centered} (we also say ``biased''). We shall make no assumption of regularity  on the measure $\mu$, only necessary moment assumptions. We will also prove an analogue of Donsker's invariance principle in our context (\Cref{Donsker-nilpotent}), establishing that the stochastic process obtained by interpolating the random walk does converge in law towards a limiting diffusion process in the topology of H\"older continuous paths (and even in the ``rough paths'' topology), see \Cref{donsker-sec}.

Unlike the case of commutative groups, \emph{the analysis of non-centered walks does not reduce to that of centered walks}. 
 It is true that certain questions for non-centered walks can be solved by reducing to the centered case, for example the behavior of $\mu^{*n}$ in a neighborhood of the origin:  as Alexopoulos observes in \cite[1.11]{alexopoulos02-2},  $d\mu^{*n}(x)= e^{-\beta_{\mu}n} \chi_{\mu}(x) d\mu_{0}^{*n}(x)$ where $\mu_{0}$ is a centered probability measure, $\beta_{\mu}>0$, and $ \chi_{\mu}$ is a well chosen character of the group. This however  yields no information on the kind of questions we are considering in this paper, namely the behavior of $\mu^{*n}$ around its average, where most of the mass is located. A delicate issue in establishing a central limit theorem for biased walks is to understand precisely how to renormalize the random walk after recentering, in order to obtain an absolutely continuous probability measure in the limit. The large scale geometry of the distribution $\mu^{*N}$ depends in a subtle way on the increment average and attempts to find a suitable renormalization have been made by  Virtser \cite{virtser74} for the full upper-triangular unipotent matrix group, then by Raugi \cite{raugi78} for general nilpotent Lie groups. However neither formulation is fully satisfactory: Virtser's renormalization is too strong, leading in some cases to non-absolutely continuous distributions in the limit, whereas Raugi's renormalization may lead to full escape of mass in the limit (see the end of \Cref{Sec-CLT+BE} for a detailed discussion). We shall show the right way to renormalize non-centered random walks on nilpotent Lie groups, so that the limiting distribution is always a smooth probability measure.

 We also present  a detailed analysis of the limiting measure.  In particular, we give decay estimates at infinity (see \Cref{density-reg})  proving that the limit density is a rapidly decreasing Schwartz function similarly to what is known in the centered case \cite{hebisch89, varopoulos-JFA-88, siebert84}. It came as a surprise to us to discover that a new phenomenon takes place   in the biased case, and only in step $3$ or more: due to certain inaccessibility constraints \cite{sussmann87, agrachev-sachkov04, sachkov07} appearing in the presence of a bias, \emph{the limiting measure may not have full support} in general and the identity can lie on the boundary of the support, even though it is absolutely continuous. This phenomenon occurs in all free nilpotent Lie groups of step at least $3$ whenever there is a non-zero bias (see \Cref{never-full-thm}). On the other hand there are Lie groups, such as the upper triangular unipotent groups, where it never happens and the limit measure always has full support (see \Cref{full-lower-bound-thm}). For such groups we will also establish a lower bound on the density, by exploiting a version of Harnack's inequality for hypoelliptic operators recently established in \cite{kogoj-polidoro16}. Yet there are also examples, such as filiform Lie algebras of dimension at least $4$, where both phenomena  occur depending on the increment average. Typically the limiting measure is far from being an ordinary Gaussian distribution, but we shall also characterize exactly when this is the case (\Cref{charact-gaussian}).

In the context of hypoelliptic heat kernel estimates and diffusion processes on manifolds, the consequences of a non-trivial drift for the support and the small time estimates have been studied in several influential papers, starting with those of Ben Arous and L\'eandre \cite{benarous-leandre} (see also \cite{aidaetal} and more recently \cite{colin-hillairet-trelat}). In these papers the drift refers to a first order term in the infinitesimal generator of the diffusion. In our context, the non-centeredness assumption on the driving measure is not reflected by a first order term in the infinitesimal generator of the limiting diffusion. Rather, it forces the generator to be time dependent. This time dependency plays a crucial role here, especially as the generator obtained for a frozen time t is not hypoelliptic. Below, we will present examples of limiting measures with non-full support and also no drift term at all in the (hypoelliptic) infinitesimal generator.

We note that our central limit theorem also applies to random walks on \emph{finitely generated} (discrete) nilpotent groups. Indeed a torsion-free finitely generated nilpotent group can always be realized as a co-compact discrete subgroup of its Malcev closure, which is a connected and simply connected nilpotent Lie group \cite{corwin-greenleaf90} (the torsion subgroup is a finite normal subgroup, which does not play a role in the asymptotic behavior of the random walk).  We refer the reader to \cite{alexopoulos02, breuillard10, ishiwata03, ishiwata-kawabi-namba-I, ishiwata-kawabi-namba-II, diaconis-hough21} for related articles  where limit theorems for random walks on finitely generated nilpotent groups as well as on nilpotent covers of compact manifolds are obtained.

Contrary to previous work on the nilpotent CLT, which relied on martingale techniques \cite{stroock-varadhan73, virtser74, caramellinoetal} or convergence theorems for semigroups of contracting operators \cite{wehn62, crepel-raugi78, raugi78}, our proof  follows a rather down-to-earth Fourier analytic approach, which allows for greater flexibility and provides directly information on the rate of convergence.  Similar techniques are employed by the second named author \cite{breuillard05} and by Diaconis and Hough \cite{diaconis-hough21, hough19} in their work on the centered local limit theorem. Similarly as in Lindeberg's proof of the classical CLT on $\R$ (see \cite{feller}), we shall use a  \emph{replacement scheme}, where at each step one inserts a new random variable distributed according to the limit measure. However,  we will first need to replace the Lie group product $*$ by an associated graded product $*'$, for which the renormalization acts by automorphisms. Coordinatewise, we are led to consider random variables of the form:
\begin{equation}
\sum_{n_1<\ldots<n_t \leq N} M(X_{n_1},\ldots,X_{n_t})\end{equation}
where $M$ is a fixed polynomial function on $\kg$, and the $X_{i}$'s represent i.i.d. variables with law $\mu$. Lindeberg's replacement scheme for polynomials has also been exploited recently in a different context in works on noise stability and influences \cite{mossel-odonnell10, chatterjee06}, see also \cite{rotar75}.

%Our proof yields at essentially no extra cost a Berry-Esseen estimate (\Cref{BE}) valid on any simply connected nilpotent Lie group under the optimal assumption of existence of a finite moment of order $3$, generalizing the work of Bentkus-Pap \cite{bentkus-pap96} who dealt with centered walks on stratified nilpotent Lie groups. 

\bigskip

We now pass to the more precise description of our results.

\subsection{Main results}

Let $G$ be a simply connected nilpotent Lie group with Lie algebra $\kg$. We identify $G$ with $\kg$ via the exponential map.
Let $s$ be the step (or nilpotency class) of $\kg$, that is the smallest integer $s$ such that $\kg^{[s+1]}=\{0\}$, where $\kg^{[i]}:=[\kg,\kg^{[i-1]}]$ is the central descending series. We denote by $*$ the Lie product on $G$, which we view as being defined on $\kg$. By abuse of notation we also denote by $*$ the associated convolution product of measures on $\kg$.  Note that $*$ is given by a polynomial map on $\kg$ as follows from the Campbell-Hausdorff formula \cite{dynkin47}:
$$x*y = x+y + \frac{1}{2}[x,y] + \frac{1}{12}[x,[x,y]] +\frac{1}{12}[y,[y,x]]+\dots $$
 We  consider a random walk $S_n=X_1* \ldots * X_n$ associated to a probability measure $\mu$ on $(\kg,*)$. We let $\mu_{ab}$ be the projection of $\mu$ onto the abelianization $\kg_{ab}:=\kg/[\kg,\kg]$ and $\XXab=\E(\mu_{ab}) \in \kg_{ab}$ the mean. %We will assume that $\mu$ has finite moments of all orders, that is $\|x\|^m \in L^1(\mu)$ for all $m > 0$, where $\|\cdot\|$ is any norm on $\kg$.

It turns out that the large scale behaviour of $S_n$ on the Lie group, is captured by a certain filtration of $\kg$, namely a certain decreasing sequence of ideals, whose definition depends only on $\XXab \in \kg_{ab}$,
$$\kg^{(1)} \supseteq \kg^{(2)} \supseteq \cdots  \supseteq \kg^{(t+1)} = \{0\}$$
such that $[\kg^{(i)},\kg^{(j)}]\subseteq \kg^{(i+j)}$ for all $i,j$.

In {\bf the centered case}, i.e. when $\E(\mu_{ab})=0$, the filtration is simply the central descending series, i.e. $\kg^{(i)}=\kg^{[i]}$ and $t=s$. The projection of the walk to $\kg_{ab}$ exhibits diffusive behavior and will therefore be of order $\sqrt{n}$ as $n$ grows. On the other hand the coordinates of $S_n$ lying deeper in the filtration, i.e. in say $\kg^{[i]}\smallsetminus \kg^{[i+1]}$, grow faster at a rate $n^{i/2}$. Therefore if we renormalize the walk accordingly we may hope that it converges towards a non-degenerate random variable on $\kg$, analogous say to the Gaussian law in the abelian case. 
For a precise formulation, let us fix supplementary vector subspaces $\km^{(i)}$ such that $\kg^{(i)}= \km^{(i)} \oplus \kg^{(i+1)}$. This induces  a direct sum decomposition $\kg=\km^{(1)}\oplus \dots \oplus \km^{(s)}$,  whose  coordinate projections we denote by
 \begin{align*}
 \pi^{(i)}: \kg &\to \km^{(i)}\\ x &\mapsto \pi^{(i)}(x)=x^{(i)}.
 \end{align*} 
Let us define a one-parameter family of \emph{dilations} $(D_{r})_{r>0}$ by the formula
  \begin{equation} \label{dilation} 
  D_r:\kg \to \kg, \,\,\,D_{r}(x)= \sum_{i \ge 1} r^{i}x^{(i)}.
  \end{equation} 
Given $r>0$ and a Borel measure $\eta$ on $\kg$, we write $D_{r}(\eta)$ the push-forward measure of $\eta$ by $D_{r}$, it is characterized by the relation $D_{r}(\eta)(f)=\eta(f\circ D_{r})$ for every measurable function $f :\kg\rightarrow \R^+$. 
  The centered CLT for the $\mu$-walk on $(\kg, *)$ claims the  convergence in law
\begin{equation}\label{TCL}\DilsN(\mu^{*N}) \to \nu \end{equation}
where $\nu$ is a certain smooth probability measure on $\kg$. It also involves a description of $\nu$ as the value at time  $t=1$ of a certain explicit one-parameter semigroup of probability measures $(\sigma_{t})_{t\in \R^+}$ on $\kg$. However, one must be careful that the semigroup structure for $\sigma_{t}$ refers to a new Lie group product $*'$ on $\kg$,   inherited from the choice of subspaces $\km^{(i)}$ and defined by 
\begin{equation}\label{*'-product}a*'b=\lim_{t \to +\infty} D_{1/t}(D_t(a)*D_t(b)).\end{equation} 
Equivalently, $*'$ is the Lie group structure induced by the new Lie bracket $[x,y]'$ on $\kg$,  defined on $\km^{(i)}\times \km^{(j)}$ as the projection of $[x,y]$ onto $\km^{(i+j)}$ parallel to the other $\km^{(k)}$. With this new Lie structure, $(\kg,*')$ is again an $s$-step nilpotent Lie group, and the dilations $D_r$ become automorphisms. The semigroup  $(\sigma_{t})_{t>0}$ can now be defined (as in   \cite{hunt56}) by way of its infinitesimal generator
\begin{equation}
L_\mu  = \frac{1}{2}\sum_{i \leq \dim \kg_{ab}} E^2_{i}  + B 
\end{equation}
where $E_{i}, B$ denote left invariant vectors fields on $(\kg, *')$, with the $E_i$'s  forming a basis of $\km^{(1)}$ in which the covariance matrix of $\mu_{ab}$ is the identity, while $B \in \km^{(2)}$ is $\E_\mu(x^{(2)})$. The  measure $\nu=\sigma_{1}$ admits a smooth and exponentially decaying density, which may  be characterized alternatively as the value at time $t=1$ of  the fundamental solution (heat kernel) $u : \R_{>0}\times \kg\rightarrow \R^+$ to the PDE 
\begin{equation}
    \partial_t - \frac{1}{2}\sum_{i \leq \dim \kg_{ab}} E^2_{i}  + B=0.
\end{equation}
Here, we identify  a vector field on $\kg$ with the corresponding Lie derivative.

A proof of the centered central limit theorem \eqref{TCL} can be found in the work of Tutubalin \cite{tutubalin64} in the case of the Heisenberg group and  Crepel and Raugi \cite{crepel-raugi78} in general, with roots in the early work of Wehn \cite{wehn62} who proved an infinitesimal CLT on general Lie groups, see also \cite{grenander63, stroock-varadhan73, breuillard-thesis04}. Note that the limit law $\nu$ is entirely determined by the covariance matrix of $\mu_{ab}=\pi^{(1)}(\mu)$ and the mean of $\pi^{(2)}(\mu)$, where $\pi^{(i)}(\mu)$ denotes the push-forward of $\mu$ by $\pi^{(i)}$. The convergence \eqref{TCL} holds assuming the existence of a finite second moment, i.e. $d(0,x) \in L^2(\mu)$ for some (any) left invariant geodesic distance $d$ on $(\kg, *)$ or $(\kg, *')$.  Equivalently,  $\|x^{(i)}\|^{1/i} \in L^2(\mu)$ for each $i$.% where $\pi_i$ is the projection onto $\km_i$.)

\begin{example}Let $G$ be the $3$-dimensional Heisenberg group, so $$\kg=\{ xe_1+ye_2+ze_3 \,|\, [e_1,e_2]=e_3 , [e_1,e_3]=[e_2,e_3]=0\},$$ and assume that $\mu_{ab}$ is normalized, that is $\E(x^2)=\E(y^2)=1$, $\E(xy)=\E(x)=\E(y)=\E(z)=0$. Then $\nu$ coincides with the distribution of the random variable $Y=(x(1),y(1),z(1))$, where $(x(t),y(t))$ is a standard Brownian motion on $\R^2$ and $z(t)$ is the L\'evy area (signed area between the cord and the brownian path). The density of $\nu$ has been computed by Paul L\'evy \cite{levy51} (see also \cite[Th\'eor\`eme 1]{gaveau77}): 
%$$\widehat{p_1}(\xi,\eta,\tau)=\E(e^{2i\pi Y\cdot (\xi,\eta,\tau)})=\frac{1}{\cosh(2\tau)}\exp(-\frac{1}{4\tau}(\xi^2+\eta^2)\tanh(2\tau)).$$
$$d\nu(x,y,z)=\frac{dx dy dz }{2\pi^2} \int_\R \cos(2\xi z) \frac{\xi}{\sinh{\xi}} e^{-\frac{1}{2}(x^2+y^2)\xi/\tanh(\xi)} d\xi.$$
\end{example}

For centered walks on general nilpotent Lie groups there is in general no known closed formula for the limit density $u(1,x)$ of $\nu=u(1,x)dx$, but so-called \emph{Gaussian estimates} that control its decay at infinity are available \cite{hebisch89, varopoulos-JFA-88, varopoulos-saloff-coulhon92}. Namely there is $C>0$ such that for all $x \in \kg$:
\begin{equation}
    \frac{1}{C} \exp(- C d(0,x)^2) \leq u(1,x) \leq C  \exp(-d(0,x)^2/C)
\end{equation} with a  similar upper bound  for all partial derivatives of $u(1,.)$. See also \cite{asaad-gordina16} where certain explicit formulas for $u(1,x)$ are derived in the case of filiform Lie algebras. 
\bigskip

Now let us turn to {\bf the non-centered case}, i.e. when $\XXab:=\E(\mu_{ab})$ is possibly non-zero. Then, as in the abelian case, the random walk $S_n$ needs to be translated by $-nx$ for some representative $x$ of $\XXab$ in $\kg$, so that its projection to $\kg_{ab}$ has zero mean and we can then attempt to renormalize the resulting random variable. It turns out that the right filtration to be considered for the renormalization is finer and that it depends on the way $[x,.]$ acts on $\kg$.  Indeed, we are looking at the random product:
\begin{equation} X_1* \ldots * X_n * (-nx) = Y_1 * Y_2^{x} * \ldots * Y_n^{(n-1)x}
\end{equation}
where $Y_i :=X_i* x^{-1}$ and $Y^{g} :=g*Y*g^{-1}$. This is now a product of independent centered variables, though they are no longer identically distributed, due to the iterated adjoint action of $x$.

 The drift of the random walk is linear in the $x$ direction, while it only moves diffusively, at rate $n^{1/2}$, in the other directions of $\kg/[\kg,\kg]$, so in effect $x$ behaves as a bracket of order $2$ even though it is not in the commutator subgroup. In order to take this into account, we introduce a new filtration as follows. For $i\leq 2s-1$, we let $\kg^{(i)}$ be the subspace generated by brackets of length $a$ in which $x$ appears at least $b$ times, where $a+b\geq i$. For $i=2$ however we set $\kg^{(2)}=[\kg,\kg]$ rather than $[\kg,\kg]+\R x$. Equivalently, $\kg^{(i)}$ is defined recursively by setting $\kg^{(i+1)}=[\kg,\kg^{(i)}]+[x,\kg^{(i-1)}]$ for $i\ge 1$, while $\kg^{(0)}=\kg^{(1)}=\kg$.
 
 We thus obtain a nested sequence of ideals: 
$$\underbrace{\kg^{(1)}}_{\kg}\supseteq \underbrace{\kg^{(2)}}_{[\kg, \kg]}\supseteq \underbrace{\kg^{(3)}}_{ [\kg, x]+ [[\kg,\kg,] \kg]}\supseteq \dots \supseteq \kg^{(2s)}=0.$$
We call the filtration $(\kg^{(i)})_{i \leq 2s-1}$ the \emph{weight filtration} of $\kg$ associated to $\XXab$. Indeed , note that it does not depend on the choice of representative $x\in \kg$ of $\XXab \in \kg_{ab}$. Furthermore $\kg^{[i]} \subseteq \kg^{(i)}$ for each $i$ with equality if $\XXab = 0$. We then choose supplementary subspaces $\km^{(i)}$ as before so that $\kg^{(i)}= \km^{(i)} \oplus \kg^{(i+1)}$ and denote by  $\XX$   the representative in $\km^{(1)}$:
\begin{equation} \XXab=\XX + [\kg,\kg], \,\,\, \XX \in \km^{(1)}.
\end{equation}  
We keep the notation $y^{(i)}=\pi^{(i)}(y)$ to denote the projection of $y \in \kg$ onto $\km^{(i)}$. We also define the dilations $D_r$ as in \eqref{dilation} above, and the $*'$ Lie product as in \eqref{*'-product}. We warn the reader that, unlike in the centered case, $\km^{(1)}$ may no longer be bracket-generating in $(\kg,*')$ as happens already in the Heisenberg group case. Finally we define the nilpotent operator $a_{\XX}:\kg \to \kg$ by setting \begin{equation}\label{axdef}a_{\XX}(y)=\pi^{(i+2)}([\XX,y]) \textnormal{ when }y \in \km^{(i)}.\end{equation}

Given a probability measure $\mu$ on $G$, we say that it has a finite moment of order $m\in (0,+\infty)$ for the weight filtration if $y \mapsto \|y^{(i)}\|^{m/i}$ is in $L^1(\mu)$ for each $i$ (where we take some norm, say Euclidean, on each $\km^{(i)}$). Since $\kg^{[i]} \subseteq \kg^{(i)}$ and the inclusion can be strict when $\XXab \neq 0$, this is a slightly weaker constraint on $\mu$ than the ordinary notion of moment on nilpotent Lie groups.

\begin{theorem}[Non-centered CLT]\label{CLT} Let $G$ be a simply connected nilpotent Lie group with Lie algebra $\kg$. Let $\mu$ be a probability measure on $G$ with finite second moment for the weight filtration induced by $\XXab:=\E(\mu_{ab})\in \kg_{ab}$. Assume that the support of $\mu_{ab}$ is not contained in a proper affine subspace of $\kg_{ab}$. Then \begin{equation}\label{tcl-conv}
\DilsN(\mu^{*N}*\delta_{-N\XX})  \underset{N\to +\infty}{\longrightarrow} \nu\end{equation}
where $\nu$ is a smooth probability measure on $\kg$. More precisely,  writing $\nu_t:=D_{\sqrt{t}} (\nu)$, we have $\nu_t=u(t,x)dx$ where the density $u(t,x)$ is the (smooth) fundamental solution to the  following hypoelliptic time-dependent PDE:
\begin{equation}\label{heatPDE}\partial_t v = \frac{1}{2} \sum_{i \leq \dim \kg_{ab}} (\exp(t a_{\XX})E_i)^2 v - (\exp(t a_{\XX})B_{\mu})v.\end{equation}
Here the $E_i$'s form a basis of $\mathfrak{m}^{(1)}$ in which $\pi^{(1)}(\mu)$ has identity covariance matrix. And $B_{\mu}:=\E_\mu(x^{(2)})$ is the ``commutator mean'' of $\mu$.
\end{theorem}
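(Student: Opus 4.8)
\emph{The plan.} I would run a Lindeberg-type replacement scheme, but only after transporting the problem to the associated graded group $(\kg,*')$ of \eqref{*'-product}, on which the dilations $D_r$ act by automorphisms. First I use the hypothesis that the support of $\mu_{ab}$ spans $\kg_{ab}$ affinely to see that the covariance form of $\mu_{ab}$ on $\km^{(1)}$ is positive definite, fix a basis $(E_i)$ of $\km^{(1)}$ diagonalizing it to the identity, and set $B_\mu:=\E_\mu(x^{(2)})$. Recentering, $\mu^{*N}*\delta_{-N\XX}$ is the law of $Y_1*Y_2^{\XX}*\cdots*Y_N^{(N-1)\XX}$, where $Y_k:=X_k*(-\XX)$ is centered in $\kg_{ab}$ and $Y^{g}:=g*Y*g^{-1}$ equals $\exp(\ad g)(Y)$ in $\kg$-coordinates. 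Applying $\DilsN$ and setting $*_N:=\DilsN\circ{*}\circ(\DilN\times\DilN)$, this becomes $\DilsN(Y_1)*_N\cdots*_N\DilsN(Y_N^{(N-1)\XX})$, and I would check two asymptotic facts: (a)~$*_N\to*'$ as $N\to\infty$, which is exactly \eqref{*'-product}, because $x*y-x*'y$ lies in a strictly deeper ideal and is thus suppressed by an extra power of $N^{-1/2}$; and (b)~since $\ad\XX$ maps $\km^{(i)}$ into $\kg^{(i+2)}$ with leading part $a_{\XX}$ (see \eqref{axdef}), the rescaled conjugation $\DilsN\circ\exp((k-1)\ad\XX)\circ\DilN$ equals $\exp(\tfrac{k-1}{N}a_{\XX})$ up to an error that is $O(N^{-1/2})$ uniformly for $1\le k\le N$ --- this is the role of the weight filtration. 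Combining (a), (b), the standard truncation of increments (whose error is $o(1)$ by the finite second moment for the weight filtration), and uniform continuity of the test function, the renormalized walk is asymptotically equal in law to a $*'$-product
\begin{equation*}
W^{(N)}_1*'\cdots*'W^{(N)}_N,\qquad W^{(N)}_k:=\exp\!\big(\tfrac{k-1}{N}a_{\XX}\big)\,\DilsN(Y_k),
\end{equation*}
of independent, non-identically distributed increments, with $D_r$ now a genuine automorphism of $(\kg,*')$.

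\emph{The limit object.} Since the $E_i$ and $B_\mu$ are left-invariant on $(\kg,*')$ and $a_{\XX}$ is a fixed nilpotent operator, the right-hand side of \eqref{heatPDE} defines for each $t$ a left-invariant operator $L_t$ on $(\kg,*')$, so the inhomogeneous evolution it generates is right-convolution by probability measures $\rho_{s,t}$ obeying Chapman--Kolmogorov $\rho_{s,t}*'\rho_{t,u}=\rho_{s,u}$; set $\nu:=\rho_{0,1}$. I would then invoke H\"ormander's hypoellipticity theorem: the operators $\partial_t-L_t$ are hypoelliptic because the smallest subspace of $\kg$ containing the $E_i$ and closed under the $*'$-bracket and under $a_{\XX}$ is all of $\kg$, which is exactly what the definition of the weight filtration guarantees. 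Hence $\nu=u(1,\cdot)\,dx$ with $u$ smooth and solving \eqref{heatPDE}, and, using that right-convolution by $\rho_{t,1}$ commutes with right-invariant vector fields, one obtains bounds on the derivatives of $P_{t,1}f$ (right-convolution by $\rho_{t,1}$ applied to $f$) that are uniform in $t\in[0,1]$ and at most polynomial in $x$, for $f\in C_c^\infty(\kg)$. The scaling identity $\nu_t=D_{\sqrt t}(\nu)$ reflects the compatibility of \eqref{heatPDE} with the dilations.

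\emph{Replacement and conclusion.} Fix $f\in C_c^\infty(\kg)$ and, using $\nu=\rho_{0,t_1}*'\cdots*'\rho_{t_{N-1},t_N}$ with $t_k:=k/N$ and $t_N=1$, interpolate between $\E[f(W^{(N)}_1*'\cdots*'W^{(N)}_N)]$ and $\int f\,d\nu$ by replacing $W^{(N)}_k$ with an independent $W'_k$ of law $\rho_{t_{k-1},t_k}$, one index at a time. For the $k$-th replacement, conditioning on the other factors and using Chapman--Kolmogorov to identify the law of the product to its right with $\rho_{t_k,1}$, the error equals $\E\!\big[\psi_k(a*'W^{(N)}_k)-\psi_k(a*'W'_k)\big]$, with $a:=W^{(N)}_1*'\cdots*'W^{(N)}_{k-1}$, $\psi_k:=P_{t_k,1}f$, and $a,W^{(N)}_k,W'_k$ independent. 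I would Taylor-expand $w\mapsto\psi_k(a*'w)$ around $w=0$ and group terms by ``effective weight'', a coordinate in $\km^{(i)}$ of the increment having effective weight $1$ if $i$ is odd and $2$ if $i$ is even --- its typical size being $N^{-1/2}$, resp.\ $N^{-1}$, since $\exp(\tfrac{k-1}{N}a_{\XX})$ inflates the deep coordinates but preserves the parity of the weight. The contributions of effective weight $\le 2$ (the constant, the linear terms, the odd$\times$odd quadratic terms) have the same expectation for $W^{(N)}_k$ and $W'_k$ up to summably negligible errors, and they reconstitute precisely the generator $L_{t_k}$ of \eqref{heatPDE}: the $\km^{(1)}$-drift vanishes by centering, the deeper even-weight drift is $-\exp(t_k a_{\XX})B_\mu$, and the odd$\times$odd second moments produce $\tfrac12\sum_i(\exp(t_k a_{\XX})E_i)^2$ --- this last matching being where the identity-covariance normalization, $B_\mu=\E_\mu(x^{(2)})$, and the Campbell--Hausdorff bracket all enter. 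The remaining terms, of effective weight $\ge 3$, are monomials in the increment's coordinates of expected size $o(N^{-1})$ uniformly in $k$ (by the second-moment hypothesis and truncation), weighted by derivatives of $\psi_k$ at $a$ --- bounded uniformly in $k$ by a polynomial in $|a|$ thanks to the previous step, which is harmless since $a$ is independent of the increment and has uniformly bounded moments. Summing the $N$ errors gives $\E[f(\DilsN(\mu^{*N}*\delta_{-N\XX}))]\to\int f\,d\nu$ for every $f\in C_c^\infty(\kg)$, i.e.\ \eqref{tcl-conv}. I expect the main obstacle to be this last step --- engineering the moment matchings so that exactly $L_{t_k}$ emerges, and, above all, controlling the truncation and Taylor remainder \emph{uniformly in the replacement index $k$}, the threat to uniformity being precisely the inflation of the deep coordinates of $W^{(N)}_k$ by the conjugation $\exp(\tfrac{k-1}{N}a_{\XX})$; the graded reduction of the first step is a secondary, more computational, difficulty.
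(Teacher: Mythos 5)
Your architecture --- truncation, passage to the graded product, then a one-increment-at-a-time replacement matching moments to order two --- is sound and is the same skeleton as the paper's proof, but you execute the replacement in a genuinely different register, so a comparison is worth making. The paper works on the bias extension $\tkg=\kg\oplus\R\chi$, which turns the recentred walk into a genuine i.i.d.\ walk $\tmu^{*N}$ whose drift is carried by a \emph{central} weight-$2$ element; the time-dependent conjugations $\exp(\tfrac{k-1}{N}a_{\XX})$ that you keep explicit are absorbed there into a homogeneous convolution semigroup (this equivalence is precisely \Cref{homogeneisation}), and your Chapman--Kolmogorov telescoping is replaced by the identity $\teta^{*'N}=\DilN\nu*'N\chi$ of \Cref{densite}. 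More significantly, the paper's Lindeberg step (\Cref{gaussian}) is performed on characteristic functions $e_\xi(\Pi'(\cdot))$ rather than on the semigroup images $P_{t_k,1}f$: this trades your uniform-in-$t$ derivative bounds on $P_{t,1}f$ for integration by parts on $\hat f$, and it is what yields the quantitative Berry--Esseen rate of \Cref{BE} essentially for free, whereas your physical-space scheme as written gives only the qualitative convergence that \Cref{CLT} asserts. What your route buys is a more probabilistically transparent identification of the generator and the avoidance of Fourier inversion altogether; what it costs is the rate, and the need to prove the polynomial derivative bounds on $P_{t,1}f$ uniformly as $t\to1$ (your right-invariance argument does give this for $f\in C^\infty_c$).

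Two points to tighten. First, a sign: in the replacement expansion on test functions the even-weight drift contributes $+\exp(t_k a_{\XX})B_\mu$, i.e.\ the generator \eqref{limit-diff-gen2} of the diffusion; the minus sign in \eqref{heatPDE} belongs to the forward equation satisfied by the \emph{density}, so your matching as stated is off by a sign (harmless for the cancellation in the Lindeberg error, but it matters for correctly identifying which PDE $u$ solves). Second, your closing claim that the partial products $a=W^{(N)}_1*'\cdots*'W^{(N)}_{k-1}$ have moments bounded uniformly in $k\le N$ --- needed to absorb the polynomial-in-$|a|$ weights on the Taylor remainder --- is true but is exactly the non-trivial moment estimate of \Cref{momentPi}: it requires expanding $\Pi^{[a,b)}$ into monomial statistics, using the centring of the truncated increments to kill the degree-one factors, and exploiting the truncation to control the high-degree ones under only a second-moment hypothesis. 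It is the load-bearing estimate of the whole argument and should not be asserted as an afterthought.
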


In \eqref{heatPDE}, a vector  $X\in \kg$ is seen as a left invariant vector field on $(\kg,*')$, with the corresponding Lie derivative
$$Xf = \lim_{\eps \to 0} \frac{f(x*'\eps X)-f(x)}{\eps}.$$

\bigskip

\noindent \emph{Remark}. It follows from the theorem that the limit measure $\nu$ depends only on $\XXab=\E(\mu_{ab})$, $\Cov(\mu_{ab})$ the covariance matrix of $\mu_{ab}$,  the mean of $\mu$ in projection to $\kg/\kg^{(3)}$, and the choice of weight decomposition $\kg=\oplus_{i}\km^{(i)}$.  

\bigskip

\noindent \emph{Remark}. The result remains valid verbatim if we replace the translation  $\delta_{-N\XX}$ in \eqref{tcl-conv} by an arbitrary one $\delta_{-N\XX+g_N}$, provided the sequence $g_N\in \kg$ is such that $\|g_N^{(i)}\|=o(N^{i/2})$ for each $i\ge 1$. We may also replace the multiplicative recentering by an additive recentering, up to adjusting the limiting smooth probability measure, see \Cref{CLT-additive}.

\bigskip

%\noindent \emph{Remark}. 
%Central limit theorems for random walks on nilpotent covers of compact manifolds or finite graphs have also attracted  a lot of attention, see \cite{ishiwata03, %ishiwata-kawabi-namba-I, ishiwata-kawabi-namba-II} where certain limit theorems for centered and non-centered walks are also obtained.

%\bigskip

We shall also establish upper estimates on the density $\nu=u(1,x)dx$, namely the existence of  constants $C,c>0$ such that
\begin{equation}\label{up-dens-bd}
   u(1,x) \leq C  \exp(-d'(0,x)^c)
\end{equation}
where $d'$ is a left-invariant geodesic (e.g. Riemannian)  metric on $(\kg,*')$, and similar estimates hold for all partial derivatives of $u(1,.)$ (see \Cref{density-reg}), showing that $u(1,.)$ is a Schwartz function on $\kg$.
%in the lemma proving this in the text we had $|x|'$ in place of $d(1,x)$, where |x|' is a *'-left invariant distance on the bias extension $\tkg$. However it is easy to see that $d(1,x) \ll |x|'$ is $x \in \kg$, because $d(1,x) \simeq \max_i d(x,\kg^{[i]})^{1/i}$, while  $|x|' \simeq \max_i d(x,\tkg^{[i]'})^{1/i}$. However $\tkg^{[i]'} \leq \kg^{[i]}$  for $i \ge 2$.

The question of a lower bound is more subtle. In the centered case,  Gaussian lower estimates have been known for a while \cite{varopoulos-JFA-88, varopoulos-saloff-coulhon92}. However, when one authorizes the walk to be non-centered, the picture changes drastically as the support of the limiting distribution $\nu$ may not even coincide  with  $\kg$.  In this case, the support of $\nu$ is a proper closed subset with non-empty (dense) interior.  Describing the support is a question related to \emph{control theory} on Lie groups (\cite{sachkov07}). We compute the support in an explicit example in the case of the filiform Lie algebra of step $3$ (\Cref{thread-support}). This leads to the following:

\begin{theorem}[Never full support] \label{never-full-thm}
Assume  $G$ is a \emph{free} nilpotent Lie group of step at least $3$. Then for any \emph{non-centered} probability measure $\mu$ on $G$ satisfying the assumptions of \Cref{CLT}, the support of the limiting distribution $\nu$ does not contain the identity in its interior.  
\end{theorem}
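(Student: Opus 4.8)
The plan is to show that the limiting measure $\nu$ is supported in a proper subset of $\kg$ because the hypoelliptic diffusion defined by \eqref{heatPDE} is, at time $1$, confined by an inaccessibility constraint coming from control theory. The key point is that in a free nilpotent Lie group of step $\geq 3$, once $\XXab\neq 0$, the weight decomposition forces $\km^{(1)}$ to be \emph{not} bracket-generating in $(\kg,*')$: indeed, by construction $\kg^{(2)}=[\kg,\kg]$ and $\kg^{(3)}=[\kg,x]+[[\kg,\kg],\kg]$, and the subspace $[\kg,x]$ lies in $\km^{(3)}$ rather than being reached by brackets of elements of $\km^{(1)}$. So the diffusion generated by the $E_i\in\km^{(1)}$ together with the drift term only explores, at the level of the Lie algebra generated by the $E_i$ under $*'$, a proper subalgebra; the extra directions in $\km^{(3)}$ coming from $[\XX,\cdot]$ are fed in only through the drift $-(\exp(ta_{\XX})B_\mu)$ and through the $t$-dependent twisting $\exp(ta_{\XX})E_i$ of the diffusion vector fields.

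First I would recall the Stroock–Varadhan support theorem: the support of the law at time $1$ of the diffusion solving \eqref{heatPDE} is the closure of the set of endpoints $\gamma(1)$ of solutions to the associated control ODE
\begin{equation*}
\dot\gamma(t) = \sum_{i\leq \dim\kg_{ab}} u_i(t)\,(\exp(ta_{\XX})E_i)(\gamma(t)) - (\exp(ta_{\XX})B_\mu)(\gamma(t)),\qquad \gamma(0)=0,
\end{equation*}
as $(u_i)$ ranges over, say, piecewise constant controls. Next I would identify which coordinates of $\gamma(1)$ are constrained. The drift term $-(\exp(ta_{\XX})B_\mu)$ is deterministic, so in the top-weight coordinate — or more precisely in a well-chosen coordinate inside $\km^{(3)}$ transverse to the reachable set of the pure diffusion — the value $\gamma(1)$ can be written as a fixed deterministic quantity plus a quadratic-type functional of the controls that is, crucially, \emph{sign-definite} or otherwise range-restricted. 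This is exactly the phenomenon from the Heisenberg Lévy-area computation, but now it occurs in a coordinate that was \emph{not} reachable in the centered case. Concretely: pulling back the vector field $\exp(ta_{\XX})E_i$ to the group and projecting to the relevant $\km^{(3)}$-coordinate, the contribution of the diffusion is of the form $\int_0^1 \big(\text{linear in }a_{\XX}\big)\big(\int_0^t u(s)ds\big)\cdot u(t)\,dt$ plus genuine second-order brackets; after integration by parts one extracts a term $\tfrac12 a_{\XX}\big(\int_0^1 u\big)^{\otimes 2}$-type expression whose image, as the controls vary, is a proper convex cone (not all of the fiber), precisely because $a_{\XX}$ is a nonzero nilpotent map into a coordinate not otherwise hit. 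Combining with the fixed drift shows $0=\gamma(0)$ can be achieved, but a full neighbourhood of $0$ in that coordinate direction cannot be reached at time $1$; hence $0\notin\mathrm{int}\,\Supp(\nu)$.

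The main obstacle I anticipate is the bookkeeping needed to pin down the right transverse coordinate in $\km^{(3)}$ and to verify that the control functional in that coordinate is genuinely range-restricted rather than surjective — in a general free nilpotent Lie group of step $s$ there are many brackets in weight $3$, and one must check that the direction singled out by $a_{\XX}=\pi^{(3)}([\XX,\cdot])|_{\km^{(1)}}$ is \emph{not} in the span of the $*'$-brackets $[\km^{(1)},\km^{(1)}]'$, i.e. that freeness plus $\XXab\neq 0$ genuinely produces the inaccessibility. This is where freeness is used: in a free Lie algebra no accidental relations collapse $[\XX,\km^{(1)}]$ into $[\km^{(1)},\km^{(1)}]'$, so the constrained coordinate survives. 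I would isolate this as a lemma — essentially a statement that in the graded algebra $(\kg,*')$ the element $\XX\in\km^{(1)}$ acts on $\km^{(1)}$ with image transverse to $[\km^{(1)},\km^{(1)}]'$ inside $\km^{(3)}$ — and then the support/inaccessibility argument above is comparatively routine, modelled on the explicit step-$3$ filiform computation referenced as \Cref{thread-support}.
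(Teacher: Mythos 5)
Your overall architecture --- the Stroock--Varadhan support theorem plus a control-theoretic inaccessibility constraint of L\'evy-area type --- is the same as the paper's, but the specific obstruction you identify is not the right one, and that identification is the heart of the proof. First, the failure of $\km^{(1)}$ to be bracket-generating for $*'$ cannot by itself be the mechanism: this already happens in the biased Heisenberg group, where the limit law is a full-support Gaussian; the time-dependent twisting $\exp(ta_{\XX})E_i$ does sweep out the missing directions, which is exactly why the generator is hypoelliptic (\Cref{hypo}). Second, the coordinate you single out lies in the wrong graded piece. The directions $a_{\XX}(\km^{(1)})\subseteq\km^{(3)}$ are reached \emph{linearly} in the controls, via $\Ad(t\chi)E_i=E_i+t\,a_{\XX}E_i+\dots$, so the corresponding coordinate functionals are surjective and unconstrained. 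The genuine constraint lives one weight higher: in the step-$3$ filiform quotient $\kl$ with $\Xab$ sent to $T$, the reachable set at time $1$ is $\left\{s_1A+s_2T+s_3[A,T]+s_4[A,[A,T]] \,:\, 2s_4\geq(s_1+s_3)s_3\right\}$ (\Cref{thread-support}), and the obstruction is the Cauchy--Schwarz inequality $\int_0^1\bigl(\int_0^t u\bigr)^2dt\geq\bigl(\int_0^1\int_0^t u\,dt\bigr)^2$ relating the weight-$4$ coefficient of $[A,[A,\chi]]$ to the square of the weight-$3$ coefficient of $[A,\chi]$. This is not a ``sign-definite term $\tfrac12 a_{\XX}\bigl(\int_0^1 u\bigr)^{\otimes 2}$ in a $\km^{(3)}$-coordinate'': $a_{\XX}$ is linear, such an expression lands in $\km^{(3)}$ where no restriction holds, and the true constrained region is a paraboloid coupling three coordinates, not a half-space in one.

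Relatedly, your proposed ``freeness lemma'' is vacuous as stated: $[\km^{(1)},\km^{(1)}]'$ lies in $\km^{(2)}$ while $a_{\XX}(\km^{(1)})$ lies in $\km^{(3)}$, so transversality is automatic in any weight decomposition and encodes no information about where freeness enters. The actual role of freeness in the paper is to produce a surjective Lie algebra morphism $\pi:\kg\to\kl$ onto the step-$3$ filiform algebra with $\pi(\Xab)=T\bmod[\kl,\kl]$; the support computation is then done once, explicitly and in one dimension, in $\kl$, and pulled back (after adjusting the weight decomposition so that $\pi$ respects the gradings). Some genuine use of freeness of this kind is indispensable, since for instance the full upper triangular groups satisfy ({\bf DC}) and always yield full support. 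To repair your argument you would either have to carry out the multi-control iterated-integral computation directly in the free Lie algebra and exhibit a weight-$4$ functional that is range-restricted uniformly in the remaining coordinates, or, as the paper does, reduce to the filiform quotient first.
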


At the extreme opposite, we identify a class of nilpotent Lie groups for which every limiting distribution must have full support (\Cref{Sec-upper-triang}), and give a satisfactory lower bound (\Cref{lower-cor}). This class contains the upper triangular unipotent groups
$$Upp_d(\R):=\{a_{ij} \in \SL_d(\R); a_{ii}=1, i>j \Rightarrow a_{ij}=0\},$$ or any nilpotent Lie group of step at most  $2$. It is defined by the \emph{double cancellation condition} ({\bf DC}): there exists a bracket-generating family  $(v_{i})$ of elements in $(\kg, [.,.])$, such that any bracket of the form $[v_{i},[v , [\dots [ v, v_{i}]\dots ]]$ vanishes.

\begin{theorem}[Full support and lower bound] \label{full-lower-bound-thm}
Assume $(\kg, [.,.])$ satisfies \emph{({\bf DC})}. Then for any probability measure $\mu$ on $G$ as in \Cref{CLT}, the support of the limiting measure $\nu$ is all of $\kg$ and its density $u(1,x)$ satisfies:  $\exists \alpha, A>0$, $\forall x\in \kg$, 
$$u(1, x)\geq  \alpha \exp(-A \,d'(0, x)^2).$$
\end{theorem}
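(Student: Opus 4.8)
The plan is to reduce the statement to a Harnack inequality for the hypoelliptic operator governing the limiting diffusion, once the H\"ormander bracket condition for that operator has been verified under hypothesis (\textbf{DC}). Recall from \Cref{CLT} that $u(t,x)$ is the fundamental solution of the time-dependent PDE \eqref{heatPDE} on $(\kg,*')$, whose coefficient vector fields are $\exp(ta_{\XX})E_i$ (for $i\le\dim\kg_{ab}$) together with the drift $\exp(ta_{\XX})B_\mu$. Write $\cL_t = \tfrac12\sum_i(\exp(ta_{\XX})E_i)^2 - (\exp(ta_{\XX})B_\mu)$ for the generator. The first step is a \emph{controllability / H\"ormander} step: I would show that under (\textbf{DC}) the Lie algebra generated by the vector fields $\exp(ta_{\XX})E_i$, $1\le i\le\dim\kg_{ab}$, together with $\partial_t$, spans the full tangent space $\R\times\kg$ at every point $(t,x)$. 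The point of (\textbf{DC}) is exactly this: choosing the $v_i$'s from the statement to lie in $\km^{(1)}$ (they bracket-generate $\kg$ and the double-cancellation kills the obstructions to differentiating $a_{\XX}$), one gets that the $t$-dependent family $\{\exp(ta_{\XX})E_i\}$ together with its $t$-derivatives (which come for free once one also has $\partial_t$) generates a bracket-generating system. In other words, (\textbf{DC}) guarantees that the ``missing'' directions of $\km^{(1)}$ in the new bracket $[\cdot,\cdot]'$ are recovered by iterated brackets involving $a_{\XX}$, so that no inaccessibility phenomenon of the kind in \Cref{never-full-thm} can occur. This is where I expect the main work and the main obstacle to lie; the verification that (\textbf{DC}) yields the H\"ormander condition for the \emph{time-dependent, drift-twisted} system is the heart of the matter and requires a careful bookkeeping of brackets of the $\exp(ta_{\XX})E_i$.

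Given the H\"ormander condition, the operator $\partial_t - \cL_t$ is hypoelliptic, and by a classical support theorem for hypoelliptic diffusions (Stroock--Varadhan, cf.\ \cite{sussmann87, agrachev-sachkov04}) the support of the law $\nu=u(1,\cdot)dx$ equals the closure of the set of endpoints at time $1$ of solutions to the associated control system $\dot\gamma = \sum_i a_i(s)\exp(s a_{\XX})E_i(\gamma) - \exp(s a_{\XX})B_\mu(\gamma)$ started at $0$. Bracket-generation of the drift-twisted fields plus the fact that $0$ is a regular point (the drift is a smooth vector field, and one can absorb it by Chow--Rashevskii-type arguments on $(\kg,*')$) gives that this attainable set is all of $\kg$; hence $\supp\nu=\kg$. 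Alternatively one can invoke directly that a hypoelliptic heat kernel with smooth coefficients on a group, with no inaccessibility constraint, is everywhere positive.

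For the quantitative lower bound I would invoke the Harnack inequality for hypoelliptic operators of the form $\partial_t - \cL_t$ recently established in \cite{kogoj-polidoro16}, as already announced in the introduction. The scheme is the standard chaining argument: positivity of $u(t,x)$ on a fixed compact neighbourhood of $(1,0)$ (from the support theorem above and continuity), then iterate the Harnack inequality along a chain of Harnack cylinders joining $(1,0)$ to $(1,x)$, whose number grows linearly in the Carnot--Carath\'eodory distance $d'(0,x)$ on $(\kg,*')$; each application costs a multiplicative constant, so one gets $u(1,x)\ge \alpha e^{-A\,d'(0,x)^2}$ (the quadratic exponent coming from the parabolic scaling $t\leftrightarrow d'^2$ in the Harnack cylinders, exactly as in the centered Gaussian lower estimates of \cite{varopoulos-saloff-coulhon92}). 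A minor technical point is that the coefficients of $\cL_t$ are $t$-dependent; on the compact time interval $[0,1]$ this is harmless since $\exp(ta_{\XX})$ and its inverse are uniformly bounded, so the operator is uniformly subelliptic there and the results of \cite{kogoj-polidoro16} apply with constants depending only on $\mu$ and $\kg$. Assembling the support statement and the chained Harnack bound yields the theorem.
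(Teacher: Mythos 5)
Your overall architecture (hypoellipticity, support theorem, Harnack chaining) matches the paper's in outline, but the step where you deduce full support contains a genuine gap, and it is precisely the step where condition ({\bf DC}) must do real work. You argue: ({\bf DC}) $\Rightarrow$ H\"ormander condition for the time-twisted fields $\Ad(t\chi)E_i$ $\Rightarrow$ the attainable set of the control system is all of $\kg$. The second implication is false, and the paper itself contains the counterexample: by \Cref{prehypo} and \Cref{hypo}, the H\"ormander condition holds for \emph{every} biased walk on \emph{every} nilpotent Lie group --- in particular for the free nilpotent groups of step $\ge 3$ of \Cref{never-full-thm} and for the filiform example of \Cref{thread-support}, where the support of $\nu$ is a proper closed subset. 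The reason is that this is a control system with an uncontrollable drift: the admissible velocities at time $s$ form the \emph{affine} set $\Ad(s\chi)(\km^{(1)}+B)$, not a linear distribution of planes, so Chow--Rashevskii does not apply and the time-$1$ attainable set can be cut out by one-sided polynomial inequalities (e.g.\ $2s_4\ge (s_1+s_3)s_3$ in \Cref{thread-support}) coming from non-negative squares of the controls. Your fallback claim that a hypoelliptic heat kernel ``with no inaccessibility constraint'' is positive is circular, since the absence of such constraints is exactly what has to be proved.

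What ({\bf DC}) actually provides is not bracket generation but the commutation relation $[\Ad(t_0\chi)v_i,\Ad(t_1\chi)v_i]'=0$ (\Cref{piece-m1}): along a horizontal path driven by $\alpha(t)v_i$, Strichartz's multiplicative integral collapses to an ordinary integral $\sum_n(\int\alpha(t)t^n\,dt)\,L^{[n+1]'}(\chi^{\otimes n},v_i)$, and solving the resulting moment problem for $\alpha$ lets one reach $r\,w$ for \emph{both signs} of $r$ and for a family of $w$'s spanning $\kg/[\kg,\kg]'$, with horizontal energy $O(r^2)$; concatenating such pieces (\Cref{horizontal-path-triang}) reaches every $x\in\kg$ with energy $O(\normp{x}^2)$. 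This explicit construction is also what your quantitative step is missing: even granting the Kogoj--Polidoro Harnack iteration (\Cref{K-lower-bound}), the number of admissible Harnack steps from $0$ to $x$ is controlled by the horizontal energy of a connecting horizontal path, and bounding that energy by $C\normp{x}^2+C$ for the left-invariant Riemannian metric on $(\kg,*')$ again requires the ({\bf DC})-based path construction, not just abstract accessibility.
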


In \Cref{Sec-lower-bound} we discuss a general lower bound valid in general even when the  limiting distribution is not of full support. Using  the recent work  \cite{kogoj-polidoro16}, we provide an answer in terms \emph{Harnack chains} whose increments are not too close to the boundary of the support, see \Cref{K-lower-bound}. This analysis allows us to establish that $u(1,x)$ does not vanish on the interior of the support.  As as by-product, our lower bound allows us to characterize  the case where $\nu$ is an ordinary Gaussian distribution (in the ordinary Euclidean sense). The implication i)$\implies$iii) below is due to Crepel-Raugi \cite{crepel-raugi78}. 
 
\begin{theorem}[Characterization of the Gaussian case]\label{charact-gaussian} Keep the assumptions of \Cref{CLT}. The following are equivalent. 
\begin{itemize}
\item[i)] For all  $a\in \{1, \dots, s\}$, we have $$[\XX,\kg^{[a]}]=\kg^{[a+1]}.$$
\item[ii)] The eigenvalues of $\DilsN$ are exactly $\{N^{-(2k-1)/2},\, 1\leq k\leq s  \}$. 
\item[iii)] The limit measure $\nu$ is Gaussian (in the classical Euclidean sense) on $\kg$. 
\end{itemize}
\end{theorem}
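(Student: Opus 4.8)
\emph{Proof proposal.} The plan is to recognise that conditions i) and ii) are each equivalent to the purely Lie-algebraic statement that \emph{$(\kg,*')$ is abelian}, then to read off iii) from the PDE \eqref{heatPDE}, and finally to exclude all remaining cases by means of the lower bound of \Cref{K-lower-bound}.

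\emph{Step 1 (i $\Leftrightarrow$ ii $\Leftrightarrow$ ``$(\kg,*')$ abelian'').} Two containments for the weight filtration do all the work: from the bracket description of $\kg^{(i)}$ (length $a$, at least $b$ occurrences of $x$, $a+b\ge i$, so the bracket sits in $\kg^{[a]}$ with $a\ge\lceil i/2\rceil$) one gets $\kg^{(i)}\subseteq\kg^{[\lceil i/2\rceil]}$, while the recursion $\kg^{(i+1)}=[\kg,\kg^{(i)}]+[x,\kg^{(i-1)}]$ gives $[\XX,\kg^{(i)}]\subseteq\kg^{(i+2)}$. Granting i), an induction on $k$ (using $[\XX,\kg^{[k]}]=\kg^{[k+1]}$) yields $\kg^{(2k-1)}=\kg^{[k]}$ and $\kg^{(2k)}=\kg^{[k+1]}$, hence $\km^{(2k)}=0$ and $\km^{(2k-1)}\cong\kg^{[k]}/\kg^{[k+1]}$, nonzero exactly for $k\le s$ -- which is ii). Conversely ii) says $\kg^{(2k)}=\kg^{(2k+1)}$ for all $k$; combined with the two containments this forces $\kg^{(2k-1)}=\kg^{[k]}$ and $\kg^{(2k)}=\kg^{[k+1]}$, and then the recursion at index $2k$ reads $\kg^{[k+1]}=\kg^{[k+2]}+[\XX,\kg^{[k]}]$; a downward induction on $k$ from $k=s$ (where $\kg^{[s+1]}=0$) gives $[\XX,\kg^{[a]}]=\kg^{[a+1]}$ for all $a$, i.e. i). Lastly, ``$\km^{(2k)}=0$ for all $k$'' is equivalent to ``$(\kg,*')$ abelian'': if only odd weights survive, $[\km^{(2k-1)},\km^{(2\ell-1)}]'$ lands in $\km^{(2(k+\ell-1))}=0$; conversely $[\km^{(i)},\km^{(j)}]\subseteq\kg^{(i+j+1)}$ forces $\kg^{(2k)}=\kg^{(2k+1)}$ by a short induction (the $[\XX,\cdot]$ term being handled via $[\XX,\kg^{(i)}]\subseteq\kg^{(i+2)}$).

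\emph{Step 2 (i $\Rightarrow$ iii).} If $(\kg,*')$ is abelian then $*'$ is vector addition on $\kg$, every left-invariant vector field is constant, and the right-hand side of \eqref{heatPDE} becomes a second-order operator whose coefficients are constant in $x$ (polynomial in $t$ through $\exp(t a_{\XX})$); its fundamental solution is therefore a Gaussian kernel, non-degenerate because \Cref{CLT} guarantees $u(1,\cdot)$ is a genuine smooth density -- equivalently because i) implies $\sum_k a_{\XX}^k\,\km^{(1)}=\kg$, so the covariance $\int_0^1\exp(\tau a_{\XX})P\exp(\tau a_{\XX})^{\top}\,d\tau$ (with $P$ the orthogonal projection onto $\km^{(1)}$) is positive definite. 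This reproves the implication already attributed to \cite{crepel-raugi78}.

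\emph{Step 3 (iii $\Rightarrow$ i), the hard part.} I would argue the contrapositive: if $(\kg,*')$ is not abelian then $\nu$ is not classically Gaussian. A classical Gaussian on $\kg\cong\R^{\dim\kg}$ is non-degenerate and hence of full support, so iii) places us in the full-support regime, where \Cref{K-lower-bound} yields an unobstructed lower bound of the shape $u(1,x)\ge\exp(-C\,\rho(0,x)^2)$ for a Carnot--Carath\'eodory-type distance $\rho$ attached to the control system underlying \eqref{heatPDE}. That control system is Hörmander (full support) but $(\kg,*')$ has step $\ge2$, so $\rho(0,x)$ grows sub-linearly -- like $\|x\|^{1/w}$ with $w\ge2$ -- along directions in $[\kg,\kg]'$; there $u(1,x)$ decays strictly slower than any $\exp(-c\|x\|^2)$, which no Gaussian density can do. Combined with Steps 1--2 this closes the cycle. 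A more self-contained variant of Step 3 uses second Wiener chaos: from the construction of $\nu$ in \Cref{CLT}, $\nu$ is the law of $Z_1*'\phi(Z_2)$ with $Z_1,Z_2$ i.i.d.\ non-degenerate Gaussian and $\phi=\exp(\tfrac12 a_{\XX})$ invertible; projecting to the lowest weight where $[\cdot,\cdot]'\neq0$ displays a nonzero bilinear form of the jointly Gaussian pair $(Z_1,\phi(Z_2))$ as a summand, hence a nontrivial element of the second homogeneous chaos, which cannot be Gaussian. The main obstacle is exactly this step: one must read the step of $(\kg,*')$ off the decay of the limit density, and this needs a lower bound both sharp enough to beat the Gaussian scale along deep directions and valid without a full-support assumption -- precisely the role played by \Cref{K-lower-bound}. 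Steps 1 and 2 are routine bookkeeping with the weight filtration and with the heat equation.
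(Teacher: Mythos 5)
Your proposal is correct and follows essentially the same route as the paper: Theorem \ref{gaussian-criterion} also pivots on the intermediate condition ``$(\kg,[.,.]')$ is abelian'', proves the forward implication via the constant-coefficient form of the generator (citing Crépel--Raugi), and proves iii)$\Rightarrow$i) by the contrapositive, using the full-support lower bound \Cref{full-lower-bound} together with the fact that non-abelian $(\kg,*')$ admits horizontal paths whose horizontal cost is sub-quadratic in $\|x\|$ along commutator directions. The only caveat is your optional ``second Wiener chaos'' variant, which as stated is not right ($\sigma_1$ factors as $\sigma_{1/2}*'\Ad(\tfrac12\chi)\sigma_{1/2}$ with factors that are themselves non-Gaussian outside the abelian case), but since it is offered only as an aside your main argument stands.
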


\bigskip

In \cite{tutubalin64} (see also \cite[\S 5.4]{tutubalin-sazonov66}) Tutubalin asks for necessary and sufficient conditions for a pair of probability measures $\mu_1$ and $\mu_2$ on a nilpotent Lie group to be \emph{asymptotically close} in the sense that
$$\sup_{A \in \mathcal{C}}|\mu_1^{*n}(A)-\mu_2^{*n}(A)| \underset{n \to +\infty}{\longrightarrow} 0,$$ where $\mathcal{C}$ is a certain class of Borel subsets of $\kg$, which for definiteness we will take to be the class of all convex subsets. Solutions were provided in the case of the   upper triangular  unipotent $n \times n$ matrix group, first by Tutubalin \cite[\S 6]{tutubalin64} for $n=3$, then by Virtser \cite[Theorem 5]{virtser74} for arbitrary $n$ and only certain values of $\XXab$. Thanks to \Cref{CLT} we can answer Tutubalin's question in any simply connected nilpotent Lie group.

\begin{corollary}[Characterization of asymptotically close measures]\label{asymp-close} Let $G$ be a simply connected nilpotent Lie group with Lie algebra $\kg$. Consider two probability measures $\mu_1,\mu_2$ on $G$ with a finite second moment for their respective weight filtrations and non singular covariance matrix in the abelianization. 
The measures $\mu_1$ and $\mu_2$ are asymptotically close if and only if their projections to $\kg/[\kg, \kg]$ have the same mean and covariance matrix, and their projections to $\kg/\kg^{(3)}$ have the same mean.
\end{corollary}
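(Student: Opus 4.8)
The plan is to read off both directions from the non-centered CLT (\Cref{CLT}), its Berry--Esseen refinement (\Cref{BE}), the regularity of the limit density (\Cref{density-reg}), and the observation recorded in the remark after \Cref{CLT}: the limit $\nu$ depends on $\mu$ only through $\XXab=\E(\mu_{ab})$, the covariance of $\mu_{ab}$, the mean of $\mu$ in $\kg/\kg^{(3)}$, and the chosen weight decomposition $\kg=\oplus_i\km^{(i)}$. The recurring technical point is that the weight filtration of $\kg$ depends on $\mu$ only through $\XXab$; hence, whenever $\mu_1,\mu_2$ have the same mean in $\kg_{ab}$, one is free to fix a single weight decomposition and a single representative $\XX\in\km^{(1)}$ of $\XXab$ serving both walks at once.

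For \emph{sufficiency}, assume $\mu_1,\mu_2$ have the same mean and covariance in $\kg_{ab}$ and the same mean in $\kg/\kg^{(3)}$, and fix common data as above. For a probability measure $\mu$ with the prescribed abelianized data, \Cref{BE} attaches a reference probability measure $\rho_N$ on $\kg$ --- the image of $\nu$ under $R_{N\XX}\circ\DilN$, where $R_g(x)=x*g$ --- with $\sup_{A}\,|\mu^{*N}(A)-\rho_N(A)|\to0$, the supremum over convex $A\subseteq\kg$; this is the nilpotent counterpart of the convex-set Berry--Esseen bound on $\R^{d}$. By the remark after \Cref{CLT}, $\nu$ and hence $\rho_N$ are the same for $\mu_1$ and $\mu_2$, since all the data they depend on agree. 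The triangle inequality then gives $\sup_{A\ \mathrm{convex}}|\mu_1^{*N}(A)-\mu_2^{*N}(A)|\le\sum_{i=1,2}\sup_{A\ \mathrm{convex}}|\mu_i^{*N}(A)-\rho_N(A)|\to0$. (If one prefers to invoke only \Cref{CLT}, one upgrades the weak convergence $\DilsN(\mu_i^{*N}*\delta_{-N\XX})\to\nu$ to uniform convergence over $\{\DilsN\circ R_{-N\XX}(A):A\ \mathrm{convex}\}$ by a portmanteau argument, using that $\nu$ is absolutely continuous with bounded density (\Cref{density-reg}) and that these sets have $\nu$-boundaries of uniformly small mass once one discards the convex sets not meeting a fixed large ball.)

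For \emph{necessity}, suppose $\mu_1,\mu_2$ are asymptotically close. For any quotient homomorphism $\pi:G\to G/H$, preimages of convex sets are convex and $\pi_*(\mu_i^{*N})=(\pi_*\mu_i)^{*N}$, so $\pi_*\mu_1,\pi_*\mu_2$ are again asymptotically close on $G/H$. Applying this to the abelianization $\kg\to\kg_{ab}$ reduces to the classical fact on $\R^{d}$ that two random walks are asymptotically close over convex sets iff they have the same mean and covariance: if the means differ, a half-space separates the linearly diverging drifts; if only the covariances differ, the $\sqrt N$-rescaled walks converge to distinct non-degenerate Gaussians (here the non-singular covariance hypothesis is used), separated by an ellipsoid. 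Thus $\mu_1,\mu_2$ have the same mean and covariance in $\kg_{ab}$, hence the same weight filtration. Now apply the descent to $\kg\to\kg/\kg^{(3)}$. Since $\kg^{(3)}\supseteq[[\kg,\kg],\kg]$, the quotient $\overline G:=\kg/\kg^{(3)}$ is nilpotent of step $\le2$, so by Campbell--Hausdorff the recentering $x\mapsto x*(-N\XX)=(I-\tfrac N2\,\ad_\XX)(x)-N\XX$ is \emph{affine} on $\overline G$; consequently $\phi_N:=\DilsN\circ(\,\cdot\,*(-N\XX))$ is affine and maps convex sets to convex sets. By \Cref{CLT} applied to $\overline G$, the $\phi_N$-pushforward of $(\pi_*\mu_i)^{*N}$ converges weakly to an absolutely continuous $\overline\nu_i$ on $\overline G$; for any ball $\overline B\subseteq\overline G$, the set $\phi_N^{-1}(\overline B)$ is convex and $(\pi_*\mu_i)^{*N}(\phi_N^{-1}(\overline B))\to\overline\nu_i(\overline B)$, so asymptotic closeness forces $\overline\nu_1(\overline B)=\overline\nu_2(\overline B)$, and ranging over balls gives $\overline\nu_1=\overline\nu_2$. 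Finally, computing the $\km^{(2)}$-mean of $\overline\nu_i$ directly from the renormalization --- the $O(N^2)$ Campbell--Hausdorff and recentering corrections cancel because $[\XX,\XX]=0$ --- yields $\E_{\overline\nu_i}[\pi^{(2)}]=B_{\mu_i}$, so $B_{\mu_1}=B_{\mu_2}$; together with the abelianized data this says $\mu_1,\mu_2$ have the same mean in $\kg/\kg^{(3)}$.

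The main obstacle is the nonlinearity, in step $\ge3$, of the group recentering $x\mapsto x*(-N\XX)$, which obstructs any direct transfer of convex sets across the renormalization. This is why the converse is routed through the quotients $\kg_{ab}$ and $\kg/\kg^{(3)}$, on which the recentering is affine, and why the forward implication leans on the convex-set Berry--Esseen bound of \Cref{BE} being uniform over a class of test sets rich enough to detect convex-set discrepancy yet stable under the maps in play.
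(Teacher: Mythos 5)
Your necessity direction is sound and is a legitimate variant of the paper's argument: projecting to $\kg_{ab}$ and to the step-$\le 2$ quotient $\kg/\kg^{(3)}$ (where $\XX$ is even central, so the recentering is a genuine translation) correctly recovers the abelianized mean and covariance and the commutator mean. The paper instead stays on the full group and uses a CLT with \emph{additive} recentering (\Cref{CLT-additive}: $\DilsN(\mu^{*N}-N\Xm)\to(\nu*'\chi)-\chi$), then identifies the infinitesimal generators; both routes work.

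The sufficiency direction, however, has a genuine gap, and it is exactly the obstacle you name in your last paragraph without resolving it. \Cref{BE} is \emph{not} a convex-set Berry--Esseen bound: it controls $|\DilsN(\mu^{*N}*\delta_{-N\XX})(f)-\nu(f)|$ only for smooth compactly supported $f$, with an error proportional to $\max_{|\alpha|\le\dim\kg+5}\|\partial^\alpha f\|_{L^1}$, which blows up when you mollify the indicator of a convex set. Even granting a uniform convex-set bound for the renormalized walk against $\nu$, transferring it back to $\sup_{A\ \mathrm{convex}}|\mu^{*N}(A)-\rho_N(A)|$ requires the class of test sets to be stable under $\phi_N=\DilsN\circ R_{-N\XX}$, and for step $\ge 3$ the map $x\mapsto x*(-N\XX)$ is a non-affine polynomial, so $\phi_N(A)$ is not convex; your portmanteau fallback founders on the same point, since Ranga Rao's theorem (uniform convergence over \emph{convex} sets for weak convergence to an absolutely continuous limit) does not apply to the images $\phi_N(A)$, and a uniform-in-$N$ control of their $\nu$-boundary mass is precisely what is missing. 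The paper's fix is \Cref{CLT-additive}: because $x\mapsto\DilsN(x-N\Xm)$ is affine, convex sets are preserved, the limit $(\nu*'\chi)-\chi$ is absolutely continuous, and Ranga Rao then upgrades the weak convergence to uniform convergence over $\mathcal{C}$; since the limit depends only on the shared data, the triangle inequality closes the argument. Your proof needs this lemma (or an equivalent affine recentering) to be complete.
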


We recall that $\kg^{(3)}=[\kg,[\kg,\kg]] + [\XXab,\kg]$ and $\XXab=\E(\mu_{ab})$. In particular the equivalence classes of aymptotically close measures are determined by a finite number of parameters. This is in sharp contrast with what happens in semisimple Lie groups, see \cite{tutubalin-sazonov66}, \cite{bougerol}.

While previous approaches for proving the CLT were based either on probabilistic methods (martingale convergence \cite{stroock-varadhan73, virtser74}) or functional analytic methods (perturbation of operator semigroups \cite{wehn62, crepel-raugi78, raugi78}), we use characteristic functions. This approach is motivated by \cite{diaconis-hough21, hough19}. This has a pay-off as it allows us to obtain Berry-Esseen bounds, yielding information on the rate of convergence. 
It is formulated in terms of  the  moments of $\mu$. Given any $r>0$ and a choice of norm on $\kg$, the $r$-th moment of $\mu$ (for the weight filtration) is defined by: 
$$m_{r}(\mu):=\sum_{1\leq i\leq 2s-1}\int_{\kg}\|x^{(i)}\|^{r/i} d\mu(x).$$

\begin{theorem}[Berry-Esseen estimate] \label{BE}
Keep the assumptions of \Cref{CLT}. Then for every smooth function of compact support $f$
 on $\kg$ we have for all $N\ge 1$
$$|\DilsN(\mu^{*N} * \delta_{-N \XX}) (f)-\nu(f)| \leq \eps_{N} \max_{|\alpha| \leq \dim \kg +4} \|\partial^\alpha f\|_{L^1}$$
where $\eps_{N}=o_{\mu}(1)$. 
Furthermore, if $\mu$ has a finite third moment  for the weight filtration, then we can take 
$$\eps_{N} \leq C (1+m_{2}(\mu)^{c_s})(1+m_{3}(\mu))N^{-1/2}$$ 
where $s\geq 1$ is the nilpotency class of $\kg$, and $c_s=6s^2$ while $C>0$ is a constant depending only on $(\kg, [.,.])$, a choice of norm and Haar measure, the line $\R\XXab\subseteq \kg_{ab}$, and the weight decomposition $(\km^{(i)})_{i}$.
\end{theorem}
Here, $\partial^\alpha $ refers to partial derivatives of $f$ (in the classical sense on a vector space), and $|\alpha|$ is the order of the derivation.  

\bigskip

Finally, we shall prove that the previous CLT can be upgraded to the convergence of the full interpolated process in the spirit of Donsker's theorem   \cite[Theorem XIII.1.9]{revuz-yor99}. We consider:
$$W^{(N)}(t) := \DilsN \big(X_{1}*\dots *X_{\lfloor tN \rfloor} *(tN-\lfloor tN \rfloor)X_{\lfloor tN \rfloor+1}* - tN \XX\big)$$
where the $X_{i}$'s are i.i.d. with law $\mu$. Note that $W^{(N)}$ defines a random variable with values in the space of locally Lipschitz paths on $\kg$. 

In view of \eqref{heatPDE}, the natural candidate to be the limit of $W^{(N)}$ is the continuous left-invariant diffusion process  on $(\kg, *')$ whose infinitesimal generator at time $t\geq 0$ (see \Cref{Sec-diff-general}) is
\begin{align} 
\mathscr{L}_{t}=\frac{1}{2}\sum_{i=1}^{q}  \left(\exp(ta_{\XX}) E_{i}\right)^2+ \exp(ta_{\XX})B_\mu.
 \end{align}
%Observe also that \Cref{CLT} equivalently means that $W^{(N)}(1)$ converges in law to $\sigma(1)$.
%The random path given by $\sigma$ is not Lipschitz (e.g. abelian Brownian motion), but we shall prove it belongs almost surely to $C^{0, \alpha}(\R^+, \kg)$ the space of $\alpha$-H\"older paths for any $\alpha\in [0, 1/2)$.  
Denoting by $\sigma=\{\sigma_t\}_{t\in \R^+}$ the law of the diffusion on the space of paths,  \Cref{CLT} establishes the convergence in law of $W^{(N)}(1)$ towards $\sigma_1$. 

The  process associated to $(\mathscr{L}_{t})_t$ will have $\alpha$-H\"older sample paths for any $\alpha\in [0, 1/2)$. We endow the space  $C^{0, \alpha}(\R^+, \kg)$ of $\kg$-valued $\alpha$-H\"older paths with the topology of $\alpha$-H\"older convergence on compact intervals. It is induced by the (separable complete) metric $d_\alpha=\sum_{n \ge 0} 2^{-n}d_{\alpha,n}$, where:
$$d_{\alpha,n}((x(t))_t,(y(t))_t):= \|x(0)-y(0)\|+\sup_{t \neq s \in [0,n]} \frac{\|x(s)^{-1}*x(t)-y(s)^{-1}*y(t)\|}{|t-s|^\alpha}.$$
The $*$ operation in this formula could be replaced with $*'$ or $+$ without changing the induced topology on $C^{0, \alpha}(\R^+, \kg)$. Note that convergence in the $\alpha$-H\"older topology is more and more restrictive as $\alpha$ grows, and implies  uniform convergence on compact intervals (case $\alpha=0$). 

\begin{theorem}[Central limit theorem for processes] \label{Donsker-nilpotent} Keep the assumptions of \Cref{CLT} and assume further that $\mu$ has finite moments of all orders. Then for each $\alpha \in [0,\frac{1}{2})$,  as $N$ tends to $+\infty$, the distribution of $W^{(N)}$ converges to $\sigma$  in the space $C^{0, \alpha}(\R^+, \kg)$ endowed with the topology of $\alpha$-H\"older convergence on compact intervals.
\end{theorem}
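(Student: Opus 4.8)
The plan is to follow the standard two-step strategy for proving convergence in law in a Polish path space: establish (i) finite-dimensional convergence of $W^{(N)}$ to the diffusion $\sigma$, and (ii) tightness of the laws of $W^{(N)}$ in $C^{0,\alpha}(\R^+,\kg)$ for every $\alpha<1/2$. Part (i) should follow from \Cref{CLT} applied to the increments. Indeed, by left-invariance and the Markov property of the diffusion $\sigma$, it suffices to show that for fixed times $0=t_0<t_1<\dots<t_k$ the joint law of the renormalized increments $\DilsN(S_{\lfloor t_{j-1}N\rfloor}^{-1}*S_{\lfloor t_jN\rfloor}* -(t_j-t_{j-1})N\XX)$, suitably conjugated by the dilation flow to account for the $\exp(ta_\XX)$ twist, converges to the product of the corresponding marginals of $\sigma$. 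Each single increment is (up to the conjugation that produces the time-dependent generator, which should be handled exactly as in the proof of \Cref{CLT} via the Campbell–Hausdorff/graded-product analysis) a rescaled product of roughly $(t_j-t_{j-1})N$ i.i.d. copies of $\mu$, so \Cref{CLT} gives the marginal; independence of the increments of the walk and the asymptotic factorization of the dilation give the joint statement. One has to check that interpolating the last fractional step $(tN-\lfloor tN\rfloor)X_{\lfloor tN\rfloor+1}$ contributes nothing in the limit, which follows from the moment assumption since its $\km^{(i)}$-component is $O(1)=o(N^{i/2})$, exactly the error allowed by the remark after \Cref{CLT}.

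For part (ii), tightness in the $\alpha$-Hölder topology, I would use a Kolmogorov–Chentsov type criterion adapted to the group-valued setting. The metric $d_\alpha$ is built from the quantities $\|x(s)^{-1}*x(t)-y(s)^{-1}*y(t)\|/|t-s|^\alpha$, so it suffices to bound, uniformly in $N$, moments of the increments: for each coordinate block $\km^{(i)}$ and each $p$ large, one wants $\E\big\|\pi^{(i)}\big(\DilsN(S_{\lfloor sN\rfloor}^{-1}*S_{\lfloor tN\rfloor}* -(t-s)N\XX)\big)\big\|^{p}\le C_p|t-s|^{p\theta_i}$ with $\theta_i>\alpha$; since the $i$-th block is renormalized by $N^{-i/2}$ and, after recentering, the unnormalized $i$-th coordinate of a product of $m=\lfloor tN\rfloor-\lfloor sN\rfloor$ i.i.d. centered-in-abelianization variables has $L^p$ norm of order $m^{i/2}$ (this is the content of the moment estimates underlying \Cref{BE}, here used with arbitrarily high moments, which is why we assume $\mu$ has finite moments of all orders), one gets $\E\|\cdot\|^p\lesssim (m/N)^{pi/2}\asymp |t-s|^{pi/2}$, and $i/2\ge 1/2>\alpha$. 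Taking $p$ large enough (depending on $\alpha$ and $\dim\kg$) and invoking the Kolmogorov continuity/tightness criterion on each compact interval $[0,n]$, then combining over $n$ via the definition $d_\alpha=\sum_n 2^{-n}d_{\alpha,n}$, yields tightness; one also needs the initial values $W^{(N)}(0)=0$, which is trivial.

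Combining (i) and (ii) gives convergence in law of $W^{(N)}$ to a limiting process whose finite-dimensional marginals are those of $\sigma$; since a process in $C^{0,\alpha}(\R^+,\kg)$ is determined by its finite-dimensional distributions, the limit is $\sigma$ itself, which proves the theorem. A technical point deserving care is that the left-invariant diffusion with time-dependent generator $\mathscr{L}_t$ genuinely has $\alpha$-Hölder sample paths for all $\alpha<1/2$; this is a consequence of the Kolmogorov criterion applied to $\sigma$ directly (its increments over $[s,t]$ are, in the graded coordinates, governed by the same scaling $|t-s|^{i/2}$ in the $\km^{(i)}$-block, because $\exp(ta_\XX)$ is bounded on compacts and the Brownian increments scale like $|t-s|^{1/2}$ while the iterated-integral/area terms scale like higher powers), and this should be recorded as part of \Cref{Sec-diff-general} or proved inline with the same moment computation.

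The step I expect to be the main obstacle is part (ii), specifically obtaining the uniform-in-$N$ high-moment bounds on the renormalized increments in the deeper blocks $\km^{(i)}$, $i\ge 2$, in the biased case. The difficulty is that after recentering by $-nx$ the increments $Y_j^{(j-1)x}=x^{(j-1)}*Y_j*x^{-(j-1)}$ are centered but not identically distributed, and the iterated adjoint action of $x$ produces polynomial-in-$j$ coefficients; one must show these conspire so that the $i$-th coordinate of the product still has $L^p$ norm $O(m^{i/2})$ rather than something larger. This is precisely the phenomenon that the weight filtration and the operator $a_\XX$ are designed to control, and the estimate should be extractable from the same polynomial-chaos/Lindeberg-replacement machinery (the random variables $\sum_{n_1<\dots<n_t\le N}M(X_{n_1},\dots,X_{n_t})$) already developed for \Cref{CLT} and \Cref{BE}; one just needs the $L^p$ rather than distributional version, uniformly in the block and with the correct scaling, which is why finiteness of all moments of $\mu$ is assumed. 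Once this moment bound is in hand, the rest is the routine Kolmogorov–Chentsov argument.
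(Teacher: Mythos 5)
Your outline---finite-dimensional convergence via \Cref{CLT} plus Kolmogorov--Chentsov tightness from high-moment bounds on increments---is exactly the paper's strategy, and the moment estimate you want ($\E\|\cdot\|^{2m}\ll|t-s|^m$ for the rescaled increments) is indeed the content of the paper's tightness lemma. But the two structural reductions that make this run are missing from your write-up, and they are precisely what dissolves the step you flag as ``the main obstacle''. First, the paper does not work with the conjugated, non-identically-distributed increments $Y_j^{(j-1)x}$ at all: it passes to the bias extension, replacing $X_i$ by $x_i=X_i-\XX+\chi$, which are genuinely i.i.d.\ with law $\tmu$ on $\tkg$, and correspondingly replaces the time-inhomogeneous limit $W$ by the time-homogeneous diffusion $Z(t)=W(t)*'t\chi$. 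The moment bound you worry about then follows directly from the polynomial-statistics estimate (\Cref{momentPi0}) applied to i.i.d.\ $\tmu$-blocks; no cancellation among the $\exp(ja_{\XX})$-twisted increments needs to be exhibited. This also simplifies your part (i): the increments of $Z$ are independent and stationary, so the joint law is the pushforward under the continuous map $(x_1,\dots,x_k)\mapsto(x_1*'\cdots*'x_i)_i$ of independent marginals, each handled by the CLT, with no ``conjugation by the dilation flow'' to track. Second, your tightness computation implicitly factors $W^{(N)}(s)^{-1}*W^{(N)}(t)$ as $\DilsN$ of a product of the intermediate increments; this fails because $\DilsN$ is not an automorphism of $*$. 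The paper first replaces $*$ by $*'$ \emph{uniformly over the time interval} (\Cref{grading-uniform}, which needs the finite-moments-of-all-orders hypothesis), proves tightness for the $*'$-process $S^{(N)}$ where the factorization is exact, and only then transfers back. With these two reductions inserted, your argument is the paper's proof; without them, the increment factorization and the $L^p$ bound in the deeper blocks are genuinely not established.
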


Given a fixed $\alpha \in [0,\frac{1}{2})$, the proofs require only a finite moment of order $2m b_{\max}$ for the weight filtration, where $m$ is any integer $m > 1/(1-2\alpha)$, and  $b_{\max}\leq 2s-1$ is the length of the weight filtration $\kg^{(b)}$ defined by $\kg^{(b_{\max})}\supsetneq \kg^{{(b_{\max}+1)}}=\{0\}$.

Such invariance principles have been studied previously in the special case of centered walks. See in particular \cite{caramellinoetal}, which treats the $\alpha=0$ case, and \cite{breuillard-friz-huesmann}, where \Cref{Donsker-nilpotent} is proven for centered walks on the free nilpotent Lie group. We remark in passing that in the centered case the limiting diffusion is not time-dependent and even admits an explicit representation in terms iterated integrals of Brownian motion, see \cite[Theorem 6.1]{kunita} and \cite[Th\'eor\`eme 17]{benarous89}. We shall also briefly discuss convergence in the sense of rough paths.

\bigskip

\noindent \emph{Remark}. In a sequel to this paper \cite{benard-breuillardLLT}, we pursue our study further and establish the local limit theorem for biased walks on arbitrary simply connected nilpotent Lie groups, thus extending the recent work of Diaconis and Hough \cite{diaconis-hough21, hough19} to the biased case.

\subsection{Organization of the paper} In \Cref{Sec-cadre}, we  set up the notations to be used in the rest of the paper. In \Cref{Sec-densite}, we  study the class of diffusion processes on simply connected nilpotent Lie groups that arise as limits of rescaled right random walks with i.i.d. increments.  \Cref{Sec-3reductions} is devoted to various techniques of truncation, graded replacement and  Lindeberg replacement, that help in estimating the Fourier transform of the random walk. In \Cref{Sec-global-thm}, we prove the central limit theorem, the Berry-Esseen estimate, and the analogue of Donsker's invariance principle for nilpotent random walks. We also characterize  pairs of asymptotically close measures.

%\tableofcontents 

%%%%%%%%%%%%%%%%%%%%%%%%%%%%%%%%%%%%%%%%%%%%%%%%%%%% section notations

%\addtocontents{toc}{\protect\setcounter{tocdepth}{2}}

%\newpage

\section{Set-up and notation} \label{Sec2} \label{Sec-cadre}

In this section we set up the terminology and notations used throughout the paper.

\subsection{ Weight filtration on $\kg$} \label{weight-filtration}
If $\kg$ is a nilpotent Lie algebra and $\Xab \in \kg/[\kg,\kg]$, we define recursively the following nested sequence of ideals: $\kg^{(0)}=\kg^{(1)}=\kg$ and for $i\ge 1$
\begin{equation*}
    \kg^{(i+1)}=[\kg,\kg^{(i)}]+[\Xab,\kg^{(i-1)}].
\end{equation*}
 We note that it  is well-defined (independently of the choice of representative of $\Xab$ in $\kg$), and satisfies  for $i,j \ge 1$, \begin{equation}\label{nestedg}
    [\kg^{(i)},\kg^{(j)}] \subseteq \kg^{(i+j)}.
\end{equation}
We call $(\kg^{(i)})_{i \ge 1}$ the \emph{weight filtration} of $\kg$ associated to $\Xab$. When $\Xab = 0  \mod [\kg,\kg]$, this is the familiar descending central  series, which we denote by $(\kg^{[i]})_{i\ge 1}$. In general we have $\kg^{[i]} \subseteq \kg^{(i)}$, and $\kg^{(2i)}\subseteq \kg^{[i+1]}$. In particular, if $\kg$ has nilpotency class at most $s$, then $\kg^{[s+1]}=\kg^{(2s)}=0$. 
The weight filtration $(\kg^{(i)})_{i \ge 1}$ on $\kg$ allows to define a \emph{graded Lie algebra}:
\begin{equation}
\label{graded-def}
gr_{\Xab}(\kg):= \bigoplus_{i \ge 1} \kg^{(i)}/\kg^{(i+1)}
\end{equation}
whose Lie bracket is inherited from that of $\kg$ using \eqref{nestedg}.

\begin{example}In the case of  the Heisenberg group  $$\kg=\langle e_1,e_2,e_3 \,|\, [e_1,e_2]=e_3\rangle \,\,\,\,\,\, \,\,\,\,\, \text{with} \,\,\,\,\,\, \,\,\,\,\,\Xab=e_{2} \mod [\kg, \kg]$$  we have 
$\kg^{(0)}=\kg^{(1)}=\kg$, $\kg^{(2)}= \{0\}$, $\kg^{(3)}= \R e_{3}$, $(\kg^{(i)})_{i\geq 4}=\{0\}$, and $gr_{\Xab}(\kg)$ is abelian.
\end{example}

\subsection{ The bias extension $\tkg$}

%We now present a  convenient technical device for the proofs to come. The goal is to dissociate the contributions of the drift $\XX$, which ought to have weight $2$, from those of the fluctuations of the random walk in the direction of $\XX$, which ought to have weight $1$. 

%Many intermediate results will therefore be formulated in $\tkg$ for the non-centered (and non-aperiodic) walk $\widetilde{\mu}$, before being translated back to $\kg$ and $\mu$.
We define a new Lie algebra $\tkg$ by adding to $\kg$ an independent copy $\chi$ of the bias $\Xab \in \kg/[\kg,\kg]$. This extension is a  convenient technical device to dissociate the contributions of the drift incarnated by $\Xab$, which ought to have weight $2$, from those of the fluctuations of the random walk in the direction of $\Xab$, which ought to have weight $1$. Many intermediate results will therefore be formulated in $\tkg$, before being translated back to $\kg$.

If $\Xab=0$, we just set $\tkg=\kg$, $\chi=0$ . Otherwise, we define $\tkg$ as the Lie algebra direct sum:
$$\tkg:=\kg \oplus \R$$
and given a lift  $\Xm$ of $\Xab$ to $\kg$, we set $\chi:=(\Xm,1)$. In this way,    $\tkg= \kg +\R \chi$ where $\chi$ is   linearly independent from $\kg$ and satisfies  $[\chi,x]=[\Xm,x]$ for all $x \in \kg$. We call $\tkg$  the \emph{bias extension} of $\kg$. %Different choices of lift lead to isomorphic extensions.

We also extend the weight filtration from $\kg$ to $\tkg$  by setting $\tkg^{(i)}=\kg^{(i)}$ if $i \neq 2$, and $\tkg^{(2)}=\kg^{(2)}+ \R\chi$. It is straightforward to check that this is indeed a filtration, i.e. $[\tkg^{(i)},\tkg^{(j)}] \subseteq \tkg^{(i+j)}$, and is independent of the choice of lift.  The filtration $(\tkg^{(i)})_{i \ge 1}$ determines as in \eqref{graded-def} a graded Lie algebra $gr_{\Xab}(\tkg)$, which we call the \emph{graded bias extension}.

Observe that  $gr_{\Xab}(\tkg)=gr_{\Xab}(\kg)$ if $\Xab=0$. If $\Xab\neq 0$, $gr_{\Xab}(\tkg)$ is isomorphic to the Lie algebra \emph{semi-direct} sum  $$gr_{\Xab}(\kg)\oplus \R,$$ where the $\R$ factor acts on the ideal $gr_{\Xab}(\kg)$ via the nilpotent endomorphisms $a_{\Xm}(t)$ defined on $\kg^{(i)}/\kg^{(i+1)}$ by the formula $$a_{\Xm}(t)(y)=[t\Xm,y] \mod \kg^{(i+3)}.$$ Different choices of representative of $\Xm$ in $\kg$ lead to isomorphic extensions of $gr_{\Xab}(\kg)$.

\bigskip

\noindent \emph{Remark}. The limit measure $\nu_t$ appearing in the CLT (\Cref{CLT}) will arise as the projection of the value at $t$ of a one-parameter semigroup of probability measures $\lambda_t$ defined on $gr_{\Xab}(\tkg)$ and left-invariant for that Lie product. See \Cref{Sec-global-thm}.

%We keep the notation $*$ for its Lie product (isomorphic to the Cartesian product $(\kg, *) \times \R$). 

\subsection{Grading}

We choose for all $i\leq 2s-1$ a vector subspace $\km^{(i)}$ in $\kg$ such that 
 $$\kg^{(i)} = \km^{(i)}\oplus \kg^{(i+1)}. $$
It follows that $\kg$ is the (vector space) direct sum 
 $$\kg=\km^{(1)} \oplus \dots \oplus \km^{(2s-1)}$$
 and we may identify $\km^{(1)}$ with $\kg/[\kg,\kg]$. From now on, the notation $X$ will indicate the unique element of $\km^{(1)}$ lifting  $\XX$.  We call  $(\km^{(i)})_{i\leq 2s-1}$ a \emph{weight decomposition} of $\kg$. It naturally extends to a weight decomposition  of $\tkg$:
 $$\tkg=\tkm^{(1)} \oplus \dots \oplus \tkm^{(2s-1)}$$
 by setting $\tkm^{(i)}=\km^{(i)}$ if $i \neq 2$ and $\tkm^{(2)}=\km^{(2)}\oplus \R\chi$.
These splittings allow to define  new compatible Lie brackets on $\kg$, $\tkg$ by setting
\begin{equation} \label{bracket-gr}
[x,y]'= \pi^{(i+j)}([x,y])
\end{equation}
if $x \in \tkm^{(i)}, y \in \tkm^{(j)}$ and $\pi^{(k)}: \tkg \to \tkm^{(k)}$ denotes the linear projection modulo the other $\tkm^{(k')}$, $k' \neq k$. Notice that the vector space $\kg$ is an ideal of $(\tkg, [.,.]')$. The   Lie product associated to $[.,.]'$ will be denoted by  $*'$. With this new Lie bracket,  the vector space $\kg$ (resp. $\tkg$) becomes a Lie algebra naturally isomorphic to the graded Lie algebra $gr_{\Xab}(\kg)$  (resp. $gr_{\Xab}(\tkg)$) and the $\km^{(i)}$ (resp. $\tkm^{(i)}$) form a \emph{grading}.

\emph{We  fix a choice of weight grading $(\km^{(i)})_{i\leq 2s-1}$  and write $X\in \km^{(1)}$ the lift of $\Xab$}.

\smallskip

\noindent \emph{Remark}. Comparing \ref{nestedg} and  \ref{bracket-gr}, we see that $\Xm$ lies in the center of $(\tkg,*')$.

\begin{example} In the case of  the biased Heisenberg group  mentioned in \ref{weight-filtration}, we can choose $\km^{(1)}=\R e_{1}\oplus \R e_{2}$, $\km^{(2)}=\{0\}$, $\km^{(3)}= \R e_{3}$.  The bracket   $[.,.]'$ is such that  $[e_{i}, e_{j}]'=0$ and $[\chi, e_{j}]'=[e_{2}, e_{j}]$ for  $i,j\in \{1,2, 3\}$. 
\end{example}

\subsection{ Dilations}\label{dil} The weight decompositions on $\kg$ and $\tkg$ induce a one-parameter subgroup of dilations $D_r$, which are the linear maps that act on $\tkm^{(i)}$ by multiplication by $r^{i}$. Note that with the $*'$ product, $D_r$ becomes an automorphism of $\tkg$ (and $\kg$): $\forall x,y\in \tkg$, 
$$D_{r}(x*'y)=D_{r}x*'D_{r}y.$$
The determinant of the dilations $D_r$ on $\kg$ is $\det D_{r}=r^{\dd }$, where

\begin{equation} \label{hom-dim}\dd = \sum_{i \ge 1} \dim \kg^{(i)}. 
\end{equation}

We call $\dd$ the \emph{homogeneous dimension} of $\kg$ with respect to the weight filtration  induced by $\Xab$.  When $\Xab=0$, $\dd$ coincides with the homogeneous dimension of $\g$, namely $d_0=\sum_{i \ge 1} \dim \kg^{[i]}$, which controls the volume growth of balls in $G$ by the Bass-Guivarc'h formula \cite{guivarch73, breuillard14}. Note that $\dd \ge d_0$ in general.

% We further note that $\dd  \ge d_\kg$ with equality if and only if the weight filtration coincides with the descending central series, i.e. $\kg^{(i)}=\kg^{[i]}$ for all $i$. 

\subsection{ Bi-grading on the free Lie algebra}  \label{Sec-bi-grading}
 For basics on the free Lie algebra we refer the reader to \cite[chapter 2]{serre06} and \cite{reutenauer93}. 
 We denote by  $L^{[r]}$ the \emph{$r$-bracket}, which is defined recursively as $L^{[1]}(x)=x$ and 
$$L^{[r]}(x_{1}, \dots, x_{r})  =[L_{r-1}(x_{1}, \dots, x_{r-1}), x_{r}],$$
an element of the free Lie algebra on $r$ letters.

Throughout we will consider formal expressions involving Lie brackets of $N$ indeterminates $x_1,\ldots,x_N$ corresponding to $\tkg$-valued random variables. In particular the product of $\Pi(\xx):=x_{1}*x_{2}*\dots*x_{N}$ can be expanded via the Campbell-Hausdorff formula (see \cite{dynkin47}) into an element of the $s$-step free Lie algebra on $N$ variables of the form:  

$$\Pi(\xx) = \sum_{r=1}^s \sum_{ \underset{ \,1\leq n_{1}< \dots < n_{t}<\infty}{r_{1}+\dots+r_{t}= r}, \,r_{i}\neq 0}  L_{\rr}(x_{n_{1}}^{\otimes r_{1}}, \dots, x_{n_{t}}^{\otimes r_{t}})$$
where $\underline{r}=(r_{1}, \dots, r_{t})$ and  $L_{\rr}$ is a linear combination with rational coefficients of terms, each of which is an $r$-bracket $L^{[r]}$ of some permutation of the $r$ variables involved.

\begin{example} The Campbell-Hausdorff formula allows to compute   $L_{1}(x_{1})=x_{1}$, $L_{1,1}= \frac{1}{2}L^{[2]}$, $L_{2,1}=L_{1,2}= \frac{1}{12}L^{[3]}$, 
$$L_{1,1,1}(x_{1}, x_{2}, x_{3})=\frac{1}{6}L^{[3]}(x_{1}, x_{2}, x_{3}) +\frac{1}{6}L^{[3]}(x_{3}, x_{2}, x_{1}).$$ 
\end{example}
\bigskip

Note that the free Lie algebra comes with a natural grading given by bracket length. To account for the weight structure discussed above, we will need to consider a further grading on the $s$-step free Lie algebra. To each indeterminate $x_j$ we associate $2s-1$ extra indeterminates $x_j^{(i)}$ for $i=1,\ldots,2s-1$ so that $x_j=x_j^{(1)}+\ldots+x_j^{(2s-1)}$ and expand the product $\Pi(x_1,\ldots,x_N)$ as an element of the $s$-step free Lie algebra  $\kF_s$ on the set of $(2s-1)N$ indeterminates $\{x_j^{(i)}\}_{1\le j \le N, 1 \le i \le 2s-1}$.   We define a grading on this free Lie algebra by assigning weight $i$ to each $x_j^{(i)}$ and defining the weight of an $r$-bracket of $x_j^{(i)}$'s to be the sum of the weights of its $r$ variables. On the other hand, we have the ordinary grading of the free Lie algebra, where the degree of an $r$-bracket equals its length $r$. This yields a bi-grading 
\begin{equation}
    \kF_s = \bigoplus_{1\leq a \leq s, \, 1\leq b \leq 2s-1} \kF_s^{[a,b)}
\end{equation}
where the homogeneous part $\kF_s^{[a,b)}$ is spanned by all $a$-brackets of weight $b$ in the variables $x_j^{(i)}$. We thus get a decomposition:
$$\Pi(\xx) = \oplus_{a,b} \Pi^{[a,b)}(\xx).$$

\noindent We also set $\Pi^{[a]}(\xx):=\oplus_{b} \Pi^{[a,b)}(\xx)$ % the part of  $\Pi(\xx)$  made of the $a$-brackets, 
and $\Pi^{(b)}(\xx):=\oplus_{a} \Pi^{[a,b)}(\xx)$. %the part of degree $b$. 
 
Coming back to our Lie algebra $\tkg$ of nilpotency class at most $s$ (we use the words step and nilpotency class interchangeably)  endowed with the grading  $\tkg= \oplus_{ 1\leq b \leq 2s-1} \tkm^{(b)}$, the element  $\Pi^{[a,b)}(\xx) \in \kF_s$ naturally yields an evaluation mapping on $\tkg^N$ with values in $\tkg^{(b)}\cap \tkg^{[a]}$, which, by abuse of notation, we continue to denote by $\Pi^{[a,b)}(\xx)$:
\begin{equation}\label{piab}
    \Pi^{[a,b)}: \tkg^N \to \tkg^{(b)}\cap \tkg^{[a]}.
\end{equation}
In fact, when the evaluation is taken in $\tkg$ with the $*'$ product, we will denote the evaluation mapping by $\Pi'^{[a,b)}$. We note that $\Pi'^{[a,b)}$ is homogeneous of degree $b$ with respect to the dilations $D_r$, namely for any $\xx \in \tkg^N$:
$$\Pi'^{[a,b)}(D_r \xx)=r^{b}\Pi'^{[a,b)}(\xx).$$

\subsection{Norm, Lebesgue measure, and dual}\label{basis} Throughout we shall fix a basis $(e_j^{(i)})_{1\leq j \leq q_i}$ of each $\km^{(i)}$, adding $e_0^{(2)}=\chi$ to $\tkm^{(2)}$. We shall consider the induced Euclidean norm $\|\cdot\|$ on $\kg$ and $\tkg$, thus making the subspaces $\tkm^{(i)}$  mutually orthogonal. The associated Lebesgue measure will be denoted by $dx$. Notice that $dx$ is also a left and right invariant Haar measure on the associated Lie groups (for both $*$ and $*'$). 

We also fix a choice of left invariant Riemannian metrics on the various group structures $(\kg, *)$, $(\kg, *')$, $(\tkg, *')$ and denote them respectively by $\norm{.}$, $\normp{.}$, $\normtp{.}$.

We will often consider   the dual $\dkg$ of linear forms on $\kg$. It is endowed with operator norm and the Lebesgue measure $d\xi$ associated to the dual basis of the $(e_j^{(i)})_{i,j}$. Linear forms $\xi\in \dkg$ will be identified with their extension to $\tkg$ by setting $\xi(\chi)=0$.

%\bigskip
\subsection{Polynomial functions and degree}\label{poldef} % Fixing a basis of a $\tkg$, we can consider its dual basis of linear forms, and take linear combinations of finite products of these linear forms. This gives a well-defined notion of polynomial function on $\tkg$.  We can also consider several variables by mixing linear forms from different variables, and we obtain a well-defined notion of polynomial function on $\tkg^N$. This notion does not depend on the choice of basis. 
%Throughout we will fix a basis of $\tkg$ so that it is adapted to the grading  \cref{gradinggt}. 
With the above basis, we obtain a well-defined notion of degree of a polynomial function by assigning degree $i$ to each member ${e_j^{(i)}}^*$ of the dual basis of $\tkm^{(i)}$. For example, composing $\Pi^{[a,b)}(\xx)$ with any linear form on $\tkm^{(b)}$ yields a polynomial function of degree $b$. This notion of degree  depends on the filtration $(\tkg^{(b)})_{b \ge 1}$, but not on the choice of weight decomposition and nor on the choice of basis.

A function on $\kg$ or $\tkg$  is Schwartz if it is smooth with all derivatives decaying faster than any polynomial.  The class of Schwartz functions is solely determined by the vector space structure of $\kg$ or $\tkg$.

\subsection{The driving measure $\mu$} \label{Sec-cadre-mu}

Let $\mu$ be a probability measure on $\kg$. For $r>0$, we say that $\mu$ has \emph{finite $r$-th moment} for the weight filtration if one has $$m_{r}(\mu):=\sum_{1\leq b\leq 2s-1}\int_{\kg} \|x^{(b)}\|^{r/b}d\mu(x)<\infty .$$
In particular, for every polynomial $P:\kg\rightarrow \R$ of degree at most $m$ for the weight filtration
$$\int_{\kg} |P(x)|d\mu(x)<\infty .$$
Two measures $\mu$, $\eta$ with finite $m$-th moment are said to \emph{coincide up to order $m$} if $\mu(P)=\eta(P)$ for all $P$ of degree at most $m$.

We will estimate the $\mu$-average (or $\mu^{*N}$-average) of an integrable test function $f\in L^1(\kg, dx)$ using Fourier analysis. Given $\xi \in \dkg$, we set
$$e_{\xi} : \kg \rightarrow \C, x\mapsto e^{-2i\pi \xi(x)} $$
and we define the \emph{Fourier transform} of  $\mu$ or $f$ by 
$$\hmu(\xi)= \int_{\kg} e_{\xi}(x)d\mu(x) \,\,\,\,\,\,\,\,\,\,\,\,\,\,\,\,\hf(\xi)= \int_{\kg} e_{-\xi}(x) f(x)dx.$$
If $f$ is continuous and $\hf$ is integrable, the Fourier inversion formula states that
\begin{align*}
\mu (f)&=\int_{\dkg} \hf(\xi) \,\hmu(\xi) d\xi.
\end{align*}

\subsection{The bias extension $\tmu$}  \label{notation-bias}

We will consider a measure $\mu$ with finite second moment for the weight filtration and whose projection $\mu_{ab}$ to $\kg/[\kg, \kg]$ has expectation $\Xab \mod [\kg, \kg]$.  We define the \emph{bias extension} $\tmu$ of $\mu$ as the image of $\mu$ by $\kg\rightarrow \tkg, x \mapsto (x,1)$.  Given a weight decomposition $(\km^{(b)})$ (thus lifting $\Xab$ into $\Xm\in \km^{(1)}$ and defining $\chi=(\Xm, 1)\in \tkg$), one may write for every $N\geq 1$,
$$\tmu^{*N} = \mu^{*N}-N\Xm+N\chi =\mu^{*N}*-N\Xm*N\chi  .$$
Here, the notation $ \mu^{*N}-N\Xm+N\chi $ represents the image of $\mu^{*N}$ by the map $\tkg\rightarrow \tkg, x\mapsto x-N\Xm+N\chi$, and similarly for $\mu^{*N}*-N\Xm*N\chi$. The measure $\tmu$ reports the drift $\Xm \in \km^{(1)}$ into $\chi\in \tkm^{(2)}$ while keeping  the fluctuations around the drift in $\R\Xm \subseteq \km^{(1)}$.

\subsection{Moderate deviations} \label{Sec-cadre-deviations}

Given $N\geq 1$, $g,h\in \tkg$, we denote by $g*\tmu^{*N}*h$, or $\tmu^{*N}_{g,h}$ for short, the image of $\tmu^{*N}$ by $\tkg\rightarrow \tkg, x\mapsto g*x*h$. A similar notation holds for deviations of $\mu$ on $\kg$. 

%Setting $p: \tkg:=\kg\oplus \R \rightarrow \kg$ the factor projection (and Lie algebra homomorphism), we have $p_{\star}\tmu^{*N}_{g,h}=\mu^{*N}_{p(g),p(h)}$.

We shall prove  estimates for $\tmu^{*N}_{g,h}$ uniformly for $g, h$ in a certain range of \emph{moderate deviations}: given  $\delta_{0} \in [0, 1)$, we let
$$\DN(\delta_{0})=\{x \in \tkg  \,:\, \forall i\leq 2s-1,\,\, \|x^{(i)}\| \leq  N^{i/2+s^{-1}\delta_{0}}\},$$ where $x^{(i)}=\pi^{(i)}(x)$ is the coordinate projection with respect to the grading $(\tkm^{(b)})_{b}$ of $\tkg$. It is worth pointing out that $\DN$ behaves well with respect to the product $*$ :  There exists $C>0$ such that for all $\delta_{0}>0$, for all $N \geq 1$ large enough,  
 $$x,y\in \DN(\delta_{0}) \implies x*y\in \DN(C \delta_{0}).$$

\subsection{Asymptotic notations} \label{Sec-cadre-asympt}
In the rest of the text, we use the standard Vinogradov notation  $\ll$ and Landau notations $O(.)$, $o(.)$. Unless otherwise stated, the implied constants will depend only on the initial data $\kg, \Xab, \mu, (\km^{(b)})_{b}, (e^{(i)}_{j})_{i,j}, \norm{.}, \normp{.}, \normtp{.}$.  In some statements, we will also use the notation $r_{1} \lll r_{2}$ to mean that the conclusions that follow are true up to choosing $r_{1}\leq c r_{2}$  where $c>0$ is a (small) constant depending only the initial data. Subscripts are added to indicate any additional dependency (for instance, on the parameter $\delta_{0}$ controlling the range of deviations). Furthermore $\N$ denotes the non-negative integers and $\R^+$ the non-negative reals.

%%%%%%%%%%%%%%%%%%%%%%%%%%%%%%%%%%%%%%%%%%%%%%%%%%%%%%%%% brownian section

\addtocontents{toc}{\protect\setcounter{tocdepth}{2}}

\section{Limiting diffusion on a nilpotent Lie group} \label{Sec-densite}

In this section, we study  the diffusion processes on a nilpotent Lie group that  arise as limits of rescaled i.i.d. right random walks. We start by recalling  background material on the general theory of diffusion processes on Lie groups. Then we focus on the case of   limiting diffusions  on nilpotent Lie groups, and establish hypoellipticity as well as a general Gaussian upper bound. We continue with a description of the support of the time-$1$ distribution, and give several examples. Finally, we give a general lower bound, and several corollaries, notably  characterizing the case where the limiting process is  Gaussian in the ordinary Euclidean sense.

\subsection{Diffusions on a connected Lie group} \label{Sec-diff-general}

We start by recalling some standard facts about diffusion  processes on a connected real Lie group. 
We refer the reader to the books \cite{liao-book} and \cite{hazod-siebert} as well as Hunt's original article \cite{hunt56} and the papers by Stroock and Varadhan \cite{stroock-varadhan72,stroock-varadhan73} for more background material on diffusion processes. The support of invariant diffusions was also studied by Siebert, see \cite[Theorem 2]{siebert82}. Connections with sub-Riemannian geometry are discussed in \cite{varopoulos-saloff-coulhon92, kupka} and references therein.

Let  $H$ be a  connected real Lie group and denote by $1_{H}$ the identity element of $H$. 
A \emph{continuous left-invariant diffusion process} on $H$ is a stochastic process $(\W(t))_{t\in  \R^+}$ satisfying the following:
\begin{itemize}
\item $\W(0)=1_{H}$ with probability $1$.
\item The random map $\W: \R^+\rightarrow H, t\mapsto \W(t)$ is continuous with probability $1$.
\item For any $k\in \N$ and $0= t_{0}<\dots<t_{k}<+\infty$, the increments $\left(\W(t_{i})^{-1}\W(t_{i+1})\right)_{i=0, \dots, k-1}$ are mutually independent.
\end{itemize}
 In addition, if for any $0\leq s< t$, the distribution of the increment $\W(s)^{-1}\W(t)$ only depends on $t-s$, then the process is called \emph{homogeneous}. 

Denote by $\Omega(H)$ the set of continuous paths $c: \R^+\rightarrow H$, set $\mathcal{F}_{t}$ the $\sigma$-algebra generated by the projections $s\mapsto c(s)$ where $s\leq t$, and $\mathcal{F}_{\infty}=\cup_{t\in \R^+} \mathcal{F}_{t}$. The law of a diffusion process $\W$ on $H$ is a certain distribution $\sigma$ on $(\Omega(H), \mathcal{F}_{\infty})$. We denote by $\sigma_{s}$ the image of $\sigma$ by $c\mapsto c(s)$, or in other terms the law of $\W(s)$. We denote by $\mathcal{Q}$ the collection of probability measures on $(\Omega(H), \mathcal{F}_{\infty})$ which arise from a diffusion, and call them diffusion laws on $H$. 
\bigskip

A left-invariant second order differential operator on  $H$ defines a diffusion law on $H$ as follows.  Denote by $\kh$ the Lie algebra  of $H$, identify any vector $X\in \kh$ with its associated left-invariant vector field on $H$ and with the corresponding Lie derivative. In particular, for any $f\in C^1(H)$, 
$$X.f :x\mapsto \lim_{\eps \underset{\neq}{\to}  0}\frac{f(x  e^{\eps X})-f(x)}{\eps }.$$
Consider a time-dependent left-invariant  second order differential operator $\LL=(\LL_{t})_{t\in \R^+}$ of the form
\begin{align*}
\mathscr{L}_{t}=\frac{1}{2}\sum_{i=1}^q  {E_{i}(t)}^2 + B(t)
\end{align*}
where  $E_i:\R^+\rightarrow \kh$ for $i=1,\ldots,q$  and $B  : \R^+\rightarrow \kh$ are smooth maps.

\begin{fact}[Infinitesimal generator \cite{liao-book}] \label{fact-diffusion}
There exists a unique diffusion law $\sigma \in \mathcal{Q}$ such that for every $s\geq 0$ and every $\mathcal{F}_{s}$-measurable variable $f\in C^\infty_{c}(H)$, the process 
\begin{align*}
f(c(t)) -f(c(s))- \int_{s}^t(\mathscr{L}_{r}f)(c(r)) dr   \tag{$t\in \R^+$}
\end{align*}
is a martingale on $(\Omega(H), (\mathcal{F}_{t})_{t\geq s}, \sigma)$.  
\end{fact}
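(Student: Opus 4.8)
The plan is to realize $\sigma$ as the law of the solution of a left-invariant stochastic differential equation on $H$ driven by a $q$-dimensional Brownian motion; this delivers existence, the diffusion property, and uniqueness in one stroke. Let $W=(W_{1},\dots,W_{q})$ be a standard Brownian motion on some filtered probability space, and consider the Stratonovich equation
\[
dc(t)=\sum_{i=1}^{q} E_{i}(t)\bigl(c(t)\bigr)\circ dW_{i}(t)+B(t)\bigl(c(t)\bigr)\,dt,\qquad c(0)=1_{H},
\]
where $E_{i}(t)(x)$ and $B(t)(x)$ denote the values at $x\in H$ of the corresponding left-invariant vector fields. Since $t\mapsto E_{i}(t),B(t)$ are smooth $\kh$-valued maps, the coefficients are smooth in $(t,x)$, so there is a unique maximal strong solution, a priori only up to an explosion time. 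Fixing a left-invariant Riemannian metric on $H$ (automatically complete, being homogeneous), the pointwise norms $|E_{i}(t)|_{x}$ and $|B(t)|_{x}$ are independent of $x$ and locally bounded in $t$, hence bounded on each compact time interval; boundedness of the coefficients in a complete metric rules out explosion, so the solution is conservative with continuous paths \cite{liao-book}.

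Next I would record the martingale property. The generator of the above Stratonovich equation is exactly $\frac{1}{2}\sum_{i}E_{i}(t)^{2}+B(t)=\LL_{t}$: for $f\in C^{\infty}_{c}(H)$ and $0\le s\le t$, It\^o's formula together with the It\^o--Stratonovich conversion gives
\[
f(c(t))-f(c(s))-\int_{s}^{t}(\LL_{r}f)(c(r))\,dr=\int_{s}^{t}\sum_{i=1}^{q}\bigl(E_{i}(r)f\bigr)(c(r))\,dW_{i}(r),
\]
and since $f$ has compact support the integrands are bounded, so the right-hand side is a true martingale (and, being adapted to the filtration generated by $c$, a martingale for that coarser filtration as well). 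The case of an $\mathcal{F}_{s}$-measurable random $f\in C^{\infty}_{c}(H)$ follows by approximating $f$ by simple maps $\sum_{j}\1_{A_{j}}f_{j}$ with $A_{j}\in\mathcal{F}_{s}$ and $f_{j}$ deterministic, and pulling the indicators out of conditional expectations over $[s,\infty)$. Finally, left-invariance of the equation shows that $c(t)^{-1}c(t+\cdot)$ solves the same equation started at $1_{H}$ and driven by $W(t+\cdot)-W(t)$, which is independent of the past; hence the increments $c(t_{i})^{-1}c(t_{i+1})$ are independent and $\sigma:=\mathrm{Law}(c)\in\mathcal{Q}$.

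For uniqueness, suppose $\sigma'\in\mathcal{Q}$ also satisfies the martingale property. By the standard correspondence between martingale problems and It\^o equations — crucially using that a square root of the (possibly degenerate) diffusion coefficient $\sum_{i}E_{i}(t)\otimes E_{i}(t)$ is built into the data, namely the matrix whose columns are the $E_{i}(t)$ — the canonical process is, on a possibly enlarged probability space, a solution of the same SDE; pathwise uniqueness for smooth coefficients then forces $\sigma'=\sigma$ \cite{stroock-varadhan73,liao-book}. The two points needing care are: (i) non-explosion of the left-invariant SDE, where one argues through a complete left-invariant metric in which the coefficients are bounded rather than merely invoking completeness of each left-invariant vector field; and (ii) the martingale-problem-to-SDE step in the degenerate (hypoelliptic) regime, where one exploits the canonical square root above so that no H\"ormander or ellipticity hypothesis is needed. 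I expect (ii) — extracting the driving Brownian motion in the degenerate case and pinning down uniqueness in law — to be the main conceptual obstacle; the rest (It\^o's formula, the conditional-expectation bookkeeping, the increment independence) is routine.
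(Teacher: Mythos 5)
The paper gives no proof of this statement: it is recorded as a Fact and attributed to \cite{liao-book}, so there is nothing internal to compare against. Your SDE construction is correct and is essentially the argument behind the cited reference: existence and the diffusion property come from solving the left-invariant Stratonovich equation driven by a $q$-dimensional Brownian motion, non-explosion from the spatial homogeneity of the coefficients (this is also the point of \cite{stroock-varadhan73}, which the paper invokes a few lines later for exactly this purpose), the martingale property from It\^o's formula, and uniqueness from the martingale-problem--to--weak-solution step followed by pathwise uniqueness and Yamada--Watanabe. You correctly identify the only genuinely delicate point, namely extracting a driving Brownian motion from the martingale problem when $\sum_i E_i(t)\otimes E_i(t)$ is degenerate; the remedy you describe (the $E_i$'s themselves furnish a smooth square root, and one enlarges the probability space to build the missing components of $W$) is the standard one. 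Two small remarks: the time-dependence of $\LL_t$ means the process is not a L\'evy process in the time-homogeneous sense treated in parts of \cite{liao-book}, but since $t\mapsto E_i(t),B(t)$ are smooth the whole argument goes through verbatim, and your shift computation correctly produces the generator $(\LL_{s+t})_t$ for the increment process; and the non-explosion step via a complete left-invariant metric does require controlling second derivatives of the distance function (curvature terms), which is harmless here because a left-invariant metric has bounded geometry --- worth a word if one wanted a self-contained write-up. An alternative, historically prior, route is Hunt's semigroup-theoretic construction \cite{hunt56}, which builds the convolution semigroup of the increments directly from the generator via Hille--Yosida; your probabilistic route buys the pathwise object and the support/martingale statements used later in the section more directly.
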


Note that $\sigma$ characterizes $\LL$ because for any fixed $f\in C^\infty_{c}(H)$, 
$$ \frac{1}{\eps} \E_{\sigma}\left(f  \left[c(s)^{-1}c(s+\eps)\right]  -f(1_{H}) \,|\, \mathcal{F}_{s}\right) \underset{\eps\to 0^+}{\longrightarrow} \mathscr{L}_{s}f(1_{H})$$ 
where the arrow stands for the convergence in probability.

A  diffusion process  on $H$ with infinitesimal generator $\LL$ is by definition a stochastic process $\W$ on $H$ with law $\sigma$. Observe then that for any $s>0$, the process $(\W(s)^{-1}\W(s+t))_{t\geq 0}$ is a diffusion with infinitesimal generator $(\mathscr{L}_{s+t})_{t\in \R^+}$. It follows that $(\W(t))_{t\geq 0}$ is homogeneous if and only if $\mathscr{L}$ is  independent of the time parameter, and in this case the distributions $\sigma_{t}$ form a semigroup: $\forall s,t\in \R^+$, 
$$\sigma_{s}*\sigma_{t}=\sigma_{s+t}. $$

%\bigskip
%\noindent{\bf Remark.} The above , but we do if we take limits
%\bigskip
\bigskip
We now  describe the support of the time-$t$ distribution $\sigma_{t}$ associated to $\LL$. Given an open interval $I$, a $C^1$ path $\gamma : I\rightarrow H$, and $s\in I$, we define the multiplicative derivative $\partial\gamma(s)\in \kh$ as the derivative at $t=0$ (in the classical sense) of the path $t\mapsto \gamma(s)^{-1}\gamma(s+t)$. We say that a continuous, piecewise $C^1$ path $\gamma: [0, t]\rightarrow H$ is $\mathscr{L}$-\emph{horizontal } if  its multiplicative derivative satisfies
$$\partial\gamma(s) \in \sum_{i=1}^q \R E_{i}(s)+ B(s) $$
for every $s$ in the complement of a finite subset of $[0, t]$. 

\begin{fact}[Support \cite{stroock-varadhan72, stroock-varadhan73}] \label{fact-support}
The support of $\sigma_{t}$ is  the closure of the set of arrival points $\gamma(t)$, where $\gamma: [0, t]\rightarrow H$ is a continuous piecewise $C^1$ $\mathscr{L}$-horizontal  path. 
\end{fact}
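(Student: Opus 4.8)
The statement is the classical support theorem of Stroock--Varadhan, adapted to the time-inhomogeneous left-invariant setting on a Lie group $H$. The plan is to deduce it from the flat ($\R^d$) version via the exponential chart and a standard localization argument, following the scheme of \cite{stroock-varadhan72, stroock-varadhan73}. Write $\sigma$ for the diffusion law attached to $\mathscr{L}=(\mathscr{L}_t)$, and let $\mathcal{H}_t$ denote the set of arrival points $\gamma(t)$ of continuous piecewise $C^1$ $\mathscr{L}$-horizontal paths issued from $1_H$. One inclusion, $\overline{\mathcal{H}_t}\subseteq \supp\sigma_t$, is the easy ``lower bound'': every horizontal path is a limit in probability of the diffusion conditioned to follow it closely, which one sees by a change of drift (Girsanov) that remains absolutely continuous on finite time intervals, so the diffusion puts positive mass on every neighbourhood of $\gamma(t)$. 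The reverse inclusion $\supp\sigma_t\subseteq\overline{\mathcal{H}_t}$ is the substance: one must show the diffusion a.s. stays in the closed set $\overline{\mathcal{H}_t}$, equivalently that if $U$ is an open set disjoint from $\overline{\mathcal{H}_s}$ for all $s\le t$ then $\sigma(c(s)\in U \text{ for some } s\le t)=0$.

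\textbf{Key steps.} First I would pull everything back to a Stratonovich SDE. The diffusion with generator $\mathscr{L}_t=\tfrac12\sum_i E_i(t)^2+B(t)$ is the solution of
\begin{equation*}
dW(t) = \sum_{i=1}^q E_i(t)(W(t))\circ dB^i(t) + B(t)(W(t))\,dt, \qquad W(0)=1_H,
\end{equation*}
where the $E_i(t),B(t)$ are viewed as (smooth, time-dependent) vector fields on $H$ and $(B^i)$ is a standard $q$-dimensional Brownian motion; existence, uniqueness and the martingale characterization of \Cref{fact-diffusion} identify its law with $\sigma$. Second, I would invoke the Stroock--Varadhan support theorem for Stratonovich SDEs with smooth (here time-dependent, but one adds time as an extra state variable to reduce to the autonomous case) coefficients: the support of the law of $W$ in $C([0,t],H)$, in the topology of uniform convergence, is the closure of the set of solutions obtained by replacing the Brownian path $(B^i)$ by an arbitrary piecewise $C^1$ (equivalently, absolutely continuous) control $u=(u^i)$, i.e. the solutions of the ODE $\dot\gamma(s)=\sum_i u^i(s)E_i(s)(\gamma(s))+B(s)(\gamma(s))$. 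Third, one reads off the statement: evaluating at time $t$, $\supp\sigma_t$ is the closure of $\{\gamma(t)\}$ over such controlled ODE solutions; and by left-invariance of the vector fields, $\dot\gamma(s)=\sum_i u^i(s)E_i(s)(\gamma(s))+B(s)(\gamma(s))$ is exactly the condition $\partial\gamma(s)\in\sum_i\R E_i(s)+B(s)$ defining $\mathscr{L}$-horizontality (and allowing the control to be merely piecewise $C^1$ with finitely many jumps changes neither the closure nor the reachable set, by a standard approximation). This yields $\supp\sigma_t=\overline{\mathcal H_t}$.

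\textbf{Main obstacle.} The routine part is transcription; the genuine point requiring care is the passage from the Euclidean support theorem to the manifold $H$. The clean way is local: around each point of $H$ use the exponential (or a left-translate of a fixed) chart, in which the SDE becomes a Euclidean Stratonovich SDE with smooth bounded-on-compacts coefficients, apply the classical theorem there, and patch using a stopping-time argument to control the (a.s. finite) exit times from chart domains — here one uses that $H$ is a manifold without boundary, that solutions do not explode on $[0,t]$ (which on a general Lie group needs a short completeness/comparison argument, but is automatic in our intended application where $H$ is nilpotent simply connected, hence diffeomorphic to $\kg$ with polynomial coefficients), and that both the law and the horizontal reachable set are local objects. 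The time-dependence of the $E_i,B$ is handled throughout by the standard device of appending the clock $s\mapsto s$ as an autonomous coordinate. I expect this globalization/exit-time bookkeeping to be the only nontrivial step; everything else is a direct citation of \cite{stroock-varadhan72,stroock-varadhan73}.
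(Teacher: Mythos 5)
Your proposal is correct and follows essentially the same route as the paper, which simply derives the fact from the Euclidean Stroock--Varadhan support theorem applied locally in charts of $H$, citing \cite[page 284]{stroock-varadhan73} for the non-explosion of invariant diffusions on Lie groups. The extra detail you supply (the Girsanov lower bound, the controlled-ODE reformulation of horizontality, appending the clock to handle time-dependence) is just an unpacking of what the cited references contain.
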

 This fact is a consequence of Stroock-Varadhan's support theorem \cite{stroock-varadhan72} for diffusion processes on $\R^n$, applied to $\W$ locally in charts of $H$ as in \cite[page 284]{stroock-varadhan73}, where it is shown that invariant diffusions on Lie groups never explode in finite time.

\bigskip

Next we give a smoothness criterion for the distribution $\sigma_{t}$. Denote by $\mathcal{H}$ the Lie algebra of vector fields on   the  product manifold $\R_{>0}\times H$ generated by $(E_{i})_{i\leq q}$ and $\partial_{t}+ B$, where $\partial_{t}$ stands for the constant  vector field $(1,0)$. We see $E_{i}, B$ as functions $\R_{>0}\times H\rightarrow TH$, where $TH$ is the tangent bundle of $H$.  
%Note that $E_{i}, B$ are not left-invariant for the product group structure on $(\R,+)\times H$ because they are allowed to depend on time. 
Recall  H\"ormander's criterion for hypoellipticity:

\begin{fact}[Hypoellipticity \cite{hormander67}]    \label{fact-hypo} Assume the H\"ormander condition: for  every  point $(t,x)\in \R_{>0}\times H$, every vector $v\in TH_{x}$ tangent to $x$, there exists $Y\in \mathcal{H}$ such that $Y(t,x)=v$. Then  the differential equation on   $\R_{>0}\times H$ given by
\begin{align} \label{eq-hypo}
\partial_{t} - \frac{1}{2}\sum_{i=1}^q E_{i}^2+ B=0
\end{align}
is hypoelliptic, i.e. every solution (in the sense of distributions) is smooth.
\end{fact}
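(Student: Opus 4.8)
The plan is to obtain this as a direct application of Hörmander's hypoellipticity theorem for second order operators of ``sum of squares plus drift'' type \cite{hormander67}; the only work is to put \eqref{eq-hypo} into the right normal form and to match the bracket hypotheses. Since hypoellipticity is a local and coordinate‑invariant property for operators of this class, I would first pass to a chart around an arbitrary point of $\R_{>0}\times H$, reducing to an open subset of $\R^{n+1}$ with $n=\dim H$, on which $\partial_t$, the $E_i$ and $B$ become honest smooth real vector fields with smooth coefficients. Up to the harmless overall sign and the rescaling $X_j:=\tfrac{1}{\sqrt2}E_i$, the operator in \eqref{eq-hypo} reads $\sum_{j=1}^q X_j^2+X_0$ with $X_0:=-\partial_t-B$ a first order term carrying no zeroth order part, and Hörmander's theorem then yields hypoellipticity as soon as the Lie algebra generated by $X_0,X_1,\dots,X_q$ has full rank $n+1$ at every point.

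It then remains to see that the condition stated in the Fact delivers this. Here I would exploit the product structure of $\R_{>0}\times H$: the field $\partial_t=(1,0)$ is the only generator with nonvanishing $\R_{>0}$‑component, and every iterated bracket of $\partial_t$, $B$, $E_i$ lies in the ``$H$‑direction'' subbundle, so $\mathcal H=\R\partial_t\oplus\mathcal H_0$ for an $H$‑direction part $\mathcal H_0$, and the hypothesis that $\mathcal H$ spans at every $(t,x)$ is equivalent to $\mathcal H_0(t,x)=T_xH$ for all $(t,x)$. The Hörmander Lie algebra $\mathrm{Lie}(X_0,X_1,\dots,X_q)$ splits the same way, as $\R X_0$ plus an $H$‑direction part in which the bracket with $X_0$ extracts precisely the time derivatives of the $E_i$ and of $B$ — which in the concrete applications of this fact (where $\mathfrak m^{(1)}$ need not be bracket‑generating for $*'$ and the deeper directions of $\kg$ are reached exactly through the explicit time dependence) is the mechanism that fills out $T_xH$. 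Modulo this identification one gets full rank precisely when the stated condition holds, so Hörmander's criterion is met and every distributional solution of \eqref{eq-hypo} is $C^\infty$.

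The genuine depth lies in Hörmander's theorem itself, which is exactly what this reduction lets us avoid: its proof rests on a subelliptic a priori estimate of the form $\|u\|_{H^{\varepsilon}_{\mathrm{loc}}}\lesssim\|Pu\|_{L^2_{\mathrm{loc}}}+\|u\|_{L^2_{\mathrm{loc}}}$ for some $\varepsilon>0$ controlled by the length of brackets needed to span, established through a pseudodifferential calculus and commutator estimates, followed by a bootstrap promoting $L^2$‑regularity of $u$ to smoothness. Reproving this is far outside what is needed, so I would simply cite \cite{hormander67} (or a later streamlined account) after recording the bracket verification above; the one point deserving genuine care — and which I would therefore check case by case in the applications rather than abstractly — is the identification of the drift vector field $X_0$ with $-\partial_t-B$, i.e. that verifying the stated condition on $\mathcal H$ is enough to run Hörmander's criterion in its precise form.
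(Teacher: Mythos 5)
Your reduction to Hörmander's theorem is the right route (and is all the paper does: the Fact is stated as a citation of \cite{hormander67} with no proof), but the step you flag as merely "deserving genuine care" is in fact where the argument breaks. Hörmander's hypothesis for $\sum X_j^2+X_0$ is full rank of the Lie algebra generated by $X_0,X_1,\dots,X_q$ with the drift $X_0=-(\partial_t+B)$ entering as a \emph{single} generator, whereas $\mathcal{H}$ is generated by $\partial_t$ and $B$ as \emph{separate} generators. One only has $\mathrm{Lie}(\partial_t+B,E_1,\dots,E_q)\subseteq\mathrm{Lie}(\partial_t,B,E_1,\dots,E_q)=\mathcal{H}$, and the inclusion can be strict in a way that kills hypoellipticity. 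Concretely, take $H=\R^2$, $q=1$, $E_1=\partial_x$, $B=t\,\partial_y$. Then $[\partial_t,B]=\partial_y$, so $\mathcal{H}$ contains $\partial_x$ and $\partial_y$ and the stated condition holds at every point; yet $[\partial_t+t\partial_y,\partial_x]=0$, so the Hörmander algebra has rank $2<3$, and indeed $u(t,x,y)=v\bigl(y-\tfrac{t^2}{2}\bigr)$ solves \eqref{eq-hypo} for an arbitrary distribution $v$ on $\R$. The operator is not hypoelliptic, so the implication you are trying to establish is false in the stated generality and no normal-form reduction to \cite{hormander67} can deliver it.

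What is true, and what your "check case by case" instinct should be upgraded to, is the Fact with $\mathcal{H}$ replaced by $\mathrm{Lie}(\partial_t+B,E_1,\dots,E_q)$; this is then literally Hörmander's theorem after your (correct) normal-form manipulations. This stronger hypothesis does hold in the paper's application: there $[\partial_t+\Ad(t\YY)B,\Ad(t\YY)E_i]=\Ad(t\YY)[\YY+B,E_i]'$, and since $\YY+B\in\chi+\km^{(2)}$, the argument of \Cref{prehypo} applies verbatim with $\YY+B$ in place of $\YY$, so the computation in \Cref{hypo} goes through for the correctly generated Lie algebra. So the conclusion of \Cref{hypo} is safe, but your proof of the Fact as stated — and the Fact itself — is not.
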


A direct computation shows that the distribution $u=\int_{\R_{>0}}\delta_{t}\otimes \sigma_{t}\,dt$ is solution of  \eqref{eq-hypo}. Assuming \eqref{eq-hypo} is hypoelliptic, one may then write for $t>0$, 
$$\sigma_{t}=u(t,x)dx$$
 where $u:\R_{>0}\times H\rightarrow \R^+$ is a smooth function, and $dx$ stands for a left-invariant Haar measure on $H$. 

Hypoellipticity also yields the following domination principle \cite[Corollary III.1.3]{varopoulos-saloff-coulhon92}: for any  open set  $\Omega\subseteq \R_{>0}\times H$, any  compact subset  $K\subseteq \Omega$,  $p\in \N$, there exists $C_{1}>0$ such that for every solution $v(t,x)$ of  \eqref{eq-hypo}, one has 
 \begin{align} \label{bound-der-int}
 \| v\|_{C^p(K)} \leq C_{1} \int_{\Omega} |v| \, dtdx 
 \end{align}
where $\| v\|_{C^p(K)}$ is the supremum on $K$ of the absolute value of the derivatives of $v$ up to order $p$. Later, we will use  this inequality to convert a moment estimate on $\sigma$ into a bound on the derivatives of the density $u$.
\bigskip

We also record a Harnack inequality, namely a statement to the effect that non-negative solutions of \eqref{eq-hypo} do not decrease too much while going along a small horizontal path. We rely here on the recent work of Kogoj and Polidoro \cite{kogoj-polidoro16} addressing the setting of a time-dependent hypoelliptic equation. Let $1_{H}\in U\subseteq H$ be an open neighborhood of the identity, whose closure is compact and included in a chart of $H$. Let $0<\eps_{0}<\eps_{1}$ be two parameters and define $\Pc$ as the set of couples $(\tau,x)\in (\eps_{0}, \eps_{1}) \times U$ such that there exists a $U$-valued horizontal path $\gamma : [\eps_{0}, \tau]\rightarrow U$ such that $\gamma(\eps_0)=1_H$, $\gamma(\tau)=x$.

\begin{fact}[Harnack inequality  \cite{kogoj-polidoro16}]\label{fact-harnack} Assume the H\"ormander condition from \Cref{fact-hypo}.  Let $U,\eps_{0}, \eps_{1}, \Pc$ as above. Let $L$ be a compact set in the interior of the closure of $\Pc$ in $[\eps_{0}, \eps_{1}]\times \overline{U}$. 
Then there exists a constant $C>1$ such that 
for any  non negative solution $v :(\frac{\eps_{0}}{2}, 2\eps_{1})\times U \rightarrow \R^+$ of  \eqref{eq-hypo}, one has 
$$v(\eps_{0}, 1_H)\leq C \inf_{(t,x)\in L} v(t, x) .$$
\end{fact}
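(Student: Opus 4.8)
\emph{Plan.} The plan is to derive this Harnack inequality from the local (two-point) Harnack principle of \cite{kogoj-polidoro16} for non-autonomous hypoelliptic operators of Kolmogorov--H\"ormander type, and then to upgrade it over the compact set $L$ by a Harnack-chain argument. First I would check that \cite{kogoj-polidoro16} applies to \eqref{eq-hypo}: in a coordinate chart containing $\overline{U}$ the equation reads $\partial_{t}v-\sum_{i=1}^{q}Y_{i}^{2}v+Y_{0}v=0$ on an open subset of $\R_{>0}\times\R^{\dim H}$, where $Y_{i}$ (resp.\ $Y_{0}$) is the chart expression of the time-dependent vector field $E_{i}(t)$ (resp.\ $B(t)$), with smooth coefficients; the H\"ormander condition assumed in \Cref{fact-hypo} is precisely that $\partial_{t},Y_{0},Y_{1},\dots,Y_{q}$ together with their iterated Lie brackets span the tangent space at every point, which is the bracket-generating hypothesis under which \cite{kogoj-polidoro16} establishes an invariant Harnack inequality. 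So their result is available on this chart.

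From \cite{kogoj-polidoro16} I would extract the following local form: for every pair of points $z_{0}=(t_{0},x_{0})$ and $z_{1}=(t_{1},x_{1})$ such that $z_{1}$ is reachable from $z_{0}$ by an $\mathscr{L}$-horizontal path (in the sense of \Cref{fact-support}, lifted to $\R_{>0}\times H$ by appending $s\mapsto s$ in the time coordinate) whose image stays inside a fixed small relatively compact neighborhood $V$, there is a constant $c(z_{0},z_{1})>1$ with $v(z_{0})\le c(z_{0},z_{1})\,v(z_{1})$ for every non-negative solution $v$ of \eqref{eq-hypo} on $V$, and $c$ may be taken bounded when $(z_{0},z_{1})$ ranges over a compact set of such admissible pairs. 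Here one observes that $\Pc$ is, by its very definition, exactly the set of points forward-attainable from $(\eps_{0},1_{H})$ along $U$-valued horizontal paths started at time $\eps_{0}$, so these local steps are adapted to the geometry of the statement.

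Next comes the chaining. Fixing $(t,x)\in L$: since $L$ lies in the interior of $\overline{\Pc}$ inside $[\eps_{0},\eps_{1}]\times\overline{U}$, I can pick $(t',x')\in\Pc$ arbitrarily close to $(t,x)$ together with a $U$-valued horizontal path $\gamma:[\eps_{0},t']\to U$ from $1_{H}$ to $x'$, which by the interior hypothesis may be chosen to stay at positive distance from $\partial U$; hence the lift $s\mapsto(s,\gamma(s))$ has compact image sitting well inside $(\tfrac{\eps_{0}}{2},2\eps_{1})\times U$. Covering that image by finitely many copies of $V$ and applying the local estimate successively at consecutive sample points of the lift yields $v(\eps_{0},1_{H})\le C'\,v(t',x')$; letting $(t',x')\to(t,x)$ and using that $v$ is continuous (indeed smooth, by hypoellipticity, \Cref{fact-hypo}) gives $v(\eps_{0},1_{H})\le C'\,v(t,x)$.

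It remains to make $C$ uniform in $(t,x)\in L$ and in $v$, which is the main point. Independence of $v$ is automatic, since each local estimate already is. For independence of $(t,x)$ I would invoke compactness: the set of points forward-attainable from $(\eps_{0},1_{H})$ \emph{with a fixed positive margin} is open and contains $L$, so finitely many horizontal paths (equivalently finitely many Harnack chains) already handle all of $L$; only finitely many local constants then occur, and their product is a single $C>1$ valid for every $(t,x)\in L$ and every non-negative $v$. The genuinely delicate ingredient --- and the reason the classical parabolic Harnack inequality does not suffice --- is that \eqref{eq-hypo} is hypoelliptic, non-autonomous, and carries the first-order drift $B$, so the admissible velocity set $\sum_{i}\R E_{i}(s)+B(s)$ is an affine subspace rather than a symmetric cone and forward-reachability is genuinely one-directional; I expect the real work to lie in matching this to the geometric framework of \cite{kogoj-polidoro16} and verifying that their attainable set coincides with $\Pc$, after which the chaining is routine.
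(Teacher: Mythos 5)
Your argument is correct in outline but takes a different, more laborious route than the paper. The paper's justification of this Fact is essentially a direct citation: \cite[Corollary 2]{kogoj-polidoro16} already gives a \emph{global} Harnack inequality with a single constant valid on any compact subset of the interior of the attainable set of the base point, so no chaining is needed on our side — the only manipulation the paper performs is a time reversal ($E'_i(t)=E_i(-t)$, $B'(t)=-B(-t)$) to match orientations, converting the supremum over the (backward-)attainable set in \cite{kogoj-polidoro16} into the infimum over the forward-reachable set $\Pc$ that appears in the statement. You instead extract only a local two-point estimate from \cite{kogoj-polidoro16} and rebuild the global statement by Harnack chains along horizontal paths plus compactness; this is legitimate and essentially reproduces the internal mechanism of Kogoj–Polidoro's own proof, at the cost of redoing work the citation already packages.

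Two points in your reconstruction deserve care. First, you assert the local estimate directly in the orientation $v(z_0)\le c\,v(z_1)$ with $z_1$ \emph{forward}-reachable from $z_0$; in \cite{kogoj-polidoro16} the inequality is stated with the attainable set running in the opposite time direction, so obtaining your local form requires precisely the time-reversal step you omit — this is the one genuinely non-formal ingredient of the paper's proof. Second, in the chaining step each intermediate sample point $z_{i+1}$ must lie in the \emph{interior} of the local attainable set from $z_i$ (with a definite margin, or the local constant degenerates); a point sitting exactly on a given horizontal path need not have this property. Since your targets in $L$ sit in the interior of $\overline{\Pc}$ with room to spare, one can perturb the chain to arrange this, but the perturbation should be made explicit; citing \cite[Corollary 2]{kogoj-polidoro16} directly, as the paper does, avoids the issue entirely.
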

%With words, this means that the value of $v$ is at most divided by a constant when one follows a horizontal path prescribed between $(\eps,0)$ and a point in $L$. 
This fact is a consequence
of \cite[Corollary 2]{kogoj-polidoro16}, applied in the bounded chart $U$ to the differential operator $\partial_{t} - \frac{1}{2}\sum_{i=1}^q( E'_{i})^2+ B'$ where $E'_{i}(t)=E_{i}(-t), B'(t)=-B(-t)$. The time reversal we use here allows to convert the supremum on the left-hand side in the Harnack inequality from \cite{kogoj-polidoro16}  into an infimum on the right-hand side, as displayed above.
\bigskip

Finally, we put aside any assumption of hypoellipticity on $\sigma$ and record a moment estimate due to  Kisynski \cite[Theorem 2]{kisynski79} (see also \cite[Proposition 3.1]{jorgensen75}) dealing with homogeneous diffusions. Denote by $d_{H}$ the distance induced by a  left-invariant Riemmanian metric on $H$. 

\begin{fact}[Quadratic exponential moment \cite{kisynski79}] \label{fact-moment} Assume $\LL$ is independent of the time parameter (but not necessarily hypoelliptic). Then there exists $\eps>0$ such that for  $x\in H$, all $T>0$
$$\sup_{t\in [0,T]}\int_{H} \exp\left(\eps d_{H}(x,y)^2 \right)\,d\sigma_{t}(y) <+\infty.$$
\end{fact}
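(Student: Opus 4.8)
The plan is to reduce the statement to a scalar estimate on the function $t\mapsto \E_\sigma\big(\exp(\eps\, d_H(1_H,\W(t))^2)\big)$ by exploiting the left-invariance and the semigroup property of the homogeneous diffusion, and then close a Gronwall-type differential inequality for small $\eps$. First I would reduce to the case $x=1_H$: since $\LL$ is left-invariant and $\W(0)=1_H$ a.s., the law of $\W(t)$ is $\sigma_t$, and for general $x$ we have $d_H(x,y)\le d_H(x,1_H)+d_H(1_H,y)$, so $\exp(\eps d_H(x,y)^2)\le \exp(2\eps d_H(x,1_H)^2)\exp(2\eps d_H(1_H,y)^2)$; the first factor is a constant and halving $\eps$ absorbs the rest. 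So it suffices to bound $\phi(t):=\int_H \exp(\eps\, \rho(y)^2)\,d\sigma_t(y)$ uniformly for $t\in[0,T]$, where $\rho(y):=d_H(1_H,y)$.

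Next I would set up the differential inequality. Fix a smooth compactly supported cutoff and work with $F_R(y)=\chi_R(y)\exp(\eps\rho(y)^2)$ approximations to control integrability, or alternatively localize using the fact (cited in the proof of Fact \ref{fact-support}) that invariant diffusions never explode. Applying Fact \ref{fact-diffusion}, the process $F(\W(t))-F(1_H)-\int_0^t(\LL F)(\W(r))\,dr$ is a martingale, hence $\frac{d}{dt}\E_\sigma F(\W(t)) = \E_\sigma(\LL F)(\W(t))$. The key computation is that for $F=\exp(\eps\rho^2)$, one has the pointwise bound
\begin{equation*}
\LL F \;=\; \Big(\tfrac12\sum_i E_i^2 + B\Big)F \;\le\; \big(C_1\eps + C_2\eps^2\rho^2\big)\,F \;\le\; \big(C_1\eps + C_2\rho^2\eps^2\big)F
\end{equation*}
for constants depending only on $\LL$ and the metric. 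This comes from: $E_i F = \eps(E_i\rho^2)F$ with $|E_i\rho^2|=2\rho|E_i\rho|\le 2\rho$ (since $\rho$ is $1$-Lipschitz for the Riemannian metric and $|E_i|$ is bounded), and $E_i^2 F = \big(\eps E_i^2\rho^2 + \eps^2(E_i\rho^2)^2\big)F$, where $E_i^2\rho^2$ is at most linear in $\rho$ — here one uses the standard fact that $\rho^2$ is smooth away from the cut locus with $\Delta\rho^2$ and all the $E_i^2\rho^2$ controlled by $C(1+\rho)$ on a complete Riemannian manifold with the relevant curvature bounds (this requires a comparison argument or the classical Gaffney-type estimate). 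The term $BF$ is bounded by $\eps\rho|B|F\le C\eps\rho F\le \eps(C+C\rho^2\eps)F$ after absorbing.

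The main obstacle I expect is precisely making the bound $\LL(e^{\eps\rho^2})\le (C_1\eps+C_2\eps^2\rho^2)e^{\eps\rho^2}$ rigorous: $\rho^2$ is only Lipschitz, not smooth, at the cut locus, so one must either smooth $\rho$ (replacing it by a smooth $1$-Lipschitz comparison function that dominates it and whose Hessian is controlled — standard on Lie groups since they are complete of bounded geometry) or invoke a distributional/barrier argument. Once this is in hand, choosing $\eps>0$ small enough that $C_2\eps<\tfrac12$ say, and noting $\rho^2 e^{\eps\rho^2}\le \tfrac1\eps e^{2\eps\rho^2}$ — wait, that reintroduces a larger exponent, so instead one keeps the inequality in the form $\frac{d}{dt}\phi(t)\le C_1\eps\,\phi(t) + C_2\eps^2\,\psi(t)$ where $\psi(t)=\int\rho^2 e^{\eps\rho^2}d\sigma_t$, and bounds $\psi$ by $\phi$ at a slightly larger $\eps$: using $r^2\le \delta^{-1}e^{\delta r^2}$ for any $\delta>0$ gives $\psi(t)\le \delta^{-1}\int e^{(\eps+\delta)\rho^2}d\sigma_t$. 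Running the argument simultaneously for all $\eps$ in a small interval $(0,\eps_0]$ (a bootstrap: if $\phi_{\eps'}(t)$ is finite and bounded on $[0,T]$ for all $\eps'\le\eps_0$, feed this back), or more cleanly choosing the single constant $\eps$ so small that the self-improving inequality closes, yields $\phi(t)\le \phi(0)e^{C t}=e^{Ct}$, finite and bounded on $[0,T]$. The truncation/localization via $\chi_R$ is routine: apply the martingale identity to $F_R$, obtain a uniform-in-$R$ differential inequality by dominating $\LL F_R$ by the same right-hand side plus error terms supported on $\{R\le \rho\le 2R\}$ that vanish as $R\to\infty$ by monotone convergence, then let $R\to\infty$ using Fatou. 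This completes the proof.
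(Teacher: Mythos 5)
The paper does not prove this statement at all: it is quoted as a black-box Fact with a citation to Kisy\'nski [Theorem 2] (see also J{\o}rgensen, Proposition 3.1), so there is no internal proof to compare yours against. Your Lyapunov-function strategy ($F=e^{\eps\rho^2}$, bound $\LL F$ pointwise, run Gronwall) is indeed the standard route to such results, and your reduction to $x=1_H$ and your identification of the two technical obstacles (non-smoothness of $\rho^2$ at the cut locus, and justifying the generator identity for unbounded $F$) are on target.

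There is, however, a genuine gap at the point where you close the differential inequality, and it is forced on you by the statement itself. As written (a single $\eps>0$ working for \emph{all} $T>0$), the Fact is false: for Brownian motion on $H=\R$ one has $\sigma_t=N(0,t)$ and $\int e^{\eps y^2}\,d\sigma_t(y)=\infty$ as soon as $t\ge 1/(2\eps)$. The correct statement (and what Kisy\'nski proves, and all the paper uses, since it only invokes the Fact on $[0,3/2]$) is ``for every $T>0$ there exists $\eps=\eps(T)>0$''. Consequently your fallback ``choose the single constant $\eps$ so small that the self-improving inequality closes'' cannot work: $\rho^2e^{\eps\rho^2}$ is not dominated by $Ce^{\eps\rho^2}$ for any $\eps$, and no fixed $\eps$ survives all $T$. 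The inequality does close, but only via the moving-exponent device you gesture at without executing: set $\phi_s(t)=\int e^{s\rho^2}d\sigma_t$, note $\psi_s=\partial_s\phi_s$, so your estimate reads $\partial_t\phi_s\le C_1s\,\phi_s+C_2s^2\,\partial_s\phi_s$, and integrate along the characteristics $\dot s=-C_2s^2$, i.e.\ $s(t)=s_0/(1+C_2s_0t)$; this yields $\phi_{s(T)}(T)\le e^{C_1\int_0^Ts}$ with $s(T)>0$ depending on $T$. Separately, your localization by cutoffs $\chi_R$ is circular as described: the error terms on the annulus $\{R\le\rho\le2R\}$ are of size $e^{4\eps R^2}\sigma_t(\rho\ge R)$, and showing they vanish requires precisely the Gaussian tail you are trying to prove. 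Replacing the cutoff by the exit time $\tau_R$ of the ball of radius $R$ (optional stopping, then $R\to\infty$ using non-explosion and Fatou) removes this circularity.
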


\subsection{Invariance and regularity of the limiting diffusion} \label{Sec-inv-reg}

In this subsection we restrict our attention to a certain family of diffusion processes on nilpotent Lie groups. They are those that arise as limiting processes for right random walks with i.i.d. increments. Using \Cref{Sec-diff-general}, we  prove that every  such   diffusion is smooth, and give a Gaussian upper bound on its density and each of its derivatives (\Cref{density-reg}). 
In the biased case, the diffusion process we have to deal with can be seen in two ways, either as a time-dependent left-invariant diffusion process on $\kg$ with absolutely continuous increments (with generator \eqref{limit-diff-gen} below), or as the restriction to the affine hyperplane $\kg + t\chi$ of a bona fide left-invariant time-homogeneous diffusion process on the bias extension $\tkg$ (\Cref{homogeneisation}). Both points of view are good to bear in mind.

Let $\kg$ be a nilpotent Lie algebra, $\Xab \in \kg/[\kg,\kg]$ a bias, and $\kg=\oplus_{b} \km^{(b)}$ an adapted weight decomposition for the  weight filtration induced by $\Xab$.  We saw in  \Cref{Sec-cadre} that these data define a bias extension $\tkg$, a variable $\chi$ for which  $\tkg=\kg+\R\chi$, and graded Lie bracket or Lie product $[.,.]'$, $*'$ on $\tkg$. 

For the rest of the section, we fix vectors $E_{1}, \dots, E_{q}, B, \YY\in \tkg$ such that the linear span of $\{E_{1}, \dots, E_{q}\}$ is $\km^{(1)}$, $B\in \km^{(2)}$, and $\YY\in \chi +\km^{(2)}$. Our goal is to study the diffusion law $\sigma$ on $(\kg, *')$ whose infinitesimal generator is given by $\LL=(\LL_{t})_{t\in \R^+}$ where 
 \begin{align} \label{limit-diff-gen}
\mathscr{L}_{t}=\frac{1}{2}\sum_{i=1}^{q}  \left(\Ad(t\YY) E_{i}\right)^2+ \Ad(t\YY)B.
 \end{align}
 Here the notation $\Ad$ refers to the adjoint representation for $*'$, namely for every $X,Y$ in $\tkg$, $\Ad(Y)X=Y *' X*' (-Y)$. We will occasionally use the notation $\W$ to refer to a diffusion process with law $\sigma$.

If the random walk on $(\kg,*)$ is driven by a measure $\mu$ with finite second moment and mean $\Xab$ in the abelianization, then we shall prove  (\Cref{Sec-global-thm}) that the limiting process has  infinitesimal generator $\LL=\LL(\mu, (\km^{(b)}))$ such that $E_{1}, \dots, E_{q}$ is a basis of $\km^{(1)}$ in which the covariance matrix of $\mu^{(1)}$ is the identity, $B=\E_\mu(x^{(2)})$, and $\YY=\chi$. We allow here $\YY$ to be more generally in $\chi+ \km^{(2)}$ in order to consider a class of processes that is stable by change of weight decomposition. This flexibility will be useful later for explicit computations of the support of the time-$t$ distribution $\sigma_{t}$. 

\begin{lemme}[Changing the weight decomposition] \label{changing-dec}
Let $\kg=\oplus_{b} \mathring{\km}^{(b)}$ be some other choice of weight decomposition for the weight filtration $(\kg^{(b)})$ induced by $\Xab$.  Call $\mathring{\chi}$, $\mathring{*}'$ the associated objects on $\tkg$.

 Let  $\phi : \tkg\rightarrow \tkg$ be the unique linear map such that $\phi : \tkm^{(b)} \rightarrow \widetilde{\mathring{\km}}^{(b)}$ is the isomorphism  that factorizes via the identity on the vector space $\tkg^{(b)}/\tkg^{(b+1)}$.
 
 Then $\phi$ is a group isomorphism from $(\tkg, *')$ to $(\tkg, \mathring{*}')$. It satisfies $\phi(\km^{(2)}) = \mathring{\km}^{(2)}$ and  $\phi(\chi)\in \mathring{\chi} +\mathring{\km}^{(2)}$.  Moreover, $\phi \circ \sigma$ is the diffusion law on $(\tkg, \mathring{*}')$ determined by the parameters $\phi(E_{i}), \phi(B), \phi(\YY)$. 
\end{lemme}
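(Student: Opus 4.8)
The plan is to settle the assertions in the order they are stated, the transport of the diffusion being the only point of substance. \emph{Step 1: $\phi$ is a group isomorphism.} The key observation is that $(\tkg,[.,.]')$ is, essentially by construction, a concrete model of the graded Lie algebra $gr_{\Xab}(\tkg)$: the linear map $\psi\colon\tkg\to gr_{\Xab}(\tkg)$ sending $x\in\tkm^{(b)}$ to its class in $\tkg^{(b)}/\tkg^{(b+1)}$ is a Lie algebra isomorphism, since for $x\in\tkm^{(i)},y\in\tkm^{(j)}$ one has $[x,y]\in\tkg^{(i+j)}$ by \eqref{nestedg} and $[x,y]'=\pi^{(i+j)}([x,y])\equiv[x,y]\bmod\tkg^{(i+j+1)}$, which is exactly the bracket of the classes of $x$ and $y$ in $gr_{\Xab}(\tkg)$. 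The map $\mathring\psi$ built in the same way from $(\widetilde{\mathring\km}^{(b)})$ is an isomorphism onto the \emph{same} graded Lie algebra, and unwinding the definitions gives $\phi=\mathring\psi^{-1}\circ\psi$; hence $\phi$ is a Lie algebra isomorphism $(\tkg,[.,.]')\to(\tkg,\mathring{[.,.]}')$. Since $*'$ and $\mathring{*}'$ are recovered from their brackets through the (polynomial, because nilpotent) Campbell--Hausdorff formula, a linear Lie algebra isomorphism is automatically a group isomorphism for these products. Because $\kg$ is an ideal for both brackets and $\phi(\kg)=\kg$ (immediate from the description of $\phi$ on each weight space in Step 2), $\phi$ also restricts to a group isomorphism $(\kg,*')\to(\kg,\mathring{*}')$, which is the version actually used since $\sigma$ is a law on $\kg$.

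\emph{Step 2: images of $\km^{(2)}$ and $\chi$.} For $b\neq2$ we have $\tkm^{(b)}=\km^{(b)}$ and $\widetilde{\mathring\km}^{(b)}=\mathring\km^{(b)}$, both contained in $\kg$, so $\phi(\km^{(b)})=\mathring\km^{(b)}$; in particular $\phi(\km^{(1)})=\mathring\km^{(1)}$. For $b=2$: if $x\in\km^{(2)}\subseteq\kg$ then $\phi(x)\in\widetilde{\mathring\km}^{(2)}$ with $\phi(x)\equiv x\bmod\tkg^{(3)}$; as $x$ and $\tkg^{(3)}=\kg^{(3)}$ lie in $\kg$, this gives $\phi(x)\in\kg\cap\widetilde{\mathring\km}^{(2)}=\mathring\km^{(2)}$, whence $\phi(\km^{(2)})=\mathring\km^{(2)}$ by comparing dimensions ($=\dim(\kg^{(2)}/\kg^{(3)})$) and using injectivity. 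For $\chi$, I would use the projection $\varpi\colon\tkg=\kg\oplus\R\to\R$ onto the second factor: it vanishes on $\kg$, hence on $\tkg^{(3)}$, so it descends to $\tkg^{(2)}/\tkg^{(3)}$, and $\phi(\chi)\equiv\chi\bmod\tkg^{(3)}$ yields $\varpi(\phi(\chi))=\varpi(\chi)=1$. Writing $\phi(\chi)=m+\lambda\mathring\chi$ with $m\in\mathring\km^{(2)}\subseteq\ker\varpi$ and $\varpi(\mathring\chi)=1$ forces $\lambda=1$, i.e. $\phi(\chi)\in\mathring\chi+\mathring\km^{(2)}$.

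\emph{Step 3: transport of the diffusion.} Since $\phi$ is a group isomorphism fixing the identity, $\phi\circ\sigma$ is again a continuous left-invariant diffusion law, now on $(\kg,\mathring{*}')$, so by the uniqueness in \Cref{fact-diffusion} it suffices to identify its infinitesimal generator. From the characterisation of the generator and the fact that $\phi$ carries increments to increments one gets $\mathring{\LL}_t f(1)=\LL_t(f\circ\phi)(1)$ for $f\in C^\infty_c$. A linear Lie algebra isomorphism intertwines left-invariant vector fields, $X(f\circ\phi)=(\phi(X)f)\circ\phi$, whence $X^2(f\circ\phi)(1)=(\phi(X)^2f)(1)$, and it intertwines the adjoint representations, $\phi(\Ad_{*'}(t\YY)Z)=\Ad_{\mathring{*}'}(t\phi(\YY))\phi(Z)$. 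Substituting $Z=E_i$ and $Z=B$ gives
\[
\mathring{\LL}_t=\frac12\sum_{i=1}^q\bigl(\Ad_{\mathring{*}'}(t\phi(\YY))\phi(E_i)\bigr)^2+\Ad_{\mathring{*}'}(t\phi(\YY))\phi(B),
\]
which is precisely the generator \eqref{limit-diff-gen} attached to $\phi(E_i),\phi(B),\phi(\YY)$; by Step 2 these parameters are admissible (the $\phi(E_i)$ span $\mathring\km^{(1)}$, $\phi(B)\in\mathring\km^{(2)}$, $\phi(\YY)\in\mathring\chi+\mathring\km^{(2)}$), so $\phi\circ\sigma$ is the diffusion law they determine.

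The step I expect to need the most care is Step 3: one has to verify that all the vectors entering the two generators stay inside the ideal $\kg$, so that $\LL_t$ and $\mathring{\LL}_t$ are genuine left-invariant differential operators on $\kg$ rather than on $\tkg$, and one must track Lie derivatives and the two adjoint representations consistently through $\phi$. Steps 1 and 2, by contrast, are formal once one recognises that $(\tkg,[.,.]')$ and $(\tkg,\mathring{[.,.]}')$ are two coordinate presentations of the single graded Lie algebra $gr_{\Xab}(\tkg)$, and likewise $(\kg,*')$ and $(\kg,\mathring{*}')$ of $gr_{\Xab}(\kg)$.
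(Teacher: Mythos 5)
Your proposal is correct and follows essentially the same route as the paper: both identify $(\tkg,[.,.]')$ and $(\tkg,\mathring{[.,.]}')$ as two presentations of the graded Lie algebra $gr_{\Xab}(\tkg)$, write $\phi$ as the composition of the two identifications to get a Lie algebra (hence group) isomorphism, and then check the weight-$2$ data before transporting the generator. Your Step 2 bookkeeping (the projection $\varpi$ onto the $\R$ factor) and your explicit intertwining computation in Step 3 are just more detailed versions of what the paper leaves as "straightforward."
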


\noindent\emph{Remark}. $\phi(\chi)$ may not be equal to $\mathring{\chi}$. It follows from the proof that the equality case occurs exactly when the respective lifts of $\Xab$ to $\km^{(1)}$ and $\mathring{\km}^{(1)}$ have the same projection in the quotient vector space $\kg/\kg^{(3)}$.

\begin{proof}
For the first claim, recall the graded Lie algebra $gr_{\Xab}(\tkg)$ from \Cref{Sec-cadre}.  A choice of weight decomposition for $\kg$ yields a vector space isomorphism by $\tkg$ to $gr_{\Xab}(\tkg)$, by sending via the identity the subspace of weight $b$ to $\tkg^{(b)}/\tkg^{(b+1)}$. For the choices $\kg= \oplus \km^{(b)}$, $\kg= \oplus \mathring{\km}^{(b)}$, call those maps $pr, \mathring{pr}$. The graded bracket $[.,.]'$ is defined so that $pr : (\tkg, [.,.]')\rightarrow gr_{\Xab}(\tkg)$ is a morphism of Lie algebras, and the same holds for $\mathring{[.,.]}', \mathring{pr}$. As $\phi= \mathring{pr}^{-1}\circ pr$, we deduce that $\phi$ is an isomorphism of Lie algebras from $(\tkg, [.,.]')$ to $(\tkg, \mathring{[.,.]}')$, which implies the claim.

For the second claim, we observe that $pr(\km^{(2)})=\kg^{(2)}/\kg^{(3)}=\mathring{pr}(\mathring{\km}^{(2)})$, so in particular $\phi(\km^{(2)}) = \mathring{\km}^{(2)}$. Now, slightly abusing notations,  denote by  $\Xm\in \km^{(1)}$, $\mathring{\Xm}\in \mathring{\km}^{(1)}$ the respective lifts of $\Xab \mod [\kg, \kg]$. One can write $\Xm=\mathring{\Xm}+\mathring{Y}+\mathring{Z}$ where $\mathring{Y}\in \mathring{\km}^{(2)}$, $Z\in \kg^{(3)}$. This equality can be rewritten as $\chi=\mathring{\chi}+\mathring{Y}+Z$, and by definition of $\phi$, we get 
$$\phi(\chi)=\mathring{\chi}+\mathring{Y}.$$

The third claim is a straightforward consequence of the first claim. The second claim ensures that the type of diffusions  we consider is indeed preserved by $\phi$.
\end{proof}

We now stick with the given weight decomposition $\kg=\oplus_{b} \km^{(b)}$ and present  invariance properties for the diffusion  $\sigma$. Recall that $D_r$ denotes the dilation $D_{r} =\oplus_{b}\,r^{b}\text{Id}_{\km^{(b)}}$.

\begin{proposition}[Invariance] \label{density-inv}
The diffusion law $\sigma$ associated to $\mathscr{L}$ (defined in \cref{limit-diff-gen}) satisfies  for all $r, s,t>0$, 
 \begin{align*}
\sigma_{s}*'\Ad(s\YY)\sigma_{t}= \sigma_{s+t}\,\,\,\,\,\,\,\,\,\,\,\,\,\,\,\,\,\,\,\,\,\,\,\,\,\,\,\,\,\,\,\,\,\,\,\,\,\,\,\,D_{\sqrt{r}}\sigma_{t}=\sigma_{rt} .
 \end{align*}
\end{proposition}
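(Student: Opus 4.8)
The plan is to deduce both identities from the structural description of $\sigma$ as the diffusion law with generator $\mathscr{L}_t$ given in \eqref{limit-diff-gen}, using the uniqueness statement in \Cref{fact-diffusion}. For the first identity $\sigma_s *' \Ad(s\YY)\sigma_t = \sigma_{s+t}$, I would start from the Markov / independent-increments property of a diffusion $\W$ with law $\sigma$: the law of $\W(s+t)$ is the law of $\W(s) *' \big(\W(s)^{-1}*'\W(s+t)\big)$, and the two factors are independent, so $\sigma_{s+t} = \sigma_s *' \rho_{s,t}$ where $\rho_{s,t}$ is the law of the increment $\W(s)^{-1}*'\W(s+t)$. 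As noted right after \Cref{fact-diffusion}, the shifted process $\big(\W(s)^{-1}*'\W(s+r)\big)_{r\ge 0}$ is a diffusion with generator $(\mathscr{L}_{s+r})_{r\ge 0}$, i.e.\ with coefficient vector fields $\Ad((s+r)\YY)E_i$ and $\Ad((s+r)\YY)B$. I then observe that, by the cocycle property $\Ad((s+r)\YY) = \Ad(s\YY)\Ad(r\YY)$ (valid because $\YY$, more precisely $\mathring\chi$-type elements, generate a one-parameter subgroup — or directly because $\Ad(uY)$ is defined via $*'$), applying the automorphism $\Ad(s\YY)$ of $(\tkg,*')$ to a diffusion with generator $\tfrac12\sum (\Ad(r\YY)E_i)^2 + \Ad(r\YY)B$ produces a diffusion with generator $\tfrac12\sum (\Ad((s+r)\YY)E_i)^2 + \Ad((s+r)\YY)B$. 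Hence $\rho_{s,t} = \Ad(s\YY)\sigma_t$ by the uniqueness in \Cref{fact-diffusion}, which gives the first identity. (One must check that pushing a diffusion forward by a Lie group automorphism $\phi$ transforms the generator by $\phi_*$; this is immediate from the martingale characterization since $f\mapsto f\circ\phi$ and $X\mapsto \phi_* X$ are compatible.)

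For the scaling identity $D_{\sqrt r}\sigma_t = \sigma_{rt}$, I would again use that $D_{\sqrt r}$ is an automorphism of $(\tkg,*')$ (recalled in \Cref{dil}), so $D_{\sqrt r}\sigma$ is the diffusion law whose generator at time $u$ has coefficient fields $D_{\sqrt r}\Ad(u\YY)E_i$ and $D_{\sqrt r}\Ad(u\YY)B$. The key computation is the homogeneity: $D_{\sqrt r}$ multiplies $E_i\in\km^{(1)}$ by $\sqrt r$ and $B\in\km^{(2)}$ by $r$; and $D_{\sqrt r}\circ \Ad(u\YY)\circ D_{1/\sqrt r} = \Ad(ru\YY)$, since $\YY\in\chi+\km^{(2)}$ has weight $2$ (conjugation by $D_{\sqrt r}$ scales the weight-$2$ infinitesimal generator $\ad(\YY)$ by $r$). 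Therefore $D_{\sqrt r}\Ad(u\YY)E_i = \sqrt r\,\Ad(ru\YY)E_i$ and $D_{\sqrt r}\Ad(u\YY)B = r\,\Ad(ru\YY)B$. After the time change $t = ru$ (so $dt = r\,du$, which absorbs exactly the factor $r$ in front of $B$ and the factor $r$ coming from $(\sqrt r)^2$ in front of the diffusion sum), the generator $(D_{\sqrt r}\mathscr{L}_u)_u$ of $D_{\sqrt r}\sigma$, read in the time variable $\tau = ru$, coincides with $(\mathscr{L}_\tau)_\tau$. Equivalently, one checks that the martingale property defining $\sigma_\bullet$ is preserved under $(t,x)\mapsto(rt, D_{\sqrt r}x)$, and concludes by uniqueness that $D_{\sqrt r}\sigma_t = \sigma_{rt}$.

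The main obstacle, and the point needing the most care, is the bookkeeping in the scaling identity: one must correctly track how the dilation $D_{\sqrt r}$ interacts with both the quadratic (order-two) part and the drift (order-one) part of $\mathscr{L}$ and how this is compensated by the reparametrisation of time, and in particular verify the conjugation formula $D_{\sqrt r}\,\Ad(u\YY)\,D_{1/\sqrt r} = \Ad(ru\YY)$ using that $\YY$ has weight exactly $2$ (this is where it matters that $\YY\in\chi+\km^{(2)}$ rather than just $\YY\in\tkg$). Everything else is a routine application of the uniqueness of diffusion laws from \Cref{fact-diffusion} together with the fact that $*'$-automorphisms push generators forward by their differential. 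I would present the argument by first stating the lemma ``if $\phi$ is an automorphism of $(\tkg,*')$ then $\phi_*\sigma$ has generator $(\phi_*\mathscr{L}_t)_t$'', then applying it with $\phi=\Ad(s\YY)$ and with $\phi=D_{\sqrt r}$ combined with the time change.
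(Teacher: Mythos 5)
Your proposal is correct and follows essentially the same route as the paper: both identities are reduced, via the uniqueness of the diffusion law attached to a generator, to the two pushforward computations $\mathscr{L}_{t}(f\circ \Ad(s\YY)) = \mathscr{L}_{s+t}(f)\circ \Ad(s\YY)$ (the cocycle property of $\Ad(\cdot\,\YY)$) and $\mathscr{L}_{t}(f\circ D_{\sqrt{r}}) = r\mathscr{L}_{rt}(f)\circ D_{\sqrt{r}}$ (homogeneity, using that $\YY$ has weight $2$), combined with the independent-increments property and the time change $t\mapsto rt$. The paper's proof is just a terser version of exactly this argument.
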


\begin{proof}%[Proof of \Cref{density-inv}] 
For the first identity on the left hand side, it is sufficient to check that the process $(\Ad(s\YY)\W(t))_{t\in \R^+}$ is the diffusion with infinitesimal generator $(\mathscr{L}_{s+t})_{t\in \R^+}$. This follows from definitions and the straightforward computation 
 \begin{align*}
 \mathscr{L}_{t}( f\circ \Ad(s\YY)) = \mathscr{L}_{s+t}( f)\circ \Ad(s\YY)  
 \end{align*}
 
 For the identity on the right hand side, it is sufficient to check that the diffusion processes $(\W(rt))_{t\in \R^+}$ and $(D_{\sqrt{r}} \W(t))_{t\in \R^+}$ have the same infinitesimal generator $(r\mathscr{L}_{rt})_{t\in \R^+}$. This follows on the one hand by a simple change of variables, and on the other hand from the straightforward computation
   \begin{align*}
 \mathscr{L}_{t}( f\circ D_{\sqrt{r}} ) = r\mathscr{L}_{rt}( f)\circ D_{\sqrt{r}}  .
 \end{align*}

\end{proof}

We now study the regularity of $\sigma_t$. We denote by $\normtp{x}$ the  distance between  $0$ and $x$ for a fixed left-invariant Riemannian metric on  $(\tkg, *')$. For $\alpha=(\alpha^i_{j})_{i,j}\in \N^d$, we write $\partial^\alpha$  the operator differentiating (for the addition) $\alpha^i_{j}$ times in each direction $e^{(i)}_{j}$ (see \Cref{basis} for this notation). We also set  $\|\alpha\|=\sum_{i,j} i\alpha^i_{j}$ and recall that $\dd$ is the \emph{homogeneous dimension} of the weight filtration induced by $\Xab$ (\Cref{hom-dim}).

\begin{proposition}[Regularity] \label{density-reg}
The diffusion law $\sigma$ associated to $\mathscr{L}$  
can be written 
$$\sigma_{t}= u(t,x)dx$$ 
where $u : \R_{>0}\times \kg\rightarrow \R^+$ is a smooth function, and for each $\alpha\in \N^d$, there exists $C>0$ such that for all $t>0$, $x\in \kg$, 
 \begin{align}
 \partial^\alpha u(t,x)\leq C t^{-\frac{1}{2}(\dd +\|\alpha\|)}\,e^{-\frac{1}{C}\normtp{D_{\frac{1}{\sqrt{t}}}x}^2}. \label{derivu}
 \end{align}
\end{proposition}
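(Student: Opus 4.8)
\textbf{Proof plan for \Cref{density-reg}.}

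The plan is to deduce both the smoothness of $u$ and the Gaussian-type bound \eqref{derivu} from the general theory collected in \Cref{Sec-diff-general}, together with the scaling invariance of \Cref{density-inv}. First I would verify that the H\"ormander condition of \Cref{fact-hypo} holds for the operator $\partial_t-\frac12\sum_i(\Ad(t\YY)E_i)^2+\Ad(t\YY)B$ on $\R_{>0}\times\kg$ (equivalently, for the time-homogeneous operator $\LL$ on $\tkg$ via the homogenisation point of view). The vector fields $\partial_t$ and $\Ad(t\YY)E_i$ together with $\Ad(t\YY)B$ generate, after iterated brackets (including brackets with $\partial_t$, which differentiate the coefficients in $t$ and bring down the $a_\YY$-action), all of $\tkg$ at every point: this is because the weight filtration was constructed precisely so that $\km^{(1)}$ together with $\chi$ and the iterated brackets $[\XX,\cdot]$ span $\kg$. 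Hence \eqref{eq-hypo} is hypoelliptic, the distributional solution $u=\int_{\R_{>0}}\delta_t\otimes\sigma_t\,dt$ is smooth, and we may write $\sigma_t=u(t,x)\,dx$ for $t>0$.

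Next I would establish the bound \eqref{derivu}. The strategy is: (i) get the bound at $t=1$, and (ii) propagate it to all $t>0$ using the scaling relation $D_{\sqrt t}\sigma_1=\sigma_t$ from \Cref{density-inv}, which translates into $u(t,x)=t^{-\dd/2}u\bigl(1,D_{1/\sqrt t}x\bigr)$ and, after differentiating, produces exactly the prefactor $t^{-\frac12(\dd+\|\alpha\|)}$ since $\partial^\alpha$ has homogeneous degree $\|\alpha\|$ under $D_r$. For step (i), the pointwise derivative bounds on compacta come from the domination principle \eqref{bound-der-int}: on a ball of radius $\sim1$ around a fixed point $x_0$, $\|u(1,\cdot)\|_{C^p}$ is controlled by $\int|u|$ over a slightly larger space-time region, which is at most the total mass $1$. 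To upgrade this local estimate into the global Gaussian decay $e^{-\frac1C\normtp{x}^2}$, I would invoke \Cref{fact-moment} (Kisy\'nski's quadratic exponential moment) for the homogeneous diffusion $\W$ on $(\tkg,*')$: it gives $\int\exp(\eps\, \normtp{y}^2)\,d\sigma_t(y)<\infty$ uniformly for $t$ in a compact interval. Combining this finiteness with the local $C^p$ bound from \eqref{bound-der-int} — applied on a unit ball around $x$ and using that $\sigma_1$ restricted there has mass $\le e^{-\eps\normtp{x}^2/2}\cdot(\text{const})$ by Markov's inequality on the exponential moment — yields $|\partial^\alpha u(1,x)|\le C e^{-\normtp{x}^2/C}$; one has to be a little careful that $\normtp{\cdot}$ on the unit ball around $x$ differs from $\normtp{x}$ by a bounded additive constant, which is fine since $\normtp{\cdot}$ is a left-invariant Riemannian distance.

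The main obstacle I anticipate is the passage from the homogeneous diffusion on $\tkg$ (where Kisy\'nski's estimate and the clean semigroup/scaling structure live) back to the time-dependent picture on $\kg$ where $u(t,x)$ is defined: one must be careful that the time-$t$ density on $\kg$ really is the restriction of the $\tkg$-heat kernel to the hyperplane $\kg+t\chi$ (this is \Cref{homogeneisation}, referenced in the excerpt), and that the Riemannian distances $\normtp{\cdot}$ on $\tkg$ and the induced one on the slice are comparable. A secondary technical point is checking that the constants produced by \eqref{bound-der-int} and \eqref{fact-moment} can be chosen uniformly in $x$ — this follows from left-invariance of $\LL$ under $\km^{(1)}$ and $\km^{(2)}$ translations, but only \emph{along those directions}; the genuinely non-invariant directions are handled precisely by the exponential-moment decay, so the two ingredients are complementary and must be dovetailed carefully. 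Once \eqref{derivu} is proved at $t=1$, the scaling step (ii) is a routine substitution.
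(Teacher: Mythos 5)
Your proposal follows essentially the same route as the paper: H\"ormander/hypoellipticity for smoothness, scaling via \Cref{density-inv} to reduce to $t=1$, the domination principle \eqref{bound-der-int} for local derivative bounds, and Kisy\'nski's exponential moment combined with \Cref{homogeneisation} for the Gaussian decay. One small simplification over your worry about uniformity: since $\LL$ is left-invariant for $*'$ in \emph{all} directions, $(t,y)\mapsto u(t,x*'y)$ is a solution for every $x\in\kg$, so the constant in \eqref{bound-der-int} is automatically independent of $x$; the only residual technicality is converting additive derivatives $\partial^\alpha$ into iterated right $*'$-derivatives (up to polynomial coefficients absorbed by the exponential), which the paper notes explicitly.
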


\noindent\emph{Remarks}. 
%\details{1) The constant $C$ is effective, which means that in principle, given an explicit couple $(G,\mathscr{L})$, one should be able to extract from the proof of \Cref{density-reg}  an explicit value for $C$. This claim uses that certain constants coming from \Cref{Sec-diff-general} are effective. Namely the constant  $\eps$ and the finite upper bound in \Cref{fact-moment}, as well as the  constant $C_{1}$ in \eqref{bound-der-int}  (up to replacing the 
 %$L^1$-norm by the $L^2$-norm which does not affect the proof of \Cref{density-reg}). The assertion about $C_{1}$ can be deduced from the proof of Hormander's hypoellipticity criterion \cite{hormander67}. }
 
1) The distance $\normtp{.}$ depends on the choice of grading $\km^{(b)}$. However any two left invariant geodesic metrics on a connected Lie group are coarsely equivalent and in our setting (\emph{ball box principle}, see e.g. \cite{bellaiche96}) for every Euclidean norm $\|.\|$ on $\tkg$, there exists $\eps\in (0,1]$ such that $$\eps \|.\|^\eps \leq \normtp{.} \leq \eps^{-1}\|.\|$$
outside of a compact neighbourhood of the origin in $\kg$. 
In particular, \eqref{derivu} implies that $u(t,x)$ is a \emph{Schwartz function} in $x$ and \cref{up-dens-bd} holds.

2) We will see in the proof of \Cref{density-reg} that \eqref{derivu} continues to hold for iterated right derivatives with respect to the product $*'$ in place of additive derivatives. 
%It also holds for the product $*$, because $\partial_X^*$ for $X\in \kg^{(i)}$ can be written as a combination with polynomial coefficients of additive derivatives in 
 %$e^{(i')}_{j}$, where $i'\geq i$.

3) The exponential rate of decay in the upper bound is probably not sharp in general. We expect that a $*'$-left invariant Riemannian metric on $\kg$ can be taken in place of  the distance $\normtp{.}$ in \cref{derivu}. This is what happens in the Heisenberg group case or whenever one is in the situation of \Cref{gaussian-criterion}. Not that it would be futile to obtain a similar looking lower bound in general, because the support of $\sigma_t$ may not be full in general as we shall see.

\bigskip

\noindent{\bf Plan of the proof}. The argument to prove \Cref{density-reg} will be in two parts. First, we use \Cref{fact-hypo}  to show that the diffusion $\sigma$ is hypoelliptic, justifying the smoothness of $\sigma_{t}$ and yielding also a principle of domination derivative-integral, that bounds the value of any fixed derivative of $u$  at a point by the integral of $u$ on a small neighborhood. Second, we show that $\sigma_{t}$ has quadratic exponential moment for $\normtp{.}$, by observing that $\sigma$ is the projection to $\kg$ of some  homogeneous (though degenerate) diffusion law on $(\tkg, *')$ and applying \Cref{fact-moment}.

\bigskip
The hypoellipticity of $\sigma$  relies on the following geometrical statement. 
\begin{lemme}\label{prehypo}
We have the equality $[\tkg, \tkg]'= \kg^{(2)}$. In particular, $(\tkg, [.,.]')$ is generated as a Lie algebra by $\km^{(1)} \oplus \R \YY$.  
\end{lemme}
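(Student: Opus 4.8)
The plan is to prove the two assertions of the lemma in turn, deriving the ``in particular'' from the equality $[\tkg,\tkg]'=\kg^{(2)}$ via a standard property of nilpotent Lie algebras. The inclusion $[\tkg,\tkg]'\subseteq\kg^{(2)}$ is a homogeneity statement: by bilinearity it suffices to control a bracket $[u,v]'$ with $u\in\tkm^{(i)}$, $v\in\tkm^{(j)}$, and by construction of $[.,.]'$ such a bracket lies in $\tkm^{(i+j)}$. If $i+j\ge 3$ then $\tkm^{(i+j)}=\km^{(i+j)}\subseteq\kg^{(2)}$; and if $i=j=1$ then $[u,v]'=\pi^{(2)}([u,v])$ with $[u,v]\in[\kg,\kg]\subseteq\kg$, so it has no $\chi$-component and lies in $\km^{(2)}\subseteq\kg^{(2)}$.

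For the reverse inclusion, since $\kg^{(2)}=\bigoplus_{k\ge 2}\km^{(k)}$ it is enough to show $\km^{(k)}\subseteq[\tkg,\tkg]'$ for each $k\ge 2$. I would start from the recursive definition $\kg^{(k)}=[\kg,\kg^{(k-1)}]+[\Xm,\kg^{(k-2)}]$ (with $\kg^{(0)}=\kg^{(1)}=\kg$), split $\kg=\km^{(1)}\oplus\kg^{(2)}$, $\kg^{(k-1)}=\km^{(k-1)}\oplus\kg^{(k)}$ and $\kg^{(k-2)}=\km^{(k-2)}\oplus\kg^{(k-1)}$, and discard every bracket term that falls into $\kg^{(k+1)}$ --- here one uses that $[\kg,\kg^{(k)}]$, $[\kg^{(2)},\kg^{(k-1)}]$ and $[\Xm,\kg^{(k-1)}]$ are all contained in $\kg^{(k+1)}$, the last by the recursion itself --- which leaves
\[
\kg^{(k)}\equiv[\km^{(1)},\km^{(k-1)}]+[\Xm,\km^{(k-2)}]\pmod{\kg^{(k+1)}}.
\]
Applying $\pi^{(k)}$, which is the identity on $\km^{(k)}$ and vanishes on $\kg^{(k+1)}$, and using both $[a,b]'=\pi^{(i+j)}([a,b])$ on $\tkm^{(i)}\times\tkm^{(j)}$ and $[\chi,y]=[\Xm,y]$ for $y\in\kg$, the right-hand side turns into $[\km^{(1)},\km^{(k-1)}]'+[\chi,\km^{(k-2)}]'$, which is a subspace of $[\tkg,\tkg]'$; hence $\km^{(k)}\subseteq[\tkg,\tkg]'$. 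The case $k=2$ is slightly special: there $[\Xm,\kg^{(0)}]=[\Xm,\kg]\subseteq[\kg,\kg]$ is already absorbed, and one simply obtains $\km^{(2)}=[\km^{(1)},\km^{(1)}]'$.

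For the ``in particular'', recall that $(\tkg,[.,.]')$ is nilpotent, being isomorphic to the positively graded Lie algebra $gr_{\Xab}(\tkg)$. By the first part its abelianization is $\tkg/[\tkg,\tkg]'=\tkg/\kg^{(2)}$, which is spanned by the images of a basis of $\km^{(1)}$ together with the class $\bar\chi$ of $\chi$. Since $\YY\in\chi+\km^{(2)}$ and $\km^{(2)}\subseteq\kg^{(2)}$, the subalgebra $\ks$ generated by $\km^{(1)}\cup\{\YY\}$ already maps onto $\tkg/\kg^{(2)}$, i.e. $\ks+[\tkg,\tkg]'=\tkg$; by the standard fact that a subalgebra of a nilpotent Lie algebra surjecting onto its abelianization is the whole algebra, $\ks=\tkg$.

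I do not expect a genuine obstacle: everything is elementary once the filtration is unwound. The points needing care are the bookkeeping --- keeping $[.,.]$ versus $[.,.]'$, $\chi$ versus $\Xm$, and the projections $\pi^{(k)}$ straight --- and the degenerate low-degree cases ($k=2,3,4$, where $\km^{(2)}\subsetneq\tkm^{(2)}$ and the index $k-2$ drops to $0$ or $1$). The single step carrying all the content is the reduction modulo $\kg^{(k+1)}$ of the recursive definition of the weight filtration.
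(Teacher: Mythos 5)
Your proof is correct. The forward inclusion and the deduction of the ``in particular'' are exactly as in the paper. For the reverse inclusion $\kg^{(2)}\subseteq[\tkg,\tkg]'$, however, you take a genuinely different (and arguably cleaner) route. The paper argues by downward induction on $b$ from $2s$ to $2$: it writes an element of $\kg^{(b)}$ as an iterated \emph{non-graded} bracket of elements of $\km^{(1)}$ with enough occurrences of $\Xm$, replaces each $\Xm$ by $\chi$, compares the iterated bracket with its fully graded counterpart modulo $\kg^{(b+1)}$, and absorbs the discrepancy using the induction hypothesis. You instead avoid induction altogether by unwinding the recursion $\kg^{(k)}=[\kg,\kg^{(k-1)}]+[\Xm,\kg^{(k-2)}]$ only one level, discarding everything that falls into $\kg^{(k+1)}$, and applying $\pi^{(k)}$ to land on the identity $\km^{(k)}=[\km^{(1)},\km^{(k-1)}]'+[\chi,\km^{(k-2)}]'$, from which the inclusion is immediate. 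What your version buys is precisely this explicit generation statement for each graded piece (a single graded bracket of lower-weight pieces suffices), which is stronger than what the lemma asks for and is close in spirit to the observation used later in the proof of Lemma~\ref{piece-ab}; what the paper's version buys is that it never needs to know that $\pi^{(k)}(\kg^{(k)})=\km^{(k)}$ or to track the projections, only the single fact that graded and non-graded iterated brackets of weight-homogeneous elements agree modulo the next step of the filtration. Your bookkeeping of the degenerate cases $k=2,3$ (where the index $k-2$ drops to $0$ or $1$ and where $\tkm^{(2)}\neq\km^{(2)}$) is handled correctly.
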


\begin{proof}
The inclusion $[\tkg, \tkg]'\subseteq \kg^{(2)}$ follows directly from the definition of $[.,.]'$. For the converse inclusion, we show by induction that $\kg^{(b)}\subseteq  [\tkg, \tkg]'$ for all $b\in \{2, \dots, 2s\}$. The initial step $\{0\}=\kg^{(2s)}\subseteq [\tkg, \tkg]'$ is clear. Now  assume $\kg^{(b+1)}\subseteq  [\tkg, \tkg]'$. We  consider $x\in \kg^{(b)}$ and show $x\in [\tkg, \tkg]'$. By definition of $\kg^{(b)}$, we may suppose that $x$ is a (non-graded) bracket $x= [v_{1},[v_{2},[ \dots, [v_{a-1}, v_{a}] \dots]]]$ ($a\geq 2$) where the $v_{i}$'s belong to $\km^{(1)}$, and if $j$ denotes the number of occurences of $\Xm$ among them, then $a+j\geq b$. As $[\chi,.]=[\Xm,.]$, we may replace each  $v_{i}$ by $w_{i}=v_{i}\1_{v_{i}\neq \Xm}+ \chi \1_{v_{i}= \Xm}$ without altering the value of the total bracket. By induction hypothesis, we can further assume $a+j=b$.  Now  $x=[w_{1},[w_{2},[ \dots, [w_{a-1}, w_{a}] \dots]]]$ coincides with $[w_{1},[w_{2},[ \dots, [w_{a-1}, w_{a}]' \dots]']']'$ modulo $\kg^{(b+1)}$. As $\kg^{(b+1)}\subseteq  [\tkg, \tkg]'$ by induction hypothesis, it follows that $x\in  [\tkg, \tkg]'$. This proves the induction step, and the claim $[\tkg, \tkg]'= \kg^{(2)}$.

The second claim follows because  $\km^1 \oplus \R \YY$ projects surjectively to $\tkg/\kg^{(2)}$, which we just established to be the abelianization of $(\tkg, [.,.]')$. 
\end{proof}

We  combine \Cref{prehypo} and  \Cref{fact-hypo} to obtain

\begin{lemme} \label{hypo} The vector fields $\Ad(t\YY)E_{1}, \dots, \Ad(t\YY)E_{{q}}, \partial_{t}+\Ad(t\YY)B$ satisfy the H\"ormander condition from \Cref{fact-hypo}. In particular, the differential operator 
$$\partial_{t}- \frac{1}{2}\sum_{i=1}^{q}\left(\Ad(t\YY) E_{i}\right)^2+ \Ad(t\YY)B$$
 is hypoelliptic on $\R_{>0}\times \kg$. 
 \end{lemme}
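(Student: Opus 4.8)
The plan is to deduce the H\"ormander condition of \Cref{fact-hypo} directly from the Lie-generation statement of \Cref{prehypo}, by the standard device of converting brackets with $\partial_{t}$ into the action of the adjoint $\ad(\YY)$ for $*'$. Throughout I identify a vector $Z\in\kg$ with the left-invariant vector field it defines on $(\kg,*')$; since $\kg$ is an ideal of $(\tkg,*')$, each $\Ad(t\YY)$ restricts to a linear automorphism of $\kg$, so the fields $\Ad(t\YY)E_{i}$ and $\Ad(t\YY)B$ genuinely live on $\kg$.

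First I would record the two bracket identities on which the argument rests. Regarding $t\mapsto\Ad(t\YY)Z$ (for $Z\in\kg$) as a $\partial_{t}$-free vector field on $\R_{>0}\times\kg$, one has, using $\tfrac{d}{dt}\Ad(t\YY)=\Ad(t\YY)\ad(\YY)$ and the fact that $\Ad(t\YY)$ is an automorphism of $(\kg,*')$,
\[
[\partial_{t},\Ad(t\YY)Z]=\Ad(t\YY)[\YY,Z]',\qquad[\Ad(t\YY)Z_{1},\Ad(t\YY)Z_{2}]=\Ad(t\YY)[Z_{1},Z_{2}]'.
\]
Let $\mathfrak{a}\subseteq\kg$ be the smallest linear subspace containing $\km^{(1)}$ that is stable under both $[\YY,\cdot]'$ and $[\cdot,\cdot]'$. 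Let $\mathfrak{H}$ be the Lie algebra of vector fields on $\R_{>0}\times\kg$ generated by $\partial_{t},\Ad(t\YY)E_{1},\dots,\Ad(t\YY)E_{q},\Ad(t\YY)B$. The set $\{W\in\kg:\Ad(t\YY)W\in\mathfrak{H}\}$ contains $\km^{(1)}$ (the $E_{i}$ span it) and, by the two identities above together with $\partial_{t}\in\mathfrak{H}$ and the bracket-closedness of $\mathfrak{H}$, is stable under $[\YY,\cdot]'$ and $[\cdot,\cdot]'$; hence it contains $\mathfrak{a}$. So it will be enough to prove $\mathfrak{a}=\kg$. Now $\mathfrak{a}$ is a Lie subalgebra of $(\kg,*')$ (being $[\cdot,\cdot]'$-stable), hence $\mathfrak{a}+\R\YY$ is a Lie subalgebra of $(\tkg,*')$ (using $[\YY,\mathfrak{a}]'\subseteq\mathfrak{a}$ and $[\YY,\YY]'=0$), containing $\km^{(1)}$ and $\YY$. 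By \Cref{prehypo} those generate $(\tkg,*')$, so $\mathfrak{a}+\R\YY=\tkg$; since $\mathfrak{a}\subseteq\kg$, $\YY\notin\kg$ and $\dim\tkg=\dim\kg+1$, this forces $\mathfrak{a}=\kg$.

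It then remains only to assemble the conclusion: at a point $(t,x)\in\R_{>0}\times\kg$ the tangent space is $\R\partial_{t}\oplus T_{x}\kg$; the first summand is reached by $\partial_{t}\in\mathfrak{H}$, and since $\Ad(t\YY)$ is a linear automorphism of $\kg$, the vectors $\{\Ad(t\YY)W:W\in\mathfrak{a}=\kg\}$ evaluate at $x$ to all of $T_{x}\kg$ (left-invariant fields span the tangent space at each point). Thus $\mathfrak{H}$ spans the tangent space at every $(t,x)$, which is precisely the H\"ormander condition, and \Cref{fact-hypo} then yields hypoellipticity of $\partial_{t}-\tfrac{1}{2}\sum_{i}(\Ad(t\YY)E_{i})^{2}+\Ad(t\YY)B$ on $\R_{>0}\times\kg$. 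I do not anticipate a genuine obstacle here: the one structural input, \Cref{prehypo}, is already in hand, and what is left is the bookkeeping around the first bracket identity together with the remark $\mathfrak{a}\subseteq\kg$ --- it is that remark which makes the dimension count pin $\mathfrak{a}$ down to $\kg$ exactly, rather than merely inside $\tkg$.
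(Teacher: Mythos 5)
Your argument is correct and follows essentially the same route as the paper: the bracket identity $[\partial_{t},\Ad(t\YY)Z]=\Ad(t\YY)[\YY,Z]'$, the observation that the fields $\Ad(t\YY)W$ for $W$ in the subalgebra generated by $\km^{(1)}$ and stable under $[\YY,\cdot]'$ all lie in $\mathfrak{H}$, and then \Cref{prehypo} to identify that subalgebra with $\kg$. The only caveat is your final dimension count, which invokes $\YY\notin\kg$ and $\dim\tkg=\dim\kg+1$: this fails verbatim in the centered case $\Xab=0$, where $\tkg=\kg$ and $\YY\in\km^{(2)}\subseteq\kg$, though there $\km^{(1)}$ alone bracket-generates $(\kg,[\cdot,\cdot]')$ so the conclusion $\mathfrak{a}=\kg$ still holds.
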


Remember that for $X\in \kg$, the notation $\Ad(t\YY)X$ refers to the vector field  $\R_{>0}\times \kg\rightarrow T\kg, (t,x)\mapsto TL_{x}(\Ad(t\YY)X)$ where $L_{x}:y\mapsto x*'y$. It is  usually not left-invariant on the product Lie group $\R_{>0}\times \kg$.

\begin{proof}
We start with a few useful observations concerning the Lie brackets of vector fields on the manifold $\R_{>0}\times \kg$. Such brackets are denoted by $[.,.]$. The notation $[.,.]'$ is reserved for the graded bracket of elements in $\tkg$, seen as vectors and not vector fields. First, note that
$$[\partial_{t}, \Ad(t\YY)X]= \sum_{k=1}^s [\partial_{t}, \frac{t^k}{k!} \ad(\YY)^kX]$$
where $\ad(\YY):=[\YY, .]'$. 
Combined with the general formula $[Y,\varphi Z]=\varphi[Y,Z]+Y.\varphi Z$, and the equality $[\partial_{t}, Z]=0$ for every vector field $Z$ on $\kg$ extended to $\R_{>0}\times \kg$ by $\R$-invariance, we get  
$$[\partial_{t}, \Ad(t\YY)X]= \sum_{k=1}^s  \frac{t^{k-1}}{(k-1)!} \ad(\YY)^kX= \Ad(t\YY)[\YY, X]'.$$
Write $\kg^{[i]'}$ the descending central filtration of $(\kg, [.,.]')$, and similarly for $\tkg$. It follows that if $X$ belongs to $\kg^{[i]'}$, noting that $B\in [\tkg, \tkg]'$ (\Cref{prehypo}),  we can rewrite the bracket vector field 
$$[\partial_{t}+\Ad(t\YY)B, \Ad(t\YY)X]= \Ad(t\YY)([\YY, X]' +Z)$$
where $Z:= [B, X]' \in [\tkg^{[2]'}, X]'$ is a vector in $\kg$. 

We now check H\"ormander's condition from \Cref{fact-hypo}. Call $\mathcal{H}$ the Lie algebra  generated by the vector fields $\Ad(t\YY)E_{1}, \dots, \Ad(t\YY)E_{{q}}, \partial_{t}+\Ad(t\YY)B$ on $\R_{>0}\times \kg$. Write $\kh\subseteq \kg$ the subset of vectors $X\in \kg$ such that the vector field corresponding to $\Ad(t\YY)X$ is in $\mathcal{H}$. It is enough to check that $\kh=\kg$. Note that $\kh$  contains $E_{1}, \dots, E_{q}$ by definition. $\kh$ is also a sub Lie algebra of $(\kg, [.,.]')$, and in view of the previous paragraph, if $X\in \kh$ then $[\YY, X]'$ is in  $\kh + [\tkg^{[2]'}, X]'$.  By \Cref{prehypo}, this forces $\kh=\kg$ and concludes the proof.
\end{proof}

\begin{lemme}[Homogenisation] \label{homogeneisation}
Let $\lambda$ be the  \emph{homogeneous}  diffusion law on $(\tkg, *')$ associated to the differential operator 
$$\widetilde{\LL}=\frac{1}{2}\sum_{i=1}^{q}E_{i}^2+ (B+\YY).$$ 
If $W$ is a diffusion process on $\kg$ with law $\sigma$, then  $\Z(t):=\W(t)*'t\YY$ is a diffusion process on $\tkg$ with law $\lambda$. 
\end{lemme}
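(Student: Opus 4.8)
The plan is to invoke the uniqueness statement in \Cref{fact-diffusion}: by definition $\lambda$ is the \emph{unique} diffusion law on $(\tkg,*')$ whose generator is the time-independent operator $\widetilde{\LL}=\tfrac12\sum_{i=1}^q E_i^2+(B+\YY)$, so it suffices to check that $\Z=(\W(t)*'t\YY)_{t\in\R^+}$ is a continuous left-invariant diffusion process on $(\tkg,*')$ whose generator is $\widetilde{\LL}$. Continuity and $\Z(0)=1_{\tkg}$ are immediate. For the independence of increments, first note that $\{r\YY:r\in\R\}$ is a one-parameter subgroup of $(\tkg,*')$, since $[\YY,\YY]'=0$; hence for $s<t$
\[
\Z(s)^{-1}*'\Z(t)=(-s\YY)*'\W(s)^{-1}*'\W(t)*'t\YY=\Ad(-s\YY)\bigl(\W(s)^{-1}*'\W(t)\bigr)*'(t-s)\YY,
\]
which exhibits each increment of $\Z$ over an interval $[t_i,t_{i+1}]$ as a deterministic function (given the times) of the corresponding increment of $\W$; as the increments of $\W$ over disjoint intervals are mutually independent, so are those of $\Z$. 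Moreover, as established in the proof of \Cref{density-inv}, $\W(s)^{-1}*'\W(s+r)$ has law $\Ad(s\YY)\sigma_r$, so the increment of $\Z$ over $[s,s+r]$ has law $\sigma_r*'\delta_{r\YY}$; in particular $\Z$ is homogeneous and its law lies in $\mathcal{Q}$.

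The core step is to identify the generator of $\Z$. Being homogeneous, its generator is a time-independent operator $\widetilde{\LL}'$, and by the converse part of \Cref{fact-diffusion} it is determined by $\widetilde{\LL}'\phi(1_{\tkg})=\lim_{\eps\to0^+}\tfrac1\eps\,\E\bigl[\phi(\Z(\eps))-\phi(1_{\tkg})\bigr]$ for $\phi\in C_c^\infty(\tkg)$. Writing $R_{\eps\YY}:x\mapsto x*'\eps\YY$ for the right translation, and using that $*'$ is polynomial together with the compact support of $\phi$, a Taylor expansion yields $\phi\circ R_{\eps\YY}=\phi+\eps\,(\YY\phi)+O_\phi(\eps^2)$ uniformly on $\tkg$, whence
\[
\E\bigl[\phi(\Z(\eps))\bigr]=\sigma_\eps(\phi\circ R_{\eps\YY})=\sigma_\eps(\phi)+\eps\,\sigma_\eps(\YY\phi)+O_\phi(\eps^2).
\]
Since $\sigma_\eps\to\delta_{1_{\tkg}}$ weakly (continuity of $\W$ and $\W(0)=1_{\tkg}$) and $\YY\phi$ is bounded continuous, $\sigma_\eps(\YY\phi)\to\YY\phi(1_{\tkg})$; and the generator of $\sigma$ at time $0$ is $\tfrac12\sum_iE_i^2+B$ because $\Ad(0\cdot\YY)=\mathrm{Id}$, so $\tfrac1\eps\bigl(\sigma_\eps(\phi)-\phi(1_{\tkg})\bigr)\to\bigl(\tfrac12\sum_iE_i^2+B\bigr)\phi(1_{\tkg})$. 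Combining these, $\widetilde{\LL}'\phi(1_{\tkg})=\bigl(\tfrac12\sum_iE_i^2+B+\YY\bigr)\phi(1_{\tkg})=\widetilde{\LL}\phi(1_{\tkg})$, so $\widetilde{\LL}'=\widetilde{\LL}$ and $\Z$ has law $\lambda$.

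The main obstacle is to make this generator identification fully rigorous: one must argue that the limit formula of \Cref{fact-diffusion} genuinely pins down the law of the homogeneous diffusion $\Z$ (i.e.\ that $\Z$'s generator has the standard form $\tfrac12\sum X_i^2+X_0$, which is the case since $\Z$ is a continuous left-invariant diffusion, cf.\ \cite{hunt56}), and that the Taylor remainder for $\phi\circ R_{\eps\YY}$ is controlled uniformly so that it still integrates to $O(\eps^2)$ against the probability measures $\sigma_\eps$. An equivalent and perhaps more robust route, bypassing the homogeneity shortcut, is to verify the martingale problem for $\widetilde{\LL}$ directly: one feeds the time-dependent test function $(t,x)\mapsto\phi(x*'t\YY)=(\phi\circ R_{t\YY})(x)$ into the martingale property of $\W$ (time-dependent Dynkin formula) and simplifies using the two identities $\partial_t(\phi\circ R_{t\YY})=(\YY\phi)\circ R_{t\YY}$ and $[\Ad(t\YY)X](\phi\circ R_{t\YY})=(X\phi)\circ R_{t\YY}$ for $X\in\tkg$, both consequences of the relation $t\YY*'\exp(\eps\,\Ad(t\YY)X)=t\YY*'(t\YY)^{-1}*'\exp(\eps X)*'t\YY$; this shows that $\phi(\Z(t))-\phi(\Z(s))-\int_s^t\widetilde{\LL}\phi(\Z(r))\,dr$ is a martingale, and since $\W(t)=\Z(t)*'(-t\YY)$ the natural filtrations of $\W$ and $\Z$ coincide, so one may conclude via \Cref{fact-diffusion}.
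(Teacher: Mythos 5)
Your proposal is correct, and the ``more robust route'' you sketch in the last paragraph is in fact exactly the paper's proof: the authors first establish the time-dependent Dynkin formula \eqref{hom1} for $F\in C^\infty_c(\R^+\times\kg)$ (this is the only part requiring real work, done via an interchange-of-integrals computation), then plug in $F(t,x)=f(x*'t\YY)$ and simplify using precisely your two identities $\partial_t(f\circ R_{t\YY})=(\YY f)\circ R_{t\YY}$ and $(X.F)(r,\cdot)=(\Ad(-r\YY)X.f)\circ R_{r\YY}$ (equivalently $[\Ad(r\YY)X](f\circ R_{r\YY})=(Xf)\circ R_{r\YY}$), which converts $(\partial_t+\LL_r)F$ into $\widetilde{\LL}f$ evaluated at $\Z(r)$. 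Your primary route --- establishing homogeneity of $\Z$ via \Cref{density-inv} and computing the infinitesimal generator at the identity by Taylor expansion of $\phi\circ R_{\eps\YY}$ --- is a genuinely different argument; it is shorter but, as you correctly flag, it only closes once one knows that the law of a continuous left-invariant homogeneous diffusion is determined by its generator at the identity, i.e.\ it outsources the uniqueness to Hunt's theory rather than to the martingale characterization of \Cref{fact-diffusion}. Since \Cref{fact-diffusion} as stated only gives existence and uniqueness of the diffusion law \emph{given} the generator (the displayed limit recovers $\LL$ from $\sigma$, not conversely), the martingale-problem route is the one that matches the paper's framework, and it is the one you should retain; the only ingredient it uses beyond \Cref{fact-diffusion} is the time-dependent version \eqref{hom1} of the Dynkin formula, which does require the short justification the paper supplies.
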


%$\lambda$ is the image of $\sigma$ by the embedding $\Omega(\kg)\rightarrow \Omega(\tkg), \,(c(t))\mapsto (c(t)*'t\YY)$. 

%In other terms, if $W$ denotes the diffusion process generated by $\LL$, then  $\Z(t):=\W(t)*'t\YY$ is the diffusion process generated by $\widetilde{\LL}$.

\bigskip

\noindent\emph{Remarks}.  
1) The diffusion $\lambda$ is not hypoelliptic.

2)
 Writing $\LL_{+}:=\frac{1}{2}\sum_{i=1}^{q}\Ad(t(B+\YY))E_{i}^2$, we see that $\widetilde{\LL_+}=\widetilde{\LL}$. In particular \Cref{homogeneisation} implies that the process $\W(t)*'t\YY*'(-t(B+\YY))$ is a diffusion on $(\kg, *')$ with infinitesimal generator $\LL_{+}$.  This trick  allows to suppress the drift parameter in the generator. It will be useful later for explicit computations of the support of $\sigma$ (\Cref{Sec-support}).

\begin{proof}
We start by noticing that for every $s\geq0$, if $F\in C^{\infty}_{c}(\R^+\times \kg)$ is a $\mathcal{F}_{s}$-measurable variable, then the stochastic process $(I(t))_{t\geq s}$ given by
\begin{align} \label{hom1}
I(t):=F(t, c(t))-F(s, c(s))- \int_{s}^t (\partial_{e_{1}}+\LL_{r})F(r, c(r)) dr
\end{align}
is a martingale on $(\Omega(\kg), (\mathcal{F}_{t})_{t\geq s}, \sigma)$. Here $\partial_{e_1}$ denotes the partial derivative with respect to the $\R^+$ variable. Indeed, for some martingales $(M_{i}(t))_{i\leq 3}$, we have
\begin{align*}
&F(t, c(t))-F(t, c(s))- \int_{s}^t (\LL_{r})F(r, c(r)) dr\\
&=  \int_{s}^t \LL_{r}[F(t,.)-F(r,.)]  (c(r)) dr +M_{1}(t)\\
&=  \int_{s}^t \LL_{r} \left[\int_{r}^t \partial_{e_{1}}F(z,.) dz\right] \,(c(r)) dr +M_{1}(t)\\
&=  \int_{s}^t \int_{r}^t  \LL_{r}\left[ \partial_{e_{1}}F(z,.) \right] dz \,(c(r)) dr +M_{1}(t)\\
&=  \int_{s}^t \int_{s}^z  \LL_{r}\left[ \partial_{e_{1}}F(z,.) \right](c(r))  dr dz +M_{1}(t)\\
&=  \int_{s}^t \partial_{e_{1}}F(z,c(z)) -\partial_{e_{1}}F(z,c(s))+M_{2}(z) \,dz +M_{1}(t)\\
&=  \int_{s}^t \partial_{e_{1}}F(z,c(z))  \,dz -F(t, c(s))+F(s, c(s))+M_{3}(t)
\end{align*}
which rewrites as $I(t)=M_{3}(t)$. 

Now set $F(s, x)=f(x*'s\YY)$ where  $f\in C^{\infty}_{c}(\kg)$ is a $\mathcal{F}_{s}$-measurable variable. Observe that 
\begin{align}\label{hom2}
 \partial_{e_{1}}F(r, c(r)) =(\YY .f)(c(r)*'r\YY)
 \end{align}
and for any vector $X\in \kg$,
\begin{align} \label{hom3}
(X.F)(r, c(r)) = (\Ad(-r\YY)X .f)(c(r)*'r\YY).
\end{align}
The result now follows by combining \eqref{hom1}, \eqref{hom2}, \eqref{hom3}.
 \end{proof}

We now combine \Cref{Sec-diff-general}, Lemmas \ref{hypo}, \ref{homogeneisation} to prove \Cref{density-reg}.

\begin{proof}[Proof of \Cref{density-reg}]
 From \Cref{hypo} and the background material from \Cref{Sec-diff-general}, one may write $\sigma_{t}= u(t,x)dx$ 
where $u : \R_{>0}\times \kg\rightarrow \R^+$ is a smooth function. The  relation $\sigma_{t}=D_{\sqrt{t}}\sigma_{1}$ obtained in \Cref{density-inv} can be restated as 
 \begin{align}
 u(t,x)=t^{-\frac{\dd}{2}}u(1, D_{\frac{1}{\sqrt{t}}}x ). \label{equivu}
 \end{align}
As $D_{\frac{1}{\sqrt{t}}}$ acts linearly on $\kg$, and $D_{\frac{1}{\sqrt{t}}}e^{(i)}_{j}=t^{-i/2}e^{(i)}_{j}$, the upper bound  \eqref{derivu} reduces to the case where $t=1$. 

Notice also that, in order to prove the upper bound \eqref{derivu},
 we may replace the additive derivative $\partial^\alpha u(t,x)$ by an iterated multiplicative derivative $(\underline{X} u)(1,x)$ where $\underline{X}=X_{1}\dots X_{k}$ is a finite sequence of vectors in $\kg$. Indeed, every additive derivation with respect to some $e^{(i)}_{j}$ can be written as a combination  with polynomial coefficients (in the $x$ variable) of right derivations for $*'$, and this remains true under iteration. The polynomial coefficients appearing do not matter, as they are absorbed by the exponential term (here $t=1$).

 %We now aim to apply the bound derivative-integral from the comments that follows \Cref{fact-hypo}.
 We  introduce the bounded neighborhood of $(1,0)\in \R\times \kg$ given by  $\Omega :=]\frac{1}{2}, \frac{3}{2}[ \times \{x\in \kg, \, \normtp{x}<1\}$. For  $x\in \kg$, $(t,y)\mapsto u(t,x*'y)$ is a solution to \ref{eq-hypo}. Therefore   \Cref{bound-der-int} guarantees the existence of a constant $C_{1}>0$ that is independent of $x$ and for which
\begin{equation}\label{eqderint}
    |\underline{X}u(1,x)| \leq C_{1} \int_{\Omega}  u(t,x*'y)\,dt dy .
\end{equation}
% i.e.
 %$$|\underline{X}u(1,x)| \leq C_{1} \int_{\Omega} u(t,x*'y)\,dt dy $$   
On the other hand, the combination of \Cref{fact-moment} applied to $H:=(\tkg,*')$ and \Cref{homogeneisation} implies that 
$$I:=\sup_{ t\in [0,3/2]} \int_{\kg} \exp(\eps \normtp{y*'t\chi}^2) u(t,y)\, dy<\infty$$
for some $\eps >0$. 
For $(t,y) \in \Omega$, we have $\normtp{x*'y*'t\chi} \ge \normtp{x}/2$ whenever $\normtp{x}\geq 2(1+\sup_{t \in [0,3/2]}\normtp{t\chi})$. Hence 
$$\int_{\Omega} u(t,x*'y)\,dt dy\leq I \exp(-\frac{\eps}{4} \normtp{x}^2).$$
Finally, from \cref{eqderint} we get:
 $$|\underline{X}u(1,x)| \leq C_{1} I \exp(-\frac{\eps}{4} \normtp{x}^2) $$   
which concludes the proof.

\end{proof}

\subsection{Support of the limiting diffusion} \label{Sec-support}

We keep the notations of \Cref{Sec-inv-reg} and describe the support of the distribution $\sigma_{1}$. After a general and explicit description, we turn to two remarkable examples: for the upper triangular nilpotent Lie algebra, the support of $\sigma_{1}$ is always the full Lie algebra, whereas for non-centered walks (i.e. $\Xab\neq 0$) on a free nilpotent Lie algebra of step at least $3$, the support is always a proper closed subset. The emergence of a limiting diffusion without full support is  unexpected, and specific to the non-centered case.

\bigskip
Let us begin with a general description of the support $\SS$ of $\sigma_{1}$. It is clearly a closed subset of $\kg$, with dense interior because $\sigma_{1}$ is smooth. We use the \emph{support theorem} (\Cref{fact-support}) to give an explicit description in our context.

Recall that if $I \subseteq \R$ is an open interval and $\gamma : I \rightarrow \kg $ is $C^1$, we may define its (multiplicative) derivative at $t\in I$ by 
$$\partial \gamma (t)= \lim_{\eps \underset{\neq}{\to} 0} \frac{1}{\eps} \gamma(t)^{-1} *' \gamma(t+\eps).$$
When $*'$ is abelian, it is just the usual derivative $\gamma'(t)$, and we have in general the relation $\gamma'(t)=T L_{\gamma(t)} \partial \gamma(t)$ where $T L_{\gamma(t)} $ denotes the tangent map of the left $*'$-multiplication by $\gamma(t)$.

A theorem of Strichartz \cite{strichartz87} ensures that the derivation $\partial$ admits an inverse, generalizing the  classical Riemann integral in the abelian case, expressed in terms of \emph{iterated integrals} (\cite{chen}) as follows. Given a bounded and measurable path $c :[a,b] \rightarrow \kg$,  set 

\begin{align} \label{strich-formula}
\int^{*'}_{[a,b]}c \,&:= \sum_{r=1}^s \sum_{\tau\in \mathfrak{S}_{r}} \frac{(-1)^{e(\tau)}}{r^2 \binom{n-1}{e(\tau)}} \int_{\{a<t_{1}<\dots <t_{r}<b\}} L^{[r]'}(c(t_{\tau(1)}), \dots, c(t_{\tau(r)})) \,dt_{1}\dots dt_{r} \nonumber\\ 
&\,\\
&= \int_{[a,b]}c(t)dt \,+\, \frac{1}{2}\int_{\{a<t_{1}<t_{2}<b\}} [c(t_{1}), c(t_{2})]' dt_{1}dt_{2} \nonumber \\
&\,\,\,+\frac{1}{6}\int_{\{a<t_{1}<t_{2}<t_{3}<b\}} [[c(t_{1}), c(t_{2})]', c(t_{3})]' \,+\, [[c(t_{3}), c(t_{2})]', c(t_{1})]'dt_{1}dt_{2}dt_{3}\,+\,\dots \nonumber
\end{align}
where $L^{[r]'}$ refers to $r$ iterations of the bracket  $[.,.]'$ (see \Cref{Sec-bi-grading}) and $e(\tau)$ represents the number of consecutive subscripts whose order is reversed under the permutation $\tau$, i.e. $e(\tau)=\sharp\{i \in \{1, \dots, r-1\},\, \tau(i)>\tau(i+1)\}$.

\begin{fact}[Strichartz \cite{strichartz87}] \label{fact-integration} The map that sends $c$ to $\gamma(t)=\int_{[a,t]}^{*'} c(s)ds$ is a bijection between the set of piecewise continuous paths $c$ on $[a,b]$ and the set of continuous piecewise $C^1$ paths $\gamma$ on $[a,b]$. The inverse is given by the multiplicative derivative $\gamma\mapsto \partial \gamma$.  
\end{fact}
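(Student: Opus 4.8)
The statement is attributed to Strichartz, but here is how one could reprove it. The plan is to translate it into a question about an ordinary differential equation on the group $(\kg,*')$, where the existence of a ``primitive'' $\gamma$ and its recovery from $\partial\gamma$ both become standard, and then to identify that primitive with the explicit iterated-integral series.

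First I would note that, by the very definition of the multiplicative derivative and the relation $\gamma'(t)=TL_{\gamma(t)}\,\partial\gamma(t)$ recalled above, a continuous piecewise $C^1$ path $\gamma\colon[a,b]\to\kg$ satisfies $\partial\gamma=c$ outside a finite set if and only if $\gamma$ is a Carathéodory solution of the flow equation $\gamma'(t)=TL_{\gamma(t)}\,c(t)$ driven by the time-dependent left-invariant vector field $t\mapsto\widetilde{c(t)}$ on $(\kg,*')$, with prescribed value $\gamma(a)=0$. The right-hand side of this equation is polynomial in the space variable (hence locally Lipschitz) and piecewise continuous, hence $L^1$, in $t$; so Carathéodory's existence theorem produces a local solution and a Gronwall estimate gives uniqueness. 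Since $(\kg,*')$ is simply connected nilpotent, in linear (exponential) coordinates the equation is polynomial in the space variable and triangular with respect to the descending central series, so its solutions cannot explode in finite time and therefore extend to all of $[a,b]$; at each point of continuity of $c$ the solution is $C^1$, so $\gamma$ is continuous and piecewise $C^1$. Conversely every continuous piecewise $C^1$ path based at $0$ has a well-defined piecewise continuous multiplicative derivative. The uniqueness in the Carathéodory problem then shows that $c\mapsto(\text{the solution }\gamma)$ and $\gamma\mapsto\partial\gamma$ are mutually inverse bijections between piecewise continuous $c$ on $[a,b]$ and continuous piecewise $C^1$ $\gamma$ on $[a,b]$ based at the identity (for a general base point one quotients by left translations, or simply pins the base point as in the formula defining $\int^{*'}$).

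It then remains to check that this unique solution coincides with the series $\int^{*'}_{[a,t]}c$. I would establish this via the product (time-ordered) integral: partition $[a,t]$ with mesh tending to $0$, form the ordered group products $g_N$ of the elementary factors $\exp\!\big(c(s_j)\,\Delta_j\big)$, and invoke the Euler-scheme convergence of such products to the solution of the flow equation (a Gronwall argument, as in Chen \cite{chen}). Expanding each factor and multiplying out by the Campbell--Hausdorff formula, which terminates after $s$ steps because $(\kg,*')$ is $s$-step nilpotent, the limit is a \emph{finite} sum of iterated Riemann integrals indexed by word length $r\le s$; collecting the Dynkin-type rational coefficients reproduces exactly the coefficients in \eqref{strich-formula}. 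An alternative is a direct verification: set $\Gamma(t)=\int^{*'}_{[a,t]}c$, observe $\Gamma(a)=0$, differentiate the finitely many iterated integrals term by term in the upper limit, expand $\Gamma(t)^{-1}*'\Gamma(t+\eps)$ to first order in the graded algebra, and check that $\partial\Gamma(t)=c(t)$; then conclude by the uniqueness obtained above.

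The ODE reformulation and the existence/uniqueness are routine, and nilpotency makes the global-existence point essentially free. The genuine obstacle is the bookkeeping that matches the output of the iterated Campbell--Hausdorff expansion with the precise rational coefficients in Strichartz's closed form --- this is the combinatorial core of \cite{strichartz87}. I should add that if one only needs the weaker fact that $\gamma$ admits \emph{some} finite expression as a polynomial in iterated integrals of $c$ --- which is all that the subsequent description of the support of $\sigma_1$ actually uses --- then this last step can be bypassed entirely; the closed form is what forces the coefficient computation.
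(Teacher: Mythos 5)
The paper offers no proof of this statement: it is imported verbatim from Strichartz \cite{strichartz87} as a black box, so there is no in-text argument to compare yours against. Judged on its own terms, your strategy is the right one and is essentially how the result is proved. The reformulation of $\partial\gamma=c$ as the flow equation $\gamma'(t)=TL_{\gamma(t)}c(t)$, $\gamma(a)=0$, is correct, and your existence/uniqueness argument is sound; in fact the triangular structure you invoke for global existence (the $\km^{(i)}$-component of $TL_x(v)$ depends only on $v$ and on $x^{(j)}$ for $j<i$, since brackets raise the grading) already gives an explicit layer-by-layer construction of the primitive, making Carath\'eodory and Gronwall almost superfluous. You are also right to pin down the base point $\gamma(a)=0$, which the statement leaves implicit.

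The one genuine gap is the one you name yourself: the Fact asserts not merely that $\partial$ has an inverse, but that the inverse is given by the \emph{specific} series \eqref{strich-formula}, with those particular coefficients $(-1)^{e(\tau)}/\bigl(r^2\binom{r-1}{e(\tau)}\bigr)$. Your argument establishes the bijection abstractly and reduces the Fact to verifying that this series solves the flow equation (equivalently, that the Euler-scheme/Campbell--Hausdorff expansion collects into exactly these coefficients), and that verification is deferred to \cite{strichartz87}. So as written your proposal does not prove the statement as stated; it proves a weaker existence-and-uniqueness assertion plus a correct reduction. Your closing observation is accurate and worth retaining: the support computations in \Cref{Sec-support} use only the flow property, the vanishing of the $r\geq 2$ terms when the values of $c$ pairwise commute, and the leading term $\int c$, none of which requires the exact coefficients — so for the purposes of this paper the weaker statement you actually prove would suffice.
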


Recall from \Cref{Sec-diff-general} that a piecewise $C^1$ path $\gamma : [a,b]\rightarrow \kg$ is said to be $\LL$-horizontal if 
$$\partial \gamma(t) \in \Ad (t\YY)(\km^{(1)}+B)$$ 
for any $t$ outside of the finite set of discontinuity points of  $\partial \gamma$. 
Combining \Cref{fact-support} and  \Cref{fact-integration}, we get

\begin{lemme} \label{support-integral}
The support  of $\sigma_{1}$ is the closure of the set 
$$\left\{ \int^{*'}_{[0,1]}\Ad(t \YY) (u(t) +B) \,dt  \,:\, u :[0,1]\rightarrow \km^{(1)} \text{ piecewise continuous } \right\}.$$
\end{lemme}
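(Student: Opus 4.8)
The plan is to read off the lemma from two black-box results already recorded: the Stroock--Varadhan support theorem (\Cref{fact-support}) and Strichartz's identification of piecewise $C^1$ paths with multiplicative integrals (\Cref{fact-integration}). First I would apply \Cref{fact-support} to the diffusion $\sigma$ on the group $(\kg,*')$, whose identity element is $0$: it says that $\SS=\supp\sigma_1$ is the closure of the set of endpoints $\gamma(1)$ of continuous piecewise $C^1$ paths $\gamma\colon[0,1]\to\kg$ with $\gamma(0)=0$ that are $\LL$-horizontal, i.e.\ such that $\partial\gamma(t)\in\Ad(t\YY)(\km^{(1)}+B)$ for all $t$ outside a finite set. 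So it suffices to parametrise these horizontal paths and then pass to endpoints and closures.

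Next I would invoke \Cref{fact-integration}: the multiplicative integral $c\mapsto\bigl(t\mapsto\int^{*'}_{[0,t]}c(s)\,ds\bigr)$ is a bijection from the set of piecewise continuous paths $c\colon[0,1]\to\kg$ onto the set of continuous piecewise $C^1$ paths $\gamma\colon[0,1]\to\kg$ with $\gamma(0)=0$, with inverse $\gamma\mapsto\partial\gamma$. Under this bijection, $\LL$-horizontality of $\gamma$ is exactly the condition that $c(t):=\partial\gamma(t)$ lies in the affine subspace $\Ad(t\YY)(\km^{(1)}+B)$ for all but finitely many $t$. Thus the endpoints of $\LL$-horizontal paths are precisely the values $\int^{*'}_{[0,1]}c(s)\,ds$ as $c$ runs over piecewise continuous paths obeying this pointwise constraint.

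It remains to match such constrained $c$'s with the parametrisation by $u\colon[0,1]\to\km^{(1)}$. Since $(\tkg,*')$ is nilpotent, $t\mapsto\Ad(t\YY)$ is a polynomial family of linear automorphisms of $\kg$ (each $\Ad(t\YY)$ being unipotent), so $c$ is piecewise continuous iff $t\mapsto\Ad(-t\YY)c(t)$ is. Given a constrained $c$, I would set $u(t):=\pi^{(1)}\bigl(\Ad(-t\YY)c(t)\bigr)$; because $B\in\km^{(2)}$ and $\Ad(-t\YY)c(t)\in\km^{(1)}+B$, one has $\Ad(-t\YY)c(t)=u(t)+B$, and $u$ is piecewise continuous and $\km^{(1)}$-valued. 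Conversely any piecewise continuous $u\colon[0,1]\to\km^{(1)}$ yields the constrained path $c(t)=\Ad(t\YY)(u(t)+B)$. Combining this with the previous paragraph and evaluating at $t=1$ gives the claimed description of $\SS$ after taking closures.

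I do not anticipate a genuine obstacle; the only care needed is bookkeeping: aligning the regularity classes (``piecewise continuous $c$'' versus ``piecewise $C^1$ $\gamma$'') between the two cited facts, checking that the horizontality condition is transported correctly through the multiplicative derivative, and verifying that the change of parameter $c\leftrightarrow u$ preserves piecewise continuity. All of these are routine given that $t\mapsto\Ad(t\YY)$ is polynomial, and the substantive input is entirely contained in \Cref{fact-support} and \Cref{fact-integration}.
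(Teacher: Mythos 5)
Your proposal is correct and is exactly the argument the paper intends: the paper derives this lemma in one line by combining \Cref{fact-support} with \Cref{fact-integration}, and your write-up simply fills in the routine bookkeeping (transporting horizontality through the multiplicative derivative and reparametrising the constrained derivative $c(t)=\Ad(t\YY)(u(t)+B)$ by $u\in\km^{(1)}$). No gaps.
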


This  formula has the advantage of reducing the computation of the support to the problem of solving a finite explicit family of polynomial equations in the variable $v$. We will use this point of view to show that walks on the upper triangular unipotent subgroup always have a fully supported limit distribution. 

We also present a discrete characterization of the support  of $\sigma_{1}$ that will be useful below to describe an example with non full support. Given vectors $u_{1}, \dots, u_{k}\in \tkg$, we set $\prod^{*'}_{1\leq i\leq k}u_{i}=u_{1}*'\dots*'u_{k}$. 

\begin{lemme}\label{support-sum}
The support of $\sigma_{1}$ is the closure of the set
$$\left\{ \prod^{*'}_{1\leq i\leq k} (u_{i}*'t_{i}(B+\YY)): \,\,k\geq 1, \, u_i\in \km^{(1)},\, t_{i}\geq 0, \,\sum_{i=1}^k t_{i}=1 \right\}*'(-\YY).$$
 \end{lemme}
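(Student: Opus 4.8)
The plan is to reduce, via the homogenisation trick, to the time-homogeneous diffusion $\lambda$ and then read off the support from \Cref{fact-support}, matching its output with the announced set.

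\textbf{Step 1 (homogenisation and reduction to $\lambda$).} By \Cref{homogeneisation}, if $\W$ is a diffusion on $(\kg,*')$ with law $\sigma$, then $\Z(t):=\W(t)*'t\YY$ is a diffusion on $(\tkg,*')$ with the \emph{homogeneous} law $\lambda$ whose generator is $\widetilde{\LL}=\tfrac12\sum_{i=1}^q E_i^2+(B+\YY)$. Since $\W(1)=\Z(1)*'(-\YY)$ and right translation $x\mapsto x*'(-\YY)$ is a homeomorphism of $\tkg$ (hence commutes with closures), it suffices to identify $\supp\lambda_1$. As $\Z(0)=\W(0)=0$, and $\widetilde{\LL}$ is time-independent with the $E_i$ spanning $\km^{(1)}$ and constant drift $B+\YY$, \Cref{fact-support} gives $\supp\lambda_1=\overline{\cA}$, where $\cA$ is the set of endpoints $\gamma(1)$ of continuous, piecewise-$C^1$ paths $\gamma:[0,1]\to\tkg$ with $\gamma(0)=0$ whose multiplicative derivative (for $*'$) lies in the affine subspace $\km^{(1)}+(B+\YY)$ off a finite set. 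Writing $\cS$ for the product set appearing in the statement (before the final $*'(-\YY)$), the lemma reduces to $\overline{\cA}=\overline{\cS}$. Throughout I will use that, under the identification of $(\tkg,*')$ with its Lie algebra via $\exp$, a path with constant multiplicative derivative $w$ based at $p$ is $s\mapsto p*'(sw)$; consequently, concatenating such arcs along a subdivision $0=\tau_0<\dots<\tau_k=1$ with velocities $w_i$ produces the endpoint $\prod^{*'}_{i=1}^k\big((\tau_i-\tau_{i-1})\,w_i\big)$, the durations summing to $1$.

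\textbf{Step 2 ($\cS\subseteq\overline{\cA}$).} One may assume all $t_i>0$: a point of $\cS$ with a vanishing $t_i$ is a limit of such points, since $u_i*'t_i(B+\YY)\to u_i$ as $t_i\to0^+$ and an arbitrarily small fraction of the total time can be shifted into the vanishing slots. Given $\prod^{*'}_{1\le i\le k}(u_i*'t_i(B+\YY))$ with $t_i>0$, and $\eps_i\in(0,t_i)$, take the path made of $k$ phases, the $i$-th phase being a short fast excursion of duration $\eps_i$ and constant velocity $\eps_i^{-1}u_i+(B+\YY)$ (which displaces by $u_i+\eps_i(B+\YY)$) followed by a pure-drift arc of duration $t_i-\eps_i$ and velocity $B+\YY$. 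Both velocities lie in $\km^{(1)}+(B+\YY)$, the path is based at $0$, and its total duration is $\sum_i(\eps_i+(t_i-\eps_i))=\sum_i t_i=1$, so its endpoint $\prod^{*'}_{1\le i\le k}\big((u_i+\eps_i(B+\YY))*'(t_i-\eps_i)(B+\YY)\big)$ lies in $\cA$; letting all $\eps_i\to0^+$ it converges to $\prod^{*'}_{1\le i\le k}(u_i*'t_i(B+\YY))$, so the latter is in $\overline{\cA}$.

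\textbf{Step 3 ($\cA\subseteq\overline{\cS}$).} Given $\gamma\in\cA$ with multiplicative derivative $v(s)+(B+\YY)$, $v$ piecewise continuous in $\km^{(1)}$: by continuity of Strichartz's iterated-integral map (\Cref{fact-integration}), replacing $v$ by a piecewise-constant approximation perturbs the endpoint arbitrarily little, so up to a small error the endpoint is $\prod^{*'}_{i=1}^k\big(t_i(v_i+(B+\YY))\big)$ for some $v_i\in\km^{(1)}$ and $\sum t_i=1$. Subdividing each piece into $n$ equal parts and using the Baker–Campbell–Hausdorff formula for $*'$ (equivalently Lie–Trotter with $a=t_iv_i$, $b=t_i(B+\YY)$) to write $\tfrac{t_i}{n}(v_i+(B+\YY))=(\tfrac{t_i}{n}v_i)*'(\tfrac{t_i}{n}(B+\YY))+O(n^{-2})$ uniformly in $i$, the $kn$ replacements cost $O(n^{-2})$ each, hence $O(n^{-1})$ in total, and the endpoint is the limit as $n\to\infty$ of $\prod^{*'}_{1\le i\le k}\prod^{*'}_{1\le\ell\le n}\big((\tfrac{t_i}{n}v_i)*'(\tfrac{t_i}{n}(B+\YY))\big)$; every factor is of the form $u*'\tau(B+\YY)$ with $u\in\km^{(1)}$, $\tau=\tfrac{t_i}{n}\ge0$, and all $\tau$'s sum to $\sum_i n\cdot\tfrac{t_i}{n}=1$, so this product lies in $\cS$ and $\gamma(1)\in\overline{\cS}$. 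Combining Steps 2 and 3, $\overline{\cA}=\overline{\cS}$, whence $\supp\sigma_1=\overline{\cS}*'(-\YY)$, the announced set.

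\textbf{Main obstacle.} Everything outside the two inclusions is bookkeeping (homogenisation, invoking Facts \ref{fact-support} and \ref{fact-integration}, and the fact that right translation is a homeomorphism). The delicate point is the mismatch between the two descriptions: a horizontal path always carries the drift $B+\YY$ in its velocity, whereas $\cS$ features ``pure'' $\km^{(1)}$-increments $u_i$ with no drift attached. Bridging this — inserting the $u_i$ as limits of fast excursions in Step 2, and peeling the drift off $\tfrac{t_i}{n}(v_i+(B+\YY))$ via BCH/Lie–Trotter in Step 3 — while keeping the total time normalized to $1$ is where the care is needed.
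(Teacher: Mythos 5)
Your proof is correct. It shares the paper's skeleton — describe the support through horizontal paths via the support theorem, then prove one inclusion by discretisation and the other by concentrating the driving vector — but you organise the reduction differently. The paper first treats the driftless case $B=0$ working on $\kg$ with the time-dependent horizontality condition $\partial\gamma(t)\in\Ad(t\YY)\km^{(1)}$, i.e. starting from \Cref{support-integral}: the inclusion of the support into the closure of the product set comes from a Riemann-sum decomposition of the multiplicative integral $\int^{*'}\Ad(t\YY)u(t)\,dt$, in which the factors $t_i\YY$ appear by telescoping the conjugations $\Ad(\tfrac{i}{k}\YY)$; the reverse inclusion comes from driving vectors $u_\eps$ concentrated on intervals of length $\eps$, and the general $B$ is only handled at the end via \Cref{homogeneisation}. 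You instead homogenise at the outset, apply \Cref{fact-support} directly to the time-homogeneous diffusion $\lambda$ on $(\tkg,*')$, and work with the constant horizontality condition $\partial\gamma\in\km^{(1)}+(B+\YY)$; this removes both the drift and the $\Ad(t\YY)$ twisting from the two inclusions, at the price of the Lie--Trotter splitting in your Step 3 needed to separate the $\km^{(1)}$ increments from the drift increments (which the paper obtains for free from the telescoping), together with the accompanying $O(n^{-1})$ product-perturbation estimate. Your Step 2 is the exact analogue of the paper's concentration argument. Both routes are comparably short; yours has the small advantage of never invoking \Cref{support-integral}.
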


%\noindent{\bf Remark}. For possibly non zero $B$, the set formulated in \Cref{support-sum} still coincides with $\SS$ if $\YY$ if replaced by $B+\YY$. This more general result can be deduced using the homogeneisation $\lambda(t)=\sigma(t)*'t\YY$ of $\sigma$ (\Cref{homogeneisation}) and \Cref{fact-support}. We will not need it below.

\begin{proof}
%By the second remark following \Cref{homogeneisation}, we know that right-translate $\SS*'\YY*'(-B+\YY)$ is the closure of the set 
%$$\left\{ \int^{*'}_{[0,1]}\Ad(t (B+\YY)) u(t) \,dt  \,:\, u :[0,1]\rightarrow \km^{(1)} \text{ piecewise continuous } \right\}$$

\underline{Case $B=0$}. Call $\SS$ the support of $\sigma_{1}$ and  $\SS'$ the closure of the set above. 

We first check the inclusion $\SS\subseteq \SS'$. 
It is clear from the formula defining the multiplicative integral $\int^{*'}$ that for every smooth $u :[0,1]\rightarrow \km^{(1)}$,
\begin{align*}
\int^{*'}_{[0,1]}\Ad(t \YY) u(t) \,dt &=\prod^{*'}_{0\leq i\leq k-1}  \int^{*'}_{[\frac{i}{k},\frac{i+1}{k}]}\Ad(t \YY) u(t) \,dt \\
&= \prod^{*'}_{0\leq i\leq k-1} \left(\frac{1}{k} \Ad(\frac{i}{k} \YY)u(\frac{i}{k}) + o_{u}(k^{-1})\right)\\
&= \prod^{*'}_{0\leq i\leq k-1} \left( \Ad(\frac{i}{k} \YY)(\frac{1}{k}u(\frac{i}{k}) )\right)+ o_{u}(1)\\
&= \prod^{*'}_{0\leq i\leq k-1} \left(\frac{1}{k}u(\frac{i}{k}) *' \frac{\YY}{k}\right)*'(-\YY)+ o_{u}(1).
\end{align*}
Letting $k$ go to infinity, we get that $\int^{*'}_{[0,1]}\Ad(t \YY) u(t) \in \SS'$. By continuous approximation and the formula defining  $\int^{*'}$,  we still have $\int^{*'}_{[0,1]}\Ad(t \YY) u(t) \in \SS'$ if  $u$ is only piecewise-continuous. We then deduce from \Cref{support-integral} that $\SS\subseteq \SS'$.

For the reverse inclusion, one may assume that $t_{i}>0$ for all $i$. Let $0<\eps <\min_{i} t_{i}$. Set $\ot_{i}=t_{1}+\dots +t_{i-1}$ and  define a piecewise continuous function $u_{\eps}$ by 
$$u_{\eps}(t)   = \left\{
    \begin{array}{ll}
        \eps^{-1}u_{i} & \mbox{if } t\in [\ot_{i}, \ot_{i}+\eps) \\
        0 & \mbox{otherwise}
    \end{array}
\right.$$
Then $\int^{*'}_{[0,1]}\Ad(t \YY) u_{\eps}(t) \,dt \rightarrow \prod^{*'}_{1\leq i\leq k} (u_{i}*'t_{i}\YY)*'(-\YY)$ as $\eps$ goes to zero, yielding $\SS'\subseteq \SS$. 

\underline{General case}.  
Call $\W_{+}(t)$ a diffusion process with infinitesimal generator $\LL_{+}=\frac{1}{2}\sum_{i}\Ad(t(B+\YY))E_{i}^2$. It follows from  \Cref{homogeneisation} that the stochastic processes $\W_{+}(t)*'t(B+\YY)$ and $\W(t)*'t\YY$ have same distribution. As $\LL_{+}$ has no drift, we may apply the previous discussion to characterize the support of $\W_{+}(1)*'(B+\YY)$, in other terms $\SS*'\YY$, and the result follows. 
\end{proof}

\bigskip

\subsubsection{\large Example 1: the case of upper triangular matrices} \label{Sec-upper-triang}

%${}^t \! M$
%$L^{[k+2]}(v, w^{\otimes k}, v)$
We assume here that $(\kg, [.,.])$ satisfies the following double cancellation assumption 
\bigskip

\noindent ({\bf DC}) \emph{There exist vectors $v_{1}, \dots, v_{q}\in \kg$ spanning $\kg/[\kg, \kg]$  such that for all $1\leq i\leq q$,   $k\geq 0$, $w\in \kg$, one has $$[v_{i}, \ad(w)^kv_{i}]=0$$}and we show that the support of $\sigma_{1}$ must be $\kg$. Here the notation $\ad$ refers to the non-graded bracket $[.,.]$, i.e. $\ad(w)x=[w,x]$. Condition ({\bf DC})  clearly holds in every nilpotent Lie algebra with step at most $2$. As we show now it holds for the group of upper triangular unipotent matrices. 

\begin{lemme}
Assume $\kg$ is the Lie algebra of a maximal unipotent subgroup of a special linear group $SL_{n}(\R)$. Then $\kg$ satisfies  condition \emph{({\bf DC})}.
\end{lemme}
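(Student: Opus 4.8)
The plan is to work with the standard model. All maximal unipotent subgroups of $SL_n(\R)$ are conjugate, and condition (\textbf{DC}) is invariant under Lie algebra isomorphisms, so we may assume $\kg=\kn$ is the Lie algebra of strictly upper triangular $n\times n$ matrices. I will exhibit the required family explicitly: take $v_i=E_{i,i+1}$ for $i=1,\dots,n-1$, the elementary matrix with a single $1$ in position $(i,i+1)$ (the simple root vectors). First I would check that the $v_i$ span $\kn/[\kn,\kn]$. Using $[E_{a,b},E_{c,d}]=\delta_{b,c}E_{a,d}-\delta_{d,a}E_{c,b}$ one sees that $[\kn,\kn]=\operatorname{span}\{E_{p,q}:q\ge p+2\}$: the inclusion $\subseteq$ is immediate from the bracket formula, while $\supseteq$ follows from $E_{p,q}=[E_{p,p+1},E_{p+1,q}]$ for $q\ge p+2$. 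Hence the images of $E_{1,2},\dots,E_{n-1,n}$ form a basis of the abelianization.

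The heart of the argument is a containment statement proved by induction on $k$: for every $w\in\kn$ and every $k\ge 0$, the matrix $\ad(w)^k v_i$ is a linear combination of elementary matrices $E_{p,q}$ with $p\le i$ and $q\ge i+1$. The base case $k=0$ is clear since $v_i=E_{i,i+1}$. For the inductive step, write the previous iterate as $\sum c_{p,q}E_{p,q}$ with $p\le i\le q-1$, and expand $[w,E_{p,q}]$ with $w=\sum_{a<b}w_{a,b}E_{a,b}$ via the bracket formula: the terms produced are of the form $E_{a,q}$ with $a<b=p\le i$ and $E_{p,b}$ with $b>a=q\ge i+1$, both of which still satisfy the row constraint $(\le i)$ and column constraint $(\ge i+1)$. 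This closes the induction.

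Finally I would conclude by a one-line computation. If $X=\ad(w)^k v_i=\sum c_{p,q}E_{p,q}$ with $p\le i$ and $q\ge i+1$, then
$$[v_i,X]=\sum c_{p,q}\big(\delta_{i+1,p}E_{i,q}-\delta_{q,i}E_{p,i+1}\big),$$
and both Kronecker deltas vanish identically: $p\le i$ forces $p\ne i+1$, and $q\ge i+1$ forces $q\ne i$. Hence $[v_i,\ad(w)^k v_i]=0$ for all $i$, $k\ge 0$ and $w\in\kn$, which is precisely condition (\textbf{DC}).

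I do not anticipate a genuine obstacle here; the only point requiring care is the bookkeeping of strict versus non-strict inequalities on the row and column indices through the inductive step, so that the containment $\{p\le i,\ q\ge i+1\}$ is actually stable under $\ad(w)$, and double-checking that the chosen $v_i$ span the abelianization (and not merely sit inside $\kn$). Both are routine once the bracket formula for elementary matrices is written down.
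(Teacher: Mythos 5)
Your proof is correct and takes the same approach as the paper: the paper's proof also sets $v_{i}=E_{i,i+1}$ and simply asserts that the claim "follows by direct computation." Your induction showing that $\ad(w)^{k}v_{i}$ stays in the span of the $E_{p,q}$ with $p\leq i$ and $q\geq i+1$ is a correct and complete write-up of that omitted computation.
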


\begin{proof}
In this context, $\kg$ admits a vector space basis $(E_{i,j})_{1\leq i<j\leq q_{1}+1}$ with the bracket relations 
$$[E_{i,j}, E_{k,l}]= \1_{j=k}E_{i,l}  - \1_{i=l}E_{k,j}  $$
Setting $v_{i}=E_{i,i+1}$, the claim follows by direct computation. 

%Alternatively, we may write $\kg= \oplus_{\alpha\in \Phi^+} L_{\alpha}$ where $\mathfrak{sl}_{n}(\R)= \oplus_{\alpha\in \Phi \cup\{0\}} L_{\alpha}$ is a root space decomposition of $\mathfrak{sl}_{n}(\R)$ for some Cartan subalgebra, and $\Phi^+$ a choice of positive roots. The result follows by taking the $v_{i}$'s to the root vectors $E_\alpha$ for the set of simple roots $\alpha$, and using the relation $[L_{\alpha}, L_{\beta}]\subseteq L_{\alpha+\beta}$ along with the fact that every $\alpha \in \Phi^+$ is a sum of simple roots with multiplicity at most $1$.

\end{proof}

The main result of the section is \Cref{horizontal-path-triang} below, which implies full support under condition ({\bf DC}), and will even yield a lower bound on $\sigma_{1}$ later in \Cref{Sec-lower-bound}. Given a  $\LL$-horizontal path $\gamma : I \rightarrow \kg$, we introduce its \emph{driving vector} $v_{\gamma}:I\rightarrow \km^{(1)}$ defined (outside a finite subset) by the relation $\partial \gamma(t)=\Ad(t\YY )(v_{\gamma}(t)+B)$. We call \emph{horizontal energy} of $\gamma$ the quantity 
$$E_{hor}(\gamma)=\int_{I} \|v_{\gamma}(t)\|^2\,dt.$$
 It is related to the classical notion of energy, but better suited for computations and more meaningful in our context. Indeed it expresses how hard it is for the $\LL$-diffusion to follow $\gamma$.  We fix a left-invariant Riemannian metric on $(\kg, *')$ and denote by $\normp{.}$ the distance to the origin.

\begin{proposition}\label{horizontal-path-triang} Assume \emph{({\bf DC})}.
Then for every point $x\in \kg$, there exists a continuous, piecewise $C^1$,  $\LL$-horizontal path $\gamma : [0,1]\rightarrow \kg$ such that $\gamma(0)=0$, $\gamma(1)=x$. Moreover, one may choose $\gamma$ such  that $E_{hor}(\gamma)\leq C \normp{x}^2 +C$ where $C>0$ is a constant independent from $x$.
 \end{proposition}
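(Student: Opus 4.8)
The plan is to make the support theorem (\Cref{support-integral}, \Cref{support-sum}) effective: for each $x$ I will exhibit an explicit $\LL$-horizontal path ending at $x$, built as a concatenation of elementary ``single–generator'' moves whose horizontal energy I can bookkeep.

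\textbf{Step 1 (kill the drift).} Using the homogenisation remark after \Cref{homogeneisation} — which trades $B$ for $0$ at the cost of replacing $\YY$ by $B+\YY\in\chi+\km^{(2)}$ — I may assume $B=0$, so that $\LL$-horizontal means $\partial\gamma(t)\in\Ad(t\YY)\km^{(1)}$ and $E_{hor}(\gamma)=\int|v_\gamma|^2$. Crucially \emph{(\textbf{DC})} survives this substitution, as it demands $[v_i,\ad(w)^k v_i]=0$ for \emph{all} $w\in\kg$.

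\textbf{Step 2 (\emph{(\textbf{DC})} yields exact, reversible moves).} After normalising the $v_i$ to lie in $\km^{(1)}$, set $\ka_i:=\mathrm{span}\{\ad'(\YY)^k v_i:k\ge0\}\subseteq\kg$. Since $\ad(\YY)$ acts on $\kg$ as an inner derivation, differentiating $[v_i,\ad(\YY)^m v_i]=0$ repeatedly gives $[\ad(\YY)^a v_i,\ad(\YY)^b v_i]=0$ in $\kg$ for all $a,b$, and passing to $[\cdot,\cdot]'$ (the error terms fall into a deeper piece of the weight filtration) shows $\ka_i$ is an \emph{abelian} subalgebra of $(\tkg,*')$. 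Hence a horizontal path whose driving vector points along $v_i$ stays in the \emph{additive} group $\ka_i$, and solving a finite (Hilbert-matrix) moment problem lets one realise \emph{any} prescribed element of $\ka_i$, as the holonomy over \emph{any} subinterval $[s,s+\ell]\subseteq[0,1]$, with energy $\le C_\ell\|\cdot\|^2$ and no Baker–Campbell–Hausdorff correction. This exactness and reversibility is precisely what \emph{(\textbf{DC})} provides; without it the forced $\Ad(t\YY)$-twist stamps an irreversible component onto such a move, which is the source of the one-sided supports of \Cref{never-full-thm}.

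\textbf{Step 3 (layer-by-layer ball–box construction).} I build $\gamma$ on $[0,1]$ out of blocks indexed by $b=1,\dots,b_{\max}$, maintaining the invariant that at the start of block $b$ the current point agrees with $x$ in all weight-$<b$ coordinates. In block $b$ I insert a ``commutator word'' of the elementary moves of Step 2 whose holonomy has vanishing weight-$<b$ components and whose weight-$b$ component equals the needed correction $c_b\in\km^{(b)}$. Since $(v_i)$ is bracket-generating, $c_b$ is a fixed combination of iterated $[\cdot,\cdot]'$-brackets of the $v_i$, and each $b$-fold bracket of size $\rho^b$ is produced by a scale-$\rho$ commutator word of boundedly many elementary moves each of energy $O(\rho^2)$; exactness (Step 2) means the word delivers this bracket precisely, with \emph{both} signs available, leaving junk only in weights $>b$ (absorbed by later blocks). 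Keeping the partial path in a ball of radius $\asymp\normp{x}+1$ so that $\|c_b\|\lesssim(\normp{x}+1)^b$ by ball–box, the choice $\rho\asymp\|c_b\|^{1/b}$ makes block $b$ cost $O(\normp{x}^2+1)$; summing over the finitely many $b$ and bracket monomials gives $E_{hor}(\gamma)\le C\normp{x}^2+C$. A final reparametrisation onto $[0,1]$ (bounded energy loss) and undoing the Step 1 reduction complete the argument.

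\textbf{Main obstacle.} The heart is Step 3: hitting the target \emph{exactly} while controlling the non-abelian, $\Ad(t\YY)$-twisted endpoint map (i.e.\ the Chen/iterated-integral bookkeeping), while simultaneously achieving the \emph{ball–box} energy bound $O(\normp{x}^2)$ — which is why deep coordinates must be built by a multi-scale commutator word rather than a naive move — and, decisively, exploiting \emph{(\textbf{DC})} so that each weight-$b$ correction is attainable with \emph{either} sign. This last point is the conceptual crux; it is exactly what fails in the free nilpotent setting and is restored by the double-cancellation hypothesis.
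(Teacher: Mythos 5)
Your Steps 1 and 2 match the paper's reduction: the homogenisation trick to remove $B$, the normalisation of the $v_i$ into $\km^{(1)}$ via a change of weight decomposition, and the key consequence of (\textbf{DC}) that $[\Ad(t_0\YY)v_i,\Ad(t_1\YY)v_i]'=0$, which collapses the Strichartz multiplicative integral to an additive one and lets a Hilbert-type moment problem realise prescribed holonomies along $v_i$ with either sign and quadratic energy. This is exactly the paper's property (P) and \Cref{piece-m1}.

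There is, however, a genuine gap in Step 3 as written. You assert that, since $(v_i)$ is bracket-generating, each correction $c_b\in\km^{(b)}$ is ``a fixed combination of iterated $[\cdot,\cdot]'$-brackets of the $v_i$''. The $v_i$ are bracket-generating for the \emph{original} bracket $[\cdot,\cdot]$, but the endpoint map of concatenated horizontal paths is governed by the graded product $*'$, and $\km^{(1)}$ need \emph{not} be bracket-generating for $[\cdot,\cdot]'$ in the biased case — the paper warns of this explicitly, and the biased Heisenberg group is a counterexample: there $[e_1,e_2]'=0$ while $\km^{(3)}=\R e_3$ is reached only through $[\chi,\cdot]'$. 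So the commutator words of Step 3, if built from the untwisted $v_i$, simply cannot produce the deep coordinates. The repair is available from your own Step 2: the elementary moves realise arbitrary elements of $\ka_i\ni\ad'(\YY)^kv_i$, and it is the family $\{\Ad(t\YY)v_i\}$ — not $\{v_i\}$ — that generates $(\kg,[\cdot,\cdot]')$ (this is \Cref{prehypo}); your commutator words must be taken in these twisted generators. This is precisely what the paper's \Cref{piece-ab} does: it propagates property (P) to a family $w_1,\dots,w_p$ spanning $\kg/[\kg,\kg]'$ by applying $\Ad(t_0\YY)$.

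Modulo that fix, your Step 3 is a more hands-on route than the paper's. Rather than running an explicit multi-scale, layer-by-layer commutator construction with ball–box bookkeeping, the paper quotes an elementary group-theoretic fact (\Cref{combining-pieces}): once one has pieces $w_1,\dots,w_p$ spanning the abelianization of $(\kg,*')$, there is a \emph{fixed} word $r_1w_{i_1}*'\cdots*'r_{n_0}w_{i_{n_0}}$ reaching any $x$ with $\max|r_j|\le R_0\normp{x}+R_0$; since each piece $r_jw_{i_j}$ costs energy $O(r_j^2)$ by (P), the bound $E_{hor}(\gamma)\le C\normp{x}^2+C$ is immediate. Your multi-scale argument, once run in the twisted generators, would reprove that lemma along the way; the paper's formulation cleanly separates the analytic input (property (P)) from the combinatorial one.
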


Combining  \Cref{horizontal-path-triang} and \Cref{fact-support} we get 

\begin{corollary}[Full support] \label{DC-full-support} Under  condition \emph{({\bf DC})}, the support of $\sigma_{1}$  is $\kg$.
\end{corollary}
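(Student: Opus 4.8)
The corollary is immediate once \Cref{horizontal-path-triang} is available: by the support theorem (\Cref{fact-support}) applied to the Lie group $(\kg,*')$ at time $t=1$, the support $\SS$ of $\sigma_{1}$ is the closure of the set of endpoints $\gamma(1)$ of continuous piecewise $C^1$ $\LL$-horizontal paths $\gamma\colon[0,1]\to\kg$ issued from $0=1_{(\kg,*')}$ (this is also the content of \Cref{support-integral}). \Cref{horizontal-path-triang} asserts that under \textbf{(DC)} every $x\in\kg$ is such an endpoint; since trivially $\SS\subseteq\kg$, we conclude $\SS=\kg$. Note that only the \emph{first} assertion of \Cref{horizontal-path-triang} is used here; the quantitative bound $E_{hor}(\gamma)\le C\normp{x}^2+C$ is kept in reserve for the lower bound of \Cref{Sec-lower-bound}. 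Thus all the content sits in \Cref{horizontal-path-triang}, and I now describe how I would prove that.

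The plan is constructive. First I would invoke the homogenisation trick from the remark following \Cref{homogeneisation} to dispose of the drift term $B$: replacing $\YY$ by $B+\YY$ lets me assume the horizontal constraint reads $\partial\gamma(t)\in\Ad\bigl(t(B+\YY)\bigr)\km^{(1)}$; equivalently I may pass to the discrete model of \Cref{support-sum}, so that it suffices to show that finite products $\prod^{*'}_{i\le k}\bigl(u_{i}*'t_{i}(B+\YY)\bigr)$ with $u_i\in\km^{(1)}$, $t_i\ge 0$, $\sum_i t_i=1$, reach — up to right translation by $-\YY$, and after closure — all of $\kg$. I would then build the path layer by layer along the weight decomposition $\kg=\bigoplus_{\ell}\km^{(\ell)}$: inductively on $\ell$, given a horizontal path that already realises the prescribed coordinates in $\km^{(1)}\oplus\cdots\oplus\km^{(\ell-1)}$, insert short ``commutator loops'' (moves of the shape ``drive along $v_i$, perform an auxiliary move, drive back along $-v_i$'') to correct the $\km^{(\ell)}$-coordinate to any prescribed value while leaving the lower layers untouched. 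Hypothesis \textbf{(DC)} is used exactly here: a loop based on $v_i$ produces to leading order a bracket $[v_i,\,\cdot\,]'$, and the parasitic higher-order contributions generated by such loops are iterated brackets containing $v_i$ twice, hence of the form $[v_i,\ad(w)^k v_i]$, which vanish by \textbf{(DC)}. This forces each correction move to act on a single layer in a controlled, essentially linear fashion, so the induction closes and every point of $\kg$ is attained. The energy bound would then follow by a scaling argument: the dilations $D_r$ are automorphisms of $(\kg,*')$ raising the weight grading by positive powers of $r$, so after rescaling one reduces to the regime $\normp{x}\le 1$, where a uniform bound comes from compactness and continuity of the construction, the large-scale regime being handled by a single ``long push'' in the top layer.

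The main obstacle is precisely this construction of horizontal paths under \textbf{(DC)}, and it is genuinely delicate because — unlike in the centered case — the layer $\km^{(1)}$ need not be bracket-generating for the product $*'$, as already the Heisenberg example shows. One therefore cannot simply invoke a Chow--Rashevskii theorem for a fixed sub-Riemannian structure: the available directions $\Ad\bigl(t(B+\YY)\bigr)\km^{(1)}$ genuinely move with $t$, and one must exploit this time-twisting (together with the fact that the ``time budget'' is exactly $1$) to sweep out the missing directions. Condition \textbf{(DC)} is the combinatorial input that keeps all this bracket bookkeeping tame and prevents the inaccessibility phenomenon responsible for the failure of full support in \Cref{never-full-thm}; checking that it does so layer by layer, while simultaneously tracking the energy cost, is where the real work lies.
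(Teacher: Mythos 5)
Your proof of the corollary itself is correct and is exactly the paper's: combine \Cref{horizontal-path-triang} with the support theorem (\Cref{fact-support}). Nothing more is needed there.

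Since you chose to sketch a proof of \Cref{horizontal-path-triang} — where, as you say, all the content sits — I should point out that your route diverges from the paper's and that its key step does not work as stated. You propose a Chow-style induction on layers using commutator loops based at the $v_i$, claiming that the parasitic higher-order terms such loops generate ``are iterated brackets containing $v_i$ twice, hence of the form $[v_i,\ad(w)^k v_i]$, which vanish by \textbf{(DC)}.'' That implication is false: a general iterated bracket with two occurrences of $v_i$ need not have one occurrence outermost and the other innermost separated only by a string of $\ad(w)$'s. For instance a term like $[[v_i,w],[v_i,w']]$, which does arise when composing loops, contains $v_i$ twice but is not of the shape $[v_i,\ad(w)^k v_i]$, and \textbf{(DC)} says nothing about it. So \textbf{(DC)} does not tame the bracket bookkeeping of a commutator-loop construction, and your induction does not close.

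The paper's actual mechanism is different and avoids loops entirely. The only consequence of \textbf{(DC)} that is used is the commutation relation $[\Ad(t_0\YY)v_i,\Ad(t_1\YY)v_i]'=0$ for all $t_0,t_1$: expanding $\Ad(t_1\YY)$, the brackets that must vanish are exactly $L^{[n+2]}(v_i,\YY^{\otimes n},v_i)=\pm[v_i,\ad(\YY)^n v_i]$, i.e.\ precisely the shape covered by \textbf{(DC)} (this is why the hypothesis is phrased that way). Granting this, a ``monochromatic'' drive $u(t)=\alpha(t)v_i$ makes the multiplicative integral $\int^{*'}\Ad(t\YY)(u(t))\,dt$ collapse to the additive integral $\sum_n\bigl(\int\alpha(t)t^n dt\bigr)(n!)^{-1}L^{[n+1]'}(\YY^{\otimes n},v_i)$, and choosing $\alpha$ with moments $\int\alpha(t)t^n dt=\delta_{n,0}$ realises exactly $rv_i$ with \emph{no} error term (property (P) of \Cref{piece-m1}). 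One then observes that the translates $\Ad(t\YY)v_i$ inherit (P) and span the abelianisation of $(\kg,[.,.]')$ (\Cref{piece-ab}, using \Cref{prehypo}), and concludes by the standard induction on the step (\Cref{combining-pieces}) — which also delivers the quadratic energy bound, since each piece realising $r_jw_{i_j}$ costs energy $O(r_j^2)$ and $\max_j|r_j|\ll\normp{x}+1$. Your scaling argument for the energy bound is also shaky as written, since $\normp{\cdot}$ is a Riemannian, not a Carnot--Carath\'eodory, distance and is not homogeneous under $D_r$; but that bound is not needed for the corollary in any case.
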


\bigskip

We now engage in the proof of \Cref{horizontal-path-triang}. In accordance with condition ({\bf DC}), we fix $v_{1}, \dots, v_{q_{1}} \in \kg$ projecting to a basis of $\kg/[\kg, \kg]$ and satisfying $[v_{i}, \ad(w)^k v_{i}]=0$ for all $w\in \kg$, $k\geq 0$. 

\begin{lemme} In order to prove \Cref{horizontal-path-triang}, one may assume  
$$v_{1}, \dots, v_{q_{1}} \in \km^{(1)} \,\,\,\,\,\,\,\,\,\,\,\,\,\,\,\,\,\,\,\,\,\,\,\,\,\,\,\,B=0.$$ 
\end{lemme}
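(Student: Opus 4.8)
The plan is to show that both reductions—moving the generators $v_i$ into $\km^{(1)}$ and setting $B=0$—are harmless because the support $\SS$ of $\sigma_1$, and more precisely the existence of horizontal paths with controlled horizontal energy reaching a given point, is insensitive to these changes. First I would treat the reduction $B=0$. By the last remark after \Cref{homogeneisation}, the process $\W(t)*'t\YY*'(-t(B+\YY))$ is a diffusion with generator $\LL_+=\frac12\sum_i(\Ad(t(B+\YY))E_i)^2$, which has no drift term. Since right-multiplication by the smooth path $t\mapsto t\YY*'(-t(B+\YY))$ is a diffeomorphism sending $\LL$-horizontal paths to $\LL_+$-horizontal paths and vice versa (it only reparametrizes the affine fibre and does not affect whether the multiplicative derivative lies in the prescribed distribution, up to the fixed shift), the problem of joining $0$ to $x$ by an $\LL$-horizontal path of horizontal energy $\le C\normp{x}^2+C$ is equivalent to the same problem for $\LL_+$, at the cost of translating $x$ by a bounded amount (the endpoint of the auxiliary path at time $1$) and enlarging $C$. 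Thus one may assume $B=0$; note that $\LL_+$ is exactly the generator one would get from the data $(E_i, 0, B+\YY)$, so the diffusion is still of the type considered in \Cref{Sec-inv-reg}.

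Next I would handle the reduction $v_i\in\km^{(1)}$. The vectors $v_1,\dots,v_{q_1}$ furnished by (\textbf{DC}) project to a basis of $\kg/[\kg,\kg]=\km^{(1)}$, so write $v_i = w_i + z_i$ with $w_i\in\km^{(1)}$ and $z_i\in[\kg,\kg]=\kg^{(2)}$. The key observation is that the cancellation identity $[v_i,\ad(w)^k v_i]=0$ is an identity about the \emph{ungraded} bracket, but what we actually use in the construction of horizontal paths is the graded bracket $[.,.]'$ and the graded action $\Ad$; and the graded bracket of elements of $\km^{(1)}$ with anything is just the projection to the appropriate weight layer of the ungraded bracket. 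I would argue that the $w_i$ still satisfy the graded analogue $[w_i, \ad'(u)^k w_i]'=0$ for $u\in\km^{(1)}\oplus\R\YY$: indeed $[w_i,\cdot]' = \pi^{(\bullet+1)}[w_i,\cdot]$ and iterating, each graded bracket $[w_i,\ad'(u)^k w_i]'$ is the top-weight-layer projection of $[v_i,\ad(u)^k v_i]$ plus terms involving the lower-weight corrections $z_i$, which can be absorbed: one checks by induction on weight, using that $z_i\in\kg^{(2)}$ pushes everything one weight layer deeper, that the relevant projections vanish. Alternatively, and more cleanly, since the family $(w_i)$ is bracket-generating in $(\kg,*')$ and we are free to \emph{choose} a bracket-generating family in $(\kg,[.,.]')$ for the purpose of the proof, I would simply note that condition (\textbf{DC}) for $(\kg,[.,.])$ implies the corresponding condition for $(\kg,[.,.]')$ with the family $(w_i)$—this is precisely the content one needs—and then run the rest of the argument entirely inside $(\kg,*')$ with $\km^{(1)}$-vectors.

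With these two reductions in place, the remaining content of \Cref{horizontal-path-triang} (to be carried out in the subsequent lemmas, not here) is the explicit construction: build $x$ as a $*'$-product of pieces of the form $(u\,*'\,t\YY)$ with $u\in\km^{(1)}$, using the double-cancellation relations to kill the obstructions that would otherwise appear in higher weight layers, and keep track of the energy budget. The main obstacle in the present lemma is the $v_i\in\km^{(1)}$ reduction: one must be careful that replacing $v_i$ by $w_i$ does not destroy the cancellation property, and the cleanest route is to phrase everything from the outset in terms of the graded structure $(\kg,*')$, observing that (\textbf{DC}) is inherited. The $B=0$ reduction is essentially formal given \Cref{homogeneisation}. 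I would therefore structure the proof as: (i) invoke \Cref{homogeneisation} to pass to $\LL_+$, absorbing the drift and a bounded endpoint shift; (ii) verify that the $\km^{(1)}$-components $w_i$ of the $(\textbf{DC})$-family still satisfy the cancellation identity for $[.,.]'$, so that the problem is reduced to the stated normalized form.
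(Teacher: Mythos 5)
Your proof is correct, and the $B=0$ reduction is exactly the paper's (pass to $\LL_{+}$ via \Cref{homogeneisation}; driving vector and energy are unchanged, the endpoint moves by a fixed bounded right-translation). For the reduction $v_i\in\km^{(1)}$, however, you take a genuinely different route. You keep the given weight decomposition, project $v_i=w_i+z_i$ with $z_i\in\kg^{(2)}$, and argue that the graded cancellation identity $L^{[n+2]'}(w_i,\YY^{\otimes n},w_i)=0$ is inherited because every cross term involving a $z_i$ lands in $\kg^{(2n+3)}$ and so dies under the projection $\pi^{(2n+2)}$; this is correct (the iterated graded bracket of homogeneous elements is the weight-layer projection of the iterated ungraded bracket), but you leave the verification as "one checks by induction", and your "cleaner alternative" merely asserts the conclusion. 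Note also that with your reduction the downstream \Cref{piece-m1} must be read as using only the vanishing of the projection $\pi^{(2n+2)}\bigl(L^{[n+2]}(w_i,\YY^{\otimes n},w_i)\bigr)$ rather than of the full ungraded bracket — which is indeed all that \eqref{DC-commute} requires. The paper instead changes the weight decomposition: it picks $\mathring{\km}^{(1)}=\oplus\R v_i$ (legitimate since the $v_i$ project to a basis of $\kg/[\kg,\kg]$) and invokes the isomorphism $\phi$ of \Cref{changing-dec}, which carries $\LL$-horizontal paths to $\phi(\LL)$-horizontal paths with comparable energy and comparable left-invariant metrics; then the $v_i$ themselves lie in the new $\km^{(1)}$ and the ungraded ({\bf DC}) identity is available verbatim. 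Your approach is more computational and self-contained; the paper's is more structural and reuses an already-proved stability-under-change-of-grading lemma. Both are valid; if you keep yours, write out the weight-counting step explicitly rather than deferring it.
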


\begin{proof}
Let $\kg=\oplus_{b}\mathring{\km}^{(b)}$ be some other weight decomposition of $\kg$ such that $\mathring{\km}^{(1)}=\oplus \R v_{i}$, and $\phi : (\tkg, [.,.]')\rightarrow (\tkg, \mathring{[.,.]}')$ the isomorphism of graded structures from \Cref{changing-dec}. The map $\phi$ sends a left invariant metric on $(\kg,*')$ to a left-invariant metric on $(\kg,\mathring{*}')$, and any $\LL$-horizontal path $\gamma$ on $(\kg,*')$ to the $\phi(\LL)$-horizontal path $\phi \circ \gamma$ on $(\kg,\mathring{*}')$, which also satisfies $E_{hor}(\gamma)\ll_{\phi} E_{hor}(\phi \circ \gamma)\ll_{\phi} E_{hor}(\gamma)$. Hence, it is enough to prove \Cref{horizontal-path-triang} for the process generated by $\phi(\LL)$, in other words one may assume $v_{1}, \dots, v_{q_{1}} \in \km^{(1)}$. 

To remove the drift, observe that $\gamma$ is $\LL$-horizontal if and only if the path $c : t\mapsto \gamma(t)*' t\YY *' -t(B+\YY)$ is $\LL_{+}$-horizontal where $\LL_{+}=\frac{1}{2}\sum_{i}(\Ad(t(\YY+B))E_{i})^2$. A direct computation shows that $\gamma$ and $c$ have the same driving vector (for their respective generators $\LL$, $\LL_{+}$), in particular they have the same horizontal energy. We infer that it  is sufficient to prove  \Cref{horizontal-path-triang} for the process generated by $\LL_{+}$,  in other words one may assume $B=0$. 
\end{proof}

Say that a vector $w\in \kg$ satisfies the \emph{property} (P) if for every non-trivial interval $[a,b]\subseteq \R$, there exists a smooth function $u: [a,b]\rightarrow \km^{(1)}$ such that for all $r\in \R$,
\begin{align}\label{eq-piece}
\int^{*'}_{[a,b]} \Ad(t\YY)(r u(t)) \,dt =r w .
\end{align}

\begin{lemme} \label{piece-m1}
The elements $v_{1}, \dots, v_{q_{1}}$ satisfy the property \emph{(P)}.
\end{lemme}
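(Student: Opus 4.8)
The plan is the following. By the preceding lemma we may assume $v_{1},\dots,v_{q_{1}}\in\km^{(1)}$ and $B=0$; fix $i$ and set $v:=v_{i}$. I would look for a driving function of the special form $u(t)=\phi(t)\,v$ with $\phi:[a,b]\to\R$ smooth. Then the integrand in \eqref{eq-piece} is $c_{r}(t):=\Ad(t\YY)\big(r\,u(t)\big)=r\,\phi(t)\,\Ad(t\YY)v$, and expanding the adjoint for $*'$ gives $\Ad(t\YY)v=\sum_{k\ge 0}\frac{t^{k}}{k!}\,w_{k}$, where $w_{k}:=[\YY,[\YY,\dots,[\YY,v]'\dots]']'$ (with $k$ copies of $\YY$) lies in $\km^{(2k+1)}$ and $w_{k}=0$ for $k\ge s$. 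Thus $c_{r}(t)$ always points, at each time, inside the span of the finitely many fixed vectors $w_{0}=v,w_{1},\dots,w_{s-1}$. The identity \eqref{eq-piece} with $w=v$, i.e. $\int^{*'}_{[a,b]}c_{r}=r\,v$ for all $r$, will follow from two points: (i) in the Strichartz expansion \eqref{strich-formula} of $\int^{*'}_{[a,b]}c_{r}$ every term of degree $\ge 2$ vanishes, so that $\int^{*'}_{[a,b]}c_{r}=\int_{[a,b]}c_{r}(t)\,dt=r\int_{[a,b]}\phi(t)\,\Ad(t\YY)v\,dt$; and (ii) $\phi$ can be chosen so that $\int_{[a,b]}\phi(t)\,\Ad(t\YY)v\,dt=v$.

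For (i), the degree-$m$ term ($m\ge 2$) of \eqref{strich-formula} evaluated at $c_{r}$ is a linear combination of iterated integrals of $L^{[m]'}\big(c_{r}(t_{\tau(1)}),\dots,c_{r}(t_{\tau(m)})\big)$; expanding each $c_{r}(t_{\ell})$ in the vectors $(w_{k})_{k}$ and using multilinearity, it suffices to prove $L^{[m]'}(w_{k_{1}},\dots,w_{k_{m}})=0$ for all $m\ge 2$ and all $k_{1},\dots,k_{m}\ge 0$, which, since this bracket equals $[\dots[[w_{k_{1}},w_{k_{2}}]',w_{k_{3}}]'\dots]'$, reduces to the single identity
\[ [w_{k_{1}},w_{k_{2}}]'=0\qquad\text{for all }k_{1},k_{2}\ge 0. \]
To prove it I would first note that $[\chi,\cdot]=[\Xm,\cdot]$ on $\kg$, so, writing $\YY=\chi+B'$ with $B'\in\km^{(2)}$ and $w:=\Xm+B'\in\kg$, the operator $[\YY,\cdot]$ restricted to $\kg$ is $\ad(w)$; an induction on $k$, using the filtration bounds $[\Xab,\kg^{(j)}]\subseteq\kg^{(j+2)}$ and $[\kg^{(p)},\kg^{(q)}]\subseteq\kg^{(p+q)}$ (see \eqref{nestedg}) together with the definition \eqref{bracket-gr} of $[\cdot,\cdot]'$, then shows $w_{k}=\pi^{(2k+1)}\big(\ad(w)^{k}v\big)$, the top-weight component of $\ad(w)^{k}v$, with $\ad(w)^{k}v-w_{k}\in\kg^{(2k+2)}$. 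Condition ({\bf DC}) gives $[v,\ad(w)^{k}v]=0$; since $[v,\kg^{(2k+2)}]\subseteq\kg^{(2k+3)}$, projecting onto $\km^{(2k+2)}$ yields $[w_{0},w_{k}]'=[v,w_{k}]'=0$ for all $k$. Finally, $D:=[\YY,\cdot]'$ is a derivation of $(\tkg,[\cdot,\cdot]')$, so $D^{n}\big([v,w_{m}]'\big)=\sum_{j=0}^{n}\binom{n}{j}[D^{j}v,\,D^{n-j}w_{m}]'=\sum_{j=0}^{n}\binom{n}{j}[w_{j},w_{n-j+m}]'$ for all $n,m\ge 0$, and the left-hand side vanishes by what precedes; an induction on $k_{1}$ (base case $k_{1}=0$ being $[v,w_{k_{2}}]'=0$, and using antisymmetry when $k_{2}<k_{1}$) isolates the term $j=k_{1}$ and forces each $[w_{k_{1}},w_{k_{2}}]'$ to be $0$. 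This establishes the key identity, hence (i).

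For (ii), because $w_{k}\in\km^{(2k+1)}$ for $k\ge 1$ are in subspaces complementary to $\km^{(1)}\ni w_{0}=v$, the condition $\int_{[a,b]}\phi(t)\,\Ad(t\YY)v\,dt=v$ is equivalent to the finite list of moment conditions $\int_{[a,b]}\phi(t)\,dt=1$ and $\int_{[a,b]}t^{k}\phi(t)\,dt=0$ for $1\le k\le s-1$. These are met by $\phi=p\,\psi$, where $\psi$ is a fixed nonnegative smooth bump supported in $(a,b)$ and not identically zero and $p$ is a polynomial of degree $\le s-1$: the linear map sending $p$ to $\big(\int_{[a,b]}t^{k}p(t)\psi(t)\,dt\big)_{0\le k\le s-1}$ on polynomials of degree $\le s-1$ is injective (vanishing of all those integrals forces $\int p^{2}\psi=0$, hence $p\equiv 0$ since $\psi>0$ on an open set) and therefore surjective, so some $p$ produces $(1,0,\dots,0)$. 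This completes the argument, and by taking the degree of $p$ large enough the same $\phi$ works simultaneously for all the $v_{i}$.

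The main obstacle is the Lie-algebraic heart of step (i): deducing, from ({\bf DC}) — a statement about the \emph{non-graded} bracket — that every iterated graded bracket $L^{[m]'}(w_{k_{1}},\dots,w_{k_{m}})$ with $m\ge 2$ vanishes. This forces one to pass carefully from $[\cdot,\cdot]$ to $[\cdot,\cdot]'$ through the weight filtration (identifying $w_{k}$ with the top-weight part of $\ad(w)^{k}v$), and then to bootstrap from the special case $[v,w_{k}]'=0$ to the full identity $[w_{k_{1}},w_{k_{2}}]'=0$ using the derivation property of $[\YY,\cdot]'$; the rest (the Strichartz expansion bookkeeping and the moment problem for $\phi$) is routine.
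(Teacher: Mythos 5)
Your proof is correct and follows essentially the same route as the paper: the ansatz $u(t)=\phi(t)v$, the reduction of the Strichartz expansion to an ordinary integral via the vanishing of all graded brackets among the $\Ad(t\YY)v$ (which the paper derives by projecting the non-graded identity $[v,\ad(w)^k v]=0$ from ({\bf DC}), exactly as you do, and then reduces the general case to $t_0=0$ by the automorphism $\Ad(-t_0\YY)$ — the exponentiated form of your derivation bootstrap), and finally the same finite moment problem for $\phi$. The only differences are presentational.
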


We recall that throughout the paper, $\Ad$ refers implicitly to the graded structure $*'$ at study, i.e. $\Ad(y)x=y*'x*'-y= \sum_{n=0}^s (n!)^{-1}L^{[n+1]'}(y^{\otimes n}, x)$ for all $x,y\in \tkg$.

\begin{proof}
Let $v\in \{v_{1}, \dots, v_{q_{1}}\}$. 
The crucial\footnote{Indeed, we only use ({\bf DC}) to get \eqref{DC-commute} for every $v$ in some basis of $\kg/[\kg, \kg]$ lifted to $\kg$.} input of  the double cancellation property is that it implies that for any $t_{0}, t_{1}\in \R$,  
\begin{align} \label{DC-commute}
[\Ad(t_{0}\YY)v, \Ad(t_{1}\YY)v]'=0
\end{align}
 Indeed, one may assume $t_{0}=0$, then expanding $\Ad(t_{1}\YY)$ it is enough to check that $L^{[n+2]'}(v, \YY^{\otimes n}, v)=0$ for any $n\geq 0$. As $v\in \km^{(1)}$, $\YY\in \km^{(2)}$, we have by definition that $L^{[n+2]'}(v, \YY^{\otimes n}, v)$ is the projection to $\km^{(2+2n)}$ of the non-graded bracket $L^{[n+2]}(v, \YY^{\otimes n}, v)$ which is equal to zero by assumption ({\bf DC}). Hence \eqref{DC-commute}.

Now fix  a non-trivial interval $[a,b]\subseteq \R$. We look for $u$ in the form $u(t)=\alpha(t)v$ where $\alpha: [a,b]\rightarrow \R$ is a smooth function. For any such $u$, it follows from Strichartz' formula \eqref{strich-formula}  defining $\int^{*'}$  and \eqref{DC-commute} above that \begin{align*}
\int^{*'}_{[a,b]}\Ad(t\YY)(u(t))\,dt &=\int_{[a,b]}\Ad(t\YY)(u(t))\,dt \\
&= \sum_{n=0}^s  \left(\int_{[a,b]} \alpha(t) t^n \,dt\right)  \,\,(n!)^{-1} L^{[n+1]'}(\YY^{\otimes n}, v)
\end{align*}
where the first line converts the multiplicative integral into an additive integral, in the classical sense. 
To conclude that $v$ satisfies (P), we just need to choose $\alpha$ such that 
$$ \int_{[a,b]} \alpha(t) t^n \,dt = \left\{
    \begin{array}{ll}
        1 & \mbox{if } n=0 \\
        0 & \mbox{if } n \in \{1, \dots, s\}
    \end{array}
\right.
$$
The existence of such  $\alpha$ is clear by  independence of the linear forms $\alpha \mapsto  \int_{[a,b]} \alpha(t) t^n dt$ on $C^\infty([a,b], \R)$. 
\end{proof}

We  use \Cref{piece-m1} to generate more elements that satisfy (P). 

\begin{lemme}[horizontal pieces] \label{piece-ab}
There exist elements $w_{1}, \dots, w_{p}\in \kg$ such that $(w_{i})$ project to a basis of $\kg/[\kg, \kg]'$ and each $w_{i}$ satisfies the property \emph{(P)}.
\end{lemme}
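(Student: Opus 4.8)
The plan is to enlarge the family of (P)-elements of $\km^{(1)}$ produced in \Cref{piece-m1} by adjoining the iterated brackets of the $v_i$ against $\YY$, and then to keep a sub-family that spans $\kg/[\kg,\kg]'$. For $1\le i\le q_1$ and $k\ge 0$, I would introduce $w_{i,k}\in\tkg$ as the coefficient of $\tfrac{t^k}{k!}$ in
\[\Ad(t\YY)v_i=\sum_{n\ge 0}\frac{t^n}{n!}\,L^{[n+1]'}(\YY^{\otimes n},v_i),\qquad\text{i.e. }w_{i,k}=L^{[k+1]'}(\YY^{\otimes k},v_i).\]
Since $[\tkg,\tkg]'=\kg^{(2)}\subseteq\kg$ (\Cref{prehypo}), one has $w_{i,k}\in\kg$ for all $i,k$; moreover $w_{i,k}\in\kg^{(1+2k)}$, which vanishes once $1+2k\ge 2s$, so only finitely many $w_{i,k}$ are non-zero. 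I would then prove (a) each $w_{i,k}$ satisfies property (P), and (b) the $w_{i,k}$ span $\kg$ modulo $[\kg,\kg]'$. Granting (a) and (b), any maximal sub-family of the $w_{i,k}$ that is linearly independent modulo $[\kg,\kg]'$ gives the desired $w_1,\dots,w_p$: it projects to a basis of $\kg/[\kg,\kg]'$ and each member satisfies (P).

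For (a), I would fix a non-trivial interval $[a,b]$ and look, as in the proof of \Cref{piece-m1}, for a witness of the form $u(t)=\alpha(t)v_i$ with $\alpha\in C^\infty([a,b],\R)$. For any scalar $r$, the path $t\mapsto\Ad(t\YY)\big(r\alpha(t)v_i\big)=r\alpha(t)\sum_{n\ge0}\tfrac{t^n}{n!}w_{i,n}$ takes values in $V_i:=\operatorname{span}\{w_{i,n}:n\ge0\}$. Expanding \eqref{DC-commute} (derived from \emph{({\bf DC})} in the proof of \Cref{piece-m1}) as a power series in $t_0,t_1$ yields $[w_{i,m},w_{i,n}]'=0$ for all $m,n$, hence $[V_i,V_i]'=0$. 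Consequently every bracket term in Strichartz' formula \eqref{strich-formula} for $\int^{*'}_{[a,b]}\Ad(t\YY)(ru(t))\,dt$ vanishes and the multiplicative integral collapses to the ordinary one:
\[\int^{*'}_{[a,b]}\Ad(t\YY)(ru(t))\,dt=\int_{[a,b]}\Ad(t\YY)(r\alpha(t)v_i)\,dt=r\sum_{n\ge0}\frac{1}{n!}\Big(\int_{[a,b]}\alpha(t)t^n\,dt\Big)w_{i,n}.\]
Choosing $\alpha$ with $\int_{[a,b]}\alpha(t)t^n\,dt=k!\,\delta_{n,k}$ for $n=0,\dots,s$ — possible because these finitely many moment functionals on $C^\infty([a,b],\R)$ are linearly independent — makes the right-hand side equal to $r\,w_{i,k}$ for every $r$, so $w_{i,k}$ satisfies (P).

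For (b), it is equivalent to show that the $w_{i,k}$ generate $\kg$ as a Lie algebra for $[.,.]'$. Here I would use that $\YY\in\chi+\km^{(2)}$, so $\tkg=\kg\oplus\R\YY$ as vector spaces, $\kg$ is an ideal of $(\tkg,*')$ (because $[\tkg,\kg]'\subseteq[\tkg,\tkg]'=\kg^{(2)}\subseteq\kg$), and $\R\YY$ is a subalgebra; hence $D:x\mapsto[\YY,x]'$ is a derivation of $(\kg,[.,.]')$ with $D w_{i,k}=w_{i,k+1}$. Letting $\mathfrak{b}\subseteq\kg$ be the Lie subalgebra generated by $\{w_{i,k}\}$, the fact that $D$ is a derivation carrying generators of $\mathfrak{b}$ to generators of $\mathfrak{b}$ makes $\mathfrak{b}$ a $D$-stable subspace, so $\mathfrak{b}\oplus\R\YY$ is a Lie subalgebra of $(\tkg,*')$. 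It contains $\km^{(1)}=\operatorname{span}\{w_{i,0}\}$ and $\R\YY$, hence contains the subalgebra generated by $\km^{(1)}\oplus\R\YY$, which is all of $\tkg$ by \Cref{prehypo}. Comparing dimensions forces $\mathfrak{b}=\kg$, proving (b) and finishing the argument.

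The hard (or at least delicate) part will be the collapse of the multiplicative integral in step (a): it rests on upgrading the single-generator cancellation $[v_i,\ad(w)^kv_i]=0$ furnished by \emph{({\bf DC})} to the identity $[V_i,V_i]'=0$, which is exactly what \eqref{DC-commute} provides. The remainder is essentially bookkeeping — in particular keeping straight that the relevant derived ideal here is $[\kg,\kg]'$ (which sits inside $\kg^{(2)}=[\kg,\kg]$, possibly strictly), and that a spanning family is needed modulo $[\kg,\kg]'$ rather than modulo $[\kg,\kg]$.
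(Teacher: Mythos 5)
Your proof is correct and takes essentially the same route as the paper's: the paper works with the curve points $\Ad(t\YY)v_i$, which span the same subspace as your coefficients $w_{i,k}$, deduces property (P) for them from the (easy, unwritten) observation that (P) is preserved by the automorphisms $\Ad(t_0\YY)$ instead of rerunning the moment computation of \Cref{piece-m1} as you do, and proves the spanning statement by exactly your argument — the subalgebra generated by these elements contains $\km^{(1)}$ and is stable under $[\YY,\cdot]'$, hence equals $\kg$ by \Cref{prehypo}. The only caveat is cosmetic: the decomposition $\tkg=\kg\oplus\R\YY$ invoked at the end fails in the centered case $\chi=0$, but there $[\kg,\kg]'=\kg^{(2)}$ and the $v_i$ alone already yield the lemma, so nothing is lost.
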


\begin{proof} This follows from \Cref{piece-m1} and the two next observations:  
\begin{itemize}
\item If $w$ satisfies (P), then $\Ad(t_{0}\YY)w$ also satisfies (P)  for any $t_{0}\in \R$.
%$$ \int^{*'}_{[a+t_{0},b+t_{0}]} \Ad(t\YY)(r u(t-t_{0}) ) \,dt = r\Ad(t_{0}\YY)  w  $$
%from which it follows that the pair  $\Ad(t_{0}\YY)w$ also satisfies (P). 

\item  The sub Lie algebra of $(\kg, [.,.]')$ generated by $\{\Ad(t \YY)\km^{(1)}, t\in \R\}$ contains $\km^{(1)}$ and is stable by $[\YY, .]'$, so it must be $\kg$ by \Cref{prehypo}. It follows that the abelianization $\kg/[\kg, \kg]'$ is spanned by the projection of $\{\Ad(t \YY)v_{i}, t\in \R, i\leq q_{1}\}$. 
\end{itemize}
\end{proof}

The next lemma claims that we can combine the horizontal paths generating the $\R w_{i}$'s via the property (P) in order to reach any point of $\kg$, with a total path of length  comparable to the distance to the origin for a $*'$-left invariant metric. 
\begin{lemme}[combining pieces]  \label{combining-pieces}
Given elements $w_{1}, \dots, w_{p} \in \kg$ such that $(w_{i})$ projects to a basis of $\kg/[\kg, \kg]'$, there exists $R_{0}, n_{0}\geq 1$ and a sequence of subscripts $i_{1}, \dots, i_{n_{0}} \in \{1, \dots, p\}$ such that every $x\in \kg$ can be written 
$$x= r_{1} w_{i_{1}}*'\dots *' r_{n_{0}} w_{i_{n_{0}}} $$
for some parameters $r_{i}$ satisfying $\max |r_{i}|\leq R_{0} \normp{x}+R_0.$
\end{lemme}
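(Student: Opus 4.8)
The plan is to argue by induction on the step $s$ of $(\kg,*')$, peeling off at each stage the last non‑trivial term $\cc:=\kg^{[s]'}$ of the descending central series of $(\kg,*')$, which is a central ideal. The base case $s=1$ is immediate: then $(\kg,*')$ is abelian, the $w_i$ span $\kg$, one picks a basis among them and writes $x$ as the corresponding linear combination, whose coefficients are linear forms in $x$ and hence $\ll\normp{x}$ (in the abelian case $\normp{\cdot}\asymp\|\cdot\|$). For the inductive step I pass to $\bar\kg:=\kg/\cc$, a simply connected nilpotent group of step $s-1$ for the product induced by $*'$; since $\cc\subseteq[\kg,\kg]'$, the images $\bar w_i:=\pi(w_i)$ (where $\pi:\kg\to\bar\kg$) still project to a basis of $\bar\kg/[\bar\kg,\bar\kg]'=\kg/[\kg,\kg]'$. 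Equipping $\bar\kg$ with the quotient left‑invariant Riemannian metric, so that $\pi$ is $1$‑Lipschitz, the induction hypothesis furnishes a fixed pattern $j_1,\dots,j_m$ and, for each $x$, reals $r_1,\dots,r_m$ with $\max_k|r_k|\ll\normp{x}+1$ and $\pi(x)=r_1\bar w_{j_1}*'\cdots*'r_m\bar w_{j_m}$. Lifting, $y:=r_1 w_{j_1}*'\cdots*'r_m w_{j_m}\in\kg$ satisfies $\normp{y}\le\sum_k|r_k|\,\normp{w_{j_k}}\ll\normp{x}+1$ (each one‑parameter subgroup $t\mapsto t w_{j_k}$ of $(\kg,*')$ being a Lipschitz path, as $[w_{j_k},w_{j_k}]'=0$), and $z:=y^{-1}*'x$ therefore lies in $\cc$, hence is central, with $\normp{z}\le\normp{y}+\normp{x}\ll\normp{x}+1=:D$.

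It then remains to express the central residual $z\in\cc$ as a bounded word in the $w_i$ with all scalars $\ll D$. Since $\{w_i\}$ generates $(\kg,[\cdot,\cdot]')$ as a Lie algebra, $\cc=\kg^{[s]'}$ is spanned by the finitely many left‑normed $s$‑fold brackets $c_\alpha=L^{[s]'}(w_{a_1},\dots,w_{a_s})$, $a_\ell\in\{1,\dots,p\}$; fix a basis among them and decompose $z=\sum_\alpha\lambda_\alpha c_\alpha$ with the $\lambda_\alpha$ linear forms in $z$. By the ball–box principle for the left‑invariant Riemannian metric on $(\kg,*')$ — more precisely, $z$ lies in the bottom term $\kg^{[s]'}$ of the descending central series, which sits at depth exactly $s$, so $\|z\|\ll\normp{z}^{\,s}$, see e.g.\ \cite{bellaiche96} — we get $|\lambda_\alpha|\ll D^{\,s}$. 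Now each summand is realized \emph{exactly}, not merely approximately, by an iterated group commutator: a short Campbell–Hausdorff computation in the step‑$s$ group shows that the left‑normed group commutator of $t_1 w_{a_1},\dots,t_s w_{a_s}$ equals $t_1\cdots t_s\,L^{[s]'}(w_{a_1},\dots,w_{a_s})$ on the nose, every correction term being a bracket of length $>s$ hence zero; choosing $t_1,\dots,t_s$ with $t_1\cdots t_s=\lambda_\alpha$ and $|t_\ell|=|\lambda_\alpha|^{1/s}$ produces $\lambda_\alpha c_\alpha$ using only scalars of size $|\lambda_\alpha|^{1/s}\ll D$. Since the $\lambda_\alpha c_\alpha$ all lie in the abelian central subalgebra $\cc$, their ordered $*'$‑product equals their sum, namely $z$ (all remaining Campbell–Hausdorff corrections lie in $[\cc,\cc]'\subseteq\kg^{[2s]'}=\{0\}$). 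Concatenating the word for $y$ with these commutator words — whose combined length depends only on $\kg$ and on the chosen basis of $\cc$, not on $x$ — exhibits $x=y*'z$ in the required form $r_1 w_{i_1}*'\cdots*'r_{n_0}w_{i_{n_0}}$ with $\max_i|r_i|\le R_0\normp{x}+R_0$.

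The main obstacle is precisely this exponent matching. The coefficients $\lambda_\alpha$ can genuinely be of size $D^{\,s}$: for $x$ at $*'$‑Riemannian distance $\asymp D$ the component of $z$ in $\cc$ may have Euclidean norm $\asymp D^{\,s}$, so a single $s$‑fold commutator would a priori call for a scalar of size $D^{\,s}$ and destroy the linear bound. What saves it is that $\lambda_\alpha$ is split evenly over exactly $s$ commutator slots, each then carrying a scalar $\asymp D$; this succeeds only because the commutator identity becomes exact — with no lower‑order tail — precisely at the bottom $\kg^{[s]'}$ of the descending central series, which is why the induction must be organised by peeling the deepest central‑series term rather than, say, by the length of the weight filtration, and why the ball–box estimate is needed in that sharp graded form. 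The remaining verifications (that the pattern $i_1,\dots,i_{n_0}$ and its length are independent of $x$, that $\pi$ is $1$‑Lipschitz for the quotient metric, and that $\normp{t w_{j_k}}\ll|t|$) are routine.
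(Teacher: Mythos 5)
Your proof is correct and is essentially the paper's argument: the paper dismisses this lemma with the single line ``straightforward induction on the step of $(\kg,[\cdot,\cdot]')$'', and what you have written is a complete and correct execution of exactly that induction. The point you rightly identify as the crux — splitting a central coefficient of size $D^{s}$ evenly over the $s$ slots of an \emph{exact} iterated group commutator, using the graded distance estimate $\|z\|\ll\normp{z}^{s}$ on $\kg^{[s]'}$ — is precisely what makes the ``straightforward'' induction work.
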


\begin{proof} 
Straightforward induction on the step of the Lie algebra $(\kg, [.,.]')$.
\end{proof}

\begin{proof}[End of proof of \Cref{horizontal-path-triang}] Combine  \Cref{piece-ab} and \Cref{combining-pieces}. 

\end{proof}

\subsubsection{\large Example 2: the case of the free nilpotent Lie algebra} \label{Sec-free}

We describe situations for which the limiting distribution $\sigma_{1}$ does not have full support in $\kg$. We start with an example of Lie algebra $\kg$ for which \emph{any biased limiting distribution} (i.e. $\Xab\neq 0 \mod [\kg, \kg] $) is properly supported. Note that the restriction to biased processes is necessary because for $\Xab = 0 \mod [\kg, \kg]$, full support is guaranteed by \cite{varopoulos-saloff-coulhon92}.  Combined with the central limit theorem established in \Cref{Sec-global-thm}, the following statement implies \Cref{never-full-thm} from the Introduction.

%Although this never occurs if the step of $\kg$ is bounded by $2$   (\Cref{DC-full-support}) nor in  the centered case  $\Xab=0 \mod [\kg, \kg]$, it happens rather frequently otherwise.

% Although this never occurs in  the centered case  $\Xab=0 \mod [\kg, \kg]$, this may be the only restriction for some Lie algebras. 

\begin{proposition}[Never full support]\label{free-non-full}
Assume that $\kg$ is a free nilpotent Lie algebra of step at least $3$, and $\Xab \in \kg/[\kg, \kg]$ is non zero.   Then the support of  
$\sigma_{1}$
does not contain $0$ in its interior. 
\end{proposition}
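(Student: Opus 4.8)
The plan is to exhibit an explicit quadratic inequality that holds on $\SS:=\supp(\sigma_1)$, is an equality at $0$, and is violated at points arbitrarily close to $0$. First, since an automorphism of the free nilpotent Lie algebra $\kg$ carries any nonzero $\Xab$ to the class of the first member $e_1$ of a free generating family $(e_1,\dots,e_q)$ — and $q\ge2$ because $\kg$ has step $\ge3$ — we may assume $\Xab=e_1\bmod[\kg,\kg]$. The composite of the quotient of $\kg$ by the ideal generated by $e_3,\dots,e_q$ with the quotient by the fourth term of the descending central series is a surjective Lie homomorphism $\kg\to\bar\kg$ onto the free nilpotent Lie algebra of step $3$ on the two generators $e_1,e_2$; it sends $\Xab$ to the nonzero bias $\bar\Xab=e_1$ of $\bar\kg$ and respects the weight filtrations attached to $e_1$. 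Passing to associated graded algebras and transporting through the weight-decomposition identifications (\Cref{changing-dec}) yields a graded homomorphism $p:(\kg,*')\to(\bar\kg,\bar*')$ with $p(\km^{(b)})\subseteq\bar\km^{(b)}$ and $p(\km^{(1)})=\bar\km^{(1)}$, which extends to the bias extensions by $p(\chi)=\bar\chi$. Then $p_{\star}\sigma$ is a diffusion law on $(\bar\kg,\bar*')$ of the form studied in \Cref{Sec-inv-reg}, with parameters $p(E_i),p(B),p(\YY)$; and since $\bar\kg^{(2)}=\bar\kg^{(3)}$ here, so $\bar\km^{(2)}=0$, these parameters collapse to $p(B)=0$ and $p(\YY)=\bar\chi$. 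As $p$ is linear and surjective (hence open), $p(0)=0$, and $p(\SS)\subseteq\supp((p_{\star}\sigma)_1)$, it suffices to prove the proposition on $\bar\kg$: namely, for the free step-$3$ algebra on two generators, bias $e_1$, with $B=0$ and $\YY=\chi$.

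On $\bar\kg$ take the basis $e_1,e_2,X_3:=[e_1,e_2],X_4:=[e_1,[e_1,e_2]],X_5:=[e_2,[e_1,e_2]]$, which for the filtration of $e_1$ has weights $1,1,3,5,4$. For the graded product $\bar*'$ the vector $e_1$ is central, and upon adjoining $\chi$ (acting as $\ad e_1$) the only nonzero brackets are $[\chi,e_2]'=X_3$, $[\chi,X_3]'=X_4$, $[e_2,X_3]'=X_5$. By \Cref{support-integral}, $\SS$ is the closure of the set of endpoints $\gamma(1)=\int^{*'}_{[0,1]}\Ad(t\chi)\,u(t)\,dt$, where $u=a\,e_1+b\,e_2:[0,1]\to\km^{(1)}$ is piecewise continuous. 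From $\Ad(t\chi)(a\,e_1+b\,e_2)=a\,e_1+b\,e_2+tb\,X_3+\tfrac12t^2b\,X_4$ and the bracket relations above, every $r$-fold iterated $\bar*'$-bracket of the integrands with $r\ge3$ vanishes (the $2$-fold bracket already lies in $\langle X_5\rangle$ and $[X_5,\bar\kg]'=0$), so only the first two iterated integrals of Strichartz's formula \eqref{strich-formula} survive; a short computation then shows that the $e_2$-coordinate of $\gamma(1)$ equals $\beta_0:=\int_0^1 b(t)\,dt$, while its $X_5$-coordinate equals
\begin{equation*}
\Gamma(b):=\tfrac12\int_{\{0<t_1<t_2<1\}}b(t_1)\,b(t_2)\,(t_2-t_1)\,dt_1\,dt_2 .
\end{equation*}

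Setting $B(s):=\int_0^s b(t)\,dt$, the identity $\int_{\{0<t_1<t_2<1\}}b(t_1)b(t_2)(t_2-t_1)\,dt_1dt_2=\int_0^1 B(s)\big(B(1)-B(s)\big)\,ds$ gives $\Gamma(b)=\tfrac12\big(\beta_0\int_0^1 B(s)\,ds-\int_0^1 B(s)^2\,ds\big)$, whence $\Gamma(b)\le\tfrac18\beta_0^2$ using $\big|\int_0^1 B\big|\le\big(\int_0^1 B^2\big)^{1/2}$ and $rc-r^2\le c^2/4$. Passing to the closure, every $x\in\SS$ satisfies the polynomial inequality $\langle X_5^\ast,x\rangle\le\tfrac18\langle e_2^\ast,x\rangle^2$, where $X_5^\ast,e_2^\ast$ are the coordinate forms dual to $X_5,e_2$. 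This is an equality at $x=0$, but at $x=\varepsilon X_5$ with $\varepsilon>0$ the left-hand side equals $\varepsilon>0$ and the right-hand side equals $0$; hence $\varepsilon X_5\notin\SS$ for every $\varepsilon>0$. Since $\varepsilon X_5\to0$, the set $\SS$ contains no neighbourhood of $0$, which proves the proposition for $\bar\kg$ and, via the reduction of the first paragraph, in general.

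The main obstacle is the bookkeeping of the first paragraph: one must verify that the quotient $\kg\to\bar\kg$ is genuinely compatible with the graded products, so that $p$ is a homomorphism $(\kg,*')\to(\bar\kg,\bar*')$ preserving the weight grading and extending appropriately to the bias extensions, and that $p_{\star}\sigma$ therefore retains the precise structural form of \Cref{Sec-inv-reg}; this requires care because the weight filtration and the descending central series do not coincide in the non-centered case. The remaining steps — the explicit form of $\Ad(t\chi)$ on $\bar\kg$, the vanishing of all order-$\ge3$ iterated brackets, and the integral identity for $\Gamma$ — are routine once the structure of $(\bar\kg,\bar*')$ has been written out.
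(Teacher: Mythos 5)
Your proof is correct, and its skeleton is the same as the paper's: project onto an explicit step-$3$ quotient by a surjective Lie homomorphism carrying the bias to a prescribed generator, use \Cref{changing-dec} to reconcile the weight decompositions upstairs and downstairs (you rightly flag this as the delicate point — it is exactly the paper's maneuver), and exhibit a quadratic constraint on the support of the pushed-forward diffusion that is violated at points arbitrarily close to $0$. The endgame differs in two cosmetic but pleasant ways. First, you quotient to the $5$-dimensional free step-$3$ algebra on two generators rather than the $4$-dimensional filiform algebra; the obstruction you find lives on the coordinate $[e_2,[X,e_2]]$ (one occurrence of the bias, weight $4$), which is the same coordinate, up to sign and relabelling of generators, as the paper's $e_4=[A,[A,T]]$. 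Second, where the paper computes the support of $\sigma_{\kl\,1}$ exactly via the discrete characterization (\Cref{support-sum}), you only derive a one-sided bound $\Gamma(b)\le\tfrac18\beta_0^2$ from Strichartz's formula (\Cref{support-integral}) together with Cauchy--Schwarz; this is weaker information but strictly sufficient, and it spares you the exact parametrization of the support. I verified the bracket table for $(\bar\kg,\bar*')$, the formula for $\Ad(t\chi)$, the vanishing of all iterated integrals of order $\ge 3$, the identity $\int_{t_1<t_2}b(t_1)b(t_2)(t_2-t_1)=\beta_0\int_0^1B-\int_0^1B^2$, and the resulting inequality: all check out.
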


\noindent{\bf Remark}. For Lie algebras of step at most $2$, full support is assured by \Cref{DC-full-support}.

\bigskip
The strategy to prove \Cref{free-non-full} is to find a concrete example of step-$3$ nilpotent Lie algebra with a limiting distribution that is not fully supported, then use the assumption that $\kg$ is free to project $\kg$ to this example in an equivariant way. To carry out this plan, we introduce  $(\kl, [.,.])$ the so-called \emph{filiform nilpotent Lie algebra} of step $3$, defined by  $\kl=\oplus_{i=1}^4\R e_{i}$ where 
\begin{align*}
&[e_{1},e_{2}]=e_{3}, \,\,[e_{1}, e_{3}]=e_{4}, \,\,[e_{1}, e_{4}]=0 \\
&[e_{2},e_{3}]= [e_{2}, e_{4}]= [e_{3}, e_{4}]=0 .
\end{align*}

We shall denote  $A=e_{1}, T=e_{2}$ (as one can realise  $\kl$ in the affine group of $\R^3$, and $A$ corresponds to a linear transformation, while  $T$ is a translation). We let $\XXkl =T \mod [\kl, \kl]$ and choose the adapted weight decomposition
$$\widetilde{\kl}=\underbrace{\R A\oplus \R T}_{\km^{(1)}_{\kl}}\oplus \underbrace{\R \chi_\kl}_{\tkm^{(2)}_{\kl}} \oplus \underbrace{\R [A,T]}_{\km^{(3)}_{\kl}}\oplus \underbrace{\R [A,[A,T]]}_{\km^{(4)}_{\kl}}$$
where $[\chi_\kl, .]=[T,.]$ on $\kl$. Observe that the associated graded bracket $[.,.]'$ coincides with $[.,.]$ except that  $T$ becomes central.

\begin{lemme}[Explicit support]\label{thread-support}
Let $\sigma_\kl$ be the diffusion law on $(\kl, *')$ associated to an operator $\LL_{\kl}$ as in \eqref{limit-diff-gen}. Then the support of the time-$1$ distribution is 
$$\supp \,\sigma_{\kl\, 1}=\left\{s_{1}A + s_{2}T + s_{3}e_{3} + s_{4}e_{4} \,:\, 2s_{4}\geq(s_{1}+s_{3})s_{3} \right\}. $$
\end{lemme}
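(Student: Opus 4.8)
The strategy is to apply the support theorem in the explicit form of Lemma~\ref{support-integral} and then solve the resulting finite family of polynomial integral relations by hand; the low dimension makes this feasible. For $\kl$ with the chosen weight decomposition one has $\km^{(2)}_{\kl}=0$, so the only admissible parameters in \eqref{limit-diff-gen} are $B=0$ and $\YY=\chi_{\kl}$ (in particular the support does not depend on the choice of the $E_i$). Hence, by Lemma~\ref{support-integral}, $\supp\sigma_{\kl,1}$ is the closure in $\kl$ of the set of multiplicative integrals $\int^{*'}_{[0,1]}\Ad(t\chi_{\kl})u(t)\,dt$, where $u\colon[0,1]\to\km^{(1)}_{\kl}=\R A\oplus\R T$ runs over piecewise continuous paths. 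One could equally start from the discrete description of Lemma~\ref{support-sum}; the computation is essentially the same.

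I would first make the adjoint action explicit. Since $T$ is $*'$-central and $[\chi_{\kl},A]'=-e_3$, $[\chi_{\kl},e_3]'=0$, the endomorphism $\ad'(\chi_{\kl})$ is $2$-step nilpotent, so $\Ad(t\chi_{\kl})(aA+bT)=aA+bT-ta\,e_3$. Writing $u(t)=a(t)A+b(t)T$ and $c(t):=\Ad(t\chi_{\kl})u(t)$, one computes $[c(t_1),c(t_2)]'=(t_1-t_2)\,a(t_1)a(t_2)\,e_4$, while every triple bracket of the $c(t_i)$ vanishes because $e_4$ is $*'$-central and $[T,e_3]'=0$. Thus Strichartz' series \eqref{strich-formula} terminates at the second term and
\begin{equation*}
\int^{*'}_{[0,1]}\Ad(t\chi_{\kl})u(t)\,dt=s_1A+s_2T+s_3e_3+s_4e_4,
\end{equation*}
with $s_1=\int_0^1a$, $s_2=\int_0^1b$, $s_3=-\int_0^1 ta(t)\,dt$ and $s_4=\tfrac12\int_{0<t_1<t_2<1}(t_1-t_2)a(t_1)a(t_2)\,dt_1\,dt_2$. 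The coordinate $s_2$ is arbitrary, so the problem reduces to identifying the closure of the set of triples $(s_1,s_3,s_4)$ produced by the functions $a$.

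Here the decisive step is to pass to the primitive $F(t):=\int_0^t a(u)\,du$, a continuous, piecewise $C^1$ function with $F(0)=0$. A few integrations by parts give $s_1=F(1)$, $s_1+s_3=\int_0^1F$ and $2s_4=\int_0^1F^2-F(1)\int_0^1F$, hence the identity
\begin{equation*}
2s_4-(s_1+s_3)s_3=\int_0^1F(t)^2\,dt-\Big(\int_0^1F(t)\,dt\Big)^2\ \ge\ 0
\end{equation*}
by Cauchy--Schwarz on the unit interval. Since the region $\{2s_4\ge(s_1+s_3)s_3\}$ is closed, this already yields $\supp\sigma_{\kl,1}\subseteq\{2s_4\ge(s_1+s_3)s_3\}$. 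For the reverse inclusion it suffices to check that $F\mapsto(F(1),\int_0^1F,\int_0^1F^2)$ has dense image in $\{(p,q,r):r\ge q^2\}$: any $r>q^2$ is attained exactly by starting from a function close to the constant $q$ and adding a mean-zero oscillation vanishing at both endpoints to raise $\int_0^1F^2$ to the desired level, whereas the boundary value $r=q^2$ together with the prescribed endpoint $F(1)=p$ is reached in the limit by inserting thin transition layers near $t=0$ (forced by $F(0)=0$) and near $t=1$. Transporting this along the affine change of coordinates $(p,q,r)\mapsto(s_1,s_3,s_4)$ gives precisely the region $\{2s_4\ge(s_1+s_3)s_3\}$, and together with the freedom in $s_2$ this establishes the claimed formula.

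The main obstacle is the density statement of the last paragraph: one has to exhibit explicit families of paths that realise, in the limit, the boundary stratum $2s_4=(s_1+s_3)s_3$ under the rigid constraint $F(0)=0$, and verify that the auxiliary oscillations used to inflate $\int_0^1F^2$ leave $F(1)$ and $\int_0^1F$ unchanged. The algebraic part --- the truncation of Strichartz' formula and the integration-by-parts identity --- is routine once the graded bracket $[\cdot,\cdot]'$ on $\tkl$ has been recorded.
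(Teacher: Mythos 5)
Your proof is correct, and it reaches the result by a mildly different route than the paper: you invoke the continuous characterization of the support (\Cref{support-integral}, via Strichartz' multiplicative integral), whereas the paper works from the discrete product description of \Cref{support-sum}. The two computations are exact parallels. Your observation that the Strichartz series truncates after the double integral, your formulas for $s_1,s_3,s_4$, and the identity $2s_4-(s_1+s_3)s_3=\int_0^1F^2-\bigl(\int_0^1F\bigr)^2$ all check out (I verified the integration by parts for $2s_4$), and the continuous Cauchy--Schwarz inequality here is precisely the continuum limit of the paper's discrete inequality $\sum\oalpha_i^2t_i\ge\bigl(\sum\oalpha_it_i\bigr)^2$ for the weights $t_i$ summing to $1$; your primitive $F(t)$ plays the role of the running sums $\oalpha_i$. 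What the paper's discrete route buys is that the coordinate bookkeeping and the surjectivity onto $\{2r_4\ge r_3^2\}$ are slightly more transparent (one just varies finitely many parameters $\alpha_i,t_i$), at the cost of a short conversion from the $r_i$ to the $s_i$ coordinates at the end; what your route buys is that you read off the $s_i$ directly from the integral and the boundary stratum appears as the equality case of Cauchy--Schwarz, at the cost of the density argument for $F\mapsto(F(1),\int F,\int F^2)$, which you handle correctly by inserting thin ramps at the endpoints (to reconcile $F(0)=0$ and $F(1)=p$ with $\int F\approx q$, $\int F^2\approx q^2$) and mean-zero oscillations on a region where $F$ is essentially constant (so that $\int Fg\approx q\int g=0$ and only $\int g^2$ moves $\int F^2$ upward). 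Since both sets are closed and each map has dense image in the claimed region, both arguments close correctly.
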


\noindent\emph{Remark}.
Of course, we may replace the condition $\XXkl=T$ by $\XXkl\in \R^*T$, and multiply $T$ appropriately in the support formula. However, if $\XXkl \notin \R^* T \mod [\kl, \kl]$, then the support is always full. We  prove below that the limiting distribution is Gaussian (in the classical Euclidean sense) if and only if $\XXkl \notin \R T \mod [\kl, \kl]$, see \Cref{gaussian-criterion}.

\begin{proof}

By \Cref{support-sum} and the facts that $T$ is  central in $(\widetilde{\kl}, *')$ and $\km^{(2)}_{\kl}=0$, the support of $\sigma_{\kl\, 1}$ is the closure of the subset of $\kl$ given by  
$$\R T *' \left \{\prod_{1\leq i\leq N}^{*'}(\alpha_{i}A*'t_{i}\chi_{\kl}),\,\,t_{i}\geq 0, \sum t_{i}=1, \alpha_{i}\in\R \right\}*'(-\chi_{\kl}) .$$
Setting $\overline{\alpha_{i}}=\sum_{j=1}^i\alpha_{j}$, one has 
\begin{align*}
\prod_{1\leq i\leq N}^{*'} (\alpha_{i}A*'t_{i}\chi_{\kl})  &=\left(\prod_{1\leq i\leq N}^{*'} \Ad(\oalpha_{i}A)(t_{i}\chi_{\kl})\right)*'(\oalpha_{N}A).
\end{align*}
The terms in the product commute because $\widetilde{\kl}^{(2)}$ is abelian. Hence, 
\begin{align*}
\prod_{1\leq i\leq N}^{*'}(\alpha_{i}A*'t_{i}\chi_{\kl}) &=\left(\sum_{i=1}^N \Ad(\oalpha_{i}A)(t_{i}\chi_{\kl})\right)*'(\oalpha_{N}A)\\
&=\left(\sum_{i=1}^Nt_{i}\chi_{\kl}  +\sum_{i=1}^N \oalpha_{i}t_{i} [A,\chi_{\kl}]' +\frac{1}{2}\sum_{i=1}^N \oalpha_{i}^2t_{i} [A[,A,\chi_{\kl}]]'  \right)*'(\oalpha_{N}A)\\
&=(\frac{1}{2}\sum_{i=1}^N \oalpha_{i}^2t_{i}) [A,[A,\chi_{\kl}]]' \,*'\,(\sum_{i=1}^N \oalpha_{i}t_{i}) [A,\chi_{\kl}]'  \,*'(\,\sum_{i=1}^N t_{i})\chi_{\kl} \,*'\,(\oalpha_{N}A).
\end{align*}
The above computation justifies that the support of $\sigma_{\kl\, 1}$ is exactly 
$$\left\{ r_{4}e_{4}*' r_{3}e_{3}*' \chi_{\kl} *'r_{0}A*'(-\chi_{\kl})\,:\, 2r_{4}\geq r_{3}^2 \right\} \oplus \R T .$$
We can formulate this subset of $\kl$ with linear combinations of the $(e_{i})$'s:
\begin{align*}
r_{4}e_{4}*' r_{3}e_{3}*' \chi_{\kl} *'r_{0}A*'(-\chi_{\kl}) &=r_{4}e_{4}*' r_{3}e_{3}*' (r_{0}A-r_{0}e_{3})\\
&=\underbrace{(r_{4} -\frac{r_{0}r_{3}}{2})}_{s_{4}}e_{4}+ \underbrace{(r_{3}-r_{0})}_{s_{3}}e_{3}+ \underbrace{r_{0}}_{s_{1}}A.
\end{align*}
The condition $2r_{4}\geq r_3^2 $ amounts to
$2s_{4} \geq (s_{1}+s_{3})s_{3} $
so the support of $\sigma_{\kl\, 1}$ is 
$$\left\{s_{1}A + s_{2}T + s_{3}e_{3} + s_{4}e_{4} \,:\, 2s_{4} \geq (s_{1}+s_{3})s_{3} \right\}  .$$
\end{proof}

\noindent\emph{Remark}. Setting $t_{0}=0$, $2t_{4}=t_{3}^2$, we see that  \emph{$\supp \,\sigma_{\kl\, 1}$ is not stable by the dilation $D_{r}$ for $r>1$}. However, it is stable by $D_{r}$ for $r\leq 1$. This second observation is true in general for any diffusion  as in  \Cref{Sec-inv-reg} with $B=0$. It comes from the equality
$$\sigma_{1} = D_{\sqrt{r}} \sigma_{1}  *'\Ad(r\YY) D_{\sqrt{1-r}}\sigma_{1}  $$
justified by \Cref{density-inv}, and the fact that the support of $\sigma_{1}$ contains $0$ (in its boundary at least) by \Cref{support-integral}. 

\bigskip

\noindent\emph{Remark}. In \cite{benoist21}, Benoist shows that for random walks on the discrete step-$3$ filiform nilpotent group, there may exist positive harmonic functions that are neither characters nor translates of harmonic functions  induced from characters. This is in contrast with the Heisenberg case, and the argument also relies on the analysis of the central component. 

%2) If one chooses a different weight decomposition  of $\kl$ adapted to $\XXkl=T \mod [\kl, \kl]]$, say $\kl=\oplus \kn^{\,(b)}_{\kl}$, then the associated diffusion is $\theta$ satisfies $\theta_{1}=\phi_{\star}\sigma_{1}$, where $\phi$ is the unique linear map $\phi : \kl\rightarrow \kl$ such that $\phi(\km^{(b)})=\kn^{(b)}$ and $\phi$ factorizes via the identity on the vector space $\kl^{(b)}/\kl^{(b+1)}$. We will prove this fact in \Cref{Sec-CLT} (which does not rely on the present section).

\bigskip

We may now conclude the proof of  \Cref{free-non-full}.
\begin{proof}[Proof of \Cref{free-non-full}]
 As $\kg$ is free of step at least $3$ and $\Xab$ is non zero, there must exist a surjective morphism of Lie algebras $\pi: (\kg, [.,.])\rightarrow (\kl, [.,.])$ such that $\pi(\Xab)= T \mod [\kl, \kl]$. Observe that $\pi$ sends the weight filtration of $(\kg, \Xab)$ to that of $(\kl, T)$, meaning $\pi(\kg^{(b)})=\kl^{(b)}$ for every $b$. However, if $\kl=\oplus\km^{(b)}_{\kl}$ is the weight decomposition from \Cref{thread-support},  it is not necessarily true that $\pi(\km^{(b)})=\km^{(b)}_{\kl}$ or  even that $(\pi(\km^{(b)}))$  is a direct sum decomposition of $\kl$. 
 
  To reduce to this situation, we introduce some other weight decomposition $\kg=\oplus \mathring{\km}^{(b)}$ such that $\pi(\mathring{\km}^{(b)})=\km^{(b)}_{\kl}$ and set  $\phi : \tkg\rightarrow \tkg$ the unique linear map such that $\phi : \tkm^{(b)} \rightarrow \widetilde{\mathring{\km}}^{(b)}$ is the isomorphism  that factorizes via the identity on the vector space $\kg^{(b)}/\kg^{(b+1)}$. Call $\mathring{*}'$ the induced graded product on $\kg$, set $\mathring{E}_{i}=\phi(E_{i})$, $\mathring{B}=\phi(B)$,  $\mathring{\YY}=\phi(\YY)$, and $\mathring{\sigma}$ the induced diffusion law on $(\kg, \mathring{*}')$ as in \eqref{limit-diff-gen}.  We know from \Cref{changing-dec}  that $$\sigma =\phi^{-1} \circ \mathring{\sigma}.$$
 In particular, $\supp\,\sigma_{1}= \phi^{-1}(\supp  \,\mathring{\sigma}_{1})$, so it is sufficient to check the claim for $\mathring{\sigma}$.

Observe now that the condition $\pi(\mathring{\km}^{(b)})=\km^{(b)}_{\kl}$ guarantees that $p$ extends to a morphism from $(\tkg,\mathring{*}')$ to $(\widetilde{\kl}, *')$ by setting $p(\mathring{\chi})=\chi_{\kl}$. As $\km^{(2)}_{\kl}=\{0\}$, we have $p(\mathring{\YY})=\chi_{\kl}$, and it follows that 
$p\circ \mathring{\sigma}$ is the diffusion law generated by 
$$\frac{1}{2}\sum_{i=1}^q  \left(\Ad(t\chi_{\kl})p(\mathring{E}_{i})\right)^2+ \Ad(t\chi_{\kl})p(\mathring{B}).$$
By \Cref{thread-support}, the support of $p \circ \mathring{\sigma}_{1}$  does not contain $0$ in its interior, whence that of $\mathring{\sigma}_{1}$ does not either, and this concludes the proof.

\end{proof}

\subsection{Discussion on a lower bound} \label{Sec-lower-bound}

We keep the notations of \Cref{Sec-inv-reg} and address the question of a lower bound for the distribution $\sigma_{1}$, that is  the distribution at time $1$ of the diffusion law $\sigma$ associated to  $\LL$ from \cref{limit-diff-gen}. 
The fact that $\sigma_{1}$ may not have full support on $\kg$ yields some difficulties. They lead us to formulate a lower bound in terms of $K$-regular chain, where $K$ is a compact in the interior of the support of $\sigma_{1}$, see \Cref{K-lower-bound}.  When $\sigma_{1}$ has full support, we interpret \Cref{K-lower-bound} in terms of the distance determined by horizontal paths, or even a left $*'$-invariant metric on $\kg$ if $\kg$ satisfies the condition ({\bf DC}) from \Cref{Sec-upper-triang}. Finally, we use \Cref{K-lower-bound} to characterize the situations where $\sigma_{1}$ is a Gaussian distribution, in the Euclidean sense.

%We notably characterize the case where $\sigma_{1}$ is gaussian (in the classical sense) and interpret the aforementioned lower bound under the form $\alpha \exp(-A |x|'^2_{(\kg, *')})dx$ if $\kg$ satisfies the double cancelling assumption ({\bf DC}) from \Cref{sec-upper-triang}. 

\bigskip
To obtain a general lower bound, we follow the classical strategy of iterating a suitable Harnack inequality. Establishing such an inequality requires more information on the set of arrival points of (continuous piecewise $C^1$) horizontal paths. We start with a lemma  telling us that this set has non empty interior, even if we only consider paths of bounded speed.

%horizontal paths on a presrcibed time interval are able  to reach every point of some open set in $\kg$, while remaining uniformly in a large ball. 

\begin{lemme}\label{Chow} Let $0<t_{0}<t_{1}$.  The set of arrival points $\gamma(t_{1})$,  for $\LL$-horizontal paths  $\gamma : [t_{0},t_{1}] \rightarrow \kg$ starting at $\gamma(t_{0})=0$ and whose derivative satisfies  $\|\partial \gamma \|_{\infty} \leq 1$, contains an open set of $\kg$. 
\end{lemme}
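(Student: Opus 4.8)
The plan is to recast the statement as an \emph{accessibility} question for a control system on $\kg$ and then invoke the standard fact that, under a Hörmander–type rank condition, the attainable set in a fixed time has nonempty interior.

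First I would reduce to the driftless generator. By \Cref{fact-support} and \Cref{fact-integration}, an $\LL$-horizontal path $\gamma:[t_0,t_1]\to\kg$ with $\gamma(t_0)=0$ is exactly a piecewise-$C^1$ solution of $\partial\gamma(t)=\Ad(t\YY)(v(t)+B)$ with $v(t)\in\km^{(1)}$, and, since $[t_0,t_1]$ is compact and $\Ad(t\YY)$ is bounded with bounded inverse there, the constraint $\|\partial\gamma\|_\infty\le 1$ is implied by a bound $\|v\|_\infty\le\varepsilon_0$ for suitable $\varepsilon_0>0$. By the remark following \Cref{homogeneisation}, the time-dependent diffeomorphism $x\mapsto x*'t\YY*'(-t(B+\YY))$ turns $\LL$-horizontal paths into $\LL_{+}$-horizontal paths \emph{with the same driving vector} $v$, where $\LL_{+}=\tfrac12\sum_i(\Ad(t(B+\YY))E_i)^2$ is driftless; left-translating so that the new path starts at $0$ and undoing this diffeomorphism at time $t_1$, it suffices to prove that the set of arrival points $c(t_1)$ of $\LL_{+}$-horizontal paths $c:[t_0,t_1]\to\kg$ with $c(t_0)=0$ and $\|v\|_\infty\le\varepsilon$ (some fixed $\varepsilon>0$) contains an open subset of $\kg$.

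Next I would view these as trajectories of the autonomous affine control system on $M:=\R_{>0}\times\kg$ given by $\dot t=1$, $\dot x=TL_x\!\big(\Ad(t(B+\YY))\sum_i v_iE_i\big)$, started at $(t_0,0)$, with arrival at time $t_1$ corresponding to attainability in exact time $t_1-t_0$. Let $\mathcal{L}$ be the Lie algebra of vector fields generated by the drift $\partial_t$ and the control fields $f_i$, and $\mathcal{L}_0\subseteq\mathcal{L}$ the ideal generated by the $f_i$. Using $\partial_t\Ad(tZ)=\Ad(tZ)\ad'(Z)$ (as in the proof of \Cref{hypo}) together with left-invariance along each slice $\{t\}\times\kg$, one computes $\mathcal{L}_0(t,x)=\{0\}\times TL_x\!\big(\Ad(t(B+\YY))\,\mathfrak{j}\big)$, where $\mathfrak{j}$ is the smallest subspace of $(\kg,[\,\cdot\,,\,\cdot\,]')$ containing $\km^{(1)}$, closed under $[\,\cdot\,,\,\cdot\,]'$, and stable under $\ad'(B+\YY)$. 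Now $[\kg,\kg]=[\km^{(1)},\km^{(1)}]+\kg^{(3)}$ (the other brackets land in $\kg^{(3)}$ by the filtration property), so $\km^{(2)}=[\km^{(1)},\km^{(1)}]'$ and hence $B\in\km^{(2)}\subseteq\mathfrak{j}$; thus $\ad'(B)$, and therefore also $\ad'(\YY)=\ad'(B+\YY)-\ad'(B)$, stabilizes $\mathfrak{j}$. Then $\mathfrak{j}\oplus\R\YY$ is a Lie subalgebra of $(\tkg,[\,\cdot\,,\,\cdot\,]')$ containing $\km^{(1)}$ and $\YY$, hence equals $\tkg$ by \Cref{prehypo}; since $\mathfrak{j}\subseteq\kg$ and $\YY\notin\kg$, a dimension count forces $\mathfrak{j}=\kg$. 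Consequently $\mathcal{L}_0(t,x)=\{0\}\times T_x\kg$ and $\mathcal{L}(t,x)=T_{(t,x)}M$ everywhere, i.e.\ the accessibility rank condition holds at every point, with controls allowed to range in a fixed ball of $\km^{(1)}$.

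Finally I would conclude by Krener's accessibility theorem (see \cite{agrachev-sachkov04, sachkov07}): for every $T>0$ the attainable set from $(t_0,0)$ in time $\le T$ has nonempty interior in $M$. Applying this with $T=(t_1-t_0)/2$ produces an open box $J\times U\subseteq\R_{>0}\times\kg$ inside that attainable set; choosing $\tau'\in J-t_0\subseteq(0,T)$, the attainable set in exact time $\tau'$ contains the open set $U$. Since the $\kg$-dynamics is driftless, the zero control freezes the $\kg$-coordinate, so prolonging by $v\equiv 0$ on $[t_0+\tau',t_1]$ shows the attainable set in exact time $t_1-t_0$ also contains $U$; projecting to $\kg$ and undoing the diffeomorphism of the first step, the set of arrival points $\gamma(t_1)$ contains an open subset of $\kg$. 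The main obstacle is this control-theoretic input: one has to be careful that the relevant attainability is at a \emph{fixed} final time (and only transverse to the time direction), which is exactly why I first pass to the driftless generator $\LL_{+}$ so that fixed-time attainability reduces to small-time accessibility; the rest — computing $\mathcal{L}_0$ and checking $\mathfrak{j}=\kg$ — is a routine consequence of \Cref{prehypo}.
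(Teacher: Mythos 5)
Your proof is correct and follows essentially the same route as the paper: reduce to the driftless generator $\LL_{+}$ via the remark after \Cref{homogeneisation}, lift to vector fields on $\R\times\kg$ including the time direction, verify the bracket-generating condition (which the paper delegates to \Cref{hypo} and which you re-derive for $B+\YY$ in place of $\YY$), and conclude by a standard accessibility theorem. Your substitution of Krener's theorem for Chow's, together with the zero-control prolongation to hit the exact final time, is a slightly more careful packaging of the same argument, since the lifted fields all point forward in time.
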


\begin{proof} Recall from the remark following \Cref{homogeneisation} that a path $\gamma$ is $\LL$-horizontal if and only if $t\mapsto \gamma(t)*'t\YY*'(-t(B+\YY))$ is $\LL_{+}$-horizontal, where $\LL_{+}:=\frac{1}{2}\sum_{i=1}^{q}\Ad(t(B+\YY))E_{i}^2$. We may thus assume that $B=0$.  Denote by $\tX$ the vector field on $\R\times \kg$ given by $( \frac{d}{dt}, \Ad(t\YY)X)$. We saw in \Cref{hypo} that the family $\{\tX,\, X\in \km^{(1)}\}$  is bracket-generating on $\R\times \kg$. Moreover, the associated flows project into horizontal paths. The result now follows from Chow's theorem \cite{bellaiche96}.
\end{proof}

Denote by $\SS$ the support of $\sigma_{1}$, and $\oSS$ the interior of $\SS$. We now use the previous lemma to show that any point in $\oSS$ can be reached by a horizontal path defined on the time interval $[0, 1]$ and starting at $0$. Moreover, the length of these paths is uniformly bounded if the set of prescribed arrival points is compact in $\oSS$.  

\begin{lemme}[Accessibility] \label{accessibility}
Let $K\subseteq \oSS$ be a compact subset. For  $R>0$ large enough, for  any $x\in K$, there exists $\gamma :[0,1]\rightarrow \kg$  $\LL$-horizontal, such that  $\|\partial \gamma \|_{\infty} \leq R$ and $\gamma(0)=0$, $\gamma(1)=x$. 
\end{lemme}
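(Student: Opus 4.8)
The plan is to reach an arbitrary $x\in K$ by concatenating a horizontal path that goes ``most of the way'' with a short, Chow-type correcting path at the end. I would first reduce to the drift-free case $B=0$, exactly as in the proof of \Cref{Chow}: by the remark following \Cref{homogeneisation}, $\gamma$ is $\LL$-horizontal iff $t\mapsto\gamma(t)*'t\YY*'(-t(B+\YY))$ is $\LL_+$-horizontal, a change that affects $\|\partial\gamma\|_\infty$ and the endpoint only through a bounded factor and a fixed translation. With $B=0$, the $\LL$-horizontal paths are exactly those with $\partial\gamma(t)=\Ad(t\YY)v(t)$, $v(t)\in\km^{(1)}$; they concatenate, the constant path is horizontal, left-translation by a fixed element preserves horizontality and $\|\partial\gamma\|_\infty$ (since $\partial(q*'\gamma)=\partial\gamma$), and, using that each $D_r$ is a $*'$-automorphism with $D_r\YY=r\YY$ and $D_r\km^{(1)}=\sqrt r\,\km^{(1)}$, the rescaled path $t\mapsto D_{\sqrt\lambda}\gamma(t/\lambda)$ is again $\LL$-horizontal on $[0,\lambda]$ with comparable speed. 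Writing $\mathcal R_t$ for the set of endpoints $\gamma(t)$ of $\LL$-horizontal paths on $[0,t]$ issued from $0$, this yields $\mathcal R_t=D_{\sqrt t}\mathcal R_1$ and, via \Cref{fact-support} and \Cref{support-integral}, $\overline{\mathcal R_t}=\supp\sigma_t=D_{\sqrt t}\SS$.

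Next I would free up a fixed ``reserve'' interval of time. By \Cref{density-inv} and the remark after \Cref{thread-support}, $D_r\SS\subseteq\SS$ for $r\le1$, so the open sets $D_r\oSS=\mathrm{int}(D_r\SS)$ increase in $r$ and, since every $x\in\oSS$ satisfies $D_{1/r}x\in\oSS$ for $r$ close to $1$, they cover $\oSS$; by compactness of $K$ we get $K\subseteq D_{r_0}\oSS=\mathrm{int}(D_{r_0}\SS)$ for some $r_0<1$. Put $\theta:=r_0^2<1$, leaving the interval $[\theta,1]$ free. On $[0,1-\theta]$ I would use a quantitative version of \Cref{Chow}: Chow's theorem for the bracket-generating (\Cref{hypo}) and — because $B=0$ — space-symmetric distribution $\{\Ad(t\YY)v:v\in\km^{(1)}\}$ on $\R\times\kg$ gives $\rho,R_2>0$ such that every $z\in\overline{B(0,\rho)}$ is $\gamma_2^z(1-\theta)$ for some $\LL$-horizontal $\gamma_2^z$ on $[0,1-\theta]$ with $\gamma_2^z(0)=0$ and $\|\partial\gamma_2^z\|_\infty\le R_2$.

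Then, for $x\in K$, set $T:=(\Ad(\theta\YY)B(0,\rho))^{-1}$ (a fixed bounded open neighbourhood of $0$), $\mathcal N_x:=x*'T$, and for $R>0$ let $A_R^{(\theta)}:=\{\gamma(\theta):\gamma\ \LL\text{-horizontal on }[0,\theta],\ \gamma(0)=0,\ \|\partial\gamma\|_\infty\le R\}$. The sets $U_R:=A_R^{(\theta)}*'T^{-1}$ are open, increase with $R$, and $\bigcup_R U_R=\mathcal R_\theta*'T^{-1}$; since $\mathcal R_\theta=D_{r_0}\mathcal R_1$ is dense in $D_{r_0}\SS$ and $\mathcal N_x$ is a neighbourhood of $x\in\mathrm{int}(D_{r_0}\SS)$, we have $\mathcal R_\theta\cap\mathcal N_x\neq\emptyset$, i.e. $x\in\mathcal R_\theta*'T^{-1}$, so $\{U_R\}_R$ is an open cover of $K$ and $K\subseteq U_{R_1}$ for a single $R_1$. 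Hence for every $x\in K$ there is $y=\gamma_1(\theta)$ with $\gamma_1$ $\LL$-horizontal on $[0,\theta]$ from $0$, $\|\partial\gamma_1\|_\infty\le R_1$, and $y^{-1}*'x\in\Ad(\theta\YY)B(0,\rho)$. Put $z:=\Ad(-\theta\YY)(y^{-1}*'x)\in B(0,\rho)$ and define
\[
\gamma(t):=\begin{cases}\gamma_1(t),& t\in[0,\theta],\\[1mm] y*'\Ad(\theta\YY)\gamma_2^z(t-\theta),& t\in[\theta,1].\end{cases}
\]
Because left-translation and $\Ad(\theta\YY)$ are $*'$-automorphisms and $\theta\YY*'(t-\theta)\YY=t\YY$, one checks $\partial\gamma(t)=\Ad(t\YY)\,v_2(t-\theta)$ on $[\theta,1]$ (with $v_2$ driving $\gamma_2^z$), so $\gamma$ is $\LL$-horizontal; it is continuous at $\theta$ (both one-sided values equal $y$), $\gamma(0)=0$, $\gamma(1)=y*'\Ad(\theta\YY)z=x$, and $\|\partial\gamma\|_\infty\le\max(R_1,CR_2)=:R$, a bound independent of $x\in K$. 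This is the assertion.

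The hard part is exactly to reach $x$ \emph{exactly} — $x$ may lie arbitrarily close to $\partial\SS$ — while keeping the speed bound \emph{uniform} over $K$. The time-dependence of $\LL$ rules out slowing a fixed path down, so one must (a) use the dilation-stability $D_r\SS\subseteq\SS$ to make available a fixed final interval $[\theta,1]$; (b) upgrade \Cref{Chow} to the quantitative statement above (a fixed closed ball around the identity reached in fixed time with a fixed speed bound), whose proof genuinely relies on the reduction to $B=0$ so that ``staying put'' is horizontal, together with \Cref{hypo}; and (c) organise the bookkeeping of speed bounds through the open sets $U_R$ and compactness of $K$. Steps (b) and (c) are where I expect the real work to lie.
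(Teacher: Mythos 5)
Your overall architecture — reserve a final time interval via the dilation covariance $D_r\SS\subseteq\SS$, approximate $x$ by a reachable point $y$ using the density of $\mathcal R_\theta$ in $D_{r_0}\SS$, extract a uniform speed bound by a compactness/covering argument, and finish with a Chow-type correction — is the same as the paper's, and the reduction to $B=0$, the gluing computation $\partial\gamma(t)=\Ad(t\YY)v_2(t-\theta)$ (using $\theta\YY*'(t-\theta)\YY=t\YY$) and the $U_R$ bookkeeping are all fine. But step (b), your ``quantitative Chow'', is not merely unproved: it is false in general. You assert that there is $\rho>0$ such that every $z\in\overline{B(0,\rho)}$ is the endpoint $\gamma_2^z(1-\theta)$ of an $\LL$-horizontal path on $[0,1-\theta]$ issued from $0$. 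By \Cref{fact-support} and \Cref{density-inv}, the closure of that exact-time reachable set is $\supp\sigma_{1-\theta}=D_{\sqrt{1-\theta}}\SS$, and the whole point of \Cref{thread-support} and \Cref{free-non-full} is that $\SS$ need not contain a neighbourhood of $0$ — in the step-$3$ filiform example (where $\km^{(2)}_{\kl}=0$, so one is already in the case $B=0$ with a space-symmetric distribution $\Ad(t\YY)\km^{(1)}$) the points $(0,s_2,\eps,-\eps^3)$ lie outside $\SS$ for all small $\eps>0$. Chow--Rashevskii applied to the time-augmented fields $(\tfrac{d}{dt},\Ad(t\YY)v)$ only gives that the \emph{orbit} is everything; reaching a prescribed point at a prescribed \emph{forward} time is obstructed precisely because every generator has $dt$-component $+1$ and the backward flows used in Chow's connectivity argument would run time backwards. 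This forward-time inaccessibility is the central phenomenon of the section, and symmetry of the space part $v\mapsto -v$ does not rescue it, because a move made at time $t$ cannot be undone at time $t'>t$ ($\Ad(t\YY)\neq\Ad(t'\YY)$) and the accumulated commutators carry a definite sign.

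The repair is to reverse your quantifiers, which is what the paper does. \Cref{Chow} asserts only that the bounded-speed reachable set on $[1-r,1]$ contains \emph{some} nonempty open set $V$ (not a ball around $0$). Fix that $V$ first, arrange (shrinking $V$ if necessary) that $K*'V^{-1}\subseteq D_{\sqrt{1-r}}\oSS$, and then use the density of the time-$(1-r)$ reachable set to place the approximating endpoint $y$ in the open set $x*'V^{-1}$, so that the remaining correction $y^{-1}*'x$ lands in $V$ by construction rather than in a ball around the origin; compactness of $K$ then gives finitely many such $y=\gamma_i(1-r)$ and hence a uniform speed bound. With this modification your argument goes through and coincides with the paper's.
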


The idea is to use the characterization of $\SS$ as the closure of arrival points of horizontal paths  on $[0, 1]$ starting at $0$  (\Cref{fact-support}) to generate a path from $0$ to some $y$ very close to $x$ on the time interval $[0, 1-r]$ with $r$ small.  Then we complete the path from $y$ to $x$, or any $x'$ in a neighborhood of $x$, on the remaining time $[1-r, 1]$ using \Cref{Chow}. 
\begin{proof} 

For $r\in [0, 1]$, set $\Pc_{1-r}= \{\gamma(1-r) \,| \, \gamma:  [0, 1-r]\rightarrow \kg \text{ $\LL$-horizontal}, \gamma(0)=0\}$. Recall from \Cref{fact-support} and \Cref{density-inv} that $\Pc_{1-r}$ is dense in $D_{\sqrt{1-r}}\SS$. Fix $U\subseteq \kg$ a compact symmetric neighborhood of $0$ such that $K*'U\subseteq \oSS$, then there exists $r_{0}<1$ such that $D_{\sqrt{1-r}}\oSS\supseteq K*'U$ for $0\leq r\leq  r_{0}$.  It follows that for any open set $V\subseteq U$, one has $\Pc_{1-r} *' V \supseteq K$, and by compactness, $K$ is covered by a finite number of translates $\gamma_{i}(1-r) *' V$ where $\gamma_{i}(1-r)\in \Pc_{r}$. 

In order to conclude, we only need to show that for $r<1$ large enough, any point in some open set $V\subseteq U$ can be reached by a $\LL$-horizontal path on the interval $[1-r, 1]$ starting from $0$ and of bounded speed.  This is achieved by \Cref{Chow}.
\end{proof}

We now combine the result of Kogoj-Polidoro from \Cref{fact-hypo} and the previous analysis to deduce the following Harnack Inequality. 
\begin{lemme}[Harnack Inequality] \label{Harnack-1}
Let $K\subseteq \oSS$ be a compact subset. For all $\eps_{0}>0$ small enough, there exists $C>0$ such that for every  non-negative function $v: (\frac{\eps_{0}}{2}, 2) \times \kg \rightarrow \R^+$ solution of $\partial_{t}- \frac{1}{2}\sum_{i=1}^{q}  \left(\Ad(t\YY) E_{i}\right)^2 + \Ad(t\YY)B
=0$, we have 
$$v(\eps_{0}, 0) \leq C \inf_{x\in K} v(1, x).  $$
\end{lemme}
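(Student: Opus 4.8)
\emph{Proof plan.} The plan is to obtain the statement as a single application of the Kogoj--Polidoro Harnack inequality (\Cref{fact-harnack}); essentially all the work lies in verifying its hypotheses, i.e. in producing a bounded chart $U\ni 0$ and a compact set $L\subseteq[\eps_0,1]\times\overline U$ whose interior contains $\{1\}\times K$ and which is contained in the closure of the set $\Pc$ of points reachable from $(\eps_0,0)$ by $U$-valued $\LL$-horizontal paths defined on intervals $[\eps_0,\tau]$ with $\tau<1$.

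First I set up the chart. I fix a compact neighbourhood $K'$ of $K$ with $K'\subseteq\oSS$. By \Cref{accessibility} there is $R>0$ such that every $x\in K'$ is the endpoint of an $\LL$-horizontal path $\gamma:[0,1]\to\kg$ with $\gamma(0)=0$, $\gamma(1)=x$ and $\|\partial\gamma\|_\infty\le R$; since the multiplicative derivative is bounded and the time interval finite, all such paths remain in one bounded subset of $\kg$, and I take $U$ to be a relatively compact open neighbourhood of its closure and of $0$ (enlarging $U$ finitely often below if needed).

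Now comes the only genuine point: the reachable set has to be described starting from time $\eps_0$, whereas \Cref{accessibility} produces paths starting at time $0$. As in \Cref{support-integral} (applied to the increment diffusion with generator $(\LL_{\eps_0+u})_u$), the set of points reachable at time $\tau$ from $(\eps_0,0)$ by $\LL$-horizontal paths is dense in
$$\SS_{\eps_0,\tau}:=\overline{\Big\{\int^{*'}_{[\eps_0,\tau]}\Ad(t\YY)\big(u(t)+B\big)\,dt\ :\ u:[\eps_0,\tau]\to\km^{(1)}\ \text{piecewise continuous}\Big\}},$$
and $\SS_{0,1}=\SS$. For a fixed $u$ (extended by $0$), $\int^{*'}_{[\eps_0,\tau]}\Ad(t\YY)(u(t)+B)\,dt\to\int^{*'}_{[0,1]}\Ad(t\YY)(u(t)+B)\,dt$ as $(\eps_0,\tau)\to(0,1)$, uniformly over $u$ with $\|u\|_\infty\le R$; hence the part of $\SS_{\eps_0,\tau}$ produced by such $u$ is Hausdorff-close to the corresponding part of $\SS$, which by \Cref{accessibility} already contains $K'$. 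Splitting a horizontal path into a piece on $[\eps_0,1-r]$ followed by a piece on $[1-r,1]$ and completing the latter by \Cref{Chow} (applicable on $[1-r,1]$ since \Cref{Chow} allows any time interval in $\R_{>0}$ and since, by left-invariance of $\partial$, completing from a point $y$ amounts to a left-translate of a path starting at $0$), one upgrades ``Hausdorff-close'' to ``contains a genuine open neighbourhood'': for $\eps_0$ and $r$ small enough, every point of a neighbourhood of $\{1\}\times K$ in $[\eps_0,1]\times\overline U$ is a limit of points reachable from $(\eps_0,0)$ by $U$-valued $\LL$-horizontal paths (after one enlargement of $U$ to accommodate the speed of the completing pieces). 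Taking $L$ to be a compact neighbourhood of $\{1\}\times K$ inside the closure of $\Pc$, the hypotheses of \Cref{fact-harnack} are met.

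Finally I apply \Cref{fact-harnack} with this $U$, with $\eps_1=1$, and with this $L$: the hypothesis furnishes $v$ as a non-negative solution on $(\eps_0/2,2)=(\eps_0/2,2\eps_1)$, so there is $C>0$ with $v(\eps_0,0)\le C\inf_{(t,x)\in L}v(t,x)\le C\inf_{x\in K}v(1,x)$, which is the claim. I expect the main obstacle to be precisely the one flagged above, namely reconciling the fixed reference time $\eps_0>0$ demanded by \Cref{fact-harnack} with the accessibility result of \Cref{accessibility}, whose paths emanate from time $0$; this forces one to describe the reachable set from $(\eps_0,0)$ directly through iterated integrals, to use their continuity as $(\eps_0,\tau)\to(0,1)$, and to invoke \Cref{Chow} to turn merely dense reachable sets into sets with non-empty interior around $K$.
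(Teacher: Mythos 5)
Your proof is correct and follows essentially the same route as the paper: both reduce the statement to a single application of the Kogoj--Polidoro inequality (\Cref{fact-harnack}) with $\eps_1=1$, the whole content being the verification that $\{1\}\times K$ lies in the interior of the closure of the reachable set $\Pc$ from $(\eps_0,0)$, which both arguments extract from \Cref{accessibility} together with \Cref{Chow}. The only (minor) divergence is in how the starting time is shifted from $0$ to $\eps_0$: the paper invokes the dilation/time-translation invariances of $\LL$, whereas you use continuity of the iterated integrals as $(\eps_0,\tau)\to(0,1)$ uniformly over drivers of bounded speed — both are valid and yield the same conclusion.
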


\begin{proof} Recall from \Cref{hypo} that the vector fields $\partial_{t}, \left(\Ad(t\YY) E_{i}\right)^2,\Ad(t\YY)B$ satisfy the H\"ormander condition from \Cref{fact-hypo}. Let $\eps_{0}>0$, let $0\in U\subseteq \kg$ an open ball, denote by $\Pc$ the set of couples $(\tau,x)\in (\eps_{0}, 2) \times U$ such that $x$ is accessible by a $U$-valued $\LL$-horizontal path on $[\eps_{0}, \tau]$ with starting point $0$. 
By \Cref{accessibility} and using dilations, one may choose $\eps_{0}>0$ small and $U$ large so that $\{1\}\times K$ is included in the interior of the closure of $\Pc$. 
Now the result  follows by application of the Harnack Inequality mentioned in \Cref{fact-harnack}. 
\end{proof}

We combine the previous Harnack Inequality with invariance properties of $\LL$ to  obtain the following.
\begin{lemme}[Harnack Inequality on a space-time cone] \label{Harnack-spacetime}
Let $K\subseteq \oSS$ be a compact subset, let $\eps>0$. There exists $R>1$ such that for every  non-negative solution $v: \R_{>0} \times \kg \rightarrow \R^+$  of $\partial_{t}- \frac{1}{2}\sum_{i=1}^{q}  \left(\Ad(t\YY) E_{i}\right)^2 + \Ad(t\YY)B
=0$, for all $t\in [\eps, +\infty)$, $r\in [0, 1]$, $x\in \kg$, we have 
$$v(t, x) \leq R \inf_{y\in \Ad(t\YY)D_{\sqrt{r}} K} v(t+r, x*'y) . $$
\end{lemme}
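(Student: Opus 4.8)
The plan is to deduce this "cone" version from the single-time Harnack inequality of Lemma~\ref{Harnack-1} by exploiting the two invariance properties of the diffusion established in Proposition~\ref{density-inv}, namely the scaling relation $D_{\sqrt r}\sigma_t = \sigma_{rt}$ and the quasi-translation relation $\sigma_s *' \Ad(s\YY)\sigma_t = \sigma_{s+t}$. First I would recall that, by Lemma~\ref{Harnack-1} applied with the compact set $K$, there exist $\eps_0 \in (0,1)$ small and $C>0$ such that every non-negative solution $w$ of $\partial_t - \tfrac12\sum_i(\Ad(t\YY)E_i)^2 + \Ad(t\YY)B = 0$ on $(\eps_0/2, 2)\times\kg$ satisfies $w(\eps_0,0) \le C\inf_{z\in K} w(1,z)$. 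The strategy is: given $t\ge\eps$, $r\in[0,1]$ and $x\in\kg$, and a global non-negative solution $v$, build from $v$ an auxiliary non-negative solution $w$ on $(\eps_0/2,2)\times\kg$ to which Lemma~\ref{Harnack-1} applies, in such a way that $w(\eps_0,0)$ reproduces (up to a fixed positive factor) the value $v(t,x)$ and $w(1,z)$ reproduces $v(t+r, x*'y)$ with $y = \Ad(t\YY)D_{\sqrt r}z$ as $z$ ranges over $K$.

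The key computation is to identify the group of transformations of $\R_{>0}\times\kg$ under which the operator $\LL = \partial_t - \tfrac12\sum_i(\Ad(t\YY)E_i)^2 + \Ad(t\YY)B$ is covariant. From the proof of Proposition~\ref{density-inv} we have the two identities $\mathscr L_t(f\circ\Ad(s\YY)) = \mathscr L_{s+t}(f)\circ\Ad(s\YY)$ and $\mathscr L_t(f\circ D_{\sqrt r}) = r\,\mathscr L_{rt}(f)\circ D_{\sqrt r}$; together with left-invariance of each $E_i$, $B$ on $(\kg,*')$, these show that if $v(\tau,\xi)$ solves the equation then so does $\tilde v(\tau,\xi) := v\bigl(t + r\tau,\ x *' \Ad(t\YY)D_{\sqrt r}\xi\bigr)$, up to absorbing the harmless multiplicative constant $r$ coming from the time rescaling (one checks $\partial_\tau$ picks up exactly the factor $r$ that matches $r\mathscr L_{rt}$, so $\tilde v$ is a genuine solution of the same type of equation with $\YY$ unchanged — here one uses that $\Ad(t\YY)$ commutes with the flow structure and that the time-shift by $t$ merely reindexes the $t$-dependent coefficients). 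Then I would rescale the time variable so that the source point $\eps_0$ and target time $1$ in Lemma~\ref{Harnack-1} land correctly: set $w(\tau,\xi) = \tilde v\bigl(\psi(\tau),\xi\bigr)$ for an affine change $\psi$ of $[\eps_0/2,2]$ chosen so that $\psi(\eps_0)=0$ (giving the point $(t,x)$) and $\psi(1)=1$ (giving time $t+r$), and verify $w$ still solves an equation of the admissible form on $(\eps_0/2,2)\times\kg$. Applying Lemma~\ref{Harnack-1} to $w$ and unwinding the substitutions yields $v(t,x) \le C \inf_{z\in K} v\bigl(t+r,\ x*'\Ad(t\YY)D_{\sqrt r}z\bigr) = C\inf_{y\in\Ad(t\YY)D_{\sqrt r}K} v(t+r, x*'y)$, so one may take $R = C$.

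The main obstacle I anticipate is bookkeeping the time-dependence of the coefficients correctly: unlike the left-invariance in the space variable, which is clean, the coefficients $\Ad(t\YY)E_i$ depend on $t$, so a naive space-time translation does not preserve the equation — one must combine the spatial left-translation by $x$, the spatial map $\Ad(t\YY)D_{\sqrt r}$, and the time-shift $\tau\mapsto t+r\tau$ simultaneously, and check that these three operations conspire exactly (this is precisely the content of the two covariance identities from Proposition~\ref{density-inv}, so the algebra closes, but it requires care). A secondary point is that $\eps_0$ must be chosen uniformly over the given compact $K$ and then held fixed, which is fine since $K$ and $\eps$ are fixed in the statement; one also needs $r\le 1$ so that the time-rescaled interval stays inside $(\eps_0/2,2)$ after the affine reparametrization, which is exactly the hypothesis. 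Once the covariance is set up, the conclusion is immediate from Lemma~\ref{Harnack-1}.
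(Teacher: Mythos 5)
Your overall strategy is the paper's: conjugate $v$ by the covariance group of the operator and feed the result into \Cref{Harnack-1}. But there is a genuine gap at the reparametrization step. The symmetries of the equation are \emph{coupled} space--time maps: a time shift by $\tau_0$ must be accompanied by $\Ad(\tau_0\YY)$ in space, and a time scaling by $r_0$ by $D_{\sqrt{r_0}}$ in space. Your auxiliary function $\tilde v(\tau,\xi)=v\bigl(t+r\tau,\ x*'\Ad(t\YY)D_{\sqrt r}\xi\bigr)$ is indeed a solution, but it places the source point at time $\tau=0$, not $\tau=\eps_0$. The affine map $\psi$ you then apply acts on time \emph{alone}, so $w=\tilde v\circ\psi$ is not a solution of an equation of the admissible form: the coefficient $\Ad(\psi(\tau)\YY)E_i=\Ad\bigl(\tfrac{-\eps_0}{1-\eps_0}\YY\bigr)\Ad\bigl(\tau\tfrac{1}{1-\eps_0}\YY\bigr)E_i$ cannot be rewritten as $\Ad(\tau\YY)E_i'$ with the $E_i'$ spanning $\km^{(1)}$ and $\YY$ unchanged, and \Cref{Harnack-1} (whose constant is tied to that specific operator via the accessibility argument) no longer applies. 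The verification you defer ("check that $w$ still solves an equation of the admissible form") is precisely the step that fails.

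The paper resolves this by absorbing $\psi$ into the genuine symmetry: it solves $\tau_0+r_0\eps_0=t$ and $\tau_0+r_0=t+r$, i.e.\ $\tau_0=t-r\tfrac{\eps_0}{1-\eps_0}$ and $r_0=\tfrac{r}{1-\eps_0}$ (here $\eps_0<\tfrac{\eps}{1+\eps}$ guarantees $\tau_0>0$). The price is that the Harnack inequality then delivers an infimum over $\Ad(\tau_0\YY)D_{\sqrt{r_0}}K'$ rather than over $\Ad(t\YY)D_{\sqrt r}K$. This mismatch is the second point your proposal does not address: one must apply \Cref{Harnack-1} to a compact \emph{neighbourhood} $K'$ of $K$ in $\oSS$ and then choose $\eps_0$ small enough that $\Ad(t\YY)D_{\sqrt r}K\subseteq \Ad(\tau_0\YY)D_{\sqrt{r_0}}K'$ uniformly in $(t,r)$, which reduces to the $(t,r)$-independent inclusion $K\subseteq D_{(1-\eps_0)^{-1/2}}\Ad(-\eps_0\YY)K'$. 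With these two corrections your argument closes.
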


\begin{proof} Observe first that if $v$ defined $I\times \kg$ is a solution $\partial_{t}- \frac{1}{2}\sum_{i=1}^{q}  \left(\Ad(t\YY) E_{i}\right)^2 + \Ad(t\YY)B
=0$, then for any $x_{0}\in \kg$, $\tau_{0}\in \R$, $r_{0}>0$, it is also the case of the functions
\begin{itemize}
\item  $v(t, x_{0} *'x)$ defined on $(t,x)\in I \times \kg$
\item $v(\tau_{0}+t, \Ad(\tau_{0} \YY)x)$  defined on $(t,x)\in (I-\tau)\times \kg$
\item   $v(r_{0}t, D_{\sqrt{r_{0}}}x)$  defined on  $(t,x)\in r_{0}^{-1}I\times \kg$
\end{itemize}
Combining these invariance properties, we get the solution 
$$v(\tau_{0}+r_{0}t, \, x_{0}*'\Ad(\tau_{0}\YY)D_{\sqrt{r_{0}}} x) \text{  defined on  } (t,x)\in (r^{-1}_{0}I-\tau_{0})\times \kg.$$
Let $K'\subseteq \oSS$  be a compact neighborhood of $K$. Choose $\eps_{0} \in (0, \frac{\eps}{1+\eps})$ as in \Cref{Harnack-1} for the compact $K'$, and denote by $C>1$ the associated constant. It follows that for any  non-negative solution $v: \R_{>0} \times \kg \rightarrow \R^+$  of $\partial_{t}- \frac{1}{2}\sum_{i=1}^{q}  \left(\Ad(t\YY) E_{i}\right)^2 + \Ad(t\YY)B =0$, any $x_{0}\in \kg$, $\tau_{0},r_{0}>0$, one has 
\begin{align} \label{Harnack-spacetime-eq}
v(\tau_{0}+r_{0}\eps_{0}, \, x_{0})\leq C \inf_{y\in \Ad(\tau_{0}\YY)D_{\sqrt{r_{0}}} K'}v(\tau_{0}+r_{0}, \, x_{0}*'y) .
\end{align} 
Let $t\in [\eps, +\infty)$, $r\in [0, 1]$. We now choose $(\tau_{0}, r_{0})$ so that $\tau_{0}+r_{0}\eps_{0}=t$ and $\tau_{0}+r_{0}=t+r$, i.e. we set  $\tau_{0}=t-r\frac{\eps_{0}}{1-\eps_{0}}$, $r_{0}=\frac{r}{1-\eps_{0}}$. The domain restrictions on $(t,r)$ guarantee that $\tau_{0}>0$.  Plugging into \eqref{Harnack-spacetime-eq} yields 
$$v(t, \, x_{0})\leq C \inf_{y\in \Ad(\tau_{0}\YY)D_{\sqrt{r_{0}}} K'}v(t+r, \, x_{0}*'y) .$$
The result now follows by choosing $\eps_{0}$ small enough from the start so that 
 $\Ad(\tau_{0}\YY)D_{\sqrt{r_{0}}} K'$ contains $\Ad(t\YY)D_{\sqrt{r}}  K'$. Indeed, this condition can be satisfied independently of $(t,r)$ because it  amounts to the inclusion
 $ K\subseteq D_{(1-\eps_{0})^{-1/2}} \Ad(-\eps_{0}\YY) K'$.

% and small enough so that $\eps_{0}<\eps$ and $\Ad(\frac{-\eps_{0}}{1-\eps_{0}} D_{1-\eps_{0}}\YY)K\subseteq \oSS$. 

\end{proof}

We  now deduce a general lower bound for the density $\sigma_{1}$. Given a compact subset $K\subseteq \oSS$, say that a finite sequence $(t_{i}, x_{i})_{i=0, \dots, n}$ in $\R_{>0}\times \kg$ is a \emph{$K$-regular chain} if for all $i\leq n-1$, one has $t_{i}<t_{i+1}$ and
$$x^{-1}_{i}*'x_{i+1} \in \Ad(t_{i}\YY) D_{\sqrt{t_{i+1}-t_{i}}}K  .$$
Given $a<b\in \R_{>0}$ and $x, y\in \kg$, set $E_{K}(a,x, b,y)$ the smallest integer $n\geq 1$ such that there exists a $K$-regular chain $(t_{i}, x_{i})_{i=0, \dots, n}$ from $(a,x)$ to $(b,y)$.

An iterated application of \Cref{Harnack-spacetime} yields the following lower bound on $\sigma_{1}$. Recall  for $t>0$ the notation $\sigma_{t}=u(t,x)dx$. 
\begin{theorem}[General lower bound]  \label{K-lower-bound} Let $K\subseteq \oSS$ be a compact subset, let $1>\eps>0$. There exists $R>1$ such that for all $x,y\in \kg$, 
$$u(\eps, x) \leq R^{E_{K}(\eps, x, 1, y)} u(1,y) .$$
\end{theorem}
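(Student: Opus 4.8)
The plan is to obtain the estimate by chaining together applications of the space-time Harnack inequality from \Cref{Harnack-spacetime}. First I would fix a compact neighborhood $K'$ of $K$ inside $\oSS$ and apply \Cref{Harnack-spacetime} to this enlarged compact set, producing a constant $R>1$ (depending on $K'$ and $\eps$) such that for every non-negative solution $v$ of the hypoelliptic equation, for every $t\geq\eps$, every $r\in[0,1]$ and every $x\in\kg$,
$$v(t,x)\leq R\inf_{y\in\Ad(t\YY)D_{\sqrt r}K'}v(t+r,x*'y).$$
Since $u$ solves the defining PDE \eqref{eq-hypo} on $\R_{>0}\times\kg$ (the distribution $u=\int_{\R_{>0}}\delta_t\otimes\sigma_t\,dt$ is a solution, and by hypoellipticity it is the smooth density), this inequality applies to $v=u$.

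Next, given $x,y\in\kg$, let $n=E_{K}(\eps,x,1,y)$ and choose a $K$-regular chain $(t_i,x_i)_{i=0,\dots,n}$ with $(t_0,x_0)=(\eps,x)$ and $(t_n,x_n)=(1,y)$, so that $t_i<t_{i+1}$ and $x_i^{-1}*'x_{i+1}\in\Ad(t_i\YY)D_{\sqrt{t_{i+1}-t_i}}K$ for each $i$. For each step I would apply the Harnack inequality at time $t=t_i\geq\eps$ with increment $r=t_{i+1}-t_i\in(0,1]$ (note $r\leq 1$ since all times lie in $[\eps,1]$) and base point $x_0=x_i$: writing $y_i:=x_i^{-1}*'x_{i+1}\in\Ad(t_i\YY)D_{\sqrt r}K\subseteq\Ad(t_i\YY)D_{\sqrt r}K'$, the infimum bound gives in particular
$$u(t_i,x_i)\leq R\, u(t_{i+1},x_i*'y_i)=R\,u(t_{i+1},x_{i+1}).$$
Iterating from $i=0$ to $i=n-1$ telescopes to $u(\eps,x)=u(t_0,x_0)\leq R^{n}u(t_n,x_n)=R^{E_K(\eps,x,1,y)}u(1,y)$, which is the claim. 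The final subtlety is that the Harnack constant $R$ must not depend on the individual times $t_i$ or increments $r$ appearing in the chain — but this is exactly the uniformity already built into \Cref{Harnack-spacetime} (the constant there depends only on $K$, or here $K'$, and on the lower time cutoff $\eps$), so no further work is needed.

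The main obstacle, if any, is purely bookkeeping: one must ensure that each increment $t_{i+1}-t_i$ actually lies in the admissible range $[0,1]$ for \Cref{Harnack-spacetime}, which holds because the chain is confined to the interval $[\eps,1]$ of length $\leq 1$, and that the membership $y_i\in\Ad(t_i\YY)D_{\sqrt r}K$ is the exact hypothesis needed to land in the infimum set $\Ad(t_i\YY)D_{\sqrt r}K'\supseteq\Ad(t_i\YY)D_{\sqrt r}K$. Since $\Cref{Harnack-spacetime}$ was proven with room to spare (it allows an arbitrary compact $K\subseteq\oSS$, and we simply feed it $K'\supseteq K$), there is no genuine difficulty. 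One may even take $K'=K$ directly, absorbing the neighborhood argument into the statement of \Cref{Harnack-spacetime} as already done there; I would present the proof simply as "apply \Cref{Harnack-spacetime} $E_K(\eps,x,1,y)$ times along a minimizing $K$-regular chain."
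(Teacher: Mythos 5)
Your proof is correct and is exactly the argument the paper intends: the theorem is stated in the paper as following from "an iterated application of \Cref{Harnack-spacetime}", and your chaining of that Harnack inequality along a minimizing $K$-regular chain (with the observation that each increment lies in $[\eps,1]\subseteq[0,1]$ and each $x_i^{-1}*'x_{i+1}$ lies in the exact infimum set) is that iteration. As you note at the end, the detour through an enlarged compact $K'$ is unnecessary since \Cref{Harnack-spacetime} already applies to $K$ itself.
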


\Cref{K-lower-bound} has  several concrete consequences, the first of which is the following: 

\begin{corollary}\label{positive-interior} The density $\sigma_{1}$ is positive on the interior of its support: $$\forall x\in \oSS, \,\,\,\,u(1,x)>0.  $$
\end{corollary}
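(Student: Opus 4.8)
The plan is to derive \Cref{positive-interior} as a direct consequence of \Cref{K-lower-bound}, so the only real content is to verify that for every $x$ in the interior $\oSS$ of the support, one can find a compact $K \subseteq \oSS$ and an $\eps \in (0,1)$ for which the connecting quantity $E_K(\eps, 0', 1, x)$ is finite. Here I write $0'$ for a point where $u(\eps, \cdot)$ is already known to be positive; since $\sigma_\eps$ is a probability measure with smooth density $u(\eps,\cdot)$, its support has nonempty interior, and in fact (by \Cref{support-integral}, taking the constant path $u \equiv 0$) the origin lies in $\overline{D_{\sqrt\eps}\SS} = \overline{\SS}$, so $0 \in \SS$; more to the point there is \emph{some} point $x_0$ with $u(\eps,x_0)>0$. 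I would actually arrange to start the chain at such an $x_0$.

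First I would fix $x \in \oSS$. By \Cref{accessibility} applied to the compact set $K_0 := \{x\}$ (or a small compact neighbourhood of $x$ inside $\oSS$), there is an $\LL$-horizontal path $\gamma : [0,1] \to \kg$ with $\gamma(0)=0$, $\gamma(1)=x$, and $\|\partial\gamma\|_\infty \le R$ for some $R>0$. Now I would discretize $\gamma$: choose a partition $0 = \tau_0 < \tau_1 < \dots < \tau_n = 1$ and set $x_i = \gamma(\tau_i)$. The multiplicative increment $x_i^{-1} *' x_{i+1}$ is obtained by integrating $\partial\gamma$ over $[\tau_i, \tau_{i+1}]$ against the Strichartz formula \eqref{strich-formula}, and since $\partial\gamma(t) \in \Ad(t\YY)(\km^{(1)}+B)$ with $\|\partial\gamma\|_\infty \le R$, this increment lies in a set of the form $\Ad(\tau_i \YY)(D_{\sqrt{\tau_{i+1}-\tau_i}} K')$ for a suitable fixed compact $K' \subseteq \kg$, provided the mesh is small enough — the homogeneity $\Pi'^{[a,b)}(D_r \xx) = r^b \Pi'^{[a,b)}(\xx)$ of the bracket terms is exactly what makes the $D_{\sqrt{\cdot}}$ scaling appear. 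The one subtlety is that \Cref{K-lower-bound} requires $K' \subseteq \oSS$, so I need the increments to stay in the \emph{interior} of the support: this is where I would invoke the accessibility/Chow-type flexibility — by perturbing $\gamma$ slightly (thickening each short horizontal segment, as in the proof of \Cref{accessibility}) the increments can be taken in the interior of the reachable cone, hence in $\oSS$; then I set $K$ to be a fixed compact neighbourhood in $\oSS$ containing all these (finitely many) increments after rescaling back by $D_{1/\sqrt{\tau_{i+1}-\tau_i}} \Ad(-\tau_i\YY)$.

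With such a $K$ and with $\eps := \tau_1$ small, the sequence $(\tau_i, x_i)_{i=1,\dots,n}$ (starting the chain at $i=1$ rather than $i=0$, so that the base time is $\eps>0$ and we may replace $x_1$ by a nearby point where $u(\eps,\cdot)>0$, using continuity of $u$ and that $x_1$ is interior to $\SS$ by construction) is a $K$-regular chain from $(\eps, x_1)$ to $(1, x)$, so $E_K(\eps, x_1, 1, x) \le n < \infty$. \Cref{K-lower-bound} then gives $u(1,x) \ge R^{-E_K(\eps,x_1,1,x)} u(\eps, x_1) > 0$, which is the claim. I would phrase the write-up compactly: "Let $x \in \oSS$. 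By \Cref{accessibility} there is an $\LL$-horizontal path from $0$ to $x$ on $[0,1]$; discretizing it produces a $K$-regular chain (for suitable compact $K\subseteq\oSS$, using that horizontal increments scale under $D_{\sqrt{\cdot}}$) from some $(\eps,x_1)$ with $u(\eps,x_1)>0$ to $(1,x)$. \Cref{K-lower-bound} then forces $u(1,x)>0$."

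I expect the main obstacle to be the bookkeeping in the discretization step: showing that the multiplicative increments $x_i^{-1}*'x_{i+1}$ of a bounded-speed horizontal path genuinely lie (after the $\Ad(t_i\YY)D_{\sqrt{t_{i+1}-t_i}}$ rescaling) in a \emph{single fixed} compact subset of the \emph{open} support $\oSS$, uniformly over a fine enough partition. The scaling is dictated by homogeneity, but one must be careful that the lower-order-in-mesh corrections in \eqref{strich-formula} do not push increments out of $\oSS$; the cleanest fix is to run the argument on a perturbation of $\gamma$ obtained from \Cref{Chow}/\Cref{accessibility} whose increments are interior with room to spare, exactly the "Harnack chains whose increments are not too close to the boundary of the support" heuristic mentioned in the introduction. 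Everything else is a one-line appeal to \Cref{K-lower-bound} plus continuity and positivity of $u(\eps,\cdot)$ somewhere.
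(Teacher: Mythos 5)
Your route is genuinely different from the paper's, and it has a gap at exactly the point you flag as "the main obstacle". Discretizing a bounded-speed horizontal path does produce increments $x_i^{-1}*'x_{i+1}$ lying in $\Ad(\tau_i\YY)D_{\sqrt{\tau_{i+1}-\tau_i}}K'$ for a \emph{fixed compact $K'\subseteq\kg$} — this is precisely \Cref{dhor-EK}, whose statement reads "Let $K$ be a large enough compact subset of $\kg$". But \Cref{K-lower-bound} requires $K\subseteq\oSS$, and nothing in your sketch forces the rescaled increments into the \emph{interior} of the support. Each rescaled increment is the endpoint of a unit-time horizontal path, hence lies in $\SS$, but it may well sit on $\partial\SS$ (recall $0$ itself can be a boundary point of $\SS$, \Cref{never-full-thm}). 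Your proposed fix — "perturb $\gamma$ so that the increments are interior with room to spare" — is the entire difficulty, not a remark: perturbing one increment into $\oSS$ changes all subsequent $x_i$ and the final endpoint, and it is not clear that one can simultaneously keep all $n$ rescaled increments inside a single compact subset of $\oSS$ while still landing at $x$. This is exactly why the paper only combines \Cref{dhor-EK} with \Cref{K-lower-bound} under the full-support hypothesis (\Cref{full-lower-bound}), where $\oSS=\kg$ and the issue evaporates. (A secondary, fixable point: you also need the first chain link to remain $K$-regular after you move $x_1$ to a nearby point of $\{u(\eps,\cdot)>0\}$, which requires a little slack in $K$.)

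The paper avoids all of this with a one-step chain aimed at the target rather than a long chain built from the source. Fix $x\in\oSS$ and take $K\subseteq\oSS$ compact with $x\in\mathring K$. Since $\Ad(\eps\YY)D_{\sqrt{1-\eps}}\mathring K\to\mathring K$ as $\eps\to0$, a fixed neighbourhood $U$ of $x$ is contained in $\Ad(\eps\YY)D_{\sqrt{1-\eps}}\mathring K$ for all small $\eps$. By \Cref{density-inv}, $\supp\sigma_\eps=D_{\sqrt\eps}\SS$ meets every neighbourhood of $0$ for $\eps$ small, so one can pick $x_0$ with $u(\eps,x_0)>0$ and $x_0^{-1}*'x\in U$. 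Then $(\eps,x_0),(1,x)$ is a $K$-regular chain of length $1$, and \Cref{K-lower-bound} gives $u(1,x)\geq R^{-1}u(\eps,x_0)>0$. The key structural difference is that here the single increment lands in a prescribed compact neighbourhood of $x$ inside $\oSS$ by construction, so the membership-in-$\oSS$ problem never arises. I'd recommend either adopting this one-step argument, or, if you want to keep the discretization, supplying a genuine proof that a horizontal path reaching $x\in\oSS$ can be chosen with all rescaled increments in a fixed compact subset of $\oSS$ — which I do not see how to do in general without essentially reproving accessibility.
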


\begin{proof} Choose the compact set $K\subseteq \oSS$ such that $x\in \mathring{K}$. There is a fixed small neighborhood $U$ of $x$ that is included in $\Ad(\eps \YY)D_{\sqrt{1-\eps}}\mathring{K}$ for any $\eps>0$ small enough. By \Cref{density-inv} taken with $r=\eps$, we see that for small $\eps$ the support of $u(\eps,\cdot)$ will intersect any given neighbourhood of $0$. In particular, one may choose such  $\eps$ so that for some $x_{0}\in \{u(\eps, .)>0\}$, one has $x^{-1}_{0}*'x\in U$. Now $(\eps, x_{0})$, $(1, x)$ is a $K$-regular chain, and the result follows from \Cref{K-lower-bound}.
\end{proof}

In case $\sigma_{1}$ has full support, one may compare $E_{K}$ with the energy functional determined by $\LL$-horizontal paths. More precisely, given $a<b\in \R_{>0}$ and $x, y\in \kg$, define the \emph{horizontal cost} $E_{hor}(a,x, b,y)$ to be the infimum of the energy $E_{hor}(\gamma)=\int_{[a,b]}\|v_{\gamma}\|^2 $ for $\gamma :[a,b]\rightarrow \kg$ continuous, piecewise $C^1$, $\LL$-horizontal,  such that $\gamma(a)=x$, $\gamma(b)=y$. 

\begin{corollary}[Lower bound when full support]  \label{full-lower-bound} Assume $\supp \,\sigma_{1}=\kg$. For all $\eps>0$, there exists $\alpha, A>0$ such that for all $x\in \kg$, 
$$u(1,x)  \geq \alpha \exp\left(-A \,E_{hor}(\eps, 0, 1, x) \right). $$
\end{corollary}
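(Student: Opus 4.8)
The plan is to deduce the lower bound from the general estimate in \Cref{K-lower-bound} by constructing, for each target point $x\in \kg$, an explicit $K$-regular chain whose length is controlled by the horizontal cost $E_{hor}(\eps,0,1,x)$, for a fixed well-chosen compact $K\subseteq \kg=\oSS$. First I would fix $\eps>0$ and a near-optimal $\LL$-horizontal path $\gamma:[\eps,1]\to\kg$ with $\gamma(\eps)=0$, $\gamma(1)=x$, and $E_{hor}(\gamma)\le E_{hor}(\eps,0,1,x)+1$; write $v_\gamma$ for its driving vector, so $\partial\gamma(t)=\Ad(t\YY)(v_\gamma(t)+B)$. The idea is to chop $[\eps,1]$ into $n$ subintervals $[t_i,t_{i+1}]$ and check that each increment $x_i^{-1}*'x_{i+1}$, with $x_i=\gamma(t_i)$, lies in $\Ad(t_i\YY)D_{\sqrt{t_{i+1}-t_i}}K$ for a suitable fixed compact $K$ containing a neighbourhood of $0$. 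On a short interval of length $\ell=t_{i+1}-t_i$, the increment of a horizontal path is, to leading order in $\ell$, of the form $\Ad(t_i\YY)$ applied to roughly $\int_{t_i}^{t_{i+1}}(v_\gamma+B)\,dt$ plus higher-order bracket corrections; by Cauchy--Schwarz the $\km^{(1)}$-part has size $\lesssim \sqrt{\ell}\,(\int_{t_i}^{t_{i+1}}\|v_\gamma\|^2)^{1/2}+\ell\|B\|$, and after applying $D_{1/\sqrt{\ell}}$ the first coordinate has size $\lesssim (\int_{t_i}^{t_{i+1}}\|v_\gamma\|^2)^{1/2}+\sqrt\ell$, while the deeper coordinates are controlled by the dilation weights and the Campbell--Hausdorff expansion.

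The key combinatorial step is a stopping-time / greedy partition: choose the $t_i$ so that on each piece the ``local energy'' $\int_{t_i}^{t_{i+1}}\|v_\gamma(t)\|^2\,dt$ is at most a fixed small constant $\delta$, and also $t_{i+1}-t_i\le\delta$; this can always be arranged with a number of pieces $n\le C(E_{hor}(\gamma)+1)$, since the total energy is $E_{hor}(\gamma)$ and the total time is $1-\eps\le 1$. With $\delta$ small enough, the computation of the previous paragraph shows $D_{1/\sqrt{t_{i+1}-t_i}}\Ad(-t_i\YY)(x_i^{-1}*'x_{i+1})$ stays in a fixed bounded neighbourhood of $0$, hence in $K$ once $K$ is chosen to be a large enough closed ball (this is legitimate precisely because $\oSS=\kg$, so every compact subset of $\kg$ is an admissible $K$ in \Cref{K-lower-bound}). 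Thus $(t_i,x_i)_{i=0,\dots,n}$ is a $K$-regular chain from $(\eps,0)$ to $(1,x)$ with $n=E_K(\eps,0,1,x)\le C(E_{hor}(\eps,0,1,x)+1)$. Applying \Cref{K-lower-bound} with this $K$ gives $u(\eps,0)\le R^{\,C(E_{hor}(\eps,0,1,x)+1)}u(1,x)$, i.e.
$$u(1,x)\ge u(\eps,0)\,R^{-C}\exp\big(-C(\log R)\,E_{hor}(\eps,0,1,x)\big),$$
so we may take $\alpha=u(\eps,0)R^{-C}>0$ (positive by \Cref{positive-interior} since $0\in\oSS$) and $A=C\log R$, which is the claimed bound.

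The main obstacle I anticipate is the careful bookkeeping in the local increment estimate: one must show that the higher-weight coordinates of $\Ad(-t_i\YY)(x_i^{-1}*'x_{i+1})$, after rescaling by $D_{1/\sqrt\ell}$, remain bounded. These coordinates involve iterated brackets of the (unbounded-in-general) driving vector $v_\gamma$, so one cannot simply bound them pathwise; instead one has to use the integral form of Strichartz's formula (\Cref{fact-integration}) together with repeated Cauchy--Schwarz in $t$, exploiting that a bracket of weight $b$ is homogeneous of degree $b$ under $D_r$ and involves at most $b$ copies of $v_\gamma$, so that its rescaled size is bounded by a product of $L^2$-energies on the piece, each $\le\sqrt\delta$, times powers of $\sqrt\ell\le\sqrt\delta$. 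A secondary technical point is handling the drift $B$: as in the remark after \Cref{homogeneisation}, one can absorb it by passing to $\LL_+$ via $t\mapsto \gamma(t)*'t\YY*'(-t(B+\YY))$, which has the same driving vector and hence the same horizontal energy, so one may as well assume $B=0$ throughout the estimate.
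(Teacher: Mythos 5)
Your proposal is correct and follows essentially the same route as the paper: the paper factors your subdivision step into \Cref{dhor-EK} (showing $E_K(a,x,b,y)\le E_{hor}(a,x,b,y)+1$ by cutting the path into pieces of energy at most $1$ and proving each piece is $K$-regular via the rescaling $c(t)=D_{1/\sqrt{b}}\gamma(bt)$, which preserves energy), handles the drift by the same $\LL_+$ conjugation, and then combines this with \Cref{K-lower-bound} and \Cref{positive-interior} exactly as you do. Your greedy partition with the extra length constraint $t_{i+1}-t_i\le\delta$ only changes the constant $C$, which is absorbed into $A$.
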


 \Cref{full-lower-bound}  follows from the combination of \Cref{K-lower-bound}, \Cref{positive-interior}  and the next lemma.
\begin{lemme}\label{dhor-EK} Let $K$ be a large enough compact subset of $\kg$. For $a<b\in \R$,  $x,y\in \kg$, 
$$ E_{K}(a,x, b,y) \leq  E_{hor}(a,x,b,y) +1 .$$
\end{lemme}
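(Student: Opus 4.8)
\textbf{Proof proposal for \Cref{dhor-EK}.}

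The plan is to build an explicit $K$-regular chain out of any piecewise-$C^1$ $\LL$-horizontal path $\gamma:[a,b]\to\kg$ from $x$ to $y$, in such a way that the number of links is controlled by $1+E_{hor}(\gamma)$. The guiding principle: a $K$-regular chain is exactly a ``discretisation'' of a horizontal path, so one should subdivide the time interval $[a,b]$ at well-chosen points $a=t_0<t_1<\dots<t_n=b$ and set $x_i=\gamma(t_i)$; then the increment $x_i^{-1}*'x_{i+1}$ is the arrival point of the horizontal path $\gamma$ restricted to $[t_i,t_{i+1}]$ (left-translated to start at $0$), and by \Cref{density-inv} / the scaling built into the definition of $K$-regularity one needs this increment to lie in $\Ad(t_i\YY)D_{\sqrt{t_{i+1}-t_i}}K$. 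After undoing the $\Ad(t_i\YY)$ twist and the dilation $D_{\sqrt{t_{i+1}-t_i}}$, this amounts to requiring that a suitably renormalised version of $\gamma|_{[t_i,t_{i+1}]}$ has arrival point in a fixed compact $K$.

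The key estimate is then the following: if $\gamma$ is $\LL$-horizontal on $[t_i,t_{i+1}]$ with driving vector $v_\gamma$, then (after removing the drift via the trick from the remark following \Cref{homogeneisation}, so we may assume $B=0$) the renormalised path $s\mapsto D_{1/\sqrt{t_{i+1}-t_i}}\,\Ad(-t_i\YY)\,\gamma(t_i)^{-1}*'\gamma(t_i+s(t_{i+1}-t_i))$ is again $\LL$-horizontal on $[0,1]$, starting at $0$, and its horizontal energy equals $\int_{t_i}^{t_{i+1}}\|v_\gamma(t)\|^2\,dt$ (the dilation and the $\Ad$-twist preserve horizontal energy, as was used in the proof of \Cref{horizontal-path-triang}, and reparametrising $[t_i,t_{i+1}]$ to $[0,1]$ multiplies the driving vector and the time increment compatibly). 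Hence its arrival point lies in the set of endpoints of horizontal paths on $[0,1]$ from $0$ of energy at most $\int_{t_i}^{t_{i+1}}\|v_\gamma\|^2$. So it suffices to choose the subdivision so that $\int_{t_i}^{t_{i+1}}\|v_\gamma(t)\|^2\,dt\le 1$ for each $i$ — then all increments land in the fixed compact set
$$K:=\overline{\{\,\eta(1)\,:\,\eta:[0,1]\to\kg\ \LL\text{-horizontal},\ \eta(0)=0,\ E_{hor}(\eta)\le 1\,\}},$$
which is indeed compact (it is closed by definition, and bounded because horizontal energy $\le 1$ bounds $\|\partial\eta\|_{L^2}$ hence the length of $\eta$ in a left-invariant metric — use \Cref{fact-integration} / the ball-box estimate). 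One can always partition $[a,b]$ into at most $\lceil\int_a^b\|v_\gamma\|^2\rceil\le E_{hor}(\gamma)+1$ subintervals on each of which the energy integral is $\le 1$ (a greedy cut whenever the running integral reaches $1$). Taking the infimum over all horizontal $\gamma$ from $x$ to $y$ gives $E_K(a,x,b,y)\le E_{hor}(a,x,b,y)+1$.

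The main obstacle — and the only genuinely non-routine point — is verifying that the compact set $K$ above (which does \emph{not} depend on $a,b,x,y$, only on $\LL$) actually works uniformly for every link: one must check that the renormalisation maps a horizontal path on $[t_i,t_{i+1}]$ to a horizontal path on $[0,1]$ for the \emph{same} operator $\LL$, which requires the homogeneity $\Pi'^{[a,b)}(D_r\xx)=r^b\Pi'^{[a,b)}(\xx)$ of \Cref{Sec-bi-grading} together with the identity $[\partial_t,\Ad(t\YY)X]=\Ad(t\YY)[\YY,X]'$ from the proof of \Cref{hypo}, and one must also handle the $\Ad(t_i\YY)$-twist correctly so that after conjugation one lands precisely in $\Ad(t_i\YY)D_{\sqrt{t_{i+1}-t_i}}K$ as the definition of $K$-regular chain demands. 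Once $K$ is fixed with this property, the counting argument is immediate. (The statement allows ``$K$ large enough'', so there is freedom to enlarge $K$ if a cleaner description is convenient.)
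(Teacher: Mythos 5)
Your proposal is correct and follows essentially the same route as the paper: reduce to showing that every horizontal path of energy at most $1$ on $[t_i,t_{i+1}]$ is $K$-regular for a fixed compact $K$, verify this by left-translating, applying $\Ad(-t_i\YY)$ and the dilation $D_{1/\sqrt{t_{i+1}-t_i}}$ to get an $\LL$-horizontal path on $[0,1]$ of the same energy (after first removing the drift via the remark following \Cref{homogeneisation}), and then conclude by the greedy subdivision into at most $E_{hor}(\gamma)+1$ pieces. The points you flag as needing verification (horizontality of the renormalised path for the same $\LL$, and the handling of the $\Ad(t_i\YY)$-twist) are exactly the computations carried out in the paper's proof, and they go through as you describe.
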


\begin{proof} 
It is sufficient to check that for some $K$ compact, every horizontal path $\gamma$ on an interval $[a,b]$ whose energy is bounded by $1$ is $K$-regular, in the sense that
$$\gamma(b)\in \gamma(a)*'\Ad(a\YY )D_{\sqrt{b-a}}K .$$

\noindent\underline{Case $B=0$}.  
We may replace $\gamma$ by $\Ad(-a\chi )( \gamma(a)^{-1}*' \gamma)$ which is horizontal on $[0, b-a]$ and has same energy as  $\gamma$. This allows to assume $a=0$. Then we need check that $D_{\frac{1}{\sqrt{b}}}\gamma(b)$ is  bounded uniformly in  $\gamma$. We set $c(t)=D_{\frac{1}{\sqrt{b}}}\gamma(bt)$. It is a continuous, piecewise-$C^1$ path defined on $[0,1]$. Moreover, using that $D_{\frac{1}{\sqrt{b}}}$ is an automorphism for $*'$, that $B=0$ and  $v_{\gamma}\in \km^{(1)}$, we get 
\begin{align*}
\partial c(t) &= D_{\frac{1}{\sqrt{b}}} b \lim_{\eps\to 0}\frac{\gamma(bt)^{-1}*' \gamma(bt+b\eps)}{b\eps}\\
&=D_{\frac{1}{\sqrt{b}}} b \Ad(bt \YY)v_{\gamma}(bt)\\
&=  \Ad(t \YY) \sqrt{b}\, v_{\gamma}(bt).
\end{align*}
Hence, the path $c$ is horizontal on $[0,1]$ with driving vector $v_{c}(t)=\sqrt{b}\, v_{\gamma}(bt)$. A change of variable then yields that $c$ and $\gamma$ have same energy, in particular $E_{hor}(c)\leq 1$. It follows that $c(1)$ is bounded, independently of $\gamma$, thus concluding the proof in the case $B=0$.

\noindent\underline{General case}. Define the path $\gamma_{0}(t)=\gamma(t)*'t\YY*'(-tB -t\YY)$ and observe $\gamma_{0}$ is $\LL_{0}$-horizontal for the (driftless) generator $\LL_{0}= \frac{1}{2}\sum_{i=1}^q (\Ad(t(\YY+B) ) E_{i})^2$, with same driving vector as $\gamma$.  By the first case examined above, we know that 
$$\gamma_{0}(b)\in \gamma_{0}(a)*'\Ad(a(\YY +B))D_{\sqrt{b-a}}K_{0}$$
for some compact $K_{0}$ independent of $\gamma$. Direct computation shows that this implies 
$$\gamma(b)\in \gamma (a)*'\Ad(a\YY)D_{\sqrt{b-a}}K$$
where $K=K_{0}*'(-Y+B)*'Y$, thus concluding the proof. 
\end{proof}

Applying \Cref{full-lower-bound} to Lie algebras satisfying the double cancelling assumption ({\bf DC}), we obtain

\begin{corollary}[Lower bound when ({\bf DC})]\label{lower-cor}
Assume $\kg$ satisfies the assumption \emph{({\bf DC})} from \Cref{Sec-support}. There exist constants $\alpha, A>0$ such that for every $x\in \kg$, 
$$u(1,x)\geq \alpha \exp\left(-A\normp{x}^2\right) . $$
\end{corollary}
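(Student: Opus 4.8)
The statement is an immediate packaging of three results already established for Lie algebras satisfying (\textbf{DC}): the full support of $\sigma_1$ (\Cref{DC-full-support}), the full-support lower bound \Cref{full-lower-bound}, and the quantitative accessibility estimate \Cref{horizontal-path-triang}. Concretely: since $\kg$ satisfies (\textbf{DC}), \Cref{DC-full-support} gives $\supp\sigma_1=\kg$, so \Cref{full-lower-bound} applies. Fix once and for all some $\eps\in(0,1)$; it yields constants $\alpha_0,A_0>0$ with
$$u(1,x)\ \geq\ \alpha_0\exp\big(-A_0\,E_{hor}(\eps,0,1,x)\big)\qquad(x\in\kg),$$
where $E_{hor}(\eps,0,1,x)$ is the infimal horizontal energy among continuous, piecewise $C^1$, $\LL$-horizontal paths $\gamma:[\eps,1]\to\kg$ with $\gamma(\eps)=0$, $\gamma(1)=x$. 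Thus it suffices to produce, for some constant $C=C(\eps)>0$, one such path $\gamma$ with $E_{hor}(\gamma)\leq C\normp{x}^2+C$, i.e. to show $E_{hor}(\eps,0,1,x)\ll\normp{x}^2+1$; plugging this into the displayed inequality and absorbing the additive constant into the multiplicative one gives $u(1,x)\geq\alpha\exp(-A\normp{x}^2)$.

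\textbf{Constructing the path.} This is exactly what the proof of \Cref{horizontal-path-triang} delivers, run on the interval $[\eps,1]$ instead of $[0,1]$. The intermediate steps are interval-agnostic: the property (P) for the generators $v_i$ (\Cref{piece-m1}) and its propagation to a basis $w_1,\dots,w_p$ of $\kg/[\kg,\kg]'$ (\Cref{piece-ab}) are formulated for arbitrary nontrivial intervals, and \Cref{combining-pieces} provides a fixed length $n_0$, a fixed word $i_1,\dots,i_{n_0}$, and a fixed constant $R_0$ such that $x=r_1w_{i_1}*'\cdots*'r_{n_0}w_{i_{n_0}}$ with $\max_j|r_j|\leq R_0\normp{x}+R_0$. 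Splitting $[\eps,1]$ into $n_0$ fixed subintervals $I_j$ and realizing each $w_{i_j}$ on $I_j$ by a fixed (P)-function $u_j:I_j\to\km^{(1)}$ scaled by $r_j$, the concatenated path $\gamma$ is $\LL$-horizontal with driving vector $r_ju_j$ on $I_j$ and, by \Cref{fact-integration} (multiplicativity of the iterated integral over concatenation), satisfies $\gamma(\eps)=0$ and $\gamma(1)=x$. Its horizontal energy is
$$E_{hor}(\gamma)=\sum_{j=1}^{n_0}r_j^2\int_{I_j}\|u_j(t)\|^2\,dt\ \ll\ \big(\max_j|r_j|\big)^2\ \ll\ (R_0\normp{x}+R_0)^2\ \ll\ \normp{x}^2+1,$$
the implied constants depending only on $\eps$ and the fixed data. (In the presence of a drift $B\neq0$ one first reduces to $B=0$ exactly as in the proof of \Cref{horizontal-path-triang}, via the remark after \Cref{homogeneisation}. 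Alternatively, one could start from the path on $[0,1]$ given by \Cref{horizontal-path-triang}, reduce to $B=0$, and transport it onto $[\eps,1]$ using the scaling invariance $D_{\sqrt r}\sigma_t=\sigma_{rt}$ of \Cref{density-inv}; the interval mismatch is the only genuinely new point.)

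\textbf{Main obstacle.} There is essentially no obstacle: \Cref{lower-cor} is a clean corollary of the earlier machinery, and the Harnack-iteration difficulty is entirely absorbed into \Cref{full-lower-bound}. The only point demanding a line of care is the discrepancy between the time interval $[0,1]$ appearing in \Cref{horizontal-path-triang} and the interval $[\eps,1]$ implicit in $E_{hor}(\eps,0,1,\cdot)$; this is dispatched either by re-running the interval-agnostic construction on $[\eps,1]$ or by the rescaling argument above. Once the bound $E_{hor}(\eps,0,1,x)\ll\normp{x}^2+1$ is in hand, the conclusion follows by direct substitution.
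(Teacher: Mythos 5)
Your proposal is correct and follows essentially the same route as the paper: the paper's proof simply invokes \Cref{horizontal-path-triang} to get $E_{hor}(1/2,0,1,x)\leq C\normp{x}^2+C$ and then concludes via \Cref{full-lower-bound} and \Cref{positive-interior}. You are in fact slightly more careful than the paper in explicitly handling the mismatch between the interval $[0,1]$ of \Cref{horizontal-path-triang} and the interval $[\eps,1]$ implicit in $E_{hor}(\eps,0,1,\cdot)$, which the paper leaves to the reader.
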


\begin{proof}
It follows from \Cref{horizontal-path-triang} that for some constant $C>0$, for every $x\in \kg$, we have  
$$E_{hor}(1/2, 0, 1, x) \leq C \normp{x}^2+C .$$
Now the result follows from  \Cref{full-lower-bound} and \Cref{positive-interior}.  
\end{proof}

\begin{proof}[Proof of \Cref{full-lower-bound-thm}] This is a combination of  \Cref{DC-full-support}, \Cref{lower-cor} and our central limit theorem  established in \Cref{Sec-global-thm}. 
\end{proof}

To conclude, we characterize the case when the limit distribution $\sigma_{1}$ is an ordinary Euclidean Gaussian distribution. It is pleasant that the criterion below is expressed only in terms of the weight filtration of $\kg$, and ultimately only in terms of the adjoint action of the mean $\Xab$ on successive quotients of the descending central series. The forward direction ($4. \implies 6.$ below) is due to Cr\'epel and Raugi \cite[\S 4]{crepel-raugi78}. We are able to show that the converse holds thanks to our lower bound, \Cref{full-lower-bound}. The following statement encompasses \Cref{charact-gaussian} from the introduction. %We do not assume here that $\kg$ satisfies ({\bf DC}), nor that the support of $\sigma_{1}$ is $\kg$.

\begin{theorem}[Characterization of the Gaussian case] \label{gaussian-criterion}
%Let $\kg$ be a nilpotent Lie algebra, $\mu$ be as in \Cref{CLT} with limiting measure $\nu$ and $\Xm \in \kg$ with $\Xm+[\kg,\kg]=\XXab=\E(\mu_{ab})$. 
Let $\sigma$ be the diffusion law associated to $\LL$ as in \cref{limit-diff-gen}. The following are equivalent.

\begin{enumerate}
\item The Lie algebra $(\kg, [.,.]')$ is abelian.
\item For all $a\in \{1, \dots, s\}$ 
$$\kg^{[a]}=\kg^{(2a-1)}.$$
\item For all  $i\in \{1, \dots, 2s-1\}$, 
$$\km^{(i)}\neq \{0\} \iff \text{$i$ is odd}.$$
\item For all  $a\in \{1, \dots, s\}$, we have
$$\kg^{[a]}=L^{[a]}(\Xm^{\otimes (a-1)} \otimes \kg).$$ 
\item For all  $a\in \{1, \dots, s\}$, we have $$[\Xm,\kg^{[a]}/\kg^{[a+1]}]=\kg^{[a+1]}/\kg^{[a+2]}.$$
\item The distribution $\sigma_{1}$ is Gaussian (in the classical Euclidean sense) on $\kg$. %centered at $B$. 
\end{enumerate}
\end{theorem}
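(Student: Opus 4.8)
The plan is to prove the purely algebraic chain $1\Rightarrow 5\Rightarrow 4\Rightarrow 2\Rightarrow 3\Rightarrow 1$ first, and then to close the loop with $1\Leftrightarrow 6$; the implication $1\Rightarrow 6$ is elementary, while $6\Rightarrow 1$ is the substantial new direction and is where the lower bound \Cref{full-lower-bound} enters.

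\textbf{The algebraic equivalences.} All five conditions are statements about the weight filtration $(\kg^{(i)})$ and the action of $\ad(\Xm)$ on the lower central series, and one passes between them by short inductions. For $5\Leftrightarrow 4$: condition $5$ reads $[\Xm,\kg^{[a]}]+\kg^{[a+2]}=\kg^{[a+1]}$, and substituting this into itself repeatedly (the filtration $\kg^{[\bullet]}$ terminates) upgrades it to $[\Xm,\kg^{[a]}]=\kg^{[a+1]}$ for all $a$, which iterates to $\kg^{[a]}=\ad(\Xm)^{a-1}\kg=L^{[a]}(\Xm^{\otimes(a-1)}\otimes\kg)$, i.e. condition $4$; the converse is immediate. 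For $4\Rightarrow 2$: a length-$a$ bracket with $a-1$ copies of $\Xm$ has weight $2a-1$, so $L^{[a]}(\Xm^{\otimes(a-1)}\otimes\kg)\subseteq\kg^{(2a-1)}$, while $\kg^{(2a-1)}\subseteq\kg^{(2a-2)}\subseteq\kg^{[a]}$ by the general inclusion $\kg^{(2i)}\subseteq\kg^{[i+1]}$; under condition $4$ the two ends coincide, so $\kg^{(2a-1)}=\kg^{[a]}$. For $2\Leftrightarrow 3$: if $2$ holds then $\kg^{[a+1]}=\kg^{(2a+1)}\subseteq\kg^{(2a)}\subseteq\kg^{[a+1]}$ forces $\km^{(2a)}=0$, while $\km^{(2a-1)}=\kg^{[a]}/\kg^{[a+1]}\ne 0$ for $a\le s$; conversely $\km^{(2k)}=0$ for all $k$, fed into the recursion $\kg^{(i+1)}=[\kg,\kg^{(i)}]+[\Xm,\kg^{(i-1)}]$, gives $\kg^{(2a-1)}=\kg^{[a]}$ by induction. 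For $3\Rightarrow 1$: then $\kg=\bigoplus_{i\text{ odd}}\km^{(i)}$, so every $[\km^{(i)},\km^{(j)}]'\subseteq\km^{(i+j)}=0$ and $(\kg,[.,.]')$ is abelian. Finally $1\Rightarrow 5$: first, $1$ propagates along the recursion to give $\km^{(2k)}=0$ for all $k$ (so $\kg^{[a]}/\kg^{[a+1]}\cong\km^{(2a-1)}$ and $\ad(\Xm)$ induces on these quotients the map $a_{\Xm}$ of \eqref{axdef}); now by \Cref{prehypo} the subalgebra of $(\tkg,[.,.]')$ generated by $\km^{(1)}\oplus\R\YY$ is all of $\tkg$, and since $(\kg,[.,.]')$ is abelian this subalgebra equals $\R\YY+\sum_{k\ge 0}a_{\Xm}^{k}(\km^{(1)})$; comparing with $\tkg=\R\YY\oplus\bigoplus_{a\ge 1}\km^{(2a-1)}$ and using $a_{\Xm}^{k}(\km^{(1)})\subseteq\km^{(2k+1)}$ forces $a_{\Xm}^{a-1}(\km^{(1)})=\km^{(2a-1)}$ for all $a$, hence $a_{\Xm}(\km^{(2a-1)})=\km^{(2a+1)}$, which is condition $5$.

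\textbf{$1\Rightarrow 6$.} If $(\kg,[.,.]')$ is abelian then $(\kg,*')$ is the vector group $(\kg,+)\cong\R^{d}$, on which left-invariant vector fields are constant. Since $\Ad(t\YY)E_{i}$ and $\Ad(t\YY)B$ lie in $\kg$ (each $\ad(\YY)^{k}E_{i}\in\km^{(1+2k)}\subseteq\kg$) and depend polynomially on $t$, the generator $\LL_{t}$ of \eqref{limit-diff-gen} is a time-dependent, $x$-independent second order operator on $\R^{d}$; its fundamental solution at time $1$ is the law of a Gaussian stochastic integral plus a deterministic drift, hence a Gaussian, which is nondegenerate by \Cref{density-reg}. (This is also the implication $4\Rightarrow 6$ of Cr\'epel--Raugi.)

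\textbf{$6\Rightarrow 1$, and the main obstacle.} Assume $\sigma_{1}$ is Gaussian; being absolutely continuous (\Cref{density-reg}) it is a nondegenerate Gaussian $u(1,x)=c\,e^{-Q(x-m)}$ on $\kg\cong\R^{d}$, in particular $\supp\sigma_{1}=\kg$. Then \Cref{full-lower-bound} applies and gives $u(1,x)\ge\alpha\,e^{-A E_{hor}(\eps,0,1,x)}$, and comparison with the upper bound $u(1,x)\le c'e^{-\|x\|^{2}/C}$ coming from the Gaussian form yields $E_{hor}(\eps,0,1,x)\gtrsim\|x\|^{2}$ for every $x$. Suppose now, for contradiction, that $(\kg,[.,.]')$ is not abelian. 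The fields $\partial_{t}$ and $\{\Ad(t\YY)v:v\in\km^{(1)}\}$ are bracket-generating on $\R_{>0}\times\kg$ (\Cref{prehypo}, \Cref{hypo}), and since $[.,.]'$ is non-abelian some iterated bracket of them of length $k_{0}\ge 2$ is nonzero, pointing in a fixed direction $z_{0}\ne 0$; a standard commutator-loop (ball--box) construction then produces, for each $n$, an $\LL$-horizontal path on $[\eps,1]$ from $0$ to a point $x_{n}$ with $\|x_{n}\|\asymp n$ and $E_{hor}(\eps,0,1,x_{n})\lesssim n^{2/k_{0}}=o(\|x_{n}\|^{2})$ --- contradicting the preceding inequality. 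Hence $(\kg,[.,.]')$ is abelian. The delicate point is exactly this commutator-loop estimate: it is a Chow/ball--box statement for the \emph{time-dependent} horizontal distribution, where one must check that the twist $\Ad(t\YY)$ does not spoil the scaling of commutator loops; this is handled by reducing to $B=0$ via \Cref{homogeneisation} and using the scale-invariance $E_{hor}(\eps r,0,r,D_{\sqrt r}x)=E_{hor}(\eps,0,1,x)$ together with the dilations $D_{r}$. One can also bypass the lower bound entirely: by \Cref{density-inv}, $\sigma_{s+t}=\sigma_{s}*'\Ad(s\YY)\sigma_{t}$, so the polynomial map $(a,b)\mapsto a*'b$ sends the nondegenerate Gaussian $(X,\Ad(s\YY)Y)$ on $\kg\times\kg$ to the Gaussian $\sigma_{s+t}$, and a polynomial map of a nondegenerate Gaussian vector is Gaussian only if affine, forcing $[.,.]'\equiv 0$.
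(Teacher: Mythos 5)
Your proof is correct and its skeleton matches the paper's: the algebraic conditions are linked by short inductions on the weight filtration; $1\Rightarrow 6$ reduces to a time-dependent constant-coefficient operator on the vector group $(\kg,+)$ (the paper cites Cr\'epel--Raugi at exactly this point); and $6\Rightarrow 1$ is obtained by playing the lower bound of \Cref{full-lower-bound} against the Gaussian upper bound, using that a non-abelian $[.,.]'$ produces horizontal paths reaching Euclidean distance $n$ with horizontal energy $o(n^2)$ --- which is precisely the paper's (much terser) argument. You flesh out two points the paper leaves implicit: that Gaussianity plus absolute continuity forces $\supp\sigma_1=\kg$, so \Cref{full-lower-bound} is actually applicable, and how the commutator-loop estimate survives the time-dependent twist $\Ad(t\YY)$ via \Cref{homogeneisation} and the dilation invariance of the energy.

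Two genuine differences are worth recording. First, you reorganize the algebraic part as the single chain $1\Rightarrow5\Rightarrow4\Rightarrow2\Rightarrow3\Rightarrow1$ and derive $1\Rightarrow5$ from \Cref{prehypo} (bracket-generation of $\km^{(1)}\oplus\R\YY$ forces $a_{\Xm}^{a-1}(\km^{(1)})=\km^{(2a-1)}$), whereas the paper runs the downward inductions $2\Rightarrow4\Rightarrow5$ directly; both are valid, and yours has the small cost of re-proving $1\Rightarrow3$ inside $1\Rightarrow5$. Second, your ``bypass'' is a genuinely different proof of $6\Rightarrow1$: from $\sigma_{s+t}=\sigma_s*'\Ad(s\YY)\sigma_t$, the independence and Gaussianity of $\sigma_s,\sigma_t$ (each is a linear image $D_{\sqrt{t}}\sigma_1$), and the fact that a polynomial of a nondegenerate Gaussian vector can only be Gaussian if it is affine, the quadratic Campbell--Hausdorff term $\tfrac12[a,b]'$ must vanish identically. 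This avoids Harnack chains and horizontal-path constructions entirely, but it rests on the two-sided tail estimate for polynomials in Gaussian variables (non-Gaussianity of Wiener chaoses of order $\ge 2$), which is true but nontrivial and would need a citation or proof if you take that route.
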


\begin{proof}
$1. \implies 2.$  By definition of $[.,.]'$, the assumption $[.,.]'=0$ means that for all $1\leq i,j\leq 2s-1$, 
$$[\kg^{(i)},\kg^{(j)}]\subseteq \kg^{(i+j+1)} .$$
The result follows by induction on $a$. Indeed, the case $a=1$ is tautological. For the induction step, we observe that
$$\kg^{[a+1]}= [\kg^{[a]}, \kg] = [\kg^{(2a-1)}, \kg^{(1)}]\subseteq \kg^{(2a+1)}$$
and the reverse inclusion is clear from the definition of $\kg^{(2a+1)}$.

$2. \implies 3.$ An element in $\kg^{(2a)}$ must be in $\kg^{[a+1]}$, i.e. in $\kg^{(2a+1)}$ by assumption. Hence $\kg^{(2a)}=\kg^{(2a+1)}$. The claim on the  $\km^{(i)}$'s now follows from their definition.

% $3. \implies 1.$ By definition of the graded bracket, we have $[\km^{(1)}, \km^{(i)}]' \subseteq \km^{(i+1)}$. The assumption in  $4)$ yields that this bracket is null whenever $i$ is odd (because then $\km^{(i+1)}=0$), or even (because then $\km^{(i)}=0$). It follows that $  [\km^{(1)}, \kg]'=0$. It follows that for any $t\in \R$, we have $[\Ad(t\chi)\km^{(1)}, \kg]'=0$. As the union $\cup_{t\in \R}\Ad(t\chi)\km^{(1)}$ generates $(\kg, [.,.]')$  as a Lie algebra, we deduce that $(\kg, [.,.]')$ is abelian.

$3. \implies 1.$ Since $[\km^{(i)},\km^{(j)}]'\subseteq \km^{(i+j)}$, by assumption the left hand side is zero unless both $i$ and $j$ are odd, in which case the right hand side is zero as $i+j$ is even.

%Recall that $\kg$ with the $[,]'$ bracket is isomorphic to the graded algebra $gr_{\Xm}$ defined in \Cref{graded-def}. The assumption in 3. is thus equivalent to the requirement that $\kg^{(2a)}=\kg^{(2a+1)}$ for each $a \in \{1,\ldots,s}$. 

 $2. \iff 4.$ Only $2. \implies 4.$ needs a proof. We argue by downwards induction on $a$. The result is clear for a $a=s$. For the induction step, it is enough to notice that
 $$\kg^{[a]}= \kg^{(2a-1)}\subseteq  L^{[a]}(\Xm^{\otimes (a-1)} \otimes \kg) + \kg^{[a+1]}$$
 with $\kg^{[a+1]}=L^{[a+1]}(\Xm^{\otimes a} \otimes \kg) \subseteq L^{[a]}(\Xm^{\otimes (a-1)} \otimes \kg) $ by the induction hypothesis. 
 
   $4. \iff 5.$ Similar as $2. \iff 4.$
 
  $1. \implies 6.$ One may replace $\YY$ by $\chi$ in the definition of the generator $\LL$ which because if $(\kg, [.,.]')$ is abelian, then $\Ad(t \YY)_{|\kg}=\Ad(t\chi)_{|\kg}$. One may also assume that $q=q_{1}$, i.e. the vectors $E_{i}$ in the second order term for $\LL$ define a basis of $\km^{(1)}$.  Now, $\nu=\sigma_{1}$ is the limit measure in the central limit theorem for $\mu$. %\E(\mu_{ab})=\Xab$, $\mu^{(1)}$ has covariance matrix the identity in the basis $E_{i}$, and $\E(\mu^{(2)})=B$, see \Cref{CLT+BE}.
  In this specific context where $4$. holds, the result is due to Crepel-Raugi \cite[Section 4]{crepel-raugi78}, and their theorem also states that $\nu=\sigma_{1}$ is Gaussian with expectation $B$.

 $6. \implies 1.$  If $(\kg, [.,.]')$ is not abelian, then there exists a horizontal path $\gamma : [0, 1]\rightarrow \kg$ such that the ratio $\|\gamma(1)\|/l(\gamma)$ is arbitrary large. It follows that the ratio $d_{hor}(1/2, 0, 1,x)/\|x\| $  converges to $0$ along a sequence of $x$ going to infinity. The lower bound from \Cref{full-lower-bound}  then contradicts the assumption that $\sigma_{1}$ is Gaussian (it does not decrease fast enough). 

 \end{proof}

\begin{proof}[Proof of \Cref{charact-gaussian}]  Consequence of \Cref{gaussian-criterion} and our central limit theorem  established in \Cref{Sec-global-thm}. 
 \end{proof}

\begin{example}[Full upper-triangular group] If $\kg =\oplus_{1\leq i<j\leq q_{1}+1}  \R E_{i,j}$ is the Lie algebra of the group of upper triangular matrices (see \Cref{Sec-upper-triang}),  then the distribution $\sigma_{1}$ is Gaussian if and only if $\Xab = \sum_{1\leq i\leq q_{1}} c_{i}E_{i,i+1} \mod \kg^{[2]}$ with at most one the $c_{i}$'s equal to zero. Indeed, if $c_{i}$ and $c_{i+k-1}$ both vanish, then the application  $[\Xab,.]$ from $\kg^{[k-1]}/\kg^{[k]}$ to $\kg^{[k]}/\kg^{[k+1]}$ is not  surjective: one checks easily that $E_{i,i+k}$ is not in the image. This prevents $\sigma_{1}$ to be Gaussian by \Cref{gaussian-criterion}. The converse implication is straightforward. 
\end{example}

\begin{example}[3-step filiform case] If $\kg = \R A \oplus \R T \oplus \R [A,T]\oplus \R [A,[A,T]]$ is the step-$3$ filiform Lie algebra  (see \Cref{Sec-free}),  then the distribution $\sigma_{1}$ is Gaussian if and only if $\Xab \in \R^*A \oplus \R T \mod \kg^{[2]}$. 
\end{example}

%%%%%%%%%%%%%%%%%%%%%%%%%%%%%%%%%%%%%%%%%%%%%%%%%% reductions section

\section{The three reductions} \label{3reductions} \label{Sec-3reductions}

%To prove the limit theorems announced earlier, we need to improve the workability of the product $x_{1}*\dots *x_{N}$ of i.i.d. variables on $\kg$. This will be done in three successive reductions, whose respective goals are to overcome moment issues, allow dilations to commute with the product, and authorize to replace the driving measure  by any  other measure coinciding up to order $2$. 
To prove the limit theorems announced earlier, we need to perform a series of reductions. This section is devoted to them.
We shall show that the random product can be altered slightly (allowing a small error) in three different ways. First, we shall truncate the random  variables in order to deal with the moment assumptions. Then (graded replacement) we shall replace the Lie product by the graded Lie product (for which dilations are automorphisms) and third, we shall show that we can replace the driving measure by any measure with the same moments of order at most 2 (Lindeberg replacement). While performing those reductions, we will need to allow  moderate deviations of the random product. This flexibility will be used (in a weak form) to take into account the recentering in the proof of the central limit theorem in \Cref{Sec-global-thm}. The full extent of moderate deviations is to be used in our follow-up paper  \cite{benard-breuillardLLT} where we establish the local central limit theorem.

The notations $(\kg, *, \tkg, \Xab, (\km^{(b)})_{b\leq 2s-1}, \Xm, \chi, *')$ as well as $\DN(\delta_0)$ and $\Pi^{[a,b)}$ are those of  \Cref{Sec-cadre}. We assume $\mu$ is a probability measure on $\kg$ with finite second moment for the weight filtration induced by $\Xab$, and such that $\mu_{ab}$ has expectation $\Xab$.  In particular $\Xm=\E(\mu^{(1)})$. We will mostly work with $\tmu=\mu-\Xm+\chi$ the associated probability measure on $\tkg$.

\subsection{Truncation and moment estimates} \label{Sec-3reductions-tronc}

%Note that, with our definition of  the bias extension $\tkg$, the measure $\tmu$ becomes centered in projection to the subspace of weight $1$, i.e.  $\E_{\tmu}(x^{(1)})=0$. In view of the central limit, we may expect $\Pi^{[a,b)}(x_{1}, \dots, x_{N})$ to live at scale $N^{\frac{b}{2}}$ when the variables $x_{i}$ vary  independently with law $\tmu$. We now check that this estimation is indeed an upper bound.  We also allow moderate deviations in $\DN(\delta_{0})$. 

We first establish moment estimates. To fix ideas, we start with the case where $\mu$ has a finite moment of every order.
Consider a monomial function $M:\tkg^{t}\rightarrow \R$ of degree $b\ge 1$, see \Cref{poldef}. We define its statistics $M^{\infty}$  on $\tkg^{\N^*}$  as the formal series
$$M^{\infty}(\xx)=\sum_{1\leq n_{1}<\dots <n_{t}<\infty}M(x_{n_{1}}, \dots, x_{n_{t}}).$$
For every $N\geq1$, we identify $\tkg^N$ as the subset of $\tkg^{\N^*}$ made of sequences that vanish after the $N$ first terms.  In particular, $M^{\infty}$ is a well-defined function on $\tkg^N$. 

\begin{lemme} \label{momentPi0}  Let $m\in \N$ and assume $\mu$ has a  finite moment of order $2mb$. Then for  $N\geq 1$,

$$\E_{\tmu^{\otimes N}}\left[M^{\infty}(\xx)^{2m} \right]\ll_{m} N^{bm},$$ and in particular for $\delta_{0}>0$, $g,h\in \DN(\delta_{0})$, $\yy=(g,\xx,h)$, $a, b\geq 1$, 

$$\E_{\xx\sim \tmu^{\otimes N}}\left[\|\Pi^{[a,b)}(\yy)\|^{2m} \right]\ll_{m} N^{(b+2\delta_{0})m }.$$
\end{lemme}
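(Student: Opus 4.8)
The plan is to expand $M^\infty(\xx)$ as a sum over increasing $t$-tuples of indices and to control the resulting $2m$-th moment by a combinatorial counting argument exploiting that $\tmu$ has mean zero in the abelianization. First I would record the key centering fact: since $\mu_{ab}$ has expectation $\Xab$, the bias extension $\tmu$ satisfies $\E_{\tmu}(x^{(1)})=0$ (and $\E_{\tmu}(\chi$-coordinate$)=1$, but $\chi$ has weight $2$, so it contributes to the degree count exactly like a bracket). Write $M(x_{n_1},\dots,x_{n_t})=\sum_j c_j \prod_{k} \ell_{j,k}(x_{n_{\tau(k)}})$ as a sum of products of linear forms, where the degree $b$ equals $\sum_k \deg(\ell_{j,k})$ counted with weights; crucially each variable index $n_i$ appears at least once (monomials in \emph{all} $t$ variables). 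Then
$$\E_{\tmu^{\otimes N}}\bigl[M^\infty(\xx)^{2m}\bigr] = \sum_{\underline{\nn}^{(1)},\dots,\underline{\nn}^{(2m)}} \E\Bigl[\prod_{r=1}^{2m} M(x_{\underline{\nn}^{(r)}})\Bigr]$$
where each $\underline{\nn}^{(r)}$ ranges over increasing $t$-tuples in $\{1,\dots,N\}$. By independence, the expectation factorizes over the distinct indices appearing in $\bigcup_r \underline{\nn}^{(r)}$. If some index $n$ appears in exactly one of the $2m$ tuples, then the corresponding factor contains $\E_{\tmu}(\ell(x_n))$ for a degree-$1$ linear form $\ell$ evaluated at that single variable, and — after collecting all linear factors at that index — this vanishes unless a factor of weight $\ge 2$ sits there; either way the "singleton" indices are forced to carry total weight contribution, which is what makes the counting work. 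The clean way to organize this: a tuple-system contributes only if every index in its support is shared by at least two of the $2m$ tuples, OR the monomial structure forces extra weight.

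The main step is the counting bound. Let $K$ be the number of distinct indices used. Each of the $2m$ tuples has $t$ slots, for $2mt$ slots total, and each distinct index fills at least... here is the subtlety: a priori an index could be a "singleton" in the tuple-system sense, but then the first-moment vanishing kills the term \emph{unless} that index carries weight $\ge 2$ in the monomial. So partition the distinct indices into those shared by $\ge 2$ tuples (at most $mt$ of them, since $2mt$ slots each filled $\ge 2$ times gives $\le mt$) and "heavy singletons" carrying weight $\ge 2$. The total degree across all $2m$ copies of $M$ is $2mb$; heavy singletons eat $\ge 2$ each; so the number of heavy singletons is $\le mb$. Hence $K \le mt + mb$ — but we need to be more careful to land the bound $N^{bm}$ rather than $N^{(t+b)m}$. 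The resolution: each copy of $M$ has degree $b = \sum_{i=1}^t w_i$ where $w_i \ge 1$ is the weight attached to variable slot $i$ (since $M$ involves all $t$ variables nontrivially, each slot has weight $\ge 1$). So $t \le b$. Summing over all $2m$ copies: the $2mt$ slots carry total weight $2mb$. Now each distinct index must carry total weight $\ge 2$ in the term (either from being shared — two copies of weight $\ge 1$ each — or from being a single heavy slot of weight $\ge 2$). Therefore $K \le \tfrac{1}{2}(2mb) = mb$. The number of ways to choose which $K$ indices and assign them is $\ll_m N^{K} \le N^{mb}$, and for each such choice the expectation is $\ll_m 1$ by the finite $2mb$-th moment assumption (each factor $\E[\prod \ell(x_n)]$ is bounded by a moment of order $\le 2mb$ via Hölder). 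This gives $\E[M^\infty(\xx)^{2m}] \ll_m N^{bm}$.

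For the second assertion, write $\yy = (g,\xx,h)$ with $g,h \in \DN(\delta_0)$ and expand $\Pi^{[a,b)}(\yy)$ via Campbell–Hausdorff as in \Cref{Sec-bi-grading}: it is a sum of terms $L_{\rr}(\dots)$ which, grouping the contributions of $g$ and $h$, become linear combinations of $M^\infty$-type statistics in $\xx$ with coefficients that are monomials in the coordinates of $g$ and $h$. A term using $g$-coordinates of total weight $b_g$ and $h$-coordinates of total weight $b_h$ and an $\xx$-monomial of degree $b - b_g - b_h$ contributes, by the first part, $\ll_m N^{(b-b_g-b_h)m}$ in the $2m$-th moment, times $\|g\|$-factors bounded by $N^{(b_g/2+\delta_0)}$ and similarly for $h$ (using $\|g^{(i)}\| \le N^{i/2+s^{-1}\delta_0}\le N^{i/2+\delta_0}$, and the $g$-coordinates entering with total weight $b_g$ give $\le N^{b_g/2 + (\text{number of }g\text{-slots})\delta_0} \le N^{b_g/2+b\delta_0/\ldots}$ — absorbing the bounded number of slots into the constant). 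Collecting, each term is $\ll_m N^{(b - b_g - b_h)m} \cdot N^{2m(b_g/2+\delta_0)}\cdot N^{2m(b_h/2+\delta_0)} \ll_m N^{bm + O(\delta_0 m)}$, and bounding $O(\delta_0)$ by $2\delta_0$ (adjusting constants / taking $N$ large) gives $\E[\|\Pi^{[a,b)}(\yy)\|^{2m}] \ll_m N^{(b+2\delta_0)m}$ as claimed, after using Minkowski's inequality to sum the finitely many terms.

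\textbf{Main obstacle.} The delicate point is the counting inequality $K \le mb$ rather than the weaker $K \le m(t+b)$ which would only give $N^{m(t+b)}$; it hinges on the observation that every variable slot in the monomial $M$ carries weight $\ge 1$ so $t \le b$, combined with the mean-zero property of $\tmu$ in the abelianization forcing every index appearing in a nonvanishing term to be "covered" with total weight $\ge 2$. Making this averaging/vanishing argument fully rigorous — in particular handling mixed terms where an index appears once but with a weight-$\ge 2$ linear factor, and checking the $\chi$-coordinate (weight $2$, nonzero mean) is correctly accounted as weight $2$ — is where the real care is needed.
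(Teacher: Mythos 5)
Your proof is correct and follows essentially the same route as the paper: expand $M^\infty(\xx)^{2m}$, factorize over distinct indices by independence, use the centering of $\tmu$ in weight $1$ to discard any term in which some index carries total weight $1$, and conclude that each surviving index carries weight $\ge 2$ out of a total weight $2mb$, so at most $mb$ distinct indices occur and the count is $\ll_m N^{mb}$ with each expectation $O_m(1)$ by the moment hypothesis. One small bookkeeping point in the second assertion: your accounting $N^{2m(b_g/2+\delta_0)}\cdot N^{2m(b_h/2+\delta_0)}$ yields $N^{bm+4m\delta_0}$, and "adjusting constants" cannot reduce the exponent to $2m\delta_0$; the correct fix, which you already have in hand, is that each of the at most $s$ combined $g$- and $h$-slots in a given bracket contributes only $N^{s^{-1}\delta_0}$, so the coefficient of a degree-$b'$ statistic is $O(N^{(b-b')/2+\delta_0})$ and the $L^{2m}$ norm of $\Pi^{[a,b)}(\yy)$ is $O(N^{b/2+\delta_0})$, giving exactly $N^{(b+2\delta_0)m}$.
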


\begin{proof} Given $\alpha\in \N^{\dim\tkg}$,  denote by $x\mapsto x^\alpha$  the corresponding monomial on $\tkg$ (say for some ordering of the basis $e^{(i)}_{j}$ fixed in \Cref{Sec-cadre}) and write $d(\alpha)$ its degree for the weight filtration. Expanding the power $2m$, we can write  for $N\geq 1$, $\xx\in \tkg^N$, 
  \begin{align*}
M^{\infty}(\xx)^{2m} &= \sum_{l=1}^{2mb} \sum_{\aalpha \in \mathcal{A}^1_{l}}\sum_{\nn\in \llbracket 1, N\rrbracket^l}C_{\aalpha} x_{n_{1}}^{\alpha_{1}}\dots  x_{n_{l}}^{\alpha_{l}}
  \end{align*}
  where   $\sup_{\aalpha}|C_{\aalpha}| =O_{m}(1)$ and 
  $$\mathcal{A}^{1}_{l} =\left\{\aalpha \in (\N^{\dim\tkg})^l \,:\, \sum_{i=1}^l d(\alpha_{i})=2mb \,\text{ and }\,\inf_{i}d(\alpha_{i})\geq 1 \right\} . $$
    Then by linearity and independence,
    \begin{align*}
\E_{\tmu^{\otimes N}}\left[M^{\infty}(\xx)^{2m}\right] &= \sum_{l=1}^{2mb} \sum_{\aalpha \in \mathcal{A}^1_{l}}\sum_{\nn\in \llbracket 1, N\rrbracket^l}C_{\aalpha}\E_{\tmu}( x_{n_{1}}^{\alpha_{1}})\dots  \E_{\tmu}(x_{n_{l}}^{\alpha_{l}}).
  \end{align*}

The terms for which some $x_{i}^{\alpha_{i}}$ has degree $1$ satisfy $\E_{\tmu}(x_{i}^{\alpha_{i}})=0$, hence the $\aalpha$ that contribute belong to 
$$\mathcal{A}^{2}_{l}=\left\{\aalpha \in (\N^{\dim\tkg})^l \,:\, \sum_{i=1}^l d(\alpha_{i})=2mb \,\text{ and }\,\inf_{i}d(\alpha_{i})\geq 2 \right\}$$
thus forcing $l$ to belong to $\{1, \dots, mb\}$.  

It follows that 
\begin{align*}
\E_{\tmu^{\otimes N}}\left[M^{\infty}(\xx)^{2m}\right] &= \sum_{l=1}^{mb} \sum_{\aalpha \in \mathcal{A}^2_{l}}\sum_{\nn\in \llbracket 1, N\rrbracket^l}C_{\aalpha}\E_{\tmu}( x_{n_{1}}^{\alpha_{1}})\dots  \E_{\tmu}(x_{n_{l}}^{\alpha_{l}})
  \end{align*}
  and the bound for $\E_{\tmu^{\otimes N}}\left[M^{\infty}(\xx)^{2m}\right] $ follows because the parameters $l, \aalpha$ vary in a bounded set independent of $N$,  the parameter $\nn$ varies in a set of cardinality  $N^l\leq N^{mb}$, and the product of expectations is $O_{m}(1)$ by the moment assumption on $\mu$.

To prove the second assertion, we note that $\Pi^{[a,b)}(g,\xx,h)$ is a finite formal linear combination (depending on $a,b$ but not $N$) of statistics in $\xx$ associated to  monomials of degree at most $b$, and such that each statistics of degree $b' \leq b$ has its coefficients bounded above by    $$O(1)\sum_{i_{1}+\dots+j_{l}=b-b' } \|g^{(i_{1})}\|\dots \|g^{(i_{k})}\| \,\|h^{(j_{1})}\|\dots \|h^{(j_{l})}\|$$
if $b'<b$, and $O(1)$ otherwise.
 Given the condition on $g,h\in \DN(\delta_{0})$, this bound is in particular $O(N^{\frac{b-b}{2}'+ \delta_{0}})$.  The result follows by H\"older Inequality and the previous bound established for $\E_{\tmu^{\otimes N}}\left[M^{\infty}(\xx)^{2m}\right]$. 
\end{proof}

In order to obtain similar moment estimates but with weaker moment assumptions on $\mu$, we introduce  the \emph{truncation} $T_{N}\tmu$  as the image of $\tmu$ by 
$T_{N}: \tkg\rightarrow \tkg, x=\oplus x^{(b)}\mapsto \oplus T_{N}x^{(b)}$ where 
 $$
T_{N}x^{(b)} = \left\{
    \begin{array}{ll}
       x^{(b)}  & \mbox{if } \|x^{(b)} \|\leq N^{b/2} \\
        0 & \mbox{if } b\geq 2 \text{ and } \|x^{(b)} \|> N^{b/2}\\ 
        c_{N} & \mbox{if }  b= 1 \text{ and } \|x^{(1)} \|> N^{1/2}
    \end{array}
\right.
$$
and $c_{N}= - \tmu({\|x^{(1)} \|> N^{1/2}})^{-1}\E_{\tmu}(x^{(1)} \1_{{ \|x^{(1)} \|\leq N^{1/2}}})$ is chosen so that $T_{N}\tmu$ is centered in the $\km^{(1)}$-coordinate. 

It is good to notice first that one may replace $\tmu$ by $T_{N}\tmu$ at a very low cost. 

\begin{lemme}[Cost of truncation] \label{truncation-cost}
Let $q \in [2, +\infty)$. If $\mu$ has finite $q$-th moment then for all $N\geq 1$,   
$$\| \tmu^{\otimes N}- (T_{N}\tmu)^{\otimes N}\|   \leq \epsilon_{N}N^{-\frac{q}{2} +1} $$
where $\|.\|$ is the norm in total variation, and the sequence $\epsilon_{N} \in \R^{\N^*}_{>0}$ goes to zero while being bounded by $\epsilon_{N}\leq C (1+m_{q}(\mu))$ where  $C>0$ is a constant that may only depend on $q$. 
\end{lemme}

\begin{proof} Note that $$\tmu \{T_{N}x\neq x\} \leq \tmu \{\max_{b} \|x^{(b)}\|^{1/b}>N^{1/2}\} \leq \epsilon_{N}N^{-\frac{q}{2}}$$
with $\epsilon_{N}$ as in the statement. 
Hence $T_{N}^{\otimes N}$ is the identity on a subset of $\tkg^N$ of $\tmu^{\otimes N}$-measure at least $(1-\epsilon_{N} N^{-\frac{q}{2}+1})$, up to multiplying $\epsilon_{N}$ by a bounded constant.
\end{proof}

We now show that  $(T_{N}\tmu)^{\otimes N}$ satisfies the moment estimates from \Cref{momentPi0}. 

\begin{proposition}[Moment estimate] \label{momentPi} Assume that $\mu$ has a finite second moment. For  $N\geq 1$, $(N_{i})_{i\leq N}\in  [0, N]^N$,  $m\geq 1$ an integer,

$$\E_{\bigotimes_{i=1}^N T_{N_{i}}\tmu}\left[{M^{\infty}}(\xx)^{2m} \right]\ll_{m} N^{bm},$$ and in particular for $\delta_{0}\geq 0$, $g,h\in \DN(\delta_{0})$, $\yy=(g,\xx,h)$, $a,b\geq 1$, 
$$\E_{\xx\sim \bigotimes_{i=1}^N T_{N_{i}}\tmu }\left[\|\Pi^{[a,b)}(\yy)\|^{2m} \right]\ll_{m} N^{(b+2\delta_{0})m }.$$
Moreover, the implicit constant in $\ll_{m}$ can be taken of the form $C (1+m_{2}(\mu)^{mb})$ where $C=C(\kg,[.,.], \R \Xab, (\km^{(b)})_{b}, (e^{(i)}_{j})_{i,j}, m)>0$  is a constant and $m_{2}(\mu)$ is the second moment of $\mu$. 
\end{proposition}

It is quite remarkable that $\mu$ is only assumed to have finite second moment.

\begin{proof}
 Arguing as for \Cref{momentPi0} (with $\otimes_{i} T_{N_{i}}\tmu$ in place of $\tmu^{\otimes N}$) gives 
 \begin{align*}
\E_{\bigotimes_{i=1}^N T_{N_{i}}\tmu}\left[M^{\infty}(\xx)^{2m}\right] &= \sum_{l=1}^{mb} \sum_{\aalpha \in \mathcal{A}^2_{l}}\sum_{\nn\in \llbracket 1, N\rrbracket^l}C_{\aalpha}\E_{T_{N_{n_{1}}}\tmu}( x_{n_{1}}^{\alpha_{1}})\dots  \E_{T_{N_{n_{l}}}\tmu}(x_{n_{l}}^{\alpha_{l}})
  \end{align*}
 but it is no longer true that the product of expectation is $O_{m}(1)$. Nonetheless, note that for  $d(\alpha)\geq 2$, $N'\geq 1$,
$$\E_{T_{N'}\tmu}(|x^\alpha|)= \E_{\tmu}\left(|(T_{N'}x)^\frac{2\alpha}{d(\alpha)}|^{\frac{d(\alpha)}{2}-1} |(T_{N'}x)^\frac{2\alpha}{d(\alpha)}| \right)=O(N'^{\frac{d(\alpha)}{2}-1})$$
because $|T_{N'}x^\frac{2\alpha}{d(\alpha)}|=O(N')$ and  $ \E_{\tmu}(|(T_{N'}x)^\frac{2\alpha}{d(\alpha)}|)$ is bounded independently of $N'$ (by a constant of the form $C(1+ m_{2}(\mu))$ with $C$ as in the statement).   

This bound  implies    that
   \begin{align*}
\E_{\bigotimes_{i=1}^N T_{N_{i}}\tmu}\left[M^{\infty}(\xx)^{2m} \right]
&=   \sum_{l=1}^{mb}  \sum_{\aalpha \in \mathcal{A}^2_{l}}\sum_{\nn\in \llbracket 1, N\rrbracket^l}O_{m}(N^{mb-l})\\
&=O_{m}(N^{mb})
  \end{align*}
 as the parameters $l, \aalpha$ vary in a bounded set independent of $N$, while the parameter $\nn$ varies in a set of cardinality  $N^l$.
 
 The estimate for $\E_{\xx\sim \bigotimes_{i=1}^N T_{N_{i}}\tmu}\left[\|\Pi^{[a,b)}(\yy)\|^{2m} \right]\ll_{m} N^{(b+2\delta_{0})m }$ follows by the same argument as for  \Cref{momentPi0}.
 
 The comment on the constant in $\ll_{m}$ is just an inspection of the proof.
 \end{proof}

\subsection{Graded replacement}

In the remainder of \Cref{Sec-3reductions} we keep the assumption that $\mu$ has a finite second moment for the weight filtration induced by $\Xab$. We now show how to replace the product $*$ by the its graded counterpart $*'$, for which the dilations $(D_{r})_{r>0}$ act as automorphisms. 

\begin{proposition}[Graded replacement]  \label{graded-replacement}
For  $N\geq 1$, $(N_{i})_{i\leq N}\in [ 0, N]^N$, $g,h\in \DN(\delta_{0})$, $\yy=(g,\xx,h)$, $m\geq1$, 
 $$ \E_{\xx \sim \bigotimes_{i=1}^N T_{N_{i}}\tmu } \left( \|\DilsN \Pi(\yy) -\DilsN \Pi'(\yy)\|^{2m} \right) \ll_{m}N^{(-1+2\delta_{0})m} .$$
Moreover, the implicit constant in $\ll_{m}$ can be taken of the form $C (1+m_{2}(\mu)^{(2s-1)m})$ where $C=C(\kg,[.,.], \R \Xab, (\km^{(b)})_{b}, (e^{(i)}_{j})_{i,j}, m)>0$ and $m_{2}(\mu)$ is the second moment of $\mu$. 
\end{proposition}
\begin{proof}
Using the triangle inequality, it is sufficient to show that the bound for
$$\E_{\xx \sim \bigotimes_{i=1}^N T_{N_{i}}\tmu } \left( \|\DilsN \Pi^{(b)}(\yy) -\DilsN \Pi'^{(b)}(\yy)\|^{2m} \right) .$$
But $\Pi^{(b)}(\yy) - \Pi'^{(b)}(\yy)$ is the projection of $\Pi^{(b)}(\yy) $ to $\tkg^{(b+1)}$ parallel to $\tkm^{(b)}$. Hence, 
$$\|\DilsN(\Pi^{(b)}(\yy) - \Pi'^{(b)}(\yy)) \|\leq N^{-(b+1)/2} \,\|\Pi^{(b)}(\yy)\| $$
and the result follows because we proved in \Cref{momentPi} that
$$ \E_{\xx \sim \bigotimes_{i=1}^N T_{N_{i}}\tmu }\left[\|\Pi^{(b)}(\yy) \|^{2m}\right]\ll_{m} N^{(b+2\delta_{0})m}$$
where the implicit constant in $\ll_{m}$ has the desired form. 
\end{proof}

\noindent\emph{Remark}. The same holds for $\tmu$ in place of $T_{N}\tmu$ under the assumption of a finite moment of order $2mb_{\max}$, where $b_{\max}\leq 2s-1$ is the length of the weight filtration. Just use  \Cref{momentPi0} in place of  \Cref{momentPi}. 

\bigskip

We infer a graded approximation at the level of Fourier transforms. It will be used to prove the central limit theorem with Berry-Esseen estimate (and the local limit theorem in \cite{benard-breuillardLLT}).

\begin{corollary} \label{grading-Fourier}
 For $\xi$ a linear form on $\tkg$, $N\geq 1$, $(N_{i})_{i\leq N}\in [ 0, N]^N$, $g,h\in \DN(\delta_{0})$, $\yy=(g,\xx,h)$, 
$$\big|\E_{\xx \sim \bigotimes_{i=1}^N T_{N_{i}}\tmu } [e_{\xi} (\Pi(\yy) )-e_{\xi} (\Pi'(\yy) )] \big| \leq C(1+m_{2}(\mu)^s) \,\| \DilN \xi \| \,N^{-1/2+\delta_{0}}$$
where $C=C(\kg,[.,.], \R \Xab, (\km^{(b)})_{b}, (e^{(i)}_{j})_{i,j})>0$.
\end{corollary}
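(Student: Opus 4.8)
The statement to prove is a Fourier-analytic consequence of the graded replacement result \Cref{graded-replacement}, which gives an $L^{2m}$-bound on the difference $\DilsN\Pi(\yy)-\DilsN\Pi'(\yy)$. The natural strategy is to estimate the difference of characteristic functions pointwise by using the elementary inequality $|e^{ia}-e^{ib}|\le |a-b|$, convert this into an expectation involving $\|\Pi(\yy)-\Pi'(\yy)\|$ (with the linear form $\xi$ producing a factor controlled by its operator norm), and then apply Cauchy--Schwarz together with \Cref{graded-replacement}.

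\textbf{First step.} Fix $\xi$, $N$, $(N_i)$, $g,h$. Write $\yy=(g,\xx,h)$ and abbreviate $P=\Pi(\yy)$, $P'=\Pi'(\yy)$. By the definition of $e_\xi$ and the Lipschitz bound $|e^{-2i\pi t}-e^{-2i\pi u}|\le 2\pi|t-u|$, we have pointwise in $\xx$
$$|e_\xi(P)-e_\xi(P')|\le 2\pi\,|\xi(P)-\xi(P')|=2\pi\,|\xi(P-P')|.$$
Taking expectations and using $|\E[Z]|\le\E|Z|$,
$$\bigl|\E_{\xx\sim\bigotimes T_{N_i}\tmu}[e_\xi(P)-e_\xi(P')]\bigr|\le 2\pi\,\E_{\xx}\,|\xi(P-P')|.$$

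\textbf{Second step: insert the dilation.} The key algebraic observation is that the extra factors of $N^{-b/2}$ lost when one passes from $P-P'$ to its coordinates of weight $b$ are exactly recovered by feeding the estimate of \Cref{graded-replacement}, which is stated for $\DilsN$, not for the identity. Precisely, since $\xi$ is a linear form on $\tkg$ and $D_{\sqrt N}$ is linear and invertible, write $\xi(P-P')=(\DilN\xi)\bigl(\DilsN(P-P')\bigr)$, where $\DilN\xi$ denotes the linear form $v\mapsto\xi(D_{\sqrt N}v)$ (its components in the dual weight-$b$ subspace being multiplied by $N^{b/2}$, consistently with the notation $\DilN\xi$ used earlier in the excerpt). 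Hence
$$|\xi(P-P')|\le \|\DilN\xi\|\;\bigl\|\DilsN(P-P')\bigr\|=\|\DilN\xi\|\;\bigl\|\DilsN\Pi(\yy)-\DilsN\Pi'(\yy)\bigr\|,$$
using the operator norm on $\dkg$ fixed in \Cref{basis}. Taking expectations and then applying the Cauchy--Schwarz inequality (the case $m=1$ of \Cref{graded-replacement}, or Jensen to pass from $2m$ to $2$),
$$\E_\xx\,|\xi(P-P')|\le \|\DilN\xi\|\;\Bigl(\E_\xx\bigl\|\DilsN\Pi(\yy)-\DilsN\Pi'(\yy)\bigr\|^{2}\Bigr)^{1/2}\ll \|\DilN\xi\|\;\bigl(N^{-1+2\delta_0}\bigr)^{1/2}=\|\DilN\xi\|\,N^{-1/2+\delta_0},$$
where the middle inequality is \Cref{graded-replacement} taken with $m=1$. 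Combining with the first step yields the claimed bound, with an implied constant depending only on the initial data (through the implied constant in \Cref{graded-replacement}).

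\textbf{Main obstacle.} There is no serious obstacle here: the statement is essentially a one-line corollary of \Cref{graded-replacement} once one notices the bookkeeping identity $\xi(P-P')=(\DilN\xi)(\DilsN(P-P'))$ that moves the dilation from the random vector onto the linear form. The only point that requires a modicum of care is making sure that the operator norm of $\DilN\xi$ on $\dkg$ is the correct quantity to pair with the norm of $\DilsN(P-P')$ on $\tkg$ (which it is, by definition of the operator norm and the compatibility of the chosen Euclidean structures on $\tkg$ and its dual from \Cref{basis}), and that the $m=1$ case of \Cref{graded-replacement} genuinely applies to $\bigotimes_{i=1}^N T_{N_i}\tmu$ with $g,h\in\DN(\delta_0)$ — which it does, as stated.
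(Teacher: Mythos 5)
Your proof is correct and is essentially identical to the paper's argument: both use the Lipschitz bound on $e_\xi$, the identity $\xi(P-P')=(\DilN\xi)(\DilsN(P-P'))$ to pair the dilated linear form with the dilated difference, and then $\|\cdot\|_{L^1}\le\|\cdot\|_{L^2}$ together with the $m=1$ case of \Cref{graded-replacement}. No issues.
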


\begin{proof}
\begin{align*}
| \E_{\xx \sim \bigotimes_{i=1}^N T_{N_{i}}\tmu } [e_{\xi} (\Pi(\yy) )-e_{\xi} (\Pi'(\yy) )]|
&\ll  \E_{\xx \sim \bigotimes_{i=1}^N T_{N_{i}}\tmu } (|\xi \Pi(\yy) - \xi \Pi'(\yy) |)\\
&\leq \| \DilN \xi\| \, \E_{\xx \sim \bigotimes_{i=1}^N T_{N_{i}}\tmu } (\|\DilsN(\Pi(\yy) - \Pi'(\yy)) \|)\\
&\leq C(1+m_{2}(\mu)^s) \| \DilN \xi\| \,N^{-1/2+\delta_{0}}
\end{align*}
where the last line is given by the inequality $\|.\|_{L^1}\leq \|.\|_{L^2}$ followed by 
 \Cref{graded-replacement} (case $m=1$).
\end{proof}

We also deduce a result of graded approximation that is uniform on the time interval $\{0, \dots, N\}$. It will be useful to prove our theorem \`a la Donsker in \Cref{donsker-sec}. Here we do not use  truncation. 

\begin{corollary} \label{grading-uniform}
Let $\alpha \in [0, \frac{1}{2}) $. For $N\geq1$, set
$$F_{N}= \left\{ \xx \in \tkg^N :\sup_{k\leq N} \|\DilsN  \Pi\left((x_{i})_{i=1}^k, -k\chi \right) -\DilsN  \Pi'((x_{i})_{i=1}^k, -k\chi)\| >N^{-\alpha} \right\}. $$
If $\mu$ has finite moment of order $2mb_{\max}$ where $b_{\max}$ is the length of the weight filtration, then 
$$\tmu^{\otimes N} (F_{N})=O_{\alpha,m}(N^{(-1+2\alpha+\frac{1}{m})m}). $$
In particular $\tmu^{\otimes N} (F_{N}) $ goes to $0$ provided $m>1/(1-2\alpha)$.
\end{corollary}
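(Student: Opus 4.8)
The plan is to bound $\tmu^{\otimes N}(F_N)$ via a union bound over the $N$ times $k$, combined with a moment (Markov) estimate at each fixed $k$, just as in the proof of the non-uniform graded replacement (\Cref{graded-replacement}). The point is that the difference $\DilsN\Pi((x_i)_{i=1}^k,-k\chi)-\DilsN\Pi'((x_i)_{i=1}^k,-k\chi)$ has its $\tkm^{(b)}$-component controlled, for each weight $b$, by $N^{-(b+1)/2}\|\Pi^{(b)}((x_i)_{i=1}^k,-k\chi)\|$, exactly because $\Pi^{(b)}-\Pi'^{(b)}$ lands in $\tkg^{(b+1)}$. So first I would fix $k\le N$ and write $\yy=((x_i)_{i=1}^k,-k\chi)$, noting that $-k\chi\in\DN(0)$ since $\|{-k\chi}\|\ll k\le N=N^{1}$, so the moment estimates apply with $\delta_0=0$.

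Second, I would invoke \Cref{momentPi0} (not \Cref{momentPi}, since here we work with $\tmu^{\otimes N}$ and not a truncation, hence need the moment hypothesis of order $2mb_{\max}$): for each $b\le b_{\max}$ and each $a$, $\E_{\tmu^{\otimes N}}[\|\Pi^{[a,b)}(\yy)\|^{2m}]\ll_m N^{bm}$, whence $\E_{\tmu^{\otimes N}}[\|\Pi^{(b)}(\yy)\|^{2m}]\ll_m N^{bm}$. Therefore, summing over $b$ and using $\|\DilsN(\Pi^{(b)}(\yy)-\Pi'^{(b)}(\yy))\|\le N^{-(b+1)/2}\|\Pi^{(b)}(\yy)\|$, we get
$$\E_{\tmu^{\otimes N}}\big[\|\DilsN\Pi(\yy)-\DilsN\Pi'(\yy)\|^{2m}\big]\ll_m N^{-m}.$$
Note the bound is uniform in $k\le N$: although $\Pi^{(b)}$ is evaluated at a $k$-tuple padded by $-k\chi$, the argument of \Cref{momentPi0} bounds the relevant statistics by $O(N^{bm})$ regardless of the actual length $k\le N$ (the sum defining $M^\infty$ has at most $N$ terms and the contribution of the extra $-k\chi$ term is absorbed just as for a general $h\in\DN(0)$).

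Third, by Markov's inequality, for each fixed $k$,
$$\tmu^{\otimes N}\Big(\|\DilsN\Pi(\yy)-\DilsN\Pi'(\yy)\|>N^{-\alpha}\Big)\le N^{2m\alpha}\,\E_{\tmu^{\otimes N}}\big[\|\DilsN\Pi(\yy)-\DilsN\Pi'(\yy)\|^{2m}\big]\ll_m N^{2m\alpha-m}.$$
Summing over $k\in\{1,\dots,N\}$ gives $\tmu^{\otimes N}(F_N)\ll_{\alpha,m}N\cdot N^{2m\alpha-m}=N^{(2\alpha-1)m+1}=N^{(-1+2\alpha+\frac1m)m}$, which is the claimed bound; it tends to $0$ precisely when $(2\alpha-1)m+1<0$, i.e. $m>1/(1-2\alpha)$.

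The only mildly delicate point — and the one I would write out carefully — is the uniformity in $k$ of the moment estimate: one must check that the implied constant in $\E[\|\Pi^{(b)}((x_i)_{i=1}^k,-k\chi)\|^{2m}]\ll_m N^{bm}$ does not depend on $k$. This follows by re-running the combinatorial expansion in the proof of \Cref{momentPi0}: the number of multi-indices $\nn$ is at most $N^l$ uniformly, the vanishing-of-first-moments argument is unchanged, and the padding term $-k\chi$ contributes, as in the ``$g,h\in\DN(\delta_0)$'' part of that lemma, polynomial factors in $\|k\chi\|\le N$ of total degree matching the weight deficit, hence bounded by the same power of $N$. Everything else is a routine union bound.
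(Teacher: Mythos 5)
Your proof is correct and is essentially the paper's own argument: the paper proves this corollary by replacing the supremum over $k$ by a sum (union bound), applying Markov's inequality with exponent $2m$ at each fixed $k$, and invoking the graded replacement estimate (\Cref{graded-replacement} with $\delta_0=0$, in the non-truncated form noted in the remark following its proof, which is exactly where the moment hypothesis of order $2mb_{\max}$ and \Cref{momentPi0} enter). Your careful remarks on the uniformity in $k$ and on $-k\chi$ lying in $\DN(0)$ are the right points to check, and they go through as you say.
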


\begin{proof} The result follows by replacing the supremum by a sum, then using the Markov inequality and (the remark after the proof of) \Cref{graded-replacement} with $\delta_{0}=0$.  
\end{proof}

%The same holds for $\tmu$ in place of $T_{N}\tmu$ under the assumption of a finite moment of order $2ms$ by \Cref{graded-replacement} and the remark following it.

\subsection{Lindeberg replacement}

We show how to replace the driving measure by any other measure which coincides up to order $2$ for the weight filtration. The substitution takes place at the level of Fourier transforms.

\begin{proposition}[Lindeberg replacement] \label{gaussian}
Let $\eta$ be a probability measure on $\kg$ with finite second moment  for the weight filtration  induced by $\Xab$ and which coincides with $\mu$ up to order $2$. For $\xi$ a linear form on $\tkg$, $N\geq 1$, $(N_{i})_{i\leq N}\in [ 0, N]^N$, $g,h\in \DN(\delta_{0})$, $\yy=(g,\xx,h)$, 
$$\big|\E_{\xx \sim \bigotimes_{i=1}^N T_{N_{i}}\tmu } [e_{\xi} (\Pi'(\yy) )]-\E_{\xx \sim \bigotimes_{i=1}^N T_{N_{i}}\teta } [e_{\xi} (\Pi'(\yy) )]\big| \leq \,(\| \DilN \xi\|+\| \DilN \xi\|^3)N^{3s \delta_{0}}\sum_{i=1}^N N^{-1}_{i} \,\eps_{N}$$
where $\eps_{N}=o_{\eta}(1)$. More generally, assuming $\mu$, $\eta$ both have finite $m$-th moment for some $m\in [2,3]$, we may take $\eps_{N}=o_{\eta}(N^{-\frac{m}{2}+1})$ if $m<3$ and $\eps_{N}=O_{\eta}(N^{-1/2})$ if $m=3$. 

Moreover, in the $m=3$ case, the implicit constant in $O_{\eta}$ can be taken of the form $C (1+m_{2}(\mu)+m_{2}(\eta))^{6s^2}(1+m_{3}(\mu)+m_{3}(\eta))$ where $C=C(\kg,[.,.], \R \Xab, (\km^{(b)})_{b}, (e^{(i)}_{j})_{i,j})>0$ and $m_{q}(\mu), m_{q}(\eta)$ denote the $q$-th moments of $\mu$ and $\eta$.
\end{proposition}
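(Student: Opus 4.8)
The plan is to carry out a Lindeberg-type replacement scheme: we swap the coordinates $x_1,\dots,x_N$ one at a time, replacing each $T_{N_i}\tmu$-distributed variable by an independent $T_{N_i}\teta$-distributed variable, and control the error produced at each swap. Concretely, write $\Pi'(\yy)$ with $\yy=(g,x_1,\dots,x_N,h)$, fix an index $k$, and set $\yy_k = (g,x_1,\dots,x_{k-1},\bullet,x_{k+1},\dots,x_N,h)$ where the first $k-1$ slots are $\teta$-distributed, the last $N-k$ slots are $\tmu$-distributed, and the $k$-th slot is to be filled. The telescoping identity then reduces the claim to bounding
\begin{equation*}
\Big|\,\E\big[e_\xi(\Pi'(\yy_k)|_{x_k\sim T_{N_k}\tmu})\big]-\E\big[e_\xi(\Pi'(\yy_k)|_{x_k\sim T_{N_k}\teta})\big]\,\Big|
\end{equation*}
for each $k$, and then summing over $k=1,\dots,N$. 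The key structural input is that, as an element of the graded free Lie algebra, $\Pi'(\yy_k)$ is \emph{polynomial of degree $\le 1$ in the $x_k$-variables of weight $\ge 1$}: indeed each bracket $L_{\rr}$ appearing in the Campbell–Hausdorff expansion that involves $x_k$ at all involves it exactly once in the relevant graded piece unless we are looking at a term where $x_k$ occurs with multiplicity $\ge 2$, and those contribute only to higher weight and get killed or are dominated. So one expands $\xi\,\Pi'(\yy_k) = P_0 + P_1(x_k) + R(x_k)$ where $P_0$ is independent of $x_k$, $P_1$ is linear in $x_k$, and $R$ collects the (genuinely) higher-order-in-$x_k$ contributions, whose weight-degree is $\ge 2$ in the $x_k$ coordinates.

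The heart of the estimate is then a Taylor expansion of $e_\xi = e^{-2i\pi(\cdot)}$ to third order in the $x_k$-dependent part. Because $\mu$ and $\eta$ coincide up to order $2$ for the weight filtration, and because $T_{N_k}$ is a (possibly truncated) pushforward that preserves moments up to order $2$ up to an error $o(1)$ (this is the content of the centering choice of $c_N$ and the moment estimates of \Cref{momentPi}, \Cref{truncation-cost}), the zeroth, first and second order Taylor coefficients agree between the two measures up to the stated $\eps_N$. The first non-matching term is the third-order one, contributing $\ll \|\DilN\xi\|^3$ times a third moment of the relevant coordinate of $x_k$; here the truncation at scale $N_k^{b/2}$ in weight $b$ forces each such third moment to be $O(N_k^{-1}\cdot(\text{something bounded}))$ after the $\DilsN$ rescaling produces the $N^{-1}$ factors, which is exactly where the $\sum_i N_i^{-1}$ appears. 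The deviation parameters $g,h\in\DN(\delta_0)$ only inflate the polynomial coefficients by factors $N^{O(\delta_0)}$, giving the $N^{3s\delta_0}$ term. For the refined rates: if $\mu,\eta$ have a finite $m$-th moment with $2\le m<3$, one instead truncates the Taylor expansion at order $2$ and estimates the remainder by a $(2,m)$-interpolation ($|t|^m$-type) bound, producing $o_\eta(N^{-m/2+1})$ per swap after rescaling; if $m=3$ one keeps the exact third-order term and bounds it by the (finite) third moment, giving $O_\eta(N^{-1/2})$ after summing $N$ swaps each of size $O(N^{-3/2})$.

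The main obstacle I expect is bookkeeping the bi-grading carefully enough to see that $\Pi'(\yy_k)$ really is affine (degree $\le 1$) in the weight-$\ge1$ coordinates of $x_k$ at the level of each graded piece $\Pi'^{[a,b)}$, so that the Taylor remainder $R(x_k)$ only involves genuine higher moments and not, say, mixed terms of the form $x_k^{(1)}\otimes x_k^{(1)}$ that would spoil the second-order matching. This needs the observation that in $(\tkg,*')$ the graded product only pairs $\km^{(i)}\times\km^{(j)}\to\km^{(i+j)}$, together with the fact that a bracket containing $x_k$ twice has length $\ge 3$ hence — after grading — lands in weight $\ge 3$; combined with the truncation, such contributions are $O(N_k^{-1/2}\|\DilN\xi\|)$-small per factor and can be absorbed. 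Once this algebraic point is pinned down, the analytic estimates are the routine Taylor-with-remainder computations and the final summation over the $N$ swaps, using the moment bounds of \Cref{momentPi} to control the expectations $\E[|P_0|^j]$, $\E[|P_1(x_k)|^j]$ that multiply the $x_k$-moments via Hölder.
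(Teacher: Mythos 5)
Your overall scheme is the paper's: telescope over the $N$ coordinates, isolate the polynomial dependence of $\Pi'$ on the swapped variable $x_k$ (a polynomial without constant term whose coefficients depend on the product of the remaining variables), Taylor-expand $e_\xi$ to third order, match the low-order terms using the hypothesis that $\mu$ and $\eta$ coincide up to order $2$, and bound the rest by truncated moments; the $N^{3s\delta_0}$ and $\sum_i N_i^{-1}$ factors arise exactly as you say. The refinement for $m\in[2,3)$ via a $|t|^m$-interpolation bound is a legitimate variant of the paper's route (which instead keeps the order-$3$ expansion and uses that the truncated measure satisfies $\E_{T_{N_i}\tmu}|v^\alpha|=o(N_i^{d(\alpha)/2-m/2})$ for $d(\alpha)\ge 3$).

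There is, however, a genuine gap in your treatment of the terms where $x_k$ occurs with multiplicity $2$ in a bracket (e.g.\ the Campbell--Hausdorff term $\tfrac1{12}[x_k,[x_k,y]]$). Your opening claim that $\Pi'(\yy_k)$ is affine in $x_k$ is false, and your fallback — that such brackets have length $\ge 3$, hence land in weight $\ge 3$, hence are ``$O(N_k^{-1/2}\|\DilN\xi\|)$-small per factor and can be absorbed'' — does not work. After rescaling, the expectation of such a monomial $v^{(1)}_j v^{(1)}_{j'}$ under \emph{either} measure is of order $N^{-1}\E[\|v^{(1)}\|^2]=\Theta(N^{-1})$, times a coefficient of size $\|\DilN\xi\|N^{O(\delta_0)}$ in expectation; summed over the $N$ swaps this is $\Theta(\|\DilN\xi\|N^{O(\delta_0)})$ with no $\eps_N$ gain — exactly the borderline order, not an absorbable error. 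These terms are weight-degree $2$ in the swapped variable and must be \emph{cancelled}, not absorbed: the hypothesis that $\mu$ and $\eta$ coincide up to order $2$ for the weight filtration equates the expectations of \emph{all} monomials of weight-degree $\le 2$ in $x_k$, whether they arise from squaring the linear part in the second-order Taylor coefficient or directly from brackets containing $x_k$ twice. The paper avoids your difficulty by never separating ``linear'' from ``higher order in $x_k$'': it compares the two Taylor expansions monomial by monomial in $v=x_k$, sorted by the weight-degree $d(\alpha)$ of the monomial in $v$ alone — $d(\alpha)=1$ terms vanish by centering of the truncation, $d(\alpha)=2$ terms cancel up to $o_\eta(N_i^{-m/2})$ by the order-$2$ coincidence (plus a truncation error controlled by H\"older), and only $d(\alpha)\ge 3$ terms are bounded individually. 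Your argument becomes correct once the multiplicity-$2$ brackets are moved into the ``matched by coincidence'' bucket rather than the remainder.
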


In this paper, we will apply this proposition with $(N_{i})$ constant equal to $N$. In \cite{benard-breuillardLLT}, we will need to consider gradually increasing $(N_{i})$. In both cases, $\sum_{i=1}^N N^{-1}_{i}=O(1)$, so the right-hand side is $O_{\eta}(1)(\| \DilN \xi\|+\| \DilN \xi\|^3)N^{3s \delta_{0}}\,\eps_{N}$.

\begin{proof} 
We replace $\tmu$-variables by $\teta$-variables successively. At the step $i$, we set 
$$\kappa_{i} = g \otimes T_{N_{1}}\tmu \otimes \dots \otimes T_{N_{i-1}}\tmu \,\,\,\, \,\,\,\,  \,\,\,\,  \,\,\,\,  \theta_{i}=T_{N_{i+1}}\teta \otimes \dots\otimes T_{N_{N}}\teta \otimes h$$
and we need show that 
$$\Delta_{i}:=|\E_{\kappa_{i}\otimes (T_{N_{i}}\tmu-T_{N_{i}}\teta) \otimes \theta_{i}}(e_{\xi}( \Pi'(\uu, v, \ww)))| \leq  (\| {\DilN }\xi\|+\| {\DilN }\xi\|^3)  \,N_{i}^{-1+3s\delta_{0}}\eps_{N} .$$
Setting $\xi_{N}=\DilN \xi$, $\uu_{N}=\DilsN \uu$, $v_{N}=\DilsN v$, $\ww_{N}=\DilsN \ww$, we can use that dilations commute with  $*'$ to rewrite  
\begin{align*}
\Delta_{i}=|\E_{\kappa_{i}\otimes  (T_{N_{i}}\tmu-T_{N_{i}}\teta) \otimes \theta_{i}}(e_{\xi_{N}}( \Pi'(\uu_{N}, v_{N}, \ww_{N})))| .
\end{align*}
We now isolate the part of $\Pi'(\uu_{N}, v_{N}, \ww_{N})$ depending on $v$. To do so, we  observe that for $g_{1}, g_{2}, g_{3}\in \tkg$,
$$g_{1}*'g_{2} *'g_{3}=g_{1}*'g_{3}+P_{g_{1},g_{3}}(g_{2})$$
where $P_{g_{1},g_{3}}$ is a polynomial of bounded degree without constant term and whose coefficients depend  polynomially on $g_{1},g_{3}$. Setting $x_{N}=\Pi' \uu_{N}$, $z_{N}=\Pi' \ww_{N}$, we then rewrite
\begin{align*}
\Delta_{i}&=|\E_{\kappa_{i}\otimes  (T_{N_{i}}\tmu-T_{N_{i}}\teta) \otimes \theta_{i}}\big(e_{\xi_{N}}( x_{N}*'z_{N}+P_{x_{N}, z_{N}}( v_{N}))  \big)  |\\
&=|\E_{\kappa_{i} \otimes \theta_{i}}\big[e_{\xi_{N}}( x_{N}*'z_{N}) \E_{ (T_{N_{i}}\tmu-T_{N_{i}}\teta)}(e_{\xi_{N}}(P_{x_{N}, z_{N}}( v_{N})))\big]  |\\
 &\leq \E_{\kappa_{i} \otimes \theta_{i}} |\E_{ (T_{N_{i}}\tmu-T_{N_{i}}\teta)}(e_{\xi_{N}}(P_{x_{N}, z_{N}}( v_{N})))  |.
\end{align*}
Using Taylor expansion up to order $3$, we have 
\begin{align*}
\E_{(T_{N_{i}}\tmu)}(e_{\xi_{N}}(P_{x_{N}, z_{N}}( v_{N}))) = 1+ \E_{(T_{N_{i}}\tmu)}(-2i\pi \xi_{N }P_{x_{N}, z_{N}}( v_{N})) &+\E_{(T_{N_{i}}\tmu)}(-4\pi^2 (\xi_{N }P_{x_{N}, z_{N}}( v_{N}))^2)\\
&+ \E_{(T_{N_{i}}\tmu)}(O(\xi_{N }P_{x_{N}, z_{N}}( v_{N}))^3)
\end{align*}
and the analogous formula for $T_{N_{i}}\teta$. 
Our purpose is to compare those two polynomial expansions, and we do this one monomial at a time. Recall that for $\alpha\in \N^{\dim \tkg}$, we write $x\mapsto x^\alpha$ the induced monomial on $\tkg$ (for some ordering of the basis $e^{(i)}_{j}$ fixed in \Cref{Sec-cadre}) and $d(\alpha)$ its degree.  
We compare the expectations of  $v^\alpha_{N}:=(v_{N})^\alpha$ when $v$ varies with respect to the measures $T_{N_{i}}\tmu$ and $T_{N_{i}}\teta$. 
\begin{itemize}
\item If $d(\alpha)=1$, then 
$$\E_{T_{N_{i}}\tmu}(v^{\alpha}_{N}) = \E_{T_{N_{i}}\teta}(v^{\alpha}_{N}) = 0$$
because our truncations are centered by definition.

\item If $d(\alpha)=2$, then we use that $\mu$ and $\eta$ coincide up to order 2 to write 
 \begin{align*}
|\E_{T_{N_{i}}\tmu-T_{N_{i}}\teta}(v^{\alpha}_{N})|&= N^{-1} |\E_{T_{N_{i}}\tmu-\tmu}(v^{\alpha})+ \E_{\teta-T_{N_{i}}\teta}(v^{\alpha})|\\
&\leq N^{-1} \E_{ \tmu+\teta}[(|v^{\alpha}| + |(T_{N_{i}}v)^{\alpha}|)1_{E_{N_{i}}}(v)]
\end{align*}
where $E_{N_{i}}=\{v: T_{N_{i}}v\neq v\}$ has measure $\tmu(E_{N_{i}}), \teta(E_{N_{i}})\leq o_{\eta}(N^{-m/2}_{i})$ if $\mu, \eta$ have finite $m$-th moment. Using the  H\"older inequality $\| f_{1}f_{2}\|_{L^1}\leq \|f_{1}\|_{L^{m/2}}\| f_{2}\|_{L^{\frac{m}{m-2}}}$,  we obtain
$$\E_{T_{N_{i}}\tmu-T_{N_{i}}\teta}(v^{\alpha}_{N})=o_{\eta}(N_{i}^{-m/2}).$$

\item If $d(\alpha)\geq 3$ and $\mu$, $\eta$ both have finite $m$-th moment, then we may we use \Cref{croissance-moment} below to get
 $$
\E_{T_{N_{i}}\tmu+T_{N_{i}}\teta}(|v^{\alpha}_{N}|) = \left\{
    \begin{array}{ll}
       o_{\eta}(N^{-m/2}_{i}) &  \mbox{if  $m\in [2,3)$}\\
        O_{\eta}(N^{-3/2}_{i}) & \mbox{if  $m\in [3,+\infty)$}
    \end{array}
\right.
$$

\end{itemize}

Combining the above Taylor expansions and the monomial estimates, we deduce that
\begin{align*} 
|\E_{(T_{N_{i}}\tmu-T_{N_{i}}\teta)}(P_{x_{N}, z_{N}}( v_{N})))  | \leq (\|\xi_{N}\|+ \|\xi_{N}\|^3) (1+\|x_{N}\|+\|z_{N}\|)^{3s} N^{-1}_{i}\eps_{N}
\end{align*}
and the domination of $\Delta_{i}$ follows by integrating in $\uu_{N}$, $\ww_{N}$ and using the moment estimate from \Cref{momentPi}. 

The final claim on  $\eps_{n}$ in the $m=3$ case follows from inspecting the proof. More precisely, the dependence in $(1+ m_{3}(\mu)+m_{3}(\eta))$ originates from the estimates of $\E_{(T_{N_{i}}\tmu-T_{N_{i}}\teta)}(v_{N}^\alpha)$ performed above, and the dependence in $(1+m_{2}(\mu)+m_{2}(\eta))^{6s^2}$ appears in the very last step, when we integrate with respect to $\uu_{N}$, $\ww_{N}$ and   use \Cref{momentPi}.
\end{proof}

To complete the proof of \Cref{gaussian}, we record the following.  
\begin{lemme} \label{croissance-moment}
Let $\varphi: (\Omega, \mathbb{P})\rightarrow \R^+$ be an integrable non-negative random variable. For $\beta>1$, 
$$\int_{\Omega} \varphi^\beta \1_{{\varphi\leq N}} \,d\mathbb{P}=o_{\varphi}(N^{\beta-1}).$$
\end{lemme}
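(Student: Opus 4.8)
\textbf{Proof proposal for \Cref{croissance-moment}.}

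The plan is to reduce the claim to the dominated convergence theorem. Write $\psi_N := \varphi^\beta \mathbbm{1}_{\varphi \le N} N^{1-\beta}$. We want to show $\int_\Omega \psi_N \, d\mathbb{P} \to 0$ as $N \to \infty$. On the event $\{\varphi \le N\}$ we have $\varphi^{\beta-1} \le N^{\beta-1}$ since $\beta > 1$, hence $\psi_N = \varphi \cdot \varphi^{\beta-1} N^{1-\beta} \mathbbm{1}_{\varphi \le N} \le \varphi$ pointwise. Since $\varphi$ is integrable, this gives the integrable dominating function needed.

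Next I would check the pointwise convergence $\psi_N \to 0$ almost everywhere. Fix $\omega$ with $\varphi(\omega) < \infty$ (which holds a.e. by integrability). For $N \ge \varphi(\omega)$ we have $\psi_N(\omega) = \varphi(\omega)^\beta N^{1-\beta}$, and since $\beta - 1 > 0$ this tends to $0$ as $N \to \infty$. Thus $\psi_N \to 0$ pointwise a.e. Applying dominated convergence with dominating function $\varphi$ yields $\int_\Omega \psi_N \, d\mathbb{P} \to 0$, which is exactly the assertion $\int_\Omega \varphi^\beta \mathbbm{1}_{\varphi \le N} \, d\mathbb{P} = o_\varphi(N^{\beta-1})$.

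There is no real obstacle here; the only mild subtlety is making sure the bound $\psi_N \le \varphi$ is genuinely uniform in $N$ (it is, because the truncation $\mathbbm{1}_{\varphi \le N}$ is precisely what makes $\varphi^{\beta-1} N^{1-\beta} \le 1$ on the relevant event), and noting that the rate of convergence depends on the law of $\varphi$, which is why the $o(\cdot)$ carries a subscript $\varphi$. One could alternatively split $\int \varphi^\beta \mathbbm{1}_{\varphi \le N} = \int \varphi^\beta \mathbbm{1}_{\varphi \le K} + \int \varphi^\beta \mathbbm{1}_{K < \varphi \le N}$ for a large fixed $K$, bounding the first term by $K^{\beta-1}\int \varphi$ and the second by $N^{\beta-1} \int \varphi \mathbbm{1}_{\varphi > K}$, then dividing by $N^{\beta-1}$ and letting $N \to \infty$ then $K \to \infty$; but the dominated convergence argument is cleaner and I would present that one.
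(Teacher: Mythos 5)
Your argument is correct and is exactly the paper's proof: the authors also rewrite the integrand as $(\varphi/N)^{\beta-1}\1_{\varphi\le N}\,\varphi$, dominate it by the integrable function $\varphi$, and conclude by dominated convergence. Nothing to add.
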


\begin{proof}
Just observe that 
$$\int_{\Omega} \left(\frac{\varphi}{N}\right)^{(\beta-1)} \1_{{\varphi\leq N}} \,\varphi d\mathbb{P}\rightarrow0$$
by dominated convergence. 
\end{proof}

%%%%%%%%%%%%%%%%%%%%%%%%%%%%%%%%%%%%%%%%%%%%%%%% global theorems section 

\section{Global limit theorems} \label{Sec-global-thm}

This section is dedicated to the proof of the global limit theorems announced in the introduction, namely Theorems \ref{CLT}, \ref{BE}, \ref{Donsker-nilpotent} and \Cref{asymp-close}. We will combine  \Cref{Sec-densite} and \Cref{3reductions}. The notations $(\kg, *, \tkg, \Xab, (\km^{(b)})_{b\leq 2s-1}, X, \chi, *')$ are those of  \Cref{Sec-cadre}. We assume that $\mu$ is a probability measure on $\kg$ with finite second moment for the weight filtration induced by $\Xab$, and such that $\mu_{ab}$ has expectation $\Xab$ and non-degenerate covariance matrix. In particular, we have $X=\E(\mu^{(1)})$.

Let $(E_{i})_{1\leq i\leq q_{1}}$ be a basis of $\km^{(1)}$ in which the covariance matrix of $\mu^{(1)}$ is the identity. Let $B\in \km^{(2)}$ be the expectation of $\mu^{(2)}$. Set $\LL=(\LL_{t})_{t\in \R^+}$ the time-dependent left-invariant differential operator on $(\kg, *')$ given by 
 \begin{align} \label{limit-diff-gen2}
\mathscr{L}_{t}=\frac{1}{2}\sum_{i=1}^{q_{1}}  \left(\Ad(t\chi) E_{i}\right)^2+ \Ad(t\chi)B .
 \end{align}
 We denote by $(W(t))_{t\in \R^+}$ a diffusion process on $(\kg,*)$ of infinitesimal generator $\LL$, and  set $Z(t)=W(t)*'t\chi$ as in  \Cref{homogeneisation}. The notations $\sigma$, $\lambda$ correspond to the respective distributions of $W$ and $Z$ in the space of continuous paths.  Finally we write $\nu=\sigma_{1}$ the time-$1$ distribution of $\sigma$, i.e. the law of $W(1)$.

\subsection{Central limit theorem and convergence rate} \label{Sec-CLT+BE}

We prove the central limit theorem and the Berry-Esseen estimate announced in the introduction. We start with a lemma that further connects $\mu$ to the diffusion $\sigma$, via their respective bias extensions.

\begin{lemme} \label{densite}
The time-$1$ distribution $\lambda_{1}$ is the bias extension $\teta$ to $\tkg$ of a measure $\eta$ on $\kg$ with finite $p$-th moment for all $p$ and which coincides with $\mu$ up to order $2$. In particular, one has for all $N\geq 1$,
$$\teta^{*'N}=\DilN\nu*'N\chi .$$
\end{lemme}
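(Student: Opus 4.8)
\textbf{Proof plan for Lemma \ref{densite}.}

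The plan is to identify the diffusion measures $\sigma_t$ and $\lambda_t$ explicitly enough to recognize $\lambda_1$ as a bias extension, and then to extract the measure $\eta$ on $\kg$ from this identification. First I would recall from \Cref{homogeneisation} that $\lambda$ is the \emph{homogeneous} diffusion law on $(\tkg,*')$ with generator $\widetilde{\LL}=\frac{1}{2}\sum_i E_i^2+(B+\chi)$, so that the family $\{\lambda_t\}_{t\ge 0}$ is an honest one-parameter convolution semigroup for $*'$: $\lambda_s *' \lambda_t=\lambda_{s+t}$. Since $\widetilde{\LL}$ is left-invariant and $*'$-homogeneous with respect to the dilations $D_r$ (indeed $D_r$ rescales $E_i\mapsto rE_i$, $B\mapsto r^2 B$, $\chi\mapsto r^2\chi$, hence $\widetilde{\LL}\mapsto r^2\widetilde{\LL}$), the scaling identity from \Cref{density-inv} gives $D_{\sqrt r}\lambda_t=\lambda_{rt}$; in particular $\lambda_N=D_{\sqrt N}\lambda_1$. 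Combined with the semigroup property this yields $\lambda_1^{*'N}=\lambda_N=D_{\sqrt N}\lambda_1$. So once we know $\lambda_1=\teta$ for some $\eta$ on $\kg$, the displayed formula $\teta^{*'N}=\DilN\nu *'N\chi$ will follow from $\lambda_1=$ (projection part) $*'$ (the $\chi$-translate), i.e.\ from the relation $Z(1)=W(1)*'\chi$ which says precisely $\lambda_1=\nu*'\delta_\chi$ after applying $D_{\sqrt N}$ and using that $D_{\sqrt N}(\nu*'\delta_\chi)=(D_{\sqrt N}\nu)*'\delta_{N\chi}=\DilN\nu*'N\chi$ because $\chi\in\tkm^{(2)}$ scales by $N$.

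The substantive point is that $\lambda_1$ really \emph{is} a bias extension, i.e.\ that it is supported on the affine hyperplane $\kg+\chi$ and its pushforward $\eta$ under $x\mapsto x-\chi$ (equivalently, under the identification $\kg+\chi\cong\kg$) has the claimed moment and matching-moments properties. For the support statement: the $\R\chi$-component of $Z(t)=W(t)*'t\chi$ is deterministic and equals $t$ (since $\chi$ is central in $(\tkg,*')$ by the Remark in \Cref{basis}, and the $E_i$, $B$ all lie in $\kg$, so the $\chi$-coordinate of $W(t)$ stays $0$); hence $\lambda_1$ lives on $\kg+\chi$ and $\eta$ is a genuine probability measure on $\kg$. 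For the moments: by \Cref{fact-moment} applied to the homogeneous diffusion $\lambda$ on $H=(\tkg,*')$, $\lambda_1$ has a finite quadratic exponential moment for $\normtp{\cdot}$; by the ball-box principle (Remark 1 after \Cref{density-reg}) this distance dominates $\eps\|\cdot\|^\eps$ outside a compact set, so $\eta$ has finite $p$-th moment for \emph{every} $p\in(0,\infty)$ — equivalently all polynomial moments are finite. Finally, to see that $\eta$ coincides with $\mu$ up to order $2$: I would compute the low-degree moments of $\eta=\lambda_1$ projected to $\kg$ directly from the generator $\widetilde{\LL}$ via the martingale characterization in \Cref{fact-diffusion}. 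Testing against coordinate functions $x\mapsto x^{(1)}_j$ (degree $1$): $\widetilde{\LL}(x^{(1)}_j)$ picks out only the $B+\chi$ drift, and since $B\in\km^{(2)}$ contributes nothing to the $\km^{(1)}$-coordinate while $\chi$ is the extra variable, one gets $\E_{\eta}(x^{(1)})=0=\E_\mu(\tmu^{(1)}\text{-recentered})$, matching. Testing against degree-$2$ polynomials (products of two $\km^{(1)}$-coordinates, or a single $\km^{(2)}$-coordinate): the second-order term $\frac12\sum_i E_i^2$ produces exactly the identity covariance in the $E_i$-basis, matching $\Cov(\mu^{(1)})$ by the choice of $(E_i)$, and the drift term $\Ad(t\chi)B$ integrated over $t\in[0,1]$ produces $\E_\eta(x^{(2)})=B=\E_\mu(x^{(2)})$ plus possibly lower-weight corrections that vanish in the relevant quotient. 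More cleanly, since $\chi$ is central for $*'$ and $B\in\km^{(2)}$, $\Ad(t\chi)$ acts trivially on $\km^{(1)}\oplus\km^{(2)}$, so the generator restricted to degree-$\le 2$ functions is just $\frac12\sum_i E_i^2+B$ with $B$ central there; its time-$1$ semigroup is therefore a Gaussian in $\km^{(1)}$ with covariance $\mathrm{Id}$ translated by $B$ in $\km^{(2)}$, which has exactly the degree-$\le2$ moments of $\mu$ by construction of $(E_i)$ and $B$.

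The main obstacle I anticipate is the bookkeeping in this last step: making precise "coincides with $\mu$ up to order $2$" requires checking $\eta(P)=\mu(P)$ for \emph{every} polynomial $P$ of weight-degree $\le 2$, which includes monomials like $x^{(1)}_i x^{(1)}_j$, the single coordinates $x^{(1)}_j$, and the $\km^{(2)}$-coordinates $x^{(2)}_k$ — and one must be careful that the centering conventions built into $\tmu$ (via $X=\E(\mu^{(1)})$) line up with the fact that $\lambda$ has drift $B+\chi$ rather than $B$, so that the $\chi$-direction absorbs the linear drift while the genuine $\km^{(1)}$-fluctuation has mean zero. I would handle this by working with the generator directly: for $P$ of weight-degree $\le 2$, $t\mapsto \E_{\lambda_t}(P)$ solves the linear ODE $\frac{d}{dt}\E_{\lambda_t}(P)=\E_{\lambda_t}(\widetilde{\LL}_t P)$ with $\widetilde{\LL}_t P$ again of weight-degree $\le 2$ (the second-order part lowers degree by $2$, the drift part by $1$ or keeps it), so this is a finite closed system of linear ODEs whose solution at $t=1$ I can compute explicitly and compare termwise with the prescribed values from $\mu$. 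Everything else is soft once the semigroup and scaling identities are in hand.
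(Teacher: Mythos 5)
Your proposal is correct and follows essentially the same route as the paper: identify $\eta$ as $\lambda_1$ pulled back from the hyperplane $\kg+\chi$, get the moments from the quadratic exponential moment of the homogeneous diffusion, obtain $\teta^{*'N}=\lambda_N=\DilN\nu*'N\chi$ from the semigroup and scaling identities of \Cref{homogeneisation} and \Cref{density-inv}, and match the degree-$\le 2$ moments by applying the generator $\widetilde\LL$ to linear forms on $\km^{(1)}$, their products, and linear forms on $\km^{(2)}$ (the paper does this via the Dynkin formula $\lambda_1(f)-f(0)=\int_0^1\lambda_s(\widetilde\LL f)\,ds$, extended to polynomially growing $f$ using the finite moments).

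One misstatement to fix: $\chi$ is \emph{not} central in $(\tkg,*')$ — the remark in \Cref{Sec-cadre} asserts this for $\Xm\in\km^{(1)}$, whereas $[\chi,\cdot]'$ restricted to $\km^{(i)}$ is precisely the nonzero operator $a_{\Xm}$ landing in $\km^{(i+2)}$ (already nonzero for the Heisenberg group). The two conclusions you draw from it survive for nearby reasons: the $\R\chi$-coordinate of $Z(t)$ is deterministic because $\kg$ is an ideal of $(\tkg,[.,.]')$, so $\tkg\to\tkg/\kg\cong\R$ is a homomorphism killing $W(t)$; and $\Ad(t\chi)$ is trivial modulo $\kg^{(3)}$ because $[\chi,\tkm^{(i)}]'\subseteq\tkm^{(i+2)}$, which is all that is needed when testing against polynomials of weight-degree at most $2$.
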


Later, we will bound all the moments of $\eta$  using the  second moment of $\mu$, see  \Cref{bound-eta-moments}.

\begin{proof}
The only possible candidate is $\eta=\lambda_{1}-\chi+\Xm$. It does have finite polynomial moments by \Cref{density-reg} and satisfies $\teta=\lambda_1$. By  \Cref{homogeneisation}, we have
$$\teta^{*'N}=\lambda^{*'N}_1= \lambda_N=\DilN\nu*'N\chi .$$ 

It remains to compute the covariance and expectations. First recall from \Cref{homogeneisation} that for any  $f\in C^\infty_{c}(\tkg)$, 
\begin{align} \label{tetat0}
\lambda_{1}(f) -f(0) =  \int_{0}^1\lambda_{s} (\widetilde{\LL}.f) ds .
\end{align}
Rewriting $\lambda_{s}(\widetilde{\LL}.f) = \lambda_{1}[(\widetilde{\LL}.f)\circ D_{\sqrt{s}}] $ thanks to \Cref{density-inv}, and using that $\lambda_{1}$ has finite polynomial moments, we see that  the relation  \eqref{tetat0} holds more generally for all $f\in C^\infty(\tkg)$ whose derivatives have polynomial growth (indeed, just approximate $f(x)$ by $f(x) \rho(\eps x)$ where $\rho$ is a bump function and $\eps\to 0^+$). Now, if $f$ is a linear form on $\km^{(1)}$, then $\widetilde{\LL}.f=0$, so the projection $\lambda^{\,(1)}_{1}$ is centered, i.e. $\E(\eta^{(1)})=\Xm$. If $(E'_{i} : \km^{\,(1)}\rightarrow \R)$ refers to the dual basis of $(E_{i})$, then $\widetilde{\LL}.f(E'_{i}E'_{j})=\delta_{ij}$, whence $\lambda^{(1)}_{1}$ and $\eta$ have covariance matrix the identity in the basis $(E_{i})$. Finally, if $f$ is a linear form on $\km^{(2)}$, then $\widetilde{\LL}.f=f(B)$, so $\eta(f)=\lambda_{1}(f)=f(B)$. This concludes the proof. 
\end{proof}

\bigskip
We now   prove Theorems \ref{CLT}, \ref{BE}.  With our current notations, they together  correspond to  \Cref{CLT+BE} below.  Recall that $X=\E(\mu^{(1)})$.

\begin{theorem}[CLT + Berry-Esseen] \label{CLT+BE} Keep the above notations.
Let $f \in C^{\infty}_{c}(\kg)$ be a smooth function with compact support. Then for $N\geq1$, 
$$|\DilsN(\mu^{*N}*-N\Xm)(f)-\nu(f)| \leq \eps_{N} \max_{|\alpha| \leq \dim \kg +4} \|\partial^\alpha f\|_{L^1}$$
where the notation $\eps_{N}$ stands for $\eps_{N}=o(1)$. 
Moreover,  if $\mu$ has finite third moment for the weight filtration, then we can take $\eps_{N} \leq C(1+m_{2}(\mu))^{6s^2}(1+m_{3}(\mu)) N^{-1/2}$ where $C=C(\kg,[.,.], \R \Xab, (\km^{(b)})_{b}, (e^{(i)}_{j})_{i,j})>0$. 
\end{theorem}

\begin{proof}[Proof of  \Cref{CLT+BE}]

First we prove the statement without additional moment assumption, in particular $\eps_{N}$ stands for $o(1)$. By Fourier Inversion formula, we have
\begin{align*} 
\DilsN(\mu^{*N}*-N\Xm)(f)&= \int_{\dkg} \hf(\xi) \,\htmuNchi({\DilsN}\xi)  \,d\xi 
\end{align*} 
where a linear form $\xi\in \dkg$ is seen as a linear form on $\tkg$ by setting $\xi(\chi)=0$, and where $\tmu^{*N}_{-N\chi}=\tmu^{*N}*-N\chi$.

Using the truncation estimate  from \Cref{truncation-cost}, followed by the graded replacement and the Lindeberg replacement from  \Cref{grading-Fourier}, \Cref{gaussian} applied with $\delta_{0}=0$ and to  the measure $\eta$ from \Cref{densite}, we deduce
\begin{align*} 
\DilsN(\mu^{*N}*-N\Xm)(f)
&= \int_{\dkg} \hf(\xi) \,\widehat{(T_{N}\tmu)^{*'N}_{-N\chi}}({\DilsN}\xi)  \,d\xi   \,+\, \eps_{N} \int_{\kg} |\hf(\xi)|d\xi \\
&= \int_{\dkg} \hf(\xi) \,\widehat{(T_{N}\teta)^{*'N}_{-N\chi}}({\DilsN}\xi)  \,d\xi \,+\, \eps_{N}\int_{\kg} |\hf(\xi)| (1+ \|\xi\| +\|\xi\|^3)   \,d\xi \\
&= \int_{\dkg} \hf(\xi) \,\widehat{\teta^{*'N}_{-N\chi}}({\DilsN}\xi)  \,d\xi \,+\,\eps_{N}\int_{\kg} |\hf(\xi)| (1+\|\xi\| +\|\xi\|^3)   \,d\xi \\
&= \nu(f)\,+\,\eps_{N}\int_{\kg} |\hf(\xi)| (1+\|\xi\| +\|\xi\|^3)   \,d\xi 
\end{align*} 
where the last line uses that $\teta^{*'N}=(\DilN\nu)*' N\chi$.

We can finally bound the dependence in $f$ in terms of the $L^1$-norm of derivatives of $f$. Indeed, using integration by parts, one sees that for all $k\geq 0$, $\xi \in \dkg \smallsetminus \{0\}$,
$$ |\hf(\xi)| \ll_{k} \frac{1}{\|\xi\|^k}  \sup_{|\alpha|=k}  \| \partial^\alpha  f\|_{L^1} $$
where $\partial^\alpha $ varies among the order $k$ additive derivatives of $f$ with respect to a fixed arbitrary basis. It follows that 
$$\int_{\dkg} \hf(\xi) (1+\|\xi\| +\|\xi\|^3)   \,d\xi = O(1)\sup_{|\alpha| \leq \dim \kg +4}  \| \partial^\alpha  f\|_{L^1},$$
which concludes the first part of the proof.

We now turn to the case where $\mu$ has finite third moment. Inspecting the above argument, and using the description of constants involved in \Cref{truncation-cost}, \Cref{grading-Fourier}, \Cref{gaussian}, it appears we may take
\begin{align} \label{eqepsN-}
\eps_{N} \leq C(1+m_{2}(\mu)^{6s^2}+m_{2}(\eta)^{6s^2})(1+m_{3}(\mu)+m_{3}(\eta)).
\end{align}
 It remains to bound the moments of $\eta$ using those of $\mu$. We will do it in \Cref{lem-a} and \Cref{bound-eta-moments} below. Then \Cref{CLT+BE} follows by combining \eqref{eqepsN-} and \Cref{bound-eta-moments}. 

\end{proof}

In order to study the moments of $\eta$, we  introduce $$a(\teta):=\int_{\tkg} \|x^{(1)}\|^2 +\|x^{(2)}\| \,d\teta(x)=m_{2}(\teta^{(1)})+ m_{1}(\teta^{(2)})$$
where we recall that $\teta^{(i)}=\pi^{(i)}(\teta)$ is the projection of $\teta$ to $\tkm^{(i)}$ parallel to $\oplus_{j\neq i}\tkm^{(j)}$. These notations also make sense for $\mu$ in place of $\teta$. We start with a preliminary bound, relating 
$a(\teta)$ to  $a(\mu)$.

\begin{lemme} \label{lem-a} We have
$$\|X\| \leq a(\mu)^{1/2} \,\,\,\,\,\,\,\,\,\text{and}\,\,\,\,\,\,\,\,\, a(\teta)\leq C (1+a(\mu))$$
where  $C=C(\kg,[.,.], \R \Xab, (\km^{(b)})_{b}, (e^{(i)}_{j})_{i,j})>0$.
\end{lemme}

\begin{proof}
The estimate $\|X\|  \leq m_{1}(\mu^{(1)})\leq a(\mu)^{1/2}$ follows from the Cauchy-Schwarz inequality. Let us check the second one. Note that $\teta^{(1)}$ is a centered probability measure on $\km^{(1)}$ with same covariance as $\mu^{(1)}$. Hence $m_{2}(\teta^{(1)})=m_{2}(\mu^{(1)}-X)\leq m_{2}(\mu^{(1)})$.  
To conclude, we check that  $m_{1}(\teta^{(2)})\leq  C(1+a(\mu))$. In the first part of the proof of \Cref{CLT+BE}, we established (without rate) the graded CLT $$ D_{\frac{1}{\sqrt{n}}} (\tmu^{*'n})\rightarrow \teta.$$
It yields
\begin{align}\label{eq1-a(eta)}
\int_{\kg} \|x^{(2)}\| \,d\teta(x) 
&\leq \liminf_{n} \frac{1}{n} \int_{\kg^n} \|(g_{1}*'\dots *'g_{n})^{(2)}\| \,d\tmu^{\otimes n}(\ug) \nonumber\\
& = \liminf_{n} \frac{1}{n} \int_{\kg^n} \|g_{1}^{(2)}+ \dots + g_{n}^{(2)} + \sum_{i<j} [g_{i}^{(1)}, g_{j}^{(1)}]'\| \,d\tmu^{\otimes n}(\ug). 
\end{align}
On the one hand, by the triangle inequality and recalling $\|X\| \leq m_{2}(\mu^{(1)})^{1/2}\leq 1+a(\mu)$, we have
$$ \int_{\kg^n} \|g_{1}^{(2)}+ \dots + g_{n}^{(2)} \| \,d\tmu^{\otimes n}(\ug) \leq  2n (1+a(\mu)).$$
%\int_{\kg} \|x^{(2)}\| d\nu(x) \leq \int_{\kg} \|x^{(2)}\| d\mu(x)
On the other hand, by the Cauchy-Schwarz inequality, 
\begin{align}\label{eq2-a(eta)}
\int_{\kg^n} \| \sum_{i<j} [g_{i}^{(1)}, g_{j}^{(1)}]'\| \,d\tmu^{\otimes n}(\ug) \leq \left(\int_{\kg^n} \| \sum_{i<j} [g_{i}^{(1)}, g_{j}^{(1)}]'\|^2 \,d\tmu^{\otimes n}(\ug)\right)^{1/2}
%&\leq C m_{2}(\mu^{(1)})
\end{align}
then recalling that the norm $\|.\|$ is Euclidean, and using that $\tmu^{(1)}$ is centered, we get
\begin{align}\label{eq3-a(eta)}
\int_{\kg^n} \| \sum_{i<j} [g_{i}^{(1)}, g_{j}^{(1)}]'\|^2 \,d\tmu^{\otimes n}(\ug) 
&=\int_{\kg^n}\sum_{i<j}\sum_{k<l} \langle [g_{i}^{(1)}, g_{j}^{(1)}]'  \,,\,  [g_{k}^{(1)}, g_{l}^{(1)}]'\rangle \,d\tmu^{\otimes n}(\ug) \nonumber\\
&=\int_{\kg^n}  \sum_{i<j} \|[g_{i}^{(1)}, g_{j}^{(1)}]' \|^2 \,d\tmu^{\otimes n}(\ug)\nonumber\\
&\leq n^2 C (1+m_{2}(\mu^{(1)}))^2.
\end{align}
Combining \eqref{eq1-a(eta)}, \eqref{eq2-a(eta)}, \eqref{eq3-a(eta)} concludes the proof. 
\end{proof}

Using the previous lemma, we can bound all moments of $\eta$ using only the quantity $a(\mu)$ (and in particular the second moment of $\mu$).

\begin{proposition}[Bound for all moments]  \label{bound-eta-moments}
Given $r>0$, we have
$$m_{r}(\eta)\leq C (1+a(\mu)^{r/2}) \leq C (1+m_{r}(\mu))$$
where $C>0$ only depends on $\kg,[.,.], \R \Xab, (\km^{(b)})_{b}, (e^{(i)}_{j})_{i,j}$ and an upper bound on $r$.
\end{proposition}

The proof is inspired by \cite{bentkus-pap96}.
\begin{proof}  
We first establish that for any $\alpha \in \N^{\dim \kg} $, the $L^1(\teta)$-norm  the monomial $x^\alpha$ is related to that of monomial of lower orders. Assume the degree of $\alpha$ satisfies $d(\alpha)\geq 3$. Using the invariance relation 
$\teta=D_{\frac{1}{\sqrt{2}}}(\teta*' \teta)$, we have
\begin{align*}
\int_{\kg}|x^\alpha| \,d\teta(x) &= \frac{1}{2^{d(\alpha)/2}}\int_{\kg}|(x*' y)^\alpha| \,d\teta(x)d\teta(y)\\
&\leq \frac{1}{2^{d(\alpha)/2}}\left(2\int_{\kg}|x^\alpha|  \,d\teta(x)  + C'\sum_{\substack{d(\beta)+d(\gamma)= d(\alpha)\\ d(\beta), d(\gamma)<d(\alpha) }}  \int_{\kg}|x^\beta| d\teta(x) \int_{\kg} |x^\gamma| d\teta(x)\right).
\end{align*}
for some $C'=C'({\bf D}, \alpha)\geq 0$, where ${\bf D}$ refers to the data ${\bf D}=(\kg,[.,.], \R \Xab, (\km^{(b)})_{b}, (e^{(i)}_{j})_{i,j})$. Using  $d(\alpha)\geq 3$ to guarantee $1-\frac{1}{2^{d(\alpha)/2-1}} \geq 1/10$, we get
\begin{align*}
\int_{\kg}|x^\alpha| \,d\teta(x) 
\leq 10C' \sum_{\substack{d(\beta)+d(\gamma)= d(\alpha)\\ d(\beta), d(\gamma)<d(\alpha) }}  \int_{\kg}|x^\beta| d\teta(x) \int_{\kg} |x^\gamma| d\teta(x) 
\leq C'' a(\teta)^{d(\alpha)/2}
\leq C'' (1+a(\mu))^{d(\alpha)/2}.
\end{align*}
where the second inequality is obtained by iterating the first yielding $C''=C''({\bf D}, \alpha)>0$, while the third inequality follows from Lemma \ref{lem-a}. The same holds for $\eta$ instead of $\teta$ due to the relation $\eta=\teta-\chi+X$ and the estimate $\|\chi\|=\|X\|\leq a(\mu)^{1/2}$. 

This does not justify yet the  formula that was announced in the statement because we have only looked at monomials of integer degree so far. So let $r>0$. Write $2k_{i}$ the smallest even integer greater than $r/i$. Using the H\"older inequality, then the previous paragraph, we get
\begin{align*}
\int_{\kg}\|x^{(i)}\|^{r/i} \,d\eta(x) 
&\leq \left(\int_{\kg}\|x^{(i)}\|^{2k_{i}} \,d\eta(x)\right)^{r/(i2k_{i})} \\
&\leq  C (1+a(\mu)^{ik_{i} .r/(i2k_{i})})\\
&=  C (1+a(\mu)^{r/2})
\end{align*}
where $C=C({\bf D}, r)>0$. The second estimate in the statement follows by H\"older inequality. This concludes the proof of \Cref{bound-eta-moments} (whence that of \Cref{CLT+BE}). 
\end{proof}

We conclude this section with a discussion of several papers that preceded us in attempting to prove the central limit theorem for non-centered random walks on a general nilpotent Lie group. The case of the Heisenberg group was handled successfully by Tutubalin \cite{tutubalin64} but a substantial difficulty remained to formulate the correct renormalization for an arbitrary group. 

In \cite{virtser74}, Virtser proposes a renormalization for  walks on the full upper-triangular unipotent matrix group, i.e. $\kg=\sum_{1\leq i<j\leq n} \R E_{i,j}$ with $[E_{i,j},E_{k,l}]=\1_{j=k}E_{i,l}  - \1_{i=l}E_{k,j}$. While Virtser's normalization is appropriate (and coincides with ours) for certain values of $\Xab$, as in \cite[Theorem 2 \& 3]{virtser74}, for certain other values his renormalization is too strong and may lead to non absolutely continuous limiting measures. This occurs for instance if $n=4$, and $\Xab = s E_{1,2}+t E_{3,4} \mod [\kg, \kg]$ with $s, t\neq 0$.  Indeed, in this case direct computation shows that our subspace of weight  at least $3$ is $\kg^{(3)}=\R E_{1,4}+\R(E_{1,3} - t/s E_{2,4}) \subsetneq \kg^{[2]}$. However, Virtser renormalizes the whole subspace $\kg^{[2]}$ with coefficient $N^{-3/2}$, hence shrinking to a point the coordinate $\kg^{[2]}/\kg^{(3)}$ which lives at scale $N$.

In \cite{raugi78},  Raugi proposes a renormalization for walks on a general nilpotent Lie group. We explain below why his renormalization is too weak to prevent escape of mass in non-centered cases, despite the CLT formulated in his paper.

Raugi's renormalization is defined with respect to the sequence of ideals $\{\ki^{k,l}, 0\leq l<k\leq s\}$, where $\ki^{k,l}$ is the linear span of $\kg^{[k+1]}$ and brackets containing  $k$ elements of $\kg$, at least $l$ of which are in $\R \Xab$. One also authorizes the case $(k,l)=(1,1)$ and set $\ki^{(1,1)}=\kg^{[2]}+\R\Xab$. The ideals $\ki^{k,l}$ are then ordered by lexicographical order, i.e. 
$$\ki^{1,0}\supseteq \ki^{1,1} \supseteq \ki^{2,0}   \supseteq \ki^{2,1}   \supseteq \dots \supseteq \ki^{k,0} \supseteq \ki^{k,1}\supseteq\dots \supseteq \ki^{k,k-1}\supseteq \dots \supseteq \ki^{s+1,0}=\{0\} $$
and for each each $(k,l)$ above, of successor $(k,l)_{+}$, one may choose a supplementary subspace $\kr^{k,l}$ such that 
$$\ki^{k,l} = \kr^{k,l} \oplus \ki^{(k,l)_{+}}  $$
in particular $\kg=\oplus  \kr^{k,l}$. Raugi's system of dilation is the family $R_{t}$ of linear maps defined on $\kr^{k,l}$ by $R_{t}x= t^{k+l}x$ except on  $\kr^{1,1}$ where  $R_{t}x= t x$. The \emph{main problem} with Raugi's definition is that the weight of the dilation may be strictly subadditive when taking brackets: $[\kr^{k_{1},l_{1}}, \kr^{k_{2},l_{2}}]$ is not necessarily included in $\oplus\{ \kr^{k,l} \,:\, k+l\geq k_{1}+l_{1}+k_{2}+l_{2}\}$. This leaves room for escape of mass, contradicting \cite{raugi78}. We now formulate a precise counterexample  where this phenomenon occurs. 

\bigskip

\noindent{\bf Counterexample to \cite{raugi78}}.
Let $\kl$ be the step-3 filiform algebra defined by $\kl= \oplus_{i=1}^4 \R e_{i}$ where $[e_{1}, e_{2}]=e_{3}$, $[e_{1}, e_{3}]= e_{4}$, $[e_{2}, e_{3}]=0$, $e_{4}\in \mathfrak{z}(\kl)$. In particular, 
$$\kl = \R e_{1} \oplus  \R e_{2} \oplus  \R [e_{1}, e_{2}]   \oplus  \R [e_{1}, [e_{1}, e_{2}]]  .$$
Let $\kl'$ be a nilpotent Lie algebra of step $4$,  and $(e'_{j})_{1\leq j\leq d'}$ a basis adapted to its central descending series, in particular $e_{d'}\in \kl'^{[4]}$. Endow the vector space  $\kg:=\kl\oplus \kl'$ with the bracket $[.,.]$ defined by $[\kl, \kl']=0$, extending the bracket on $\kl'$, and whose restriction to $\kl$ corresponds to the original bracket on $\kl$ modified by imposing $[e_{2}, e_{3}]=e'_{d}$.  Direct computation shows that $(\kg, [.,.])$ is indeed a Lie algebra (of step $4$). Now let $\mu$ be a compactly supported non-lattice probability measure on $\kg$ with expectation $\E(\mu_{ab})=e_{2} \mod [\kg, \kg]$ in the abelianization. Notice $\ki^{3,2} =\ki^{4,0}=\kl'^{[4]}$  and $\ki^{4,1}=\{0\}$. Given an arbitrary choice of $(\kr^{k,l})_{k,l}$, Raugi  renormalises the projection of $\mu^{*n}*-N\Xm$ to $\kr^{4,0}=\kl'^{[4]}$ by  $N^{-4/2}$, but  \Cref{CLT+BE} (and even Crepel-Raugi \cite{crepel-raugi78} in this particular case) predicts that this projection is spread at scale $N^{5/2}$, whence the  escape of mass.

\subsection{Asymptotically close measures} 

We deduce from our central limit theorem a necessary and sufficient condition for two measures on $\kg$ to be asymptotically close, namely \Cref{asymp-close}. We keep the notations from the beginning of \Cref{Sec-global-thm} and start with a variant of the CLT whose recentering is not multiplicative, but additive. 

\begin{lemme}[CLT with additive recentering] \label{CLT-additive} As $N$ goes to $+\infty$, we have 
$$\DilsN (\mu^{*N}- N\Xm) \longrightarrow (\nu*'\chi)- \chi .$$
\end{lemme}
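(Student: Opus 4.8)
\textbf{Proof plan for Lemma \ref{CLT-additive}.}

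The goal is to deduce the additively-recentered CLT from the multiplicatively-recentered one (Theorem \ref{CLT+BE}), i.e. from the convergence $\DilsN(\mu^{*N}*-N\Xm) \to \nu$ in law. The bridge between the two is the observation that the additive translate $\mu^{*N} - N\Xm$ and the multiplicative translate $\mu^{*N}*-N\Xm$ differ by a ``lower-order'' perturbation once we pass to the graded picture. Concretely, for $y \in \kg$ one has $y*-N\Xm = y - N\Xm + (\text{brackets of } y \text{ with } N\Xm)$, and since $\Xm = \E(\mu^{(1)})$ behaves as a weight-$2$ element, these correction terms land in $\kg^{(2)} = [\kg,\kg]$ (in fact deeper) and carry the right powers of $N$ to survive the rescaling $\DilsN$. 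The cleanest route is therefore to compare the Fourier transforms directly: for a linear form $\xi$ on $\kg$,
$$
\widehat{\DilsN(\mu^{*N}*-N\Xm)}(\xi) = \E_{\mu^{*N}}\big(e_{\xi}(\DilsN(y*-N\Xm))\big),
\qquad
\widehat{\DilsN(\mu^{*N}-N\Xm)}(\xi) = \E_{\mu^{*N}}\big(e_{\xi}(\DilsN(y-N\Xm))\big),
$$
and show these have the same limit; equivalently, that the difference $\DilsN(y*-N\Xm) - \DilsN(y-N\Xm)$ tends to $0$ (say in probability under $\mu^{*N}$, or in $L^1$ after truncation), while simultaneously identifying the limit of the first quantity with $\widehat{(\nu*'\chi)-\chi}(\xi)$.

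The first step is to set up the computation inside the bias extension $\tkg$. Recall $\tmu^{*N} = \mu^{*N}*-N\Xm*N\chi$, so the object $\mu^{*N}*-N\Xm$ is exactly $p_\star$ of $\tmu^{*N}*-N\chi$ where $p:\tkg\to\kg$ is the projection; and Theorem \ref{CLT+BE} amounts, after the three reductions of Section \ref{Sec-3reductions} (truncation \ref{truncation-cost}, graded replacement \ref{grading-Fourier}, Lindeberg replacement \ref{gaussian} applied to the measure $\eta$ of Lemma \ref{densite}), to the statement $\DilsN((T_N\tmu)^{*'N}*-N\chi) \to \nu$, using $\teta^{*'N} = \DilN\nu *' N\chi$. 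I would run the \emph{same} three reductions for the additively-recentered measure. The only new ingredient is to understand, for the reduced (truncated, graded) product $\Pi'(\xx)$, the difference between ``recentering by $*'$-multiplication by $-N\chi$'' and ``subtracting $N\Xm$ coordinatewise'', after dilation by $N^{-1/2}$. Since $\Xm$ lies in the center of $(\tkg,*')$ (Remark after the grading is fixed in Section \ref{Sec-cadre}), $\chi - \Xm \in \tkm^{(2)}$, and one computes
$$
\DilsN\big(z *' (-N\chi)\big) - \DilsN\big(z - N\Xm\big)
= \DilsN\big(z *' (-N\chi)\big) - \DilsN(z) *' \DilsN(-N\Xm) + \DilsN(N(\chi-\Xm)),
$$
where the first difference is a polynomial in $\DilsN z$ and $\DilsN(N\chi)$ with no constant term and the correction $\DilsN(N(\chi-\Xm)) = \chi - \Xm$ is constant; here $z = \Pi'(\xx)$ with $\xx$ the (truncated) $\tmu$-sample. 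Using the moment estimate of Proposition \ref{momentPi} to control $\DilsN\Pi'(\xx)$, and $\DilsN(N\chi) = N^{-1/2}\cdot N \cdot N^{-1}\chi$… — more precisely $\DilN\chi = \chi$ up to the weight-$2$ scaling, so $\DilsN(N\chi)$ is bounded — one sees the polynomial correction terms carry strictly negative powers of $N$ except for the constant term $\chi-\Xm$. This is where the identification of the limit measure $(\nu*'\chi)-\chi$ emerges: the multiplicative CLT gives $\DilsN(\tmu^{*N}*-N\chi)\to\nu$ on $\kg\subseteq\tkg$, equivalently $\DilsN(\tmu^{*N})\to\nu*'\chi$ (shifting back by $\chi$ using centrality of $\Xm$ and boundedness of $\DilsN(N\chi)$), and then the additive recentering replaces the final $*'$-shift by $-\chi$ with an ordinary subtraction of $\chi$, giving $(\nu*'\chi)-\chi$.

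I expect the main obstacle to be bookkeeping rather than conceptual: carefully tracking which correction terms in the Campbell–Hausdorff-type expansion of $z*'(-N\chi)$ versus $z - N\Xm$ survive the $\DilsN$ rescaling, and verifying that all surviving non-constant terms vanish in probability (or in $L^1$ after truncation, via Proposition \ref{momentPi} and Hölder). A clean way to organize this is to note that $z \mapsto z *' (-N\chi)$ and $z\mapsto (z - N\Xm) *' (\text{error})$ agree, and that since $\Xm$ is $*'$-central the ``error'' is a polynomial in $z$ alone with coefficients that are fixed brackets with $\chi$; grading considerations then show every monomial of the error, after $\DilsN$, is $O(N^{-c})$ for some $c>0$ uniformly, so that $\E_{\mu^{*N}}|e_\xi(\cdot) - e_\xi(\cdot)| \to 0$ by the Lipschitz bound $|e_\xi(a)-e_\xi(b)| \le 2\pi\|\xi\|\,\|a-b\|$ combined with the moment bounds. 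Finally, combining with Theorem \ref{CLT+BE} and the continuity of the map ``translate by $-\chi$'' on measures, one concludes $\DilsN(\mu^{*N} - N\Xm) \to (\nu*'\chi) - \chi$ weakly, as claimed.
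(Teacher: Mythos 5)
Your final paragraph does land on the correct mechanism --- the multiplicative CLT yields $\DilsN \tmu^{*N} \to \nu*'\chi$, and the additive recentering then subtracts $\chi$ instead of $*'$-multiplying by $-\chi$, which is exactly the paper's proof --- but the route you propose to get there contains two genuine errors. First, the opening claim that $\DilsN(y*(-N\Xm)) - \DilsN(y-N\Xm)$ tends to $0$ is false, and would prove the wrong statement: if the two recenterings agreed asymptotically, the limit would be $\nu$ itself rather than $(\nu*'\chi)-\chi$, and these measures differ whenever $\ad'(\chi)\neq 0$ (already in the biased Heisenberg case). The Campbell--Hausdorff corrections such as $\tfrac12[y,-N\Xm]$ place a term of size $N\cdot N^{i/2}$ in $\kg^{(i+2)}$, which survives $\DilsN$ at order $1$; these surviving terms are precisely what produce the shear $a\mapsto (a*'\chi)-\chi$, so they cannot be discarded. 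Second, the displayed identity in your second paragraph compares the wrong objects: lifted to $\tkg$, the additively recentered measure is $\tmu^{*N}-N\chi$ (because $\mu^{*N}-N\Xm=\tmu^{*N}-N\chi$), not $\Pi'(\xx)-N\Xm$; and $\DilsN\bigl(N(\chi-\Xm)\bigr)$ is not the constant $\chi-\Xm$, since $\chi-\Xm$ has the nonzero weight-one component $-\Xm$, so that $\DilsN\bigl(N(\chi-\Xm)\bigr)=\chi-\sqrt{N}\,\Xm$ diverges.

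The step you actually need, and which your sketch glosses over with ``centrality of $\Xm$'', is the passage from $\DilsN(\tmu^{*N}*(-N\chi))\to\nu$ to $\DilsN(\tmu^{*N})\to\nu*'\chi$. This is not formal: $\DilsN$ is an automorphism of $(\tkg,*')$ but not of $(\tkg,*)$, and $\chi$ is not central for $*'$, so you cannot simply peel off the translation. The paper applies the graded replacement (\Cref{grading-Fourier}) twice: once to replace $\DilsN\tmu^{*N}$ by $\DilsN\tmu^{*'N}$, at which point $\DilsN\tmu^{*'N}=\DilsN(\tmu^{*'N}*'(-N\chi))*'\chi$ because $\DilsN$ is a $*'$-automorphism and $\DilsN(N\chi)=\chi$; and once more to identify $\DilsN(\tmu^{*'N}*'(-N\chi))$ with $\DilsN(\tmu^{*N}*(-N\chi))=\DilsN(\mu^{*N}*(-N\Xm))\to\nu$. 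With that double replacement inserted your outline becomes the paper's proof; without it the argument has a hole exactly where the two group laws are exchanged.
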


We emphasize that $\DilsN (\mu^{*N}- N\Xm)$ differs from the measure $\DilsN (\mu^{*N}* - N\Xm)$ that we studied until now. Indeed,  $\DilsN (\mu^{*N} - N\Xm)$ refers to the image of $\mu^{*N}$ by the map $\kg\rightarrow \kg, x\mapsto \DilsN(x - N\Xm)$ (see conventions in \Cref{notation-bias}), so here the recentering refers to the addition on $\kg$ seen as a vector space, and not the Lie product $*$. 
In a similar way,  the measure in the limit is the image of $\nu$ by the map $\tkg\rightarrow \tkg, x\mapsto (x*'\chi)- \chi$ which induces a diffeomorphism of $\kg$. In particular, it is also a smooth probability measure on $\kg$.

\begin{proof}
We observe that 
$\mu^{*N}- N\Xm = \tmu^{*N}- N\chi$, so we just need to check that $\DilsN   \tmu^{*N} \rightarrow \nu*'\chi$. 
By the graded replacement lemma (\Cref{grading-Fourier}),  we may replace $\DilsN   \tmu^{*N}$ by $\DilsN   \tmu^{*'N} $, so it boils down to showing the convergence $\DilsN   (\tmu^{*'N} *'-N\chi) \rightarrow \nu$.  Applying \Cref{grading-Fourier} again, we are left to check that
$\DilsN   (\tmu^{*N} *-N\chi) \rightarrow \nu$.  As $\tmu^{*N} *-N\chi= \mu^{*N} *-N\Xm$, this follows from the central limit theorem \ref{CLT+BE}. 

\end{proof}

\begin{proof}[Proof of \Cref{asymp-close}]

If two measures are asymptotically close, then so are their projections to the abelianization. Combining the  classical CLT and the fact that the class of convex subsets is invariant by dilations and translations, one  deduces that those projections must have same mean and covariance matrix. In particular,  the weight filtrations for both measures  coincide and we may thus choose a common family of dilations and recentering for both measures. Using again the invariance properties of $\mathcal{C}$,   the limiting distributions in  \Cref{CLT-additive} must coincide, hence the corresponding measures $\nu$ as well, or more generally the underlying diffusion laws $\sigma$. Examining infinitesimal generators, we conclude that the commutator means, i.e. the $B$ term in $\LL$, also coincide.

The converse statement  is a direct consequence of the fact that  $\mathcal{C}$ is closed under dilations and translations, \Cref{CLT-additive}, and the fact that weak convergence of measures toward an absolutely continuous distribution implies uniform convergence on the class $\mathcal{C}$ \cite[Theorem 4.2]{ranga-rao62}.
\end{proof}

\subsection{Convergence of processes}\label{donsker-sec}

In this subsection, we prove \Cref{Donsker-nilpotent}, namely the analog of Donsker's invariance principle \cite[Theorem XIII.1.9]{revuz-yor99} for the $\mu$-walk on $(\kg, *)$. For $N\geq 1$, $t \ge0$,  we recall the interpolated random process  $W^{(N)}: \R^+\rightarrow \kg$ defined by 
$$W^{(N)}(t)= \DilsN \big(X_{1}*\dots *X_{\lfloor tN \rfloor} *(tN-\lfloor tN \rfloor)X_{\lfloor tN \rfloor+1}* - tN \Xm\big)$$
where the $X_{i}$'s are i.i.d. with law $\mu$, and $X=\E(\mu^{(1)})$ is the lift to $\km^{(1)}$ of $\Xab=\E(\mu_{ab})$.  
We also recall  that we have associated to $\mu$ a $*'$-left-invariant diffusion process on $(\kg,*')$, namely the time-dependent diffusion $(W(t))_{t \ge 0}$  with infinitesimal generator $\LL_t(\mu,(\km^{(b)})_b)$ and law $\sigma$  defined in  \eqref{limit-diff-gen2}. 

Our goal is to show that $W^{(N)}$ converges in law to $W$ in the topology of $\alpha$-H\"older convergence on bounded intervals. 
We begin by rewriting the process as:
\begin{align} \label{rewriting-WN}
W^{(N)}(t) =\DilsN \big(x_{1}*\dots *x_{\lfloor tN \rfloor}*(tN-\lfloor tN \rfloor)x_{\lfloor tN \rfloor+1} *- tN \chi\big)
\end{align}
where $x_{i}=X_{i}-\Xm+\chi$ is now a family of i.i.d. variables with law $\tmu$. Then we set 
$$S^{(N)}(t) = \DilsN \big(x_{1}*'\dots *'x_{\lfloor tN \rfloor}*'(tN-\lfloor tN \rfloor)x_{\lfloor tN \rfloor+1}\big) .$$
From  \Cref{grading-uniform} and the remark following it,  \Cref{Donsker-nilpotent} will reduce to the following. We recall that $Z(t)=W(t)*'t\chi$. 
\begin{proposition} \label{Donsker-tilde} Assume that $\mu$ has finite $m$-th moment for any $m>0$.  Given any  $\alpha \in [0,\frac{1}{2})$,   the process $S^{(N)}$ converges in law towards the process $Z$ in $C^{0,\alpha}(\R_+, \tkg)$ endowed  with the topology of $\alpha$-H\"older convergence on bounded intervals. 
\end{proposition}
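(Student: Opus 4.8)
The plan is to prove convergence in law in $C^{0,\alpha}(\R_+,\tkg)$ by establishing two things: finite-dimensional convergence and tightness (in the $\alpha$-Hölder topology on each bounded interval $[0,n]$), which together are equivalent to weak convergence in the separable complete metric space $(C^{0,\alpha}(\R_+,\tkg),d_\alpha)$. I would first reduce to working with the graded product $*'$ throughout, which is legitimate by \Cref{grading-uniform} and the remark following it: the event where $S^{(N)}$ and the non-graded interpolation differ by more than $N^{-\alpha}$ uniformly on $[0,n]$ has vanishing probability as soon as $\mu$ has enough moments, so it suffices to handle $S^{(N)}$. Since $D_r$ is an automorphism of $(\tkg,*')$ and the $x_i=X_i-X+\chi$ are i.i.d.\ with law $\tmu$, the process $S^{(N)}$ is exactly the dilated interpolated random walk on the \emph{graded} Lie group $(\tkg,*')$ driven by $\tmu$, with the interpolation taking place coordinatewise after applying $D_{1/\sqrt N}$.

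For the finite-dimensional marginals I would argue as follows. Fix times $0\le t_1<\dots<t_k$. The increments $S^{(N)}(t_j)^{-1}*'S^{(N)}(t_{j+1})$ are (up to the negligible fractional-interpolation terms, which are $O(N^{-1/2})$ in each coordinate after dilation and hence vanish) dilations of products of disjoint blocks of i.i.d.\ $\tmu$-variables, so they are independent. For a single increment on a block of length $\sim (t_{j+1}-t_j)N$, \Cref{densite} together with \Cref{CLT+BE} (applied to the relevant sub-walk and its dilation) gives convergence in law to the corresponding increment of $Z$; here one uses $Z(t)=W(t)*'t\chi$ and the semigroup/scaling identities $\sigma_s*'\Ad(s\chi)\sigma_t=\sigma_{s+t}$, $D_{\sqrt r}\sigma_t=\sigma_{rt}$ from \Cref{density-inv}, which is exactly what makes the increments of $Z$ compose correctly. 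Combining independence across blocks with marginal convergence of each block yields convergence of the joint law $(S^{(N)}(t_1),\dots,S^{(N)}(t_k))$ to $(Z(t_1),\dots,Z(t_k))$.

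The heart of the argument — and the step I expect to be the main obstacle — is tightness in the $\alpha$-Hölder topology. Here I would invoke a Kolmogorov--Chentsov-type criterion adapted to the group-valued setting: it suffices to bound, uniformly in $N$, moments of the form $\E\big(\|S^{(N)}(s)^{-1}*'S^{(N)}(t)\|'^{\,2m}_{\tkg}\big)\le C_m\,|t-s|^{m}$ for an integer $m$ large enough that $m(1/2-\alpha)>1$ (actually one needs the exponent on $|t-s|$ to exceed $1+\alpha\cdot 2m$, which is where the precise moment count $2mb_{\max}$ enters). Writing the increment in coordinates via the bi-grading of \Cref{Sec-bi-grading}, the $\tkm^{(b)}$-component of $S^{(N)}(t)^{-1}*'S^{(N)}(s)$ is $N^{-b/2}$ times a homogeneous polynomial statistic $\Pi'^{[a,b)}$ in the $\sim|t-s|N$ relevant increments; \Cref{momentPi0} (the version for $\tmu$ without truncation, valid under a finite moment of order $2mb_{\max}$) gives $\E\|\Pi'^{[a,b)}\|^{2m}\ll_m (|t-s|N)^{bm}$, so the $b$-coordinate contributes $\ll_m |t-s|^{bm}$, and after taking the appropriate root and combining coordinates one gets the desired $|t-s|^m$ bound on $\|S^{(N)}(s)^{-1}*'S^{(N)}(t)\|'^{\,2m}_{\tkg}$ (using the ball-box comparison between the Riemannian distance $\|\cdot\|'_{\tkg}$ and the $\sup_b\|\cdot^{(b)}\|^{1/b}$ homogeneous quasi-norm). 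One must also control the fractional-interpolation term separately and check the criterion near the endpoints of dyadic blocks; these are routine once the polynomial moment bounds are in hand. Finally, I would note that the same moment bounds show the limit process $Z$ has $\alpha$-Hölder paths, and that the Kolmogorov criterion gives tightness on each $[0,n]$ simultaneously, hence tightness in $C^{0,\alpha}(\R_+,\tkg)$ for the metric $d_\alpha=\sum_n 2^{-n}d_{\alpha,n}$. Combining tightness with finite-dimensional convergence completes the proof of \Cref{Donsker-tilde}, and \Cref{Donsker-nilpotent} follows by transporting back through $D_{\sqrt N}$ and the graded-replacement estimate, together with the observation that replacing $*'$ by $*$ or $+$ in $d_\alpha$ does not change the topology.
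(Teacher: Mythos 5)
Your proposal is correct and follows essentially the same route as the paper: finite-dimensional convergence via independence of block increments and the CLT (\Cref{CLT+BE}) combined with the scaling/semigroup identities of \Cref{density-inv}, plus tightness via the Kolmogorov criterion using the moment bound $\E\bigl(d'(S^{(N)}_s,S^{(N)}_t)^{2m}\bigr)\ll_m |t-s|^m$ obtained from \Cref{momentPi} (and the remark extending it to the untruncated $\tmu$). The only cosmetic difference is that your opening reduction from $*$ to $*'$ via \Cref{grading-uniform} belongs to the deduction of \Cref{Donsker-nilpotent} rather than to \Cref{Donsker-tilde} itself, which is already stated for the graded process $S^{(N)}$.
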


 We shall use a classical approach, via Kolmogorov's tightness criterion  \cite[Chapter VIII]{revuz-yor99}. This method reduces the proof of \Cref{Donsker-tilde} to the combination of the next two lemmas.

\begin{lemme}[Convergence of finite-dimensional distributions]\label{fd-conv}
For every $k\geq 1$, $0\leq t_{1}<\dots <t_{k}<\infty$, one has the convergence in law, as $N\to +\infty$:
$$(S^{(N)}(t_{i}))_{i\leq k}  \longrightarrow (Z(t_{i}))_{i\leq k}. $$
\end{lemme}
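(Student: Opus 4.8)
The goal is to show that the finite-dimensional distributions of $S^{(N)}$ converge to those of the limiting diffusion $Z$ with law $\lambda$ (recall $Z(t)=W(t)*'t\chi$ and $\lambda_t=\DilN[\text{wrong}]$... more precisely $\teta^{*'N}=\DilN\nu*'N\chi$ by \Cref{densite}). The key point is that, because $*'$ is graded and $D_r$ acts by $*'$-automorphisms, the increments of $S^{(N)}$ become genuine (non-identically-distributed, but still independent) convolution blocks, and we can compute their Fourier transforms via the three-reductions machinery of \Cref{Sec-3reductions}.

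First I would reduce to the increments. Writing $g_i=x_{1}*'\dots*'x_{\lfloor t_iN\rfloor}$, the vector $(S^{(N)}(t_i))_i$ is a continuous (indeed polynomial) function of the tuple of successive $*'$-increments $\bigl(\DilsN(g_i^{-1}*'g_{i+1})\bigr)_i$ together with the fractional-part corrections $(t_iN-\lfloor t_iN\rfloor)x_{\lfloor t_iN\rfloor+1}$, the latter being $O(N^{-1/2})$ in probability and hence negligible (use the moment bound of \Cref{momentPi}). Because the blocks are built from disjoint batches of the i.i.d. family $(x_j)$, the $*'$-increments are mutually independent. By the continuous mapping theorem it therefore suffices to prove that each increment $\DilsN\bigl(x_{\lfloor t_iN\rfloor+1}*'\dots*'x_{\lfloor t_{i+1}N\rfloor}\bigr)$ converges in law to the corresponding increment of $Z$, namely $Z(t_i)^{-1}*'Z(t_{i+1})$, jointly over $i$ — and joint convergence follows from marginal convergence plus independence on both sides (the diffusion $\lambda$ has independent increments by definition).

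Next, for a single increment I would pass to characteristic functions. Fix $\xi\in\dkg$, seen as a linear form on $\tkg$ with $\xi(\chi)=0$. Using the truncation estimate (\Cref{truncation-cost}, applicable since $\mu$ has all moments), then the graded replacement at the Fourier level (\Cref{grading-Fourier}) and the Lindeberg replacement (\Cref{gaussian}) with $\delta_0=0$ applied to the measure $\eta$ from \Cref{densite}, one replaces the $\tmu$-block of length $\lfloor t_{i+1}N\rfloor-\lfloor t_iN\rfloor\sim (t_{i+1}-t_i)N$ by the corresponding $\teta$-block, at additive cost $o(1)$ uniformly on compacta in $\xi$; here one must track the shift by $-N\chi$ exactly as in the proof of \Cref{CLT+BE}. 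Since $\teta^{*'k}=\Dilk\nu*'k\chi$ and $\DilsN\DiloneK[...]$ — concretely $\DilsN\bigl(\teta^{*'k}\bigr)=\DilsN(\Dil{\sqrt k}\nu)*'\DilsN(k\chi)=\Dil{\sqrt{k/N}}\nu*'\tfrac kN\chi$ with $k/N\to t_{i+1}-t_i$ — the block Fourier transform converges to that of $\nu_{t_{i+1}-t_i}*'(t_{i+1}-t_i)\chi$, which by \Cref{density-inv} and \Cref{homogeneisation} is exactly the law of the $Z$-increment over $[t_i,t_{i+1}]$ (up to the deterministic $\Ad(t_i\chi)$-twist coming from $\sigma_s*'\Ad(s\chi)\sigma_t=\sigma_{s+t}$, which one bookkeeps carefully). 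Pointwise convergence of characteristic functions plus Lévy's continuity theorem then gives convergence in law of the increment; reassembling via the continuous map above yields \Cref{fd-conv}.

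\textbf{Main obstacle.} The routine parts are the three-reductions invocations; the delicate bookkeeping is the exact handling of the recenterings. Each block $g_i^{-1}*'g_{i+1}$ is centered (mean of $\mu^{(1)}$ subtracted), but because $\chi\in\tkm^{(2)}$ is \emph{not} central for $*$ and the fractional starting index $\lfloor t_iN\rfloor$ enters the $\Ad(t\chi)$-conjugation in the limit generator $\LL_t$, I have to be careful that the discrete ``time origin'' $\lfloor t_iN\rfloor/N$ converges to $t_i$ and that the twist $\Ad\bigl(\tfrac{\lfloor t_iN\rfloor}{N}\chi\bigr)$ converges to $\Ad(t_i\chi)$ — this is where the time-inhomogeneity of the limit shows up and where a naive application of \Cref{CLT+BE} (which only treats the block from time $0$) is insufficient. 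A clean way around is to apply \Cref{CLT+BE} to the \emph{homogenized} process $Z$ on $(\tkg,*')$, for which $\lambda$ is a genuine time-homogeneous convolution semigroup (\Cref{homogeneisation}), prove convergence of increments there, and only at the very end project back to $\kg$ via $W(t)=Z(t)*'(-t\chi)$; this turns the time-dependent twist into a deterministic continuous post-composition and makes the joint-increment argument transparent.
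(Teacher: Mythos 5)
Your proposal is correct and follows essentially the same route as the paper: decompose $(S^{(N)}(t_i))_{i\leq k}$ into mutually independent $*'$-increments over (rounded) grid times, apply the continuous mapping theorem via $\Psi(x_1,\dots,x_k)=(x_1*'\dots*'x_i)_i$, and deduce convergence of each increment from the central limit theorem \Cref{CLT+BE}, using that $\lambda$ is a time-homogeneous $*'$-convolution semigroup by \Cref{homogeneisation}. The ``obstacle'' you flag about the $\Ad(t_i\chi)$-twist is actually moot for this lemma, which already concerns the homogenized process $Z$ rather than $W$ — the ``clean way around'' you describe at the end is precisely the paper's argument, with the de-homogenization deferred to the proof of \Cref{Donsker-nilpotent}.
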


\begin{proof}
For $N\geq 1$, denote by $t^{(N)}_{i} =\inf \{k/N : k/N\geq t_{i}\}$. Then, for all $\eps>0$,  
$$\mathbb{P}(\|S^{(N)}(t_{i}) -S^{(N)}(t^{(N)}_{i})\|>\eps)=o(1)\,\,\,\,\,\,\,\,\,\,\,\,\,\,\, \mathbb{P}(\| \Z (t_{i}) -\Z (t^{(N)}_{i})\|>\eps)=o(1)$$
so it is sufficient to prove that for any $F : \tkg^k\rightarrow \R$ continuous and bounded, 
$$(S^{(N)}(t^{(N)}_{i}))_{i\leq k}(F)  - (\Z(t^{(N)}_{i}))_{i\leq k}(F)=o_{F}(1) .$$
But, setting $s^{(N)}_{i}= t^{(N)}_{i}-t^{(N)}_{i-1}$ (where $t^{(N)}_{0}=0$), one may rewrite
$$(S^{(N)}(t^{(N)}_{i}))_{i\leq k}= \Psi( \mathcal{S}^{(N)}_{1}, \dots,  \mathcal{S}^{(N)}_{k} ) \,\,\,\,\,\,\,\,\,\,\,\,\,\,\,  (\Z(t^{(N)}_{i}))_{i\leq k}= \Psi( \mathcal{Z}^{(N)}_{1}, \dots,  \mathcal{Z}^{(N)}_{k} )$$
where $\Psi : \tkg^k\rightarrow \tkg^k, (x_{1}, \dots, x_{k}) \mapsto(x_{1}*'\dots *'x_{i})_{i\leq k}$ is a continuous map,  
 the variables $\mathcal{S}^{(N)}_{1}, \mathcal{Z}^{(N)}_{1}, \dots, \mathcal{S}^{(N)}_{k}, \mathcal{Z}^{(N)}_{k}$ are mutually independent, with  each  $\mathcal{S}^{(N)}_{i}$ (resp. $\mathcal{Z}^{(N)}_{i}$)  distributed as $ S^{(N)}(s^{(N)}_{i})$ (resp. $\Z(s^{(N)}_{i})$). To conclude, it is now sufficient to check that $\mathcal{S}^{(N)}_{i} \rightarrow \mathcal{Z}^{(N)}_{i}$ for each $i$, but this follows from the central limit theorem \ref{CLT+BE}.
\end{proof}

\begin{lemme}[Tightness]\label{tightness} Let $d'$ be a left-invariant Riemannian metric on $(\kg, *')$.
For every segment $[0, R]$, every integer $m\ge1$ there is $C_m>0$ such that for all $N \ge 1$,
\begin{align}  \label{variation}
\sup_{0 \le s<t \le R}\E\left(d'(S^{(N)}_{s},S^{(N)}_{t})^{2m} \right)\leq C_m |s-t|^{m}.
\end{align}
\end{lemme}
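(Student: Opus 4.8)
\textbf{Proof plan for Lemma \ref{tightness}.} The plan is to reduce the moment bound on the H\"older increment $d'(S^{(N)}_s,S^{(N)}_t)$ to the polynomial moment estimates of \Cref{momentPi0}, exploiting that $S^{(N)}$ is built from i.i.d.\ increments via the graded product $*'$. First I would observe that, since $d'$ is left-invariant for $*'$, one has $d'(S^{(N)}_s,S^{(N)}_t)=\normp{(S^{(N)}_s)^{-1}*'S^{(N)}_t}$, and $(S^{(N)}_s)^{-1}*'S^{(N)}_t$ is, up to the dilation $\DilsN$, a product of the form $\Pi'\big((x_i)_{i=k_1+1}^{k_2},\,\text{fractional endpoint terms}\big)$ where $k_1=\lfloor sN\rfloor$, $k_2=\lfloor tN\rfloor$; in particular it only involves the increments $x_i$ with $i$ in a block of length $\ll N|t-s|+1$. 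By the ball-box principle (the remark after \Cref{density-reg}, or directly \cite{bellaiche96}), $\normp{y}^{2m}\ll 1+\sum_b \|y^{(b)}\|^{2m/b}$ outside a compact neighbourhood of the origin, so it suffices to bound $\E\big(\|\DilsN\Pi'^{[a,b)}(\cdot)\|^{2m/b}\big)$ for each homogeneous component, and after raising to a common power it is enough to bound $\E\big(\|\DilsN\Pi'^{[a,b)}(\cdot)\|^{2m'}\big)$ for a large integer $m'$.

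The key computation is then the following scaling bookkeeping. The component $\Pi'^{[a,b)}$ is homogeneous of degree $b$ for the dilations, so $\|\DilsN\Pi'^{[a,b)}(\xx)\|=N^{-b/2}\|\Pi'^{[a,b)}(\xx)\|$. Applying \Cref{momentPi0} (in its version for the graded product, or equivalently \Cref{momentPi} with $\delta_0=0$) to the block of $n:=k_2-k_1+O(1)$ relevant i.i.d.\ variables, one gets $\E\big(\|\Pi'^{[a,b)}(\cdot)\|^{2m'}\big)\ll_{m'} n^{bm'}$. Hence $\E\big(\|\DilsN\Pi'^{[a,b)}(\cdot)\|^{2m'}\big)\ll_{m'} (n/N)^{bm'}\ll_{m'}(|t-s|+1/N)^{bm'}$. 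The one subtlety is the regime $|t-s|<1/N$, where the increment lies inside a single interpolation step; there $(S^{(N)}_s)^{-1}*'S^{(N)}_t$ equals $\DilsN$ applied to a fraction $(tN-sN)$ of a single $x_i$ (after recentering), so $d'(S^{(N)}_s,S^{(N)}_t)\ll \|\DilsN(\lambda x_i)\|\ll N^{-1/2}\|x_i\|$ with $\lambda\le 1$, giving $\E(d'(\cdot)^{2m})\ll N^{-m}\E(\|x\|^{2m})\ll N^{-m}\le |t-s|^m$ by the finite-moment hypothesis. Combining the two regimes, and noting $(|t-s|+1/N)^{b}\ll|t-s|$ whenever $|t-s|\ge 1/N$ and $b\ge 1$, one arrives at the claimed bound with an appropriate choice of $m'$ (e.g.\ $m'=m b_{\max}$ so that all exponents $2m'/b$ are even integers $\ge 2m$).

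\textbf{Main obstacle.} The genuinely delicate point is handling the fractional endpoint contributions $(tN-\lfloor tN\rfloor)x_{\lfloor tN\rfloor+1}$ and $-sN\chi$-type recentering terms cleanly inside the $\Pi'$-formalism: one must check that inserting a scalar multiple $\lambda x$ (with $\lambda\in[0,1]$) of an extra increment, together with the appropriate recentering, still yields an expression to which the moment estimate \Cref{momentPi0} applies with the block length controlled by $N|t-s|+1$ rather than $N$. This is where one uses that the recentering $-tN\chi$ is itself absorbed into the $x_i=X_i-\Xm+\chi$, so that each $x_i$ has vanishing weight-$1$ expectation, and that $\lambda x_i$ for $\lambda\in[0,1]$ has moments bounded by those of $x_i$; the bookkeeping is routine but must be done carefully to keep the implied constants independent of $s,t,N$. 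Once tightness \eqref{variation} and the convergence of finite-dimensional distributions (\Cref{fd-conv}) are in hand, Kolmogorov's criterion \cite[Chapter VIII]{revuz-yor99} upgrades the convergence to the $\alpha$-H\"older topology for every $\alpha<\tfrac12-\tfrac1{2m}$, and letting $m\to\infty$ covers all $\alpha\in[0,\tfrac12)$, completing the proof of \Cref{Donsker-tilde} and hence of \Cref{Donsker-nilpotent}.
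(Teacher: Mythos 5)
Your strategy is the same as the paper's: use left-invariance of $d'$ to reduce $d'(S^{(N)}_s,S^{(N)}_t)$ to the norm of $\DilsN$ applied to a $*'$-product of the $O(N|t-s|+1)$ increments indexed between $\lfloor Ns\rfloor$ and $\lfloor Nt\rfloor$, bound each homogeneous component via its degree-$b$ homogeneity under the dilations together with the moment estimate of \Cref{momentPi0} (equivalently \Cref{momentPi} with $\delta_0=0$ and the remark after \Cref{graded-replacement}), and treat the regime $|t-s|<1/N$ separately. Two points in your write-up need repair, though neither changes the architecture of the argument.

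First, the detour through the ball-box estimate $\normp{y}^{2m}\ll 1+\sum_b\|y^{(b)}\|^{2m/b}$ is both unnecessary and problematic: the additive constant $1$ is fatal for small $|t-s|$ (the right-hand side can then never be $\ll|t-s|^m$), and the finer comparison $\normp{y}\ll\max_b\|y^{(b)}\|^{1/b}$ is a sub-Riemannian statement requiring $\km^{(1)}$ (or a suitable choice of grading) to be bracket-generating for $*'$, which the paper explicitly warns may fail in the biased case. All that is needed is the elementary global bound $d'(0,y)\ll_{d'}\|y\|$ for a left-invariant Riemannian metric; the estimate $\E\big(\|\DilsN\Pi'(\cdot)\|^{2m}\big)\ll_m\sum_b(j/N)^{bm}\ll_{R,m}(j/N)^m$ then follows from \Cref{momentPi0} exactly as you compute, and this is the route the paper takes.

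Second, your sub-grid case $|t-s|<1/N$ ends with the inequality $N^{-m}\le|t-s|^m$, which is false precisely in that regime. The correct mechanism is the one you introduce and then drop: the fractional increment carries the scalar $\lambda=(t-s)N\le 1$, so
$\E\big(d'(S^{(N)}_s,S^{(N)}_t)^{2m}\big)\ll\lambda^{2m}N^{-m}\,\E(\|x\|^{2m})\le\big((t-s)N\big)^mN^{-m}=|t-s|^m$.
Retaining the factor $\lambda^{2m}$ is exactly what makes the case $|t-s|<1/N$ close (and is also what is implicitly used in the paper's own terse ``distinguishing the cases'' sentence). With these two repairs your proof coincides with the paper's.
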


\begin{proof} 
Suppose $s=\frac{i}{N}$, $t=\frac{i+j}{N}$ for integers $i,j\ge 0$. Then 
\begin{align*}
d'(S^{(N)}_{s},S^{(N)}_{t}) &= d'(0,-S^{(N)}_{s}*'S^{(N)}_{t}) = d'(0, \DilsN(x_{i+1}*'\dots *'x_{i+j}))\\
&\ll_{d'} \|\DilsN(x_{i+1}*'\dots *'x_{i+j})\| .
\end{align*}
For any $m\geq 1$, \Cref{momentPi} and the remark following it tells us that 
$$ \mathbb{E}\left(\|\DilsN(x_{i+1}*'\dots *'x_{i+j})\|^{2m} \right) \ll_m \left(\frac{j}{N}\right)^{m} = |t-s|^m,$$ 
as desired. In general with $i:=\lfloor Ns \rfloor$, $i+j=\lfloor Nt \rfloor$, we may write 
$$d'(S^{(N)}_s,S^{(N)}_t)^{2m} \ll_m d'(S^{(N)}_s,S^{(N)}_{i/N})^{2m} + d'(S^{(N)}_{i/N},S^{(N)}_{(i+j)/N})^{2m}+ d'(S^{(N)}_{(i+j)/N},S^{(N)}_{t})^{2m}.$$
Taking expectations, this becomes $O_{d',m}(1) |t-s|^m$, as we may see after distinguishing the cases $|t-s|<1/N$ and $|t-s|\ge1/N$.
\end{proof}

\begin{proof}[Proof of \Cref{Donsker-tilde}] According to Kolmogorov's tightness criterion and the proof of Theorem 2.1 in \cite{revuz-yor99} (using the Riemannian metric $d'$ in place of the Euclidean metric), \Cref{tightness} implies that the sequence of processes $S^{(N)}$ is relatively compact in the topology of $C^{0,\alpha}(\R_+,\tkg)$ for any $\alpha < \frac{m-1}{2m}$. Together with \Cref{fd-conv} this ensures the desired convergence in law to $\lambda$.
\end{proof}

\begin{proof}[Proof of \Cref{Donsker-nilpotent}] Let $W'^{(N)}$ be the process obtained from the formula \eqref{rewriting-WN} for $W^{(N)}$ by replacing the  product $*$ by the product   $*'$. From \Cref{grading-uniform} we see that the sequence of processes $W^{(N)}$ and $W'^{(N)}$ have the same limit in the $\alpha$-H\"older topology for each $\alpha \in [0,\frac{1}{2})$. 

On the other hand, $W'^{(N)}(t)=S^{(N)}(t)*'-t\chi$, and the operator on $C^{0, \alpha}(\R, \tkg)$ defined by $c(t)\mapsto c(t)*'t\chi$ is continuous. Hence it follows from \Cref{Donsker-tilde} that $W'^{(N)}$ converges in law to the random process $(Z(t)*'-t\chi)_{t\geq 0}$, which is just $(W(t))_{t \geq0}$ by definition.

\end{proof}

\bigskip

\noindent\emph{Remark}. When $\alpha=0$, \Cref{Donsker-tilde} can also be derived from the main result of \cite{stroock-varadhan73}.

\bigskip

\noindent\emph{Remark}. (Rough paths) We note in passing that, in fact, a stronger version of Proposition \ref{Donsker-tilde} holds. Namely, for a suitable choice of dilations $\DilsN$ (i.e. of subspaces $(\tkm^{(b)})_b$), the convergence also holds in the finer \emph{rough paths} topology, see \cite{lyons-qian, friz-victoir10, friz-hairer}, namely in the Polish space of $\alpha$-H\"older paths with respect to the $*'$-invariant Carnot-Carath\'eodory metric $d'_{cc}(x,y)$  defined on $(\tkg,*')$ by any Euclidean norm on $\tkm^{(1)}\oplus\R\chi$. This is defined exactly as the Riemannian metric, except that we only allow paths that are tangent to the distribution of planes formed by the left translates of  $\tkm^{(1)}\oplus\R\chi$. If $\tkm^{(b)}$ is chosen so that it lies in the span $V_b$ of all brackets of length at most $b$ in $\tkm^{(1)}\oplus\R\chi$ (which is possible thanks to \Cref{prehypo}), then for small $x$ (see ``ball-box theorem'', \cite[Thm 7.3.4]{bellaiche96}, \cite[Thm 0.5.A, p97]{gromov96}) $$d'_{cc}(0,x) \simeq \max_{b\ge 0} d_{eucl}(x,V_b)^{1/(b+1)} \ll \max_{b \ge 1} \|x^{(b)}\|^{1/b},$$
where $V_0:=0$, while for large $x$, $d'_{cc}(0,x)$ is controlled by $d_{eucl}(0,x)$, where $d_{eucl}$ is the Euclidean distance on $\tkg$. The same proof then shows \Cref{tightness} with $d'_{cc}$ in place of $d'$. As a consequence the sample paths of the diffusion $\lambda$ are $\alpha$-H\"older for the Carnot-Carath\'eodory metric $d'_{cc}$ for each $\alpha \in [0,\frac{1}{2})$. In the centered case on the free nilpotent Lie group, Donsker's theorem in the rough paths topology was first obtained in \cite{breuillard-friz-huesmann} by a similar method.

%%%%%%%%%%%%%%%%%%%%%%%%%%%%%%%%%%%%%%%%%%%%%%
%% Example with single Appendix:            %%
%%%%%%%%%%%%%%%%%%%%%%%%%%%%%%%%%%%%%%%%%%%%%%

%%%%%%%%%%%%%%%%%%%%%%%%%%%%%%%%%%%%%%%%%%%%%%
%% Example with multiple Appendixes:        %%
%%%%%%%%%%%%%%%%%%%%%%%%%%%%%%%%%%%%%%%%%%%%%%

%%%%%%%%%%%%%%%%%%%%%%%%%%%%%%%%%%%%%%%%%%%%%%
%% Support information, if any,             %%
%% should be provided in the                %%
%% Acknowledgements section.                %%
%%%%%%%%%%%%%%%%%%%%%%%%%%%%%%%%%%%%%%%%%%%%%%
\addtocontents{toc}{\protect\setcounter{tocdepth}{0}}

\bigskip
\bigskip

\noindent \emph{Acknowledgments}. We warmly thank Yves Benoist, Peter Friz and Laurent Saloff-Coste for very helpful discussions, as well as the anonymous referee for their useful comments. The first author acknowledges support from the European Research Council (ERC) under the European Union’s Horizon 2020 research and
innovation programme (grant agreement No. 803711).

\bigskip

\bigskip

\noindent\textsc{LAGA – Institut Galilée, 99 avenue Jean Baptiste Clément, 93430 Villetaneuse, France}

\noindent\textit{Email address}: \texttt{benard@math.univ-paris13.fr },

\bigskip

\noindent\textsc{Mathematical Institute, University of Oxford, Woodstock Rd Oxford OX2 6GG, United Kingdom}

\noindent\textit{Email address}: \texttt{breuillard@maths.ox.ac.uk}

\end{document}